\sloppy\pagestyle{plain}
\newtheorem{theorem}[equation]{Theorem}
\newtheorem{proposition}[equation]{Proposition}
\newtheorem{lemma}[equation]{Lemma}
\newtheorem{corollary}[equation]{Corollary}
\newtheorem*{corollary*}{Corollary}
\newtheorem{conjecture}[equation]{Conjecture}
\newtheorem*{conjecture*}{Conjecture}
\newtheorem*{maintheorem*}{Main Theorem}
\newtheorem*{maincorollary*}{Main Corollary}
\theoremstyle{definition}
\newtheorem{example}[equation]{Example}
\newtheorem{definition}[equation]{Definition}
\theoremstyle{remark}
\newtheorem{remark}[equation]{Remark}
\makeatletter\@addtoreset{equation}{section} \makeatother
\newcommand{\mumu}{\boldsymbol{\mu}}
\newcommand{\vin}{\rotatebox[origin=c]{90}{$\in$}}
\title{K-stability of Casagrande--Druel varieties}
\author{Ivan Cheltsov, Tiago Duarte Guerreiro, Kento Fujita, \\ Igor Krylov, Jesus Martinez-Garcia}
\address{\emph{Ivan Cheltsov}
\newline
\textnormal{University of Edinburgh,  Edinburgh, Scotland}
\newline
\textnormal{\texttt{i.cheltsov@ed.ac.uk}}}
\address{ \emph{Tiago Duarte Guerreiro}
\newline
\textnormal{University of Essex, Colchester, England}
\newline
\textnormal{\texttt{t.duarteguerreiro@essex.ac.uk}}}
\address{\emph{Kento Fujita}
\newline
\textnormal{Osaka University, Osaka, Japan}
\newline
\textnormal{\texttt{fujita@math.sci.osaka-u.ac.jp}}}
\address{\emph{Igor Krylov}
\newline
\textnormal{Institute for Basic Science, Pohang, Korea}
\newline
\textnormal{\texttt{krylov.igor.o@gmail.com}}}
\address{\emph{Jesus Martinez-Garcia}
\newline
\textnormal{University of Essex, Colchester, England}
\newline
\textnormal{\texttt{jesus.martinez-garcia@essex.ac.uk}}}
\begin{document}

\maketitle

\begin{abstract}
We introduce a new subclass of Fano varieties (Casagrande--Druel varieties),
that are $n$-dimensional varieties constructed from Fano double covers of dimension~$n-1$.
We conjecture that a Casagrande--Druel variety is K-polystable if the~double cover
and its base space are K-polystable.
We prove this for smoothable Casagrande--Druel threefolds,
and for Casagrande--Druel varieties constructed from double covers of $\mathbb{P}^{n-1}$
ramified over smooth hypersurfaces of degree $2d$ with $n>d>\frac{n}{2}>1$.
As an application, we describe the connected components of the K-moduli space
parametrizing smoothable K-polystable Fano threefolds in the families \textnumero 3.9 and \textnumero 4.2 in the Mori-Mukai classification.
\end{abstract}


\bigskip

Throughout this paper, all varieties are defined over~$\mathbb{C}$.

\section{Introduction}
\label{section:intro}

Let $V$ be a Fano variety with Kawamata log terminal singularities,
and let $L$ be a line bundle on $V$ such that the~divisor $-(K_V+L)$ is ample,
and $|2L|$ contains a non-zero effective divisor.
Let $R$ be a divisor in $|2L|$, and let $\eta\colon B\to V$ be the~double cover ramified over $R$.
Then $B$ can be explicitly constructed as follows. Let
$$
Y=\mathbb{P}\big(\mathcal{O}_{V}\oplus \mathcal{O}_{V}(L)\big),
$$
let $\pi\colon Y\to V$ be the~natural projection,  and let $\xi$ be the~tautological line bundle on $Y$.
Set $H=\pi^*(L)$.
Then we have isomorphisms:
\begin{align*}
H^0\big(Y, \mathcal{O}_{Y}(\xi)\big)&\cong  H^0\big(V,\mathcal{O}_V\big)\oplus H^0\big(V,\mathcal{O}_V(L)\big),\\
H^0\big(Y, \mathcal{O}_{Y}(\xi-H)\big)&\cong H^0\big(V,\mathcal{O}_V\big)\oplus H^0\big(V,\mathcal{O}_V(-L)\big).
\end{align*}
Using these isomorphisms, fix sections $u^+\in H^0(Y,\mathcal{O}_{Y}(\xi))$ and $u^-\in H^0(Y, \mathcal{O}_{Y}(\xi-H))$
that correspond to $1\in H^0(V,\mathcal{O}_V)$ under the isomorphisms above.
Set $S^{\pm}=\{u^{\pm}=0\}$.
Then we have $S^-\cap S^+=\varnothing$ and $S^+\sim S^-+H$.
Take $f\in H^0(V,\mathcal{O}_V(2L))$ that defines $R$.
Then we can identify $B$ with the~divisor
$$
\big\{\pi^*(f)(u^-)^2=(u^+)^2\big\}\in|2S^+|,
$$
where the~double cover $\eta$ is induced by  $\pi$.

\begin{remark}
\label{remark:Fano-double-cover}
We allow $R$ to be singular, so $B$ can be very singular (and even reducible).
However, if the~log pair $(V,\frac{1}{2}R)$ has Kawamata log terminal singularities,
then the~double cover $B$ is a Fano variety with Kawamata log terminal singularities \cite{Ko97}.
So, for simplicity, we will always say that $B$ is a Fano double cover (even if $B$ is non-normal or reducible).
\end{remark}

Let $F=\pi^*(R)$, and let $\phi\colon X\to Y$ be the~blow up of the~intersection $S^+\cap F$. Then
\begin{center}
$X$ is smooth $\iff$ $Y$ and $B$ are smooth $\iff$ $V$ and $R$ are smooth.
\end{center}
Moreover, the variety $X$ is also a Fano variety (see Section~\ref{section:CD-properties}).

\begin{definition}
\label{definition:CD-variety}
If the Fano variety $X$ has at most Kawamata log terminal singularities,
then $X$ is called \emph{the~Casagrande--Druel variety} constructed from $\eta\colon B\to V$
(or, from the~ramification divisor $R\subset V$). Note that $L\in\operatorname{Pic} V$ is uniquely determined by $R$.
\end{definition}

The~group $\mathrm{Aut}(Y)$ contains a subgroup $\Gamma\cong\mathbb{G}_m$ that fixes both $S^-$ and $S^+$  pointwise,
and the~action of $\Gamma$ lifts to $\mathrm{Aut}(X)$, so we can identify $\Gamma$ with a subgroup in~$\mathrm{Aut}(X)$.
In~Section~\ref{section:CD-properties}, we will show that $\mathrm{Aut}(X)$ also contains an~involution $\iota$ such that
$$
\langle\Gamma,\iota\rangle\cong\mathbb{G}_m\rtimes\mumu_2,
$$
and $\iota$ swaps the~proper transforms of the~sections $S^-$ and $S^+$.
Set $G=\langle\Gamma,\iota\rangle$ and $\theta=\pi\circ\phi$. Then we have commutative diagram:
\begin{equation}
\label{equation:diagram-symmetric}
\xymatrix{
&X\ar[dd]_{\theta}\ar[dl]_{\phi}\ar[rr]^{\iota}&&X\ar[dd]^{\theta}\ar[dr]^{\phi}&\\
Y\ar[dr]_{\pi} &&&&Y\ar[dl]^{\pi} \\
&V\ar@{=}[rr]&& V&}
\end{equation}
and the~composition $\theta$ is a $G$-equivariant conic bundle such that $G$ acts trivially on $V$.

\begin{remark}
\label{remark:CD-varieties}
Our construction of Casagrande--Druel varieties is inspired by the~paper~\cite{CD}.
See \cite[Lemma 3.1 (iii)]{CD}. 
But it goes back to the~construction of de Jonquieres involutions using hyperelliptic curves instead of Fano double covers.
See also \cite{maruyama2,BCW,tsukioka,divpt}.
\end{remark}

The del Pezzo surface of degree $6$ (blow up of $\mathbb{P}^2$ at three general points) is the~unique smooth Casagrande-Druel surface.
Smooth Casagrande--Druel threefolds form $3$ families.
To present them, we use labeling of smooth Fano threefolds from \cite{Book}.

\begin{example}
\label{example:3-19}
Let $V=\mathbb{P}^2$, let $L=\mathcal{O}_{\mathbb{P}^2}(1)$, let $R$ be an arbitrary smooth conic in $|2L|$.
Then $B\cong\mathbb{P}^1\times\mathbb{P}^1$,
and $X$ is the~unique smooth Fano threefold in the~family \textnumero 3.19.
\end{example}

\begin{example}
\label{example:3-9}
Let $V=\mathbb{P}^2$, let $L=\mathcal{O}_{\mathbb{P}^2}(2)$, let $R$ be any smooth quartic curve in $|2L|$.
Then $B$ is a del Pezzo surface of degree $2$, and $X$ is a Fano threefold in the~family \textnumero 3.9.
\end{example}

\begin{example}
\label{example:4-2}
Let $V=\mathbb{P}^1\times\mathbb{P}^1$, let $L=\mathcal{O}_V(1,1)$, let $R$ be any~smooth curve in $|2L|$.
Then $B$ is a del Pezzo surface of degree $4$, and $X$ is a Fano threefold in the~family \textnumero 4.2.
\end{example}

All smooth Casagrande--Druel threefolds are K-polystable, see \cite[Theorem 6.1]{IltenSuess}~and~\cite{Book}.
In fact, K-polystable Casagrande--Druel varieties exist in every dimension:

\begin{example}[{\cite{Delcroix2020,Delcroix2022}}]
\label{example:Thibaud}
Suppose that $V=\mathbb{P}^{n-1}$, $L=\mathcal{O}_{\mathbb{P}^{n-1}}(1)$, $R$ is smooth, $n\geqslant 2$.
Then $X$ can be obtained by blowing up the $n$-dimensional smooth quadric at two points.
The variety $X$ is spherical, and it is known that $X$ is K-polystable \cite[4.4.2]{Delcroix2022}.
\end{example}

In this paper, we prove the following theorem:

\begin{theorem}
\label{theorem:double-spaces}
Suppose that $V=\mathbb{P}^{n-1}$, $L=\mathcal{O}_{\mathbb{P}^{n-1}}(r)$, $R$ is smooth, and $n>r>\frac{n}{2}>1$.
Then $X$ is K-polystable.
\end{theorem}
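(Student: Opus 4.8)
The plan is to establish the $G$-equivariant K-polystability of $X$ (with $G=\langle\Gamma,\iota\rangle$), which is equivalent to K-polystability by the equivariant criterion of Datar--Székelyhidi and Zhuang. Since $\iota$ acts on the Lie algebra of $\Gamma\cong\mathbb{G}_m$ by $-1$, the Futaki character of every $G$-equivariant special test configuration vanishes automatically, so it is enough to show $\beta_X(\mathbf{F})\geqslant 0$ for every $G$-invariant prime divisor $\mathbf{F}$ over $X$, with equality only when $\mathbf{F}$ defines a product test configuration; equivalently, that $\delta_G(X)\geqslant 1$ and that every $G$-invariant valuation $v$ over $X$ with $A_X(v)=S_X(v)$ is induced by the torus $\Gamma\subset\operatorname{Aut}(X)$.

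To bound $\delta_G(X)$ I would feed the $G$-equivariant conic bundle $\theta\colon X\to\mathbb{P}^{n-1}$ of the diagram~\eqref{equation:diagram-symmetric} into the Abban--Zhuang method, using the flag $X\supset\theta^{-1}(\Lambda_1)\supset\cdots\supset\theta^{-1}(\Lambda_{n-1})=\mathfrak{f}\supset O^{+}$, where $\Lambda_1\supset\cdots\supset\Lambda_{n-1}$ is a general complete flag of linear subspaces of $\mathbb{P}^{n-1}$, $\mathfrak{f}\cong\mathbb{P}^1$ is the fibre of $\theta$ over the point $\Lambda_{n-1}$, and $O^{+}=\mathfrak f\cap\widehat{S}^{+}$ is one of the two $\Gamma$-fixed points of $\mathfrak f$ ($\widehat{S}^{+}$ being the proper transform of $S^{+}$). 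As $G$ acts trivially on $\mathbb{P}^{n-1}$, every stratum of the flag is $G$-invariant and $O^{+}$ is a $G$-fixed point, so the bound $\delta_G(X)\geqslant\min_{1\leqslant k\leqslant n}A_{\bullet}/S_{\bullet}$ is valid equivariantly. To compute the refined multigraded series I would first record from the blow-up construction of Section~\ref{section:CD-properties} that $-K_X\sim\widehat{S}^{+}+S^{-}+n\,\theta^{*}\mathcal{O}_{\mathbb{P}^{n-1}}(1)$ and $(-K_X)^{n}=\tfrac{2}{r}\bigl(n^{n}-(n-r)^{n}\bigr)$, and then compute
$$
\theta_{*}\mathcal{O}_X\bigl(m\widehat{S}^{+}+mS^{-}\bigr)\;\cong\;\bigoplus_{j=-m}^{m}\mathcal{I}_{R_V}^{\max(0,j)}\otimes\mathcal{O}_{\mathbb{P}^{n-1}}(jr),
$$
where $R_V\subset\mathbb{P}^{n-1}$ is the smooth branch hypersurface of degree $2r$; this turns each term of the estimate into an elementary integral. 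The outcome I would aim for is that the ``fibre term'' $A_{\mathfrak f}(O^{+})/S(W^{\mathfrak f}_{\bullet,\dots,\bullet};O^{+})$ equals $1$ (the balanced direction coming from $\Gamma$), while each ``base term'' for $k<n$ is strictly larger than $1$; it is in the latter that the hypotheses $n>r>\tfrac{n}{2}>1$ enter, since the contribution of the discriminant $\tfrac12 R_V$ to the base terms is governed by $\deg R_V=2r>n$, i.e.\ by $R_V$ not being a Fano hypersurface — the regime in which $\bigl(\mathbb{P}^{n-1},\tfrac12 R_V\bigr)$, equivalently the Fano double cover $B$ of the Introduction, is K-stable. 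This gives $\delta_G(X)\geqslant 1$.

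For K-polystability I would then analyse a $G$-invariant valuation $v$ over $X$ with $A_X(v)=S_X(v)$. The strict inequality in the base terms shows that $v$ is not ``horizontal'' for $\theta$: its restriction to a general fibre $\mathfrak f$ must compute the fibre term, hence be supported at the $\Gamma$-fixed points $O^{\pm}$. Since $\iota$ interchanges $O^{+}$ and $O^{-}$ (equivalently $\widehat{S}^{+}$ with $S^{-}$, and $E$ with the proper transform $\widehat F$ of $F$), $G$-invariance forces the two weights to coincide, so $v$ is the valuation attached to the $\Gamma$-action and defines a product test configuration; therefore $X$ is K-polystable.

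The step I expect to be the main obstacle is the quantitative part of the Abban--Zhuang computation over the discriminant: the fibres of $\theta$ over $R_V$ are reducible (splitting as $\widehat F\cup E$, interchanged by $\iota$), so on the strata $\theta^{-1}(\Lambda_k)$ one must control the positive parts of $\widehat{S}^{+}$, $S^{-}$, $E$, $\widehat F$ along $R_V\cap\Lambda_k$ when computing the refined series, and it is precisely the inequality $2r>n$ that is needed to keep every base term above $1$. A secondary difficulty is justifying the equivariant Abban--Zhuang estimate for a flag that is $G$-invariant but not $\operatorname{Aut}(X)$-invariant, and upgrading $\delta_G(X)\geqslant 1$ to the polystability statement, for which the explicit description of $\theta_{*}\mathcal{O}_X(m\widehat{S}^{+}+mS^{-})$ is the key tool.
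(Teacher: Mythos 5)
Your route is genuinely different from the paper's, but it has a gap at its load-bearing step. Working $G$-equivariantly on $X$ itself, the best you can hope for is $\delta(X)=1$: by the symmetric form of Remark~\ref{remark:K-instability} (or the vanishing of the Futaki character you invoke), the two sections have $\beta=0$, so at any point of $\widehat{S}^{+}\cup S^{-}$ the local $\delta$ is at most $1$ and your Abban--Zhuang chain can at best return the value $1$ there. Your proof of polystability therefore rests on (a) the uncomputed claim that the fibre term is \emph{exactly} $1$ while all base terms are strictly larger, and, more seriously, (b) the assertion that a $G$-invariant valuation with $A_X(v)=S_X(v)$ must ``compute the fibre term'' and hence be supported at $O^{\pm}$, so that it is the $\Gamma$-valuation. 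Step (b) does not follow: the Abban--Zhuang method produces one-way inequalities, and when the minimum of the terms equals $1$ the equality $A_X(v)/S_X(v)=1$ gives no information about how $v$ interacts with the flag; the equality conditions of the method are exactly the hard point and are nowhere established. Moreover the valuations $\mathrm{wt}_\xi$ attached to $\Gamma$ are \emph{not} $G$-invariant (the involution $\iota$ inverts $\Gamma$), so in Zhuang's equivariant criterion the ``product-type'' escape is unavailable: one needs the strict inequality $\beta(\mathbf{F})>0$ for every $G$-invariant prime divisor $\mathbf{F}$ over $X$, which your bound ($\geqslant 1$ with equality near the sections) does not deliver. Two smaller inaccuracies: $O^{+}$ is not a $G$-fixed point, since $\iota$ swaps $\widehat{S}^{+}$ with $S^{-}$ and hence $O^{+}$ with $O^{-}$ (harmless for running Abban--Zhuang, but it invalidates the equivariance bookkeeping you lean on); and the restriction of a nontrivial $\mathrm{wt}_\xi$ to a general fibre is supported at \emph{one} fixed point, so ``equal weights at $O^{+}$ and $O^{-}$'' does not characterize the $\Gamma$-valuations. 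A fix within your framework would be to note that a $G$-invariant irreducible center can never lie inside $\widehat{S}^{+}$ (its $\iota$-image $S^{-}$ is disjoint from it) and then prove the strict bound $\delta_P(X)>1$ for all $P\notin \widehat{S}^{+}\cup S^{-}$; but that strict local estimate is precisely what is missing.

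For contrast, the paper avoids the equality analysis altogether: by Proposition~\ref{proposition:quotient} and the result of Liu--Zhu, $X$ is K-polystable if and only if the log Fano pair $(Y,\tfrac12 B)$ is, and since the $\Gamma$-action does not descend to $Y$ one can aim for the \emph{stronger} statement $\delta(Y,\tfrac12 B)>1$. This is Theorem~\ref{theorem:log-delta}, proved by combining Lemma~\ref{lemma:Kento} along $S^{-}$ with Proposition~\ref{proposition:delta}, the latter by induction on dimension using flags cut by general members of $|\mu L|$ pulled back to $Y$ (not a conic-bundle flag on $X$); Theorem~\ref{theorem:double-spaces} then follows by checking the three numerical inequalities of Theorem~\ref{theorem:delta} for $V=\mathbb{P}^{n-1}$, $L=\mathcal{O}_{\mathbb{P}^{n-1}}(r)$ in Lemmas~\ref{lemma:Pn-Kn}, \ref{lemma:Pn-Kento-easy}, \ref{lemma:Pn-Kento}, which is where $n>r>\tfrac{n}{2}>1$ enters. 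In other words, the hypothesis $2r>n$ is used to obtain strict uniform bounds on the quotient pair, not to drive an equality/polystability analysis on $X$; to salvage your approach you would have to either carry out that delicate equality analysis or establish the strict bound away from the two sections.
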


We obtain this result as an application of the~following K-polystability criteria:

\begin{theorem}
\label{theorem:delta}
Suppose that both $V$ and $R$ are smooth (or equivalently $X$ is smooth), and~\mbox{$-K_V\sim_{\mathbb{Q}} aL$},
where $a\in\mathbb{Q}_{>0}$ such that $a>1$.
Let $\mu$ be the~smallest rational number such that $\mu L$ is very ample.
Set $n=\mathrm{dim}(X)$ (so $\mathrm{dim}(V)=n-1$), set $d=L^{n-1}$, set
$$
k_n(a,d,\mu)=\frac{a^{n+1}-(a-1)^{n+1}}{(n+1)(a^n-(a-1)^n)}d\mu^{n-2}+\frac{a^{n+1}-(a+n)(a-1)^{n}}{2(n+1)(a^n-(a-1)^n)}
$$
and set
$$
\gamma=\mathrm{min}\Bigg\{\frac{1}{k_n(a,d,\mu)},\frac{(n+1)(a^n-(a-1)^n)}{(n+1-a)a^n+(a-1)^{n+1}},\frac{a\delta(V)(n+1)(a^{n}-(a-1)^{n})}{n(a^{n+1}-(a-1)^{n+1})}\Bigg\},
$$
where $\delta(V)$ is the~$\delta$-invariant of the~Fano variety $V$.
If $n\geqslant 3$, $d\mu^{n-2}\geqslant 2$ and $\gamma>1$, then the Casagrande--Druel variety $X$ is K-polystable.
\end{theorem}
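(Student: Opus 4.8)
The plan is to establish the inequality $\delta_G(X)\geqslant\gamma$ for the $G$-invariant stability threshold of $X$; since $G\cong\mathbb{G}_m\rtimes\mumu_2$ is reductive and $\gamma>1$ by assumption, this implies that the Fano variety $X$ is K-polystable. The lower bound for $\delta_G(X)$ will be extracted from the $G$-equivariant conic bundle $\theta\colon X\to V$ of diagram~\eqref{equation:diagram-symmetric} — recall that $G$ acts trivially on $V$, hence acts on every fibre of $\theta$ — by the Abban--Zhuang method, bounding $\delta_G(X)$ below by each of the three quantities whose minimum is $\gamma$.

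First I would make the geometry of $X$ explicit. Write $\widetilde{S}^{\pm}$ for the proper transforms of $S^{\pm}$ and $E$ for the $\phi$-exceptional divisor. Since $S^{+}\cap F$ is a Cartier divisor inside $S^{+}$ and is disjoint from $S^{-}$, the blow up $\phi$ does not modify $S^{\pm}$, so $\widetilde{S}^{+}$ and $\widetilde{S}^{-}$ are disjoint sections of $\theta$ interchanged by the involution $\iota$, while $E$ and the preimages $\theta^{-1}(Z)$ of proper subvarieties $Z\subseteq V$ are $\theta$-vertical; moreover $-K_X\sim_{\mathbb{Q}}\phi^{*}(2\xi+(a-1)H)-E$. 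As $X$ is the blow up of the projective bundle $Y=\mathbb{P}(\mathcal{O}_V\oplus\mathcal{O}_V(L))$ along a smooth centre, the number $(-K_X)^{n}$ and the volume functions $\operatorname{vol}(-K_X-t\widetilde{S}^{\pm})$, $\operatorname{vol}(-K_X-tE)$, and $\operatorname{vol}(-K_X-t\theta^{*}D)$ for $D$ ample on $V$ are all computed by a lengthy but routine intersection calculation using the Grothendieck relation $\xi^{2}=\xi\cdot H$ and $H^{n}=0$. From these one reads off the ratios $A_X/S_X$ on $\widetilde{S}^{\pm}$ and $E$, the smallest of which is the second entry of $\gamma$, and the proportionality constant between $S_X(\theta^{*}(\,\cdot\,))$ and $S_V(\,\cdot\,)$, which turns the vertical part of the estimate into $\delta(V)$ times the constant appearing in the third entry of $\gamma$.

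The core of the argument is the bound for prime divisors over $X$ whose centre on $X$ is neither a section nor $\theta$-vertical. Fix a general complete intersection curve $\Gamma=Z_1\cap\cdots\cap Z_{n-2}$ with $Z_i\in|\mu L|$, so that $\deg_L\Gamma=d\mu^{n-2}\geqslant 2$, let $T=\theta^{-1}(\Gamma)$ be the corresponding conic bundle surface, pick a general point $p\in\Gamma$ with fibre $\ell=\theta^{-1}(p)\cong\mathbb{P}^1$, and a general point $x\in\ell$. Applying the Abban--Zhuang inequality to the flag
\[
X\;\supset\;\theta^{-1}(Z_1)\;\supset\;\cdots\;\supset\;T\;\supset\;\ell\;\supset\;\{x\},
\]
and using that $\Gamma$, $p$ and $x$ are general, the first $n-2$ steps contribute only the factor $d\mu^{n-2}$, and the remaining refined invariant is computed on the surface $T$: the Zariski decomposition of the relevant two-parameter family of big classes on $T$, followed by the ensuing double integral, yields exactly $1/k_n(a,d,\mu)$ at a general point of $\ell$, while the two $\Gamma$-fixed points $\ell\cap\widetilde{S}^{\pm}$ reproduce $A_X(\widetilde{S}^{\pm})/S_X(\widetilde{S}^{\pm})$ and the degenerate fibres over $\Gamma\cap R$ bring in the contribution of $E$. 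Letting $Z$ range over all subvarieties of $V$ in the role of $\Gamma$, and pushing the computation down through $\theta$ for the vertical directions, accounts for the $\delta(V)$ term. Taking the minimum over the three contributions gives $\delta_G(X)\geqslant\gamma$, as required.

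The step I expect to be the main obstacle is the two-parameter volume computation on the conic bundle surface $T$: one has to determine the Zariski chamber decomposition of the relevant classes throughout the region of integration, and it is precisely here that the shape of $k_n(a,d,\mu)$ appears and that the hypotheses $a>1$ and $d\mu^{n-2}\geqslant 2$ enter, guaranteeing that the decomposition has the generic form. A secondary difficulty is to keep the whole estimate $G$-equivariant, so that the output is K-polystability and not merely K-semistability; this is taken care of by the genericity of the chosen flag, which reduces the fibre direction to a $\mathbb{P}^1$ on which $G$ acts through $\mathbb{G}_m$ with the two fixed points $\ell\cap\widetilde{S}^{\pm}$ swapped by $\mumu_2$, so that the only $G$-invariant data are the symmetric pair $\widetilde{S}^{\pm}$ already accounted for.
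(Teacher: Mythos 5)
The decisive gap is in how you handle the $\mathbb{G}_m$-direction. The quantities produced by the Abban--Zhuang flag you describe are lower bounds for the ordinary local thresholds $\delta_P(X)$, i.e.\ infima over \emph{all} divisors whose centre contains $P$, not only the $G$-invariant ones; the flag machinery has no mechanism for discarding non-invariant divisors at a non-invariant point such as your general $x\in\ell$ or the points $\ell\cap S_1$, $\ell\cap S_2$. But a bound $\delta_P(X)\geqslant\gamma>1$ at every point is impossible: since $\Gamma\cong\mathbb{G}_m\subset\mathrm{Aut}(X)$, the variety $X$ is never uniformly K-stable, so $\delta(X)\leqslant 1$; concretely, the valuations induced by the $\Gamma$-action are positive multiples of $\mathrm{ord}_{S_1}$ and $\mathrm{ord}_{S_2}$, whose $\beta$-invariants are equal by the $\iota$-symmetry and of opposite sign by linearity of the Futaki character (via the valuative formula for product test configurations), hence both vanish, so $\delta_P(X)\leqslant 1$ at every point of the two sections. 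In particular your claim that the fixed points $\ell\cap\widetilde{S}^{\pm}$ ``reproduce'' the second entry of $\gamma$ is incorrect: that entry is $1/S_{D(a)}(S^-)$ computed on $Y$ with respect to $-(K_Y+\frac{1}{2}B)$ (Lemma~\ref{lemma:Kento}), whereas on $X$ the corresponding ratio equals $1$. If instead you genuinely restrict to $G$-invariant divisors via \cite{Zhuang}, you must control all $G$-invariant centres, and these are not exhausted by the symmetric pair of sections: they are the preimages $\theta^{-1}(Z')$ for $Z'\subsetneq V$ together with all subvarieties of $E_1\cap E_2\cong R$, every point of which is $G$-fixed; your sketch contains no analysis of divisors centred there (over the ramification locus, where the fibres are degenerate conics and neither section passes), and that analysis is of the same order of difficulty as Proposition~\ref{proposition:delta}.

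This is exactly why the paper argues differently: it passes to the quotient $X/\iota\cong Y$ (Proposition~\ref{proposition:quotient}) and uses \cite{Liu-Zhu} to reduce K-polystability of $X$ to K-polystability of the log Fano pair $\left(Y,\frac{1}{2}B\right)$, on which the troublesome $\mathbb{G}_m$ no longer acts; Theorem~\ref{theorem:log-delta} then gives $\delta\left(Y,\frac{1}{2}B\right)\geqslant\gamma>1$, i.e.\ uniform K-stability of the pair, by Abban--Zhuang combined with an induction on dimension cutting by general members of $|\mu L|$ (Lemma~\ref{lemma:Kento}, Proposition~\ref{proposition:delta}, Section~\ref{subsection:induction}). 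It is there, and not in a two-parameter surface computation inside $X$, that $k_n(a,d,\mu)$ and the hypotheses $n\geqslant 3$, $d\mu^{n-2}\geqslant 2$ enter; your unproved assertion that the first $n-2$ steps of the flag ``contribute only the factor $d\mu^{n-2}$'' is precisely this inductive content. So either adopt the quotient route or supply the missing equivariant local analysis at the invariant centres; as it stands, the proposal has a genuine gap.
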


\begin{remark}
\label{remark:dmu-1}
In the notations of Theorem~\ref{theorem:delta}, if $n\geqslant 2$ and $d\mu^{n-2}<2$,
then~\mbox{$d\mu^{n-2}=1$}, which gives $V=\mathbb{P}^{n-1}$ and $L=\mathcal{O}_{\mathbb{P}^{n-1}}(1)$,
so  $X$ is K-polystable, see Example~\ref{example:Thibaud}.
\end{remark}

In this paper, we also prove the~following two theorems about K-polystability of several singular Casagrande--Druel 3-folds:

\begin{theorem}
\label{theorem:4-2}
Suppose $V=\mathbb{P}^1\times\mathbb{P}^1$, $L=\mathcal{O}_{V}(1,1)$, and $R$ is one of the~following~curves:
\begin{enumerate}
\item[$(\mathrm{1})$] $C_1+C_2$, where $C_1$ and $C_2$ are smooth curves in $|L|$ such that $|C_1\cap C_2|=2$;
\item[$(\mathrm{2})$] $\ell_1+\ell_2+\ell_3+\ell_4$, where $\ell_1$ and $\ell_2$ are two distinct smooth curves of degree $(1,0)$,
and $\ell_3$ and $\ell_4$ are two distinct smooth curves of degree $(0,1)$;
\item[$(\mathrm{3})$] $2C$, where $C$ is a smooth curve in $|L|$.
\end{enumerate}
Then $X$ is K-polystable.
\end{theorem}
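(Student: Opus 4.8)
The three branch curves in $(1)$--$(3)$ are the highly symmetric members of the system $|\mathcal{O}_{\mathbb{P}^1\times\mathbb{P}^1}(2,2)|$ — indeed the (strictly) GIT-polystable ones up to $\mathrm{Aut}(\mathbb{P}^1\times\mathbb{P}^1)$ — and the common feature I would exploit is that each is preserved by a positive-dimensional subtorus of $\mathrm{Aut}(\mathbb{P}^1\times\mathbb{P}^1)$. First I would check that $X$ is klt in all three cases: the only singularities introduced are ordinary double points over the nodes of $R$ (a whole curve of $cA_1$-points in case $(3)$, where $R$ is non-reduced and $B=V\cup_C V$ is reducible), so $X$ is a genuine Casagrande--Druel variety in the sense of Definition~\ref{definition:CD-variety}. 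Then I would identify a maximal torus $\mathbb{T}\subseteq\mathrm{Aut}(X)$: in case $(1)$, after normalizing $C_1,C_2$ so that one is the diagonal and the other the graph of a semisimple automorphism, the diagonal one-parameter subgroup $\Delta\subset\mathrm{Aut}(\mathbb{P}^1\times\mathbb{P}^1)$ fixes $R$, so together with $\Gamma\cong\mathbb{G}_m$ from \eqref{equation:diagram-symmetric} we get a faithful action of the $2$-torus $\mathbb{T}=\Delta\times\Gamma$ on the threefold $X$; the same holds in case $(3)$ with $C$ the diagonal (and there the full diagonal $\mathrm{PGL}_2$ acts); in case $(2)$, where $R$ is the toric boundary, $\mathbb{T}\cong\mathbb{G}_m^3$ and $X$ is toric. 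Thus in every case $X$ is a Fano $\mathbb{T}$-variety of complexity $\leqslant 1$.

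For case $(2)$ I would finish by a direct polytope computation. Since $V$, $Y$, $S^{\pm}$ and $F=\pi^*(R)$ are torus-invariant and $S^+\cap F$ has a monomial ideal, the blow-up $X$ is a projective toric Fano threefold (with four ordinary double points, one over each node of $R$), and by the toric criterion (Wang--Zhu, Berman) $X$ is K-polystable if and only if the barycenter of the polytope of $(X,-K_X)$ is the origin. I would write this polytope down from the fan of $Y=\mathbb{P}(\mathcal{O}\oplus\mathcal{O}(1,1))$ together with the star-subdivisions corresponding to $\phi$, and verify the barycenter condition; the large symmetry group of $R$ (sign changes on $\mathbb{G}_m^3$ and the swap of the two $\mathbb{P}^1$-factors) makes this essentially forced.

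For cases $(1)$ and $(3)$, $X$ is a $\mathbb{Q}$-Fano threefold with a complexity-one torus action, and I would apply the combinatorial K-stability criterion of Ilten--S\"u\ss~\cite[Theorem 6.1]{IltenSuess} — the same tool that yields K-polystability of the smooth members of families \textnumero~3.9 and \textnumero~4.2. This requires: (a) presenting $X$ by its Fano polyhedral divisor over $\mathbb{P}^1=V/\mathbb{T}$, the data being the vertical $\mathbb{T}$-invariant prime divisors (the fibre components of $\theta$ over the components of $R$, the proper transforms of $S^{\pm}$, and the exceptional divisor of $\phi$) together with their coefficients, read off from \eqref{equation:diagram-symmetric}; (b) checking the Futaki/barycenter normalization, which is forced here by the discrete symmetries — the involution $\iota$ exchanging the proper transforms of $S^+$ and $S^-$, the swap of the two $\mathbb{P}^1$-factors in case $(1)$ (resp. the extra $\mathrm{PGL}_2$ in case $(3)$), and the interchange $C_1\leftrightarrow C_2$ in case $(1)$; and (c) verifying the remaining stability inequality. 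Equivalently — and this is the route I would use to keep cases $(1)$ and $(3)$ uniform as the modulus of $R$ varies — one argues valuatively: by Zhuang's equivariant criterion it suffices to show $\delta_{\mathbb{T}}(X)\geqslant 1$, which I would obtain from the Abban--Zhuang method along the flag given by a fibre component of $\theta$ over a component of $R$ together with inversion of adjunction, and then to classify the $\mathbb{T}$-equivariant special test configurations, which are generated by one-parameter subgroups of $\mathbb{T}$: the $\Gamma$-degenerations are paired up by $\iota$ so that their generalized Futaki invariants sum to a strictly positive quantity unless the configuration is a product, while the $\Delta$-degenerations further degenerate $R$ inside $|\mathcal{O}(2,2)|$ and have central fibre again a Casagrande--Druel variety, whose Futaki invariant I would compute directly.

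The main obstacle is not the K-semistability estimate $\delta_{\mathbb{T}}(X)\geqslant 1$, which should follow routinely once the Abban--Zhuang flag is set up, but rather two singular-geometry issues. First, bookkeeping: in case $(1)$ the branch curve $R$ has two nodes, $B$ is a degree-$4$ del Pezzo surface with two $A_1$-singularities and $X$ inherits two ordinary double points, while in case $(3)$ $R=2C$ is non-reduced and $B=V\cup_C V$ is reducible and non-normal, so one must carefully compute the log discrepancies of the $\mathbb{T}$-invariant divisors feeding the polyhedral data (or the $\delta$-estimate). Second, and more seriously, one must prove that $X$ is K-polystable and not merely K-semistable, i.e. that the only $\mathbb{T}$-equivariant special degeneration of $X$ with vanishing generalized Futaki invariant is the product degeneration — equivalently that $X$ sits at a closed point of the relevant stratum of the K-moduli space. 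This is exactly where the symmetry $\iota$ of \eqref{equation:diagram-symmetric}, balancing $S^+$ against $S^-$, is indispensable, and I expect case $(3)$, with its reducible double cover, to require the most delicate version of this analysis.
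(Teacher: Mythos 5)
Your treatment of case (2) coincides with the paper's: there $X$ is toric and K-polystability is exactly the barycenter computation of Remark~\ref{remark:4-2-toric}. For cases (1) and (3) your general philosophy (reduce to an equivariant criterion using the torus together with the discrete symmetries, in particular $\iota$) is also the right one, but the two mechanisms you offer for the actual polystability verification each have a genuine gap. First, \cite[Theorem 6.1]{IltenSuess} is a statement about \emph{smooth} Fano threefolds with a complexity-one torus action (it rests on the analytic equivariant criterion), whereas here $X$ is singular: two ordinary double points in case (1), and non-isolated singularities along a curve in case (3), where $B$ is even reducible and non-normal. Quoting that criterion for these $X$ is not justified without a separate argument. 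Second, your valuative alternative is not correctly reduced to a finite check: it is false that the $\mathbb{T}$-equivariant special test configurations are ``generated by one-parameter subgroups of $\mathbb{T}$'' (non-product equivariant degenerations exist and are precisely what must be excluded), and since $\operatorname{Aut}(X)$ contains $\mathbb{G}_m$ one has $\delta(X)=1$, so the estimate $\delta_{\mathbb{T}}(X)\geqslant 1$ you call routine cannot be obtained by strict Abban--Zhuang bounds alone; the entire content of the proof is to control the invariant valuations on which $\beta$ could vanish, and your proposal never identifies them nor computes a single $\beta$.

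The paper closes exactly this gap as follows. By \cite{Zhuang}, if $X$ is not K-polystable there is a $G$-invariant prime divisor $\mathbf{F}$ over $X$ with $\beta(\mathbf{F})\leqslant 0$, for $G$ a reductive subgroup of $\operatorname{Aut}(X)$. In case (1) one takes $G\cong\mathbb{G}_m^2\rtimes\mumu_2$ acting on an explicit model of $X$ as a complete intersection in a toric variety, and Lemma~\ref{lemma:4-2-group-action} shows there are no $G$-fixed points, no $G$-invariant irreducible curves, and exactly two $G$-invariant surfaces, linearly equivalent to $(\pi\circ\phi)^*(L)$; a direct computation gives $\beta>0$ for these, a contradiction. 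In case (3) (Lemma~\ref{lemma:4-2-PGL-2}) one identifies $\operatorname{Aut}(X)\cong\operatorname{PGL}_2(\mathbb{C})\times(\mathbb{G}_m\rtimes\mumu_2)\times\mumu_2$ via an auxiliary diagram through $V\times\mathbb{P}^1$, observes that the only $\operatorname{Aut}(X)$-invariant prime divisor over $X$ is the exceptional divisor $\widehat{E}$ of the blow-up of $\operatorname{Sing}(X)$, and computes $\beta(\widehat{E})=\tfrac{1}{14}>0$. So the missing ingredient in your proposal is precisely this classification of all invariant centers for a suitably large reductive group, together with the explicit $\beta$-computations; without it, neither the semistability nor the polystability part of your argument goes through.
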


\begin{theorem}
\label{theorem:3-9}
Suppose $V=\mathbb{P}^2$, $L=\mathcal{O}_{\mathbb{P}^2}(2)$, and $R$ is one of the~following curves:
\begin{enumerate}
\item[$(\mathrm{1})$] a singular reduced curve~in $|2L|$ with at most $\mathbb{A}_1$ or $\mathbb{A}_2$ singularities;
\item[$(\mathrm{2})$] $C_1+C_2$, where $C_1$ and $C_2$ are smooth conics that are tangent at two points;
\item[$(\mathrm{3})$] $C+\ell_1+\ell_2$, where $C$ is a smooth conic, $\ell_1$ and $\ell_2$ are distinct lines tangent to $C$;
\item[$(\mathrm{4})$] $2C$, where $C$ is a smooth conic.
\end{enumerate}
Then $X$ is K-polystable.
\end{theorem}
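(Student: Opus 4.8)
The plan is to derive K-polystability in all four cases from \emph{equivariant} K-stability. Recall that $X$ carries the action of $G=\langle\Gamma,\iota\rangle\cong\mathbb{G}_m\rtimes\mumu_2$ from diagram~\eqref{equation:diagram-symmetric}. In the cases at hand the ramification curve $R$ is very symmetric, so $\mathrm{Aut}(X)$ is strictly larger than $G$: in case $(4)$ it contains a copy of $\mathrm{PGL}_2$ (the automorphisms of the pair $(\mathbb{P}^2,C)$), in cases $(2)$ and $(3)$ a finite group and possibly a one-parameter subgroup scaling or permuting the components of $R$, and in case $(1)$ only finitely many extra elements. Let $\widehat{G}$ be the resulting reductive subgroup of $\mathrm{Aut}(X)$. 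By the equivariant valuative criterion (Datar--Sz\'ekelyhidi, Li, Zhuang), it suffices to prove that $\delta_{\widehat{G}}(X)\geqslant 1$ and that every $\widehat{G}$-equivariant special test configuration of $X$ with vanishing Futaki invariant is a product. In the cases with a large $\widehat{G}$ (most notably case $(4)$) I would instead aim directly for the stronger bound $\delta_{\widehat{G}}(X)>1$, which already gives K-polystability.

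For the lower bound on $\delta_{\widehat{G}}(X)$ I would run the Abban--Zhuang method along the conic bundle $\theta\colon X\to\mathbb{P}^2$, exactly as in the proof of Theorem~\ref{theorem:delta}, but keeping track of the $\widehat{G}$-action. Two structural facts make this feasible. First, on a general fibre $\ell\cong\mathbb{P}^1$ of $\theta$ the group $\Gamma\rtimes\mumu_2$ acts as $\mathbb{G}_m\rtimes\mumu_2$ preserving the unordered pair $\{\ell\cap\widetilde{S}^-,\ell\cap\widetilde{S}^+\}$, so the refinement over $\ell$ only sees the symmetric divisor $\widetilde{S}^-+\widetilde{S}^+$ and a single residual direction. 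Second, the involution $\iota$ has a fixed divisor in $X$ meeting a general fibre of $\theta$ in the two points fixed by $z\mapsto c/z$ and a singular fibre in its node; this fixed divisor is a double cover of $\mathbb{P}^2$ branched along $R$, i.e. (the normalization of a component of) the Casagrande--Druel double cover $B$. Using the flag $X\supset B$, the estimate reduces to a comparison of $(-K_X)$-expected vanishing orders with $\delta$-type invariants of $B$ and of $\mathbb{P}^2$; here $B$ is a singular, in case $(4)$ reducible, del Pezzo surface of degree $2$, but these degenerate del Pezzo surfaces of degree $2$ are exactly the GIT-polystable double covers of $\mathbb{P}^2$ branched over a quartic, so the required bound over $B$ is available. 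The remaining input is the finitely many $\widehat{G}$-invariant test divisors over $\mathbb{P}^2$ (the singular points and components of $R$, the lines $\ell_1,\ell_2$ and their tangency points with $C$, the conic $C$ itself, etc.), which can be checked by hand case by case. This yields $\delta_{\widehat{G}}(X)\geqslant 1$, hence K-semistability of $X$.

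For the test-configuration step, note that a special test configuration compatible with $\widehat{G}$ should carry along the $\mathrm{Aut}(X)$-equivariant conic bundle contraction $\theta$, so that its central fibre $X_0$ is again a Casagrande--Druel variety over $\mathbb{P}^2$ with $L=\mathcal{O}_{\mathbb{P}^2}(2)$ whose ramification curve $R_0\in|2L|$ is a flat degeneration of $R$ under a one-parameter subgroup of $\mathrm{PGL}_3$, with singularities no worse than degenerations of those in $(1)$--$(4)$. Translating $\mathrm{Fut}=0$ into GIT-polystability of $R_0$ as a plane quartic -- which is legitimate because, through the Casagrande--Druel construction and the GIT/K-moduli description of del Pezzo surfaces of degree two, the K-moduli component in question is identified with a GIT quotient of plane quartics -- and using that each $R$ in $(1)$--$(4)$ is itself GIT-polystable, one gets $R_0\in\mathrm{PGL}_3\cdot R$, so $X_0\cong X$ and the test configuration is a product. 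In case $(4)$ one may argue more directly: $X$ has the largest possible dimension of automorphism group among Casagrande--Druel threefolds in the family, so no non-product special degeneration can have vanishing Futaki invariant. Combining the two steps gives K-polystability of $X$.

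The main obstacle is precisely this last step for the most degenerate members, namely $R=2C$ in $(4)$ and $R=C+\ell_1+\ell_2$ in $(3)$ (and their analogues in Theorem~\ref{theorem:4-2}): a priori the central fibre $X_0$ of a $\widehat{G}$-equivariant special test configuration with $\mathrm{Fut}=0$ might fail to be of Casagrande--Druel type, or the degeneration $R\rightsquigarrow R_0$ might land outside the $\mathrm{PGL}_3$-orbit of $R$. Ruling this out requires either a complete description of the $\widehat{G}$-invariant divisorial valuations over $X$ with vanishing $\beta$-invariant, showing each is induced by a one-parameter subgroup of $\mathrm{Aut}(X)$ and hence gives a product, or a direct computation of the Futaki invariants of the finitely many candidate degenerations. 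A secondary technical point is that the Abban--Zhuang bookkeeping behind the constants $k_n(a,d,\mu)$ and $\gamma$ in Theorem~\ref{theorem:delta} must be redone on the singular (possibly reducible) surface $B$ and near the exceptional divisor of $\phi$, allowing the mild singularities of $R$ permitted in $(1)$--$(4)$.
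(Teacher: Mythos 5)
There is a genuine gap, in fact two. First, the heart of the matter in case $(1)$ is a local estimate at the singular points, and your proposal does not supply it. For a reduced quartic with $\mathbb{A}_1$ or $\mathbb{A}_2$ singularities the extra symmetry is (at most) finite, so $\delta_{\widehat{G}}(X)\geqslant 1$ does not suffice and you must either prove strict positivity of $\beta$ for every (invariant) divisor whose center lies over $\mathrm{Sing}(R)$, or analyse equivariant test configurations. Your substitute --- that the Abban--Zhuang refinement over the $\iota$-fixed divisor ``reduces to $\delta$-type invariants of $B$'' and that ``the required bound over $B$ is available'' because singular degree-two del Pezzo surfaces branched over GIT-polystable quartics are K-polystable --- does not work: the refinement produces $S$-invariants of $B$ (or of surfaces over it) with respect to the restriction of $-K_X$, not the anticanonical polarization of $B$, and near the $\mathbb{A}_1$ and $\mathbb{A}_2$ points one needs genuine Zariski-decomposition computations on (weighted) blow-ups. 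This is exactly what the paper does: it first replaces $X$ by the log Fano pair $\left(Y,\tfrac{1}{2}B\right)$ via the Galois quotient $X\to Y$ and \cite{Liu-Zhu}, disposes of points away from $\mathrm{Sing}(B)$ by Lemma~\ref{lemma:Kento} and Proposition~\ref{proposition:induction-step}, and then proves Propositions~\ref{proposition:A1} and~\ref{proposition:A2} by explicit toric weighted blow-ups at the $\mathbb{A}_1$ and $\mathbb{A}_2$ points; nothing in your outline replaces those computations.

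Second, your test-configuration step is circular as stated. The identification of the relevant K-moduli component with the GIT quotient of plane quartics is Corollary~\ref{corollary:3-9}, which is deduced \emph{from} Theorem~\ref{theorem:3-9}; you cannot invoke it to show that the central fibre of an equivariant special degeneration with vanishing Futaki invariant is again a Casagrande--Druel threefold with $R_0$ in the $\mathrm{PGL}_3$-orbit of $R$. You correctly flag this as the main obstacle, but it is precisely the step the paper circumvents: for the strictly polystable cases $(2)$--$(4)$ it never analyses test configurations, instead applying Zhuang's equivariant criterion \cite{Zhuang} with a small group (a torus and involutions, or $\mathrm{PGL}_2(\mathbb{C})\times(\mathbb{G}_m\rtimes\mumu_2)$ when $R=2C$), using that the invariant centers are forced either away from $\mathrm{Sing}(B)$ (where Lemmas~\ref{lemma:3-9-S-minus} and~\ref{lemma:3-9-P-not-in-B} give $\delta_P>1$) or onto a single invariant divisor whose $\beta$-invariant is computed directly (e.g.\ $\beta(\widehat{E})=\tfrac{7}{26}$ in the $R=2C$ case). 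So while your general toolbox (equivariant valuative criteria, Abban--Zhuang along the conic bundle, the double-cover structure) is the right one, the proposal as written neither proves the decisive local estimates nor closes the polystability step without circular input.
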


To present their applications,
let $\mathcal{M}^{\operatorname{Kss}}_{n,v}$ be the K-moduli
functor of Fano~varieties that have dimension $n$ and anticanonical volume $v\in\mathbb{Q}_{>0}$ in the sense of \cite[Theorem 2.17]{XZ}.
Then $\mathcal{M}^{\operatorname{Kss}}_{n,v}$ is an Artin stack of finite type.
Moreover, as in \cite[Theorem 1.3]{LXZ}, it admits a~good moduli space
$\mathcal{M}^{\operatorname{Kss}}_{n,v}\longrightarrow M^{\operatorname{Kps}}_{n,v}$
in the sense of \cite{Alper}, where $M^{\operatorname{Kps}}_{n,v}$ is a projective scheme
whose points paramertize K-polystable Fano varieties of dimension $n$ and anticanonical volume~$v$.
Let~$M^{\operatorname{Kps}}_{(3.9)}$ and $M^{\operatorname{Kps}}_{(4.2)}$ be the~closed subvarieties of $M^{\operatorname{Kps}}_{3,26}$ and $M^{\operatorname{Kps}}_{3,28}$
whose general points parametrize smooth Fano theeefolds in the~families \textnumero 3.9 and \textnumero 4.2, respectively.
Then Theorems~\ref{theorem:4-2} and \ref{theorem:3-9} imply the following two results (see Section \ref{section:moduli} and cf. \cite{HeubergerPetracci}).

\begin{corollary}
\label{corollary:4-2}
Let $V=\mathbb{P}^1\times\mathbb{P}^1$, let $L=\mathcal{O}_{V}(1,1)$,
let $\Gamma=\left(\operatorname{SL}_2(\mathbb{C})\times\operatorname{SL}_2(\mathbb{C})\right)\rtimes\mumu_2$,
let~$T=\mathbb{P}\left(H^0\left(V,\mathcal{O}_V(2,2)\right)^\vee\right)$,
let $T^{\operatorname{ss}}\subset T$ be the GIT semistable
open subset with respect to the natural $\Gamma$-action,
and let $M$ be the GIT quotient $T^{\operatorname{ss}}\mathbin{/\mkern-6mu/}\Gamma$.
Then there is a morphism
\begin{eqnarray*}
    \Phi\colon M &\to&M^{\operatorname{Kps}}_{3,28} \\
            \vin \  &     &  \ \ \vin  \\
    {[}f{]} &\mapsto& {[}X_f{]},
\end{eqnarray*}
where $X_f$ is the Casagrande--Druel threefold that is constructed from $R=\{f=0\}\in |2L|$.
Furthermore, the morphism $\Phi$ is an isomorphism onto $M^{\operatorname{Kps}}_{(4.2)}$, and
$M^{\operatorname{Kps}}_{(4.2)}$ is a connected component of the scheme $M^{\operatorname{Kps}}_{3,28}$.
\end{corollary}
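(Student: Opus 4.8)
\emph{Proof proposal.} The plan is to realize the Casagrande--Druel construction as an $\operatorname{Aut}(V)$-equivariant family, and then to identify its K-moduli with the GIT quotient $M$ in the way that is by now standard for such comparisons. First I would set up the family: since the center $\mumu_2\times\mumu_2$ of $\operatorname{SL}_2(\mathbb{C})\times\operatorname{SL}_2(\mathbb{C})$ acts trivially on $H^0(V,\mathcal{O}_V(2,2))$, the $\Gamma$-action on $T$ factors through $\operatorname{Aut}(V)=\bigl(\operatorname{PGL}_2(\mathbb{C})\times\operatorname{PGL}_2(\mathbb{C})\bigr)\rtimes\mumu_2$, so $M=T^{\operatorname{ss}}\mathbin{/\mkern-6mu/}\Gamma=T^{\operatorname{ss}}\mathbin{/\mkern-6mu/}\operatorname{Aut}(V)$. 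The construction of Section~\ref{section:intro} being relative over the base, it produces over the open $\operatorname{Aut}(V)$-invariant locus $T^{\operatorname{CD}}\subseteq T$ of those $f$ with $X_f$ klt an $\operatorname{Aut}(V)$-equivariant flat family $\mathcal{X}\to T^{\operatorname{CD}}$ of Fano threefolds with $\mathcal{X}_f=X_f$, $(-K_{\mathcal{X}_f})^3=28$, and $X_{g\cdot f}\cong X_f$ for all $g\in\operatorname{Aut}(V)$. From the GIT analysis carried out in Section~\ref{section:moduli} I would record that $T^{\operatorname{ss}}\subseteq T^{\operatorname{CD}}$ and that the GIT-polystable points of $T$ are exactly the smooth $(2,2)$-curves (which are GIT-stable) together with the curves listed in Theorem~\ref{theorem:4-2}.

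Next I would show that $X_f$ is K-semistable for \emph{every} $f\in T^{\operatorname{ss}}$. By \cite[Theorem~6.1]{IltenSuess} and \cite{Book} the threefold $X_f$ is K-polystable for smooth $f$, and by Theorem~\ref{theorem:4-2} it is K-polystable for the remaining GIT-polystable $f$; hence $X_f$ is K-polystable whenever the orbit $\Gamma\cdot f$ is closed in $T^{\operatorname{ss}}$. Now the locus $U=\{f\in T^{\operatorname{CD}}:X_f\text{ is K-semistable}\}$ is open by openness of K-semistability in families \cite{XZ}, is $\operatorname{Aut}(V)$-invariant, and is nonempty. If $T^{\operatorname{ss}}\not\subseteq U$ then $T^{\operatorname{ss}}\setminus U$ is a nonempty $\operatorname{Aut}(V)$-invariant closed subset of $T^{\operatorname{ss}}$, hence contains a point with closed orbit, contradicting the previous sentence; so $X_f$ is K-semistable for all $f\in T^{\operatorname{ss}}$. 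Consequently $\mathcal{X}|_{T^{\operatorname{ss}}}\to T^{\operatorname{ss}}$ is a family of K-semistable Fano threefolds of anticanonical volume $28$, inducing a morphism $T^{\operatorname{ss}}\to\mathcal{M}^{\operatorname{Kss}}_{3,28}$; composing with the good moduli space morphism of \cite{LXZ} gives a morphism $T^{\operatorname{ss}}\to M^{\operatorname{Kps}}_{3,28}$, which is $\Gamma$-invariant because $X_{g\cdot f}\cong X_f$, and therefore factors uniquely through the categorical quotient $M=T^{\operatorname{ss}}\mathbin{/\mkern-6mu/}\Gamma$, producing $\Phi\colon M\to M^{\operatorname{Kps}}_{3,28}$, $[f]\mapsto[X_f]$. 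Since $M$ is a projective scheme, $\Phi(M)$ is closed; it meets the smooth locus of $M^{\operatorname{Kps}}_{(4.2)}$ in a dense subset (dimension count: $\dim M=\dim|2L|-\dim\operatorname{Aut}(V)=8-6=2$, which equals the number of moduli of family \textnumero 4.2), and every point of $\Phi(M)$ is a limit of such smooth threefolds; hence $\Phi(M)=M^{\operatorname{Kps}}_{(4.2)}$.

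It then remains to see that $\Phi$ is an isomorphism onto $M^{\operatorname{Kps}}_{(4.2)}$ and that $M^{\operatorname{Kps}}_{(4.2)}$ is open in $M^{\operatorname{Kps}}_{3,28}$. For injectivity it is enough to recover $(V,R)$ up to $\operatorname{Aut}(V)$ from $X_f$ when $\Gamma\cdot f$ is closed: here one uses that the conic bundle $\theta\colon X_f\to V$ of diagram~\eqref{equation:diagram-symmetric} is intrinsic to $X_f$, the $\mathbb{G}_m$-action coming from the Casagrande--Druel construction having fixed locus that contains the two sections $\widetilde{S}^{\pm}\cong V$ interchanged by $\iota$, with weights along them determining $\theta$ (hence $V\cong\widetilde{S}^{+}$) and discriminant divisor $R$; this is carried out with the description of $X_f$ and $\operatorname{Aut}(X_f)$ from Section~\ref{section:CD-properties}, and needs extra care at the strictly polystable curves, where $\operatorname{Aut}(X_f)$ is larger and $X_f$ carries competing Mori contractions. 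Finally, to upgrade the bijective proper morphism $\Phi$ to an open immersion one compares étale-local models: near $[X_f]$ the scheme $M^{\operatorname{Kps}}_{3,28}$ is the good quotient of the K-semistable locus of a versal deformation space of $X_f$ by $\operatorname{Aut}(X_f)$ \cite{Alper,LXZ}, and the Kodaira--Spencer map of $\mathcal{X}\to T$ should identify this, $\operatorname{Aut}(X_f)$-equivariantly, with the étale slice of $M$ at $[f]$; concretely, one must show that every K-semistable deformation of $X_f$ is again a Casagrande--Druel threefold built from $(\mathbb{P}^1\times\mathbb{P}^1,\mathcal{O}_V(1,1))$ and that the extra $\mathbb{G}_m\subset\operatorname{Aut}(X_f)$ acts trivially on deformations. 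Granting this, $\Phi$ is étale and injective, hence an open immersion; its image $M^{\operatorname{Kps}}_{(4.2)}$ is then open and closed, and connected since $M$ is irreducible (a quotient of an open subset of $\mathbb{P}^8$), so it is a connected component of $M^{\operatorname{Kps}}_{3,28}$.

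I expect the main obstacle to be this last step: the uniform reconstruction of $(V,R)$ from $X_f$ over all GIT-polystable curves, and the deformation-theoretic comparison guaranteeing that no K-semistable Fano threefold outside the Casagrande--Druel family occurs as a limit — equivalently, that $M^{\operatorname{Kps}}_{(4.2)}$ acquires no boundary strata beyond those coming from the degenerate curves in Theorem~\ref{theorem:4-2}.
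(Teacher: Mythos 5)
Your first half --- the $\Gamma$-equivariant family over $T^{\operatorname{ss}}$, K-semistability of $X_f$ for every GIT-semistable $f$ (degenerate to a GIT-polystable point, apply Theorem~\ref{theorem:4-2}, use openness of K-semistability), the induced morphism $\Phi\colon M\to M^{\operatorname{Kps}}_{3,28}$, and its injectivity --- is essentially the paper's argument; the paper descends through the quotient stack $[T^{\operatorname{ss}}/\Gamma]$ and Alper's good-moduli-space theorem rather than the universal property of the categorical quotient, but that difference is cosmetic. The genuine gap is in your last step, the one you yourself flag as the main obstacle. To show that $\Phi$ is an isomorphism onto a connected component you propose an \'etale-local comparison of $M$ with the Luna slice of $M^{\operatorname{Kps}}_{3,28}$ at $[X_f]$, and this comparison rests on two claims you do not prove: that every K-semistable deformation of $X_f$ is again a Casagrande--Druel threefold built from $(\mathbb{P}^1\times\mathbb{P}^1,\mathcal{O}_V(1,1))$, and that the $\mathbb{G}_m\subset\operatorname{Aut}(X_f)$ acts trivially on deformations. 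The first claim is essentially equivalent to the statement that $M^{\operatorname{Kps}}_{(4.2)}$ has no strata beyond the Casagrande--Druel ones, i.e.\ to what you are trying to establish, so as written the plan is circular at exactly the decisive point; the second claim is not obviously true a priori and turns out to be unnecessary. Without these, you get a bijective proper morphism onto $M^{\operatorname{Kps}}_{(4.2)}$ but neither the openness of the image in $M^{\operatorname{Kps}}_{3,28}$ nor the isomorphism.

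The paper closes this step by a different mechanism, using two ingredients that never appear in your proposal. First, Proposition~\ref{proposition:unobstruction}: for every $[f]\in M$ (including the singular K-polystable fibres of Theorem~\ref{theorem:4-2}) the deformations of $X_f$ are unobstructed, so by Luna's \'etale slice theorem \cite{AHR} the scheme $M^{\operatorname{Kps}}_{3,28}$ is normal --- in particular locally irreducible --- at every point of $\Phi(M)$. Second, Lemma~\ref{lemma:HHR}: for smooth $X_f$ one has $h^0(X_f,T_{X_f})=\dim\operatorname{Aut}(X_f)=1$ and hence $h^1(X_f,T_{X_f})=2=\dim M$, so $\dim_{[X_f]}M^{\operatorname{Kps}}_{3,28}\leqslant\dim M$. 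Since $\Phi$ is injective, $\Phi(M)$ is closed, irreducible and of dimension $\dim M$, so at the smooth-fibre points it fills out $M^{\operatorname{Kps}}_{3,28}$ locally; local irreducibility at \emph{all} image points then shows no other irreducible component can meet $\Phi(M)$, so $\Phi(M)=M^{\operatorname{Kps}}_{(4.2)}$ is a connected component, and Zariski's main theorem (an injective proper birational morphism from the normal projective $M$ onto the normal image) upgrades the bijection to an isomorphism. No classification of K-semistable degenerations of $X_f$ is needed. If you want to repair your write-up, replace the \'etale-local identification by this normality-plus-dimension-count argument; the reconstruction of $(V,R)$ from $X_f$ that you sketch is then only needed for injectivity, which is where the paper also uses it.
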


\begin{corollary}
\label{corollary:3-9}
Let $V=\mathbb{P}^2$, $L=\mathcal{O}_{\mathbb{P}^2}(2)$,
let $\Gamma=\operatorname{SL}_3(\mathbb{C})$, let $T=\mathbb{P}\left(H^0\left(\mathbb{P}^2, \mathcal{O}_{\mathbb{P}^2}(4)\right)^\vee\right)$,
let $T^{\operatorname{ss}}\subset T$ be the GIT semistable open subset with respect to the natural $\Gamma$-action,
and let $M$ be the GIT quotient $T^{\operatorname{ss}}\mathbin{/\mkern-6mu/}\Gamma$.
Then there exists a morphism
\begin{eqnarray*}
    \Phi\colon M &\to&M^{\operatorname{Kps}}_{3,26} \\
        \vin  \     &   & \ \ \vin \\
    {[}f{]} &\mapsto& {[}X_f{]},
\end{eqnarray*}
where $X_f$ is the Casagrande--Druel threefold that is constructed from $R=\{f=0\}\in |2L|$.
Furthermore, the morphism $\Phi$ is an isomorphism onto $M^{\operatorname{Kps}}_{(3.9)}$,
and $M^{\operatorname{Kps}}_{(3.9)}$ is a connected component of the scheme $M^{\operatorname{Kps}}_{3,26}$.
\end{corollary}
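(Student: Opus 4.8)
The plan is to exhibit $M$ as the source of a finite morphism to $M^{\operatorname{Kps}}_{3,26}$ whose image is exactly $M^{\operatorname{Kps}}_{(3.9)}$, and then to promote this to an isomorphism onto a connected component using the local structure of the K-moduli space. The starting point is the classical GIT analysis of plane quartics for the $\operatorname{SL}_3$-action on $T=\mathbb{P}\big(H^0(\mathbb{P}^2,\mathcal{O}_{\mathbb{P}^2}(4))^\vee\big)$: a plane quartic is GIT-stable precisely when it is reduced with at worst $\mathbb{A}_1$ and $\mathbb{A}_2$ singularities, while the closed orbits in $T^{\operatorname{ss}}\setminus T^{\operatorname{s}}$ are represented by a double smooth conic $2C$, by a union $C_1+C_2$ of two smooth conics tangent at two points, and by a union $C+\ell_1+\ell_2$ of a smooth conic with two of its tangent lines. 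Thus every point of $M=T^{\operatorname{ss}}\mathbin{/\mkern-6mu/}\operatorname{SL}_3$ is represented by a quartic $R$ that is either smooth or of one of the four types listed in Theorem~\ref{theorem:3-9}; combining Theorem~\ref{theorem:3-9} with the known K-polystability of smooth Casagrande--Druel threefolds and the fact that $(-K_{X_R})^3=26$ as for the smooth members of family \textnumero 3.9, it follows that the Casagrande--Druel threefold $X_R$ is a K-polystable Fano threefold for every GIT-polystable $R$.

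Next I would construct $\Phi$. The Casagrande--Druel construction is algebraic and $\operatorname{PGL}_3$-equivariant in $f$, so over the open locus $T^\circ\subseteq T$ where $X_f$ is a klt Fano variety it produces a flat family $\mathcal{X}\to T^\circ$ with $\mathcal{X}_f=X_f$; part of the analysis behind Theorem~\ref{theorem:3-9} shows $T^{\operatorname{ss}}\subseteq T^\circ$ (every semistable quartic is reduced with only $\mathbb{A}_1,\mathbb{A}_2,\mathbb{A}_3$ singularities or is a double conic). Since every $\operatorname{SL}_3$-orbit in $T^{\operatorname{ss}}$ degenerates, via a one-parameter subgroup of $\operatorname{PGL}_3$, to a polystable orbit whose Casagrande--Druel threefold is the corresponding flat limit and is K-polystable (hence K-semistable) by the previous paragraph, and since K-semistability is Zariski open in $\mathbb{Q}$-Gorenstein flat families of Fano varieties (cf. \cite{LXZ,XZ}), it follows that $\mathcal{X}_f$ is K-semistable for every $f\in T^{\operatorname{ss}}$. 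Hence $\mathcal{X}|_{T^{\operatorname{ss}}}$ induces a morphism $T^{\operatorname{ss}}\to\mathcal{M}^{\operatorname{Kss}}_{3,26}$, which is $\operatorname{SL}_3$-invariant because $X_{\gamma\cdot f}\cong X_f$ canonically and compatibly in families; it therefore factors through $[T^{\operatorname{ss}}/\operatorname{SL}_3]$, and composing with the good moduli space morphisms — using that $M$ is the good moduli space of $[T^{\operatorname{ss}}/\operatorname{SL}_3]$ by \cite{Alper} and the universal property of good moduli spaces — produces $\Phi\colon M\to M^{\operatorname{Kps}}_{3,26}$ with $\Phi([f])=[X_f]$ for GIT-polystable $f$.

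I would then show $\Phi$ is a finite bijection onto $M^{\operatorname{Kps}}_{(3.9)}$. Injectivity should follow from Section~\ref{section:CD-properties}: the $G$-equivariant conic bundle $\theta\colon X\to V$ of \eqref{equation:diagram-symmetric} recovers $V=\mathbb{P}^2$ and the ramification divisor $R$ up to $\operatorname{Aut}(V)$, so $X_f\cong X_{f'}$ forces $[f]=[f']$ in $M$. As $M$ is projective and irreducible and $\Phi$ is quasi-finite, $\Phi$ is finite; its image is closed, irreducible, contained in $M^{\operatorname{Kps}}_{(3.9)}$ (every $X_R$ is a $\mathbb{Q}$-Gorenstein flat limit of $X_{R'}$ with $R'$ smooth, since smooth quartics are dense in $T^{\operatorname{ss}}$), and contains the dense open locus of smooth members of family \textnumero 3.9 — identified via Example~\ref{example:3-9} and the description of family \textnumero 3.9 in \cite{Book} with $\{R\ \mathrm{smooth}\}/\operatorname{PGL}_3$. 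Hence $\Phi(M)=M^{\operatorname{Kps}}_{(3.9)}$.

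Finally I would compare deformation theories. For every GIT-polystable $R$, the explicit blow-up description of $X_R$ should yield a canonical $\operatorname{Aut}(X_R)$-equivariant identification of the semiuniversal deformation of $X_R$ with the deformation space of the pair $(\mathbb{P}^2,R)$ — equivalently, with a $\operatorname{PGL}_3$-slice of $T$ at $[R]$ — so that, via the Luna-type étale-local description of $\mathcal{M}^{\operatorname{Kss}}_{3,26}$ and of its good moduli space around $[X_R]$, the morphism $\Phi$ is étale onto its image at each point; being also bijective, $\Phi$ is then an isomorphism onto $M^{\operatorname{Kps}}_{(3.9)}$, and the same local statement shows that $M^{\operatorname{Kps}}_{(3.9)}$ is open in $M^{\operatorname{Kps}}_{3,26}$, hence — being closed, open and irreducible — a connected component. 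I expect the main obstacle to be this last step at the genuinely singular or non-reduced representatives (the double conic $2C$, the two tangent conics $C_1+C_2$, and the conic with two tangent lines $C+\ell_1+\ell_2$): one must verify that the deformations of $X_R$ are unobstructed and exhausted by Casagrande--Druel threefolds, i.e.\ that near $[X_R]$ the K-moduli space sees nothing outside the family \textnumero 3.9; the GIT input of the first paragraph must also be invoked with care, since matching the closed $\operatorname{SL}_3$-orbits with the precise list in Theorem~\ref{theorem:3-9} is exactly what makes $\Phi$ surjective onto $M^{\operatorname{Kps}}_{(3.9)}$.
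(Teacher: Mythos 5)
Your construction of $\Phi$ is essentially the paper's own route (it is the proof of Corollary~\ref{corollary:4-2}, which the paper transfers verbatim to this case): GIT-polystable representatives matched with the list in Theorem~\ref{theorem:3-9}, K-semistability of $X_f$ for every semistable $f$ via degeneration to a polystable orbit and openness of K-semistability, the $\Gamma$-equivariant flat family over $T^{\operatorname{ss}}$ inducing a morphism $[T^{\operatorname{ss}}/\Gamma]\to\mathcal{M}^{\operatorname{Kss}}_{3,26}$ and then $\Phi$ by the universal property of good moduli spaces, and injectivity of $\Phi$. The genuine gap is in your last step. You reduce both ``isomorphism onto $M^{\operatorname{Kps}}_{(3.9)}$'' and ``connected component'' to the claim that at every point --- including the strictly polystable representatives $2C$, $C_1+C_2$ and $C+\ell_1+\ell_2$ --- the semiuniversal deformation of $X_R$ is canonically identified with the deformations of the pair $(\mathbb{P}^2,R)$, i.e.\ that $\Phi$ is \'etale onto its image and that every small deformation of $X_R$ is again Casagrande--Druel. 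You do not prove this and flag it yourself as the main obstacle; it is genuinely nontrivial ($h^1(X_R,T_{X_R})$ can a priori jump at the singular members, and nothing established earlier in your argument rules out K-(semi)stable deformations of $X_R$ lying outside the Casagrande--Druel family), so as written the proof does not close.

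The paper avoids this claim altogether. It proves (Proposition~\ref{proposition:unobstruction}) that the deformations of every Casagrande--Druel threefold $X_f$ with $f\in T^{\operatorname{ss}}$ are unobstructed --- via the divisorial embedding $X\subset U=\mathbb{P}\big(\mathcal{O}_V\oplus\mathcal{O}_V(-L)\oplus\mathcal{O}_V(-L)\big)$ and Kodaira/Akizuki--Nakano vanishing --- hence, by the Luna \'etale slice theorem for stacks \cite{AHR}, the scheme $M^{\operatorname{Kps}}_{3,26}$ is normal, in particular locally irreducible, at every point of $\Phi(M)$; it then computes $\chi(X,T_X)$ by Riemann--Roch (Lemma~\ref{lemma:HHR}) to get $\dim_{[X_f]}M^{\operatorname{Kps}}_{3,26}\leqslant h^1\big(X_f,T_{X_f}\big)=\dim M$ at the smooth members, where $h^0(X_f,T_{X_f})=\dim\operatorname{Aut}(X_f)=1$. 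Injectivity of $\Phi$ then forces $\Phi(M)$, an irreducible closed subset of dimension $\dim M$, to be the unique irreducible component of $M^{\operatorname{Kps}}_{3,26}$ through each of its points, hence a connected component, and Zariski's main theorem upgrades the finite injective morphism $\Phi$ to an isomorphism onto it. To repair your argument along these lines, replace the \'etaleness step by these two inputs (unobstructedness at every point of the image, dimension count at the smooth points only); no identification of the deformation space of the singular $X_R$ with deformations of $(\mathbb{P}^2,R)$ is needed.
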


If $B$ is the~smooth del Pezzo surface from Examples~\ref{example:3-19}, \ref{example:3-9}, \ref{example:4-2},
then $B$ is K-polystable.
If $B$ is the~Fano manifold from Theorem~\ref{theorem:double-spaces},
then $B$ is K-polystable \cite[Theorem 1.1]{Dervan2}.
If~$B$ is the~singular del Pezzo surface from Theorems~\ref{theorem:4-2} and \ref{theorem:3-9}
such that $R$ is reduced, then $B$ is also K-polystable \cite{OdakaSpottiSun}.
Inspired by this, we pose

\begin{conjecture}
\label{conjecture:CS-K-stability}
If $V$ and $B$ are K-polystable Fano varieties, then $X$ is K-polystable.
\end{conjecture}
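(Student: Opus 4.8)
\smallskip
\noindent\emph{A strategy towards Conjecture~\ref{conjecture:CS-K-stability}.}
The plan is to combine equivariant K-stability with the Abban--Zhuang method, exploiting the symmetry recorded in~\eqref{equation:diagram-symmetric}. Since $X$ carries the reductive group $G=\langle\Gamma,\iota\rangle\cong\mathbb{G}_m\rtimes\mumu_2$ together with the lift of $\mathrm{Aut}(V,R)$, the equivariant valuative criterion of Datar--Sz\'ekelyhidi reduces us to bounding $A_X(E)$ against $S_X(E)$ for $G'$-invariant divisors $E$ over $X$, where $G'\subseteq\mathrm{Aut}(X)$ is a maximal reductive subgroup, and --- to get K-polystability rather than mere semistability --- to checking that equality can occur only along valuations induced by one-parameter subgroups of $G'$. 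The group already cuts the problem down a great deal: because $\Gamma\cong\mathbb{G}_m$ acts on the general conic fibre of $\theta\colon X\to V$ with the two sections $\widetilde S^{-},\widetilde S^{+}$ as its fixed points, with $\iota$ swapping them, the $G'$-invariant divisors over $X$ are, roughly speaking, either pulled back from divisorial valuations on the base $V$, or concentrated over the discriminant $R$ of the conic bundle (the $\phi$-exceptional divisor $E_\phi$ and its $\iota$-translate, the node locus of the reducible fibres, and their refinements).

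To control both families I would feed two flags into the Abban--Zhuang machinery and take the best resulting bound. The section $\widetilde S^{+}\subset X$ is isomorphic to $V$, and refining $-K_X$ along it expresses the $S$-invariant of a pulled-back divisor $\theta^{*}E_V$ as an explicit combination of $S_V(E_V)$ with a lower-order correction, so that $\delta(V)\geqslant 1$ --- which holds because $V$ is K-polystable --- handles the base contribution, the $\iota$-symmetry letting us treat $\widetilde S^{-}$ at no extra cost. The proper transform $\widetilde B\subset X$ of the double cover is the $\iota$-fixed divisor, with $\theta|_{\widetilde B}\colon\widetilde B\to V$ canonically identified with $\eta\colon B\to V$; running Abban--Zhuang along $\widetilde B\subset X$ and using inversion of adjunction should reduce the ``fibrewise'' estimates over $R$ to K-stability invariants of $(V,\tfrac12 R)$, equivalently of $B$, so that $\delta(B)\geqslant 1$ enters exactly here. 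The remaining terms in both estimates --- the $S_X$-values of the boundary divisors $\widetilde S^{\pm}$, $\widetilde B$, $E_\phi$ on $X$ and their successive refinements --- are explicit rational functions of $n$, $a$, $d=L^{n-1}$ and $\mu$; these are precisely the ingredients already appearing in Theorem~\ref{theorem:delta}, and one would bound them below by $1$ and analyse their equality cases, which on $V$ and on $B$ are themselves toric or spherical by K-polystability, and on the fibres amount to the $\Gamma$-degeneration $\widetilde S^{-}\leftrightarrow\widetilde S^{+}$, an automorphism of $X$.

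The main obstacle is exactly this last step. Already Theorem~\ref{theorem:delta} does not follow from $\delta(V)\geqslant 1$ alone: its hypothesis $\gamma>1$ demands a strictly stronger inequality involving $\delta(V)$ and the volume data of $X$, and $\delta(B)$ does not even appear there. So a proof of the full conjecture cannot simply quote $\delta(V),\delta(B)\geqslant 1$; it must show that K-polystability of the base and of the double cover \emph{together} force all of the explicit ``leftover'' inequalities, uniformly in the dimension and for arbitrarily singular $R$, presumably by exploiting that the minimising valuations on $V$ and on $B$ are very special. Isolating the correct coupled inequality between $\delta(V)$, $\delta(B)$ and the Casagrande--Druel volume data, and proving it, is what currently keeps the statement a conjecture, with Theorems~\ref{theorem:double-spaces}, \ref{theorem:4-2} and \ref{theorem:3-9} --- together with the known smooth cases --- as the accessible instances.
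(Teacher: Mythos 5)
The statement you are addressing is posed in the paper as Conjecture~\ref{conjecture:CS-K-stability}: the paper offers no proof of it, only supporting evidence (Theorems~\ref{theorem:double-spaces}, \ref{theorem:4-2}, \ref{theorem:3-9}, the smooth threefold cases, and Example~\ref{example:Yuchen} showing that K-polystability of $V$ cannot be dropped). Your text is likewise a strategy rather than a proof, and you say so explicitly; so there is no proof-to-proof comparison to make, and the only ``gap'' is the one you yourself identify, namely that the programme is not carried out.

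Comparing your sketch with what the paper actually does in the proven cases: the paper does not run Abban--Zhuang on $X$ directly. It first passes to the quotient $Y\cong X/\iota$ and, via Liu--Zhu, replaces K-polystability of $X$ by K-polystability of the log Fano pair $\left(Y,\frac{1}{2}B\right)$, and then applies the Abban--Zhuang/Fujita machinery to flags on $Y$ (Theorem~\ref{theorem:log-delta} and Sections~\ref{section:delta}, \ref{section:3-9}); only in a few highly symmetric degenerate cases does it argue $G$-equivariantly on $X$ itself via Zhuang's criterion (Section~\ref{section:4-2}). Likewise, the natural way K-polystability of $B$ enters is again through Liu--Zhu: it is equivalent to K-polystability of $\left(V,\frac{1}{2}R\right)$, whereas by Mallory's Theorem~\ref{theorem:Yuchen-Daniel} K-semistability of $X$ is governed by the pair $(V,\lambda_n(a)R)$ with a coefficient $\lambda_n(a)$ that is in general strictly smaller than $\frac{1}{2}$ (it equals $\frac{9}{52}$ in Example~\ref{example:Yuchen}); so the genuinely open point is an interpolation between $\delta(V)\geqslant 1$ at coefficient $0$ and the pair at coefficient $\frac{1}{2}$, which matches the ``coupled inequality'' you single out in your final paragraph and is consistent with the fact that $\delta(B)$ never appears in Theorem~\ref{theorem:delta}. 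One caution about the sketch itself: your rough classification of $G'$-invariant divisors over $X$ as either pulled back from $V$ or concentrated over the discriminant is not justified --- invariant quasi-monomial valuations mixing $\widetilde S^{\pm}$, $\widetilde B$ and vertical divisors need not be of this form --- so even the equivariant reduction step would require more care than stated, and the equality-case analysis needed for polystability (as opposed to semistability) is not addressed by the $\Gamma$-degeneration remark alone.
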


If $B$ is a K-polystable Fano variety, the log Fano pair $(V,\frac{1}{2}R)$ is also K-polystable \cite{Liu-Zhu}.
Thus, our conjecture is closely related to the following recent result:

\begin{theorem}[{\cite{Mallory}}]
\label{theorem:Yuchen-Daniel}
Suppose that $-K_V\sim_{\mathbb{Q}} aL$, where $a\in\mathbb{Q}_{>0}$ such that $a>1$.
Set
$$
\lambda_n(a)=\frac{a^{n+1}-(a+n)(a-1)^n}{2(n+1)(a^n-(a-1)^n)},
$$
where $n=\mathrm{dim} X$. Then $X$ is K-semistable $\iff$ $(V,\lambda_n(a)R)$ is K-semistable.
\end{theorem}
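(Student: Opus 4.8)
The plan is to reduce to an equivariant statement and then compare $\beta$-invariants across the $G$-equivariant conic bundle $\theta\colon X\to V$ of \eqref{equation:diagram-symmetric}, using the Abban--Zhuang method along one of the $\Gamma$-fixed sections.

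\textbf{Step 1: equivariant reduction.} Since $G=\langle\Gamma,\iota\rangle\cong\mathbb{G}_m\rtimes\mumu_2$ is reductive and acts on $X$, Zhuang's equivariant K-stability theorem gives that $X$ is K-semistable if and only if $\delta_G(X)\geqslant 1$, i.e. $\beta_X(\mathrm{ord}_E)\geqslant 0$ for every $G$-invariant prime divisor $E$ over $X$; likewise $(V,\lambda_n(a)R)$ is K-semistable if and only if $\beta_{(V,\lambda_n(a)R)}(\mathrm{ord}_{E'})\geqslant 0$ for every prime divisor $E'$ over $V$ (first checking $\lambda_n(a)\in[0,a/2)$ for $a>1$, so that this is an honest log Fano pair). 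It thus suffices to match these two families of $\beta$-invariants.

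\textbf{Step 2: $G$-invariant valuations.} Because $G$ acts trivially on $V$ and $\theta$ is $G$-equivariant with generic fibre $\mathbb{P}^1$ carrying the standard $\Gamma$-action whose two fixed points lie on $S^+$ and $S^-$, every $G$-invariant divisorial valuation $v$ on $X$ is of one of two kinds: \emph{vertical}, meaning $v$ restricts trivially to $\mathbb{C}(V)$ --- then $v\in\{\mathrm{ord}_{S^+},\mathrm{ord}_{S^-}\}$ or $v$ is a rescaling of the $\Gamma$-weight valuation along the generic fibre; or \emph{horizontal}, in which case $v|_{\mathbb{C}(V)}$ is a divisorial valuation $v'$ over $V$ and, on a suitable $\Gamma$-equivariant model, $v=\theta^{*}v'$. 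The $\Gamma$-weight valuation has $\beta_X=0$ because $\Gamma\subset\mathrm{Aut}(X)$, so only the single number $\beta_X(\mathrm{ord}_{S^+})=\beta_X(\mathrm{ord}_{S^-})$ (equal by the $\iota$-symmetry) and the horizontal family remain to be understood.

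\textbf{Step 3: the comparison identity.} Using $-K_Y=2\xi+(a-1)\pi^{*}L$ and $-K_X=\phi^{*}(-K_Y)-\mathcal{E}$, where $\mathcal{E}$ is the exceptional divisor over $S^+\cap\pi^{*}R$, one applies Abban--Zhuang along $S^+\cong V$: $\delta_G(X)\geqslant\min\{A_X(S^+)/S_X(S^+),\ \delta_G(W^{X,S^+}_{\bullet,\bullet})\}$, with equality since $S^+$ is $\Gamma$-fixed and adapted to $\theta$. The refined series $W^{X,S^+}_{\bullet,\bullet}$ on $V$, after computing the different of $(X,S^+)$ and integrating the fibrewise volume of $-K_X$ over the moment interval of the $\Gamma$-action --- which converts $-K_V\sim aL$ into $-(K_V+\lambda_n(a)R)\sim(a-2\lambda_n(a))L$, with $\lambda_n(a)$ the average of the relevant fibre coordinate --- is exactly the $\mathbb{Q}$-twisted linear series attached to $(V,\lambda_n(a)R)$; hence $\delta_G(W^{X,S^+}_{\bullet,\bullet})=\delta(V,\lambda_n(a)R)$. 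Equivalently, for every horizontal $v=\theta^{*}v'$ one obtains $\beta_X(v)=c_n(a)\,\beta_{(V,\lambda_n(a)R)}(v')$ with an explicit constant $c_n(a)>0$ (a ratio of volumes); the degenerate conic fibres over $R$ enter only through $\mathcal{E}$, and it is precisely this that produces the coefficient $\lambda_n(a)$ of $R$ rather than of a general member of $|2L|$.

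\textbf{Step 4: conclusion, and the main difficulty.} Combining the above, $\delta_G(X)=\min\{A_X(S^+)/S_X(S^+),\ \delta(V,\lambda_n(a)R)\}$, and one checks $A_X(S^+)/S_X(S^+)\geqslant 1$ for all $a>1$; by the $\iota$-symmetry this is the only vertical constraint. Therefore $X$ is K-semistable $\iff\delta_G(X)\geqslant 1\iff\delta(V,\lambda_n(a)R)\geqslant 1\iff(V,\lambda_n(a)R)$ is K-semistable, both implications following at once because Step~3 is an equality up to the positive factor $c_n(a)$ and the vertical direction never obstructs. The main obstacles are (i) the classification in Step~2 --- ruling out "mixed" $G$-invariant valuations, which rests on the fact that a $G$-invariant valuation on $X$ is determined by its restriction to $\mathbb{C}(V)$ together with one $\Gamma$-weight --- and (ii) the fibrewise volume computation in Step~3 on the conic bundle with its degeneration over $R$, which is where the exact rational function $\lambda_n(a)$, the positivity $c_n(a)>0$, the range $0\leqslant\lambda_n(a)<a/2$, and the inequality $A_X(S^+)/S_X(S^+)\geqslant 1$ must all be extracted by explicit integration.
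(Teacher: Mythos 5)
The first thing to say is that the paper does not contain a proof of this statement at all: Theorem~\ref{theorem:Yuchen-Daniel} is quoted from \cite{Mallory}, which is listed as ``in preparation'', so there is no in-paper argument to compare yours against; your proposal has to stand on its own. As a skeleton it is not unreasonable --- an equivariant reduction via Zhuang, an Abban--Zhuang refinement along a $\Gamma$-fixed section of the conic bundle $\theta$, and the expectation that $\lambda_n(a)$ arises as the averaged coefficient of $R$ coming from the negative parts of the Zariski decompositions over the ramification divisor is consistent with the computations the paper does carry out nearby (note that $\lambda_n(a)$ is exactly the quantity $\mathrm{Res}_n(a)$ of Lemma~\ref{res}, and Lemma~\ref{lemma:Kento} is the same kind of fibration argument for $(Y,\tfrac12 B)$).

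However, as written the proposal has genuine gaps precisely where the theorem lives. First, your Step 2 classification of $G$-invariant valuations is wrong: for a $\mathbb{G}_m$-action of complexity $n-1$, invariant divisorial valuations are parametrized by a horizontal valuation on $V$ \emph{together with} a fibre weight, and the mixed ones with nonzero weight are quasi-monomial combinations that are not of the form $\theta^*v'$ on any $\Gamma$-equivariant model; they cannot be discarded, and controlling them is part of the work (your Step 4(i) concedes the correct parametrization but never uses it). Second, the Abban--Zhuang method only gives the inequality $\delta_G(X)\geqslant\min\{\dots\}$; the asserted equality ``since $S^+$ is $\Gamma$-fixed and adapted to $\theta$'' has no justification, and the key identity $\delta_G(W^{X,S^+}_{\bullet,\bullet})=\delta(V,\lambda_n(a)R)$ --- equivalently $\beta_X(v)=c_n(a)\,\beta_{(V,\lambda_n(a)R)}(v')$ with $c_n(a)>0$ --- is exactly the content of the theorem and is asserted rather than computed: one must match both the $S$-values (the explicit integration over $u$, where the coefficient $\lambda_n(a)$ of $R$ must come out of the negative parts supported on the divisor over $R$) and the log discrepancies (the different along $S^+\cong V$, taking into account that $X$ is singular over $\mathrm{Sing}(R)$). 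Without these identities you get at best the implication ``$(V,\lambda_n(a)R)$ K-semistable $\Rightarrow X$ K-semistable''; the converse requires producing, from a destabilizing $E'$ over $V$, a valuation over $X$ with nonpositive $\beta$, which needs an exact $\beta$-formula rather than a lower bound. Finally, two smaller but real errors: $\beta_X$ of the $\Gamma$-weight valuation (equivalently of $S^{\pm}$) is not automatically zero because $\Gamma\subset\mathrm{Aut}(X)$ --- the $\iota$-symmetry kills the Futaki character of this $\mathbb{G}_m$, but $\beta(S^{\pm})\geqslant 0$ still has to be verified by integration, and Remark~\ref{remark:K-instability} shows the analogous $\beta$ is negative as soon as the symmetry is broken; and the statement only parses after checking that the relevant pairs are (log) Fano with the appropriate singularities, which is more than the inequality $\lambda_n(a)<a/2$ you mention.
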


The K-polystability of $V$ in Conjecture \ref{conjecture:CS-K-stability} is necessary.

\begin{example}[Yuchen Liu]
\label{example:Yuchen}
Let $V=\mathbb{P}(1,1,4)$, let $L=\mathcal{O}_{V}(4)$, let $R$ be a general curve in $|2L|$,
and let $\lambda\in\left(0,\frac{3}{4}\right)\cap\mathbb{Q}$.
Then $(V,\lambda R)$ is a log Fano pair. One can show that
\begin{center}
$\delta(V,\lambda R)\geqslant 1$ ($\delta(V,\lambda R)>1$, respectively) $\iff$ $\lambda\geqslant\frac{3}{8}$ ($\lambda>\frac{3}{8}$, respectively),
\end{center}
so that the singular del Pezzo surface $B$ is K-polystable,
but $\left(V,\frac{9}{52}R\right)$ is not K-semistable.
Hence, the threefold $X$ is not K-semistable by Theorem \ref{theorem:Yuchen-Daniel}.
\end{example}

Let us say few words about the~proofs of Theorems~\ref{theorem:delta} and \ref{theorem:3-9}.
In Section~\ref{section:CD-properties}, we will show that
$X/\iota\cong Y$, and we have the~following commutative diagram:
$$
\xymatrix{
X\ar[dr]_{\theta}\ar[rr]^{\rho}&&Y\ar[dl]^{\pi} \\
&V&}
$$
where $\rho$ is the~quotient map, which is a double cover ramified over our divisor $B\in |2S^+|$.
Thus, using \cite{Liu-Zhu}, we see that
\begin{center}
$X$ is K-polystable $\iff$ the~log Fano pair $\left(Y, \frac{1}{2}B\right)$ is K-polystable.
\end{center}
In Section~\ref{section:delta}, we will prove the~following result,
which implies Theorem~\ref{theorem:delta}.

\begin{theorem}
\label{theorem:log-delta}
Suppose that $V$ and $R$ are smooth (so $B$ is smooth), and $-K_V\sim_{\mathbb{Q}} aL$,
where $a\in\mathbb{Q}_{>0}$ such that $a>1$.
Let $\mu$ be a~rational number such that  $\mu L$ is very ample.
Set~$n=\mathrm{dim} Y$ (so $\mathrm{dim} V =n-1$) and $d=L^{n-1}$.
Suppose $n\geqslant 3$ and $d\mu^{n-2}\geqslant 2$.
Then
$$
\delta\Big(Y,\frac{1}{2}B\Big)\geqslant\mathrm{min}\Bigg\{\frac{1}{k_n(a,d,\mu)},\frac{(n+1)(a^n-(a-1)^n)}{(n+1-a)a^n+(a-1)^{n+1}},\frac{a\delta(V)(n+1)(a^{n}-(a-1)^{n})}{n(a^{n+1}-(a-1)^{n+1})}\Bigg\},
$$
where $k_n(a,d,\mu)$ is defined in Theorem~\ref{theorem:delta}.
\end{theorem}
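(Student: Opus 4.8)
The plan is to reduce $\delta\big(Y,\frac12 B\big)$ to explicit volume computations on the $\mathbb{P}^1$-bundle $\pi\colon Y\to V$, using the $\mathbb{G}_m$-action and Abban--Zhuang adjunction onto the two sections $S^+$ and $S^-$.

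First I would record the numerical data. From $-K_Y\sim_{\mathbb{Q}}2\xi+(a-1)H$ and $B\sim 2\xi\sim 2S^+$ we get $-(K_Y+\frac12 B)\sim_{\mathbb{Q}}S^++(a-1)H\sim_{\mathbb{Q}}S^-+aH$, which is ample because $a>1$; using $\xi^2=H\xi$, $\xi\cdot H^{n-1}=d$ and $H^n=0$ one finds
\[
\operatorname{vol}\big(-(K_Y+\tfrac12 B)\big)=(S^-+aH)^n=d\big(a^n-(a-1)^n\big).
\]
Under the identifications $S^\pm\cong V$ one has $N_{S^-/Y}=\mathcal{O}_V(-L)$ and $N_{S^+/Y}=\mathcal{O}_V(L)$; moreover $B\cap S^-=\varnothing$, while $B\cap S^+$ is $R$ counted with multiplicity two, so that $\big(S^-,\operatorname{Diff}_{S^-}(\tfrac12 B)\big)\cong(V,0)$ and $\big(S^+,\operatorname{Diff}_{S^+}(\tfrac12 B)\big)\cong(V,\tfrac12 R)$, with $-(K_Y+\tfrac12 B)|_{S^+}$ identified with $-K_V$. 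Since $\Gamma\cong\mathbb{G}_m$ acts on $\big(Y,\tfrac12 B\big)$, linearizes $-(K_Y+\tfrac12 B)$ and fixes $S^\pm$ pointwise, it suffices to bound $A/S$ on $\Gamma$-invariant valuations (equivalently, to apply Abban--Zhuang adjunction to the two sections): these are the quasi-monomial valuations obtained from a valuation $v_0$ over $V$ together with a non-negative weight along $S^+$ or along $S^-$.

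Next I would run Abban--Zhuang adjunction onto $S^-$. The splitting $H^0\big(Y,-m(K_Y+\tfrac12 B)\big)=\bigoplus_{j=0}^m H^0\big(V,(m(a-1)+j)L\big)$, along which $\operatorname{ord}_{S^-}$ reads off the grading, converts the relevant volumes into integrals over $V$. Two computations carry the argument. One gets
\[
S_{(Y,\frac12 B)}(S^-)=\frac{1}{d(a^n-(a-1)^n)}\int_0^1 d\big(a^n-(a-1+t)^n\big)\,dt=\frac{(n+1-a)a^n+(a-1)^{n+1}}{(n+1)(a^n-(a-1)^n)},
\]
with $A_{(Y,\frac12 B)}(S^-)=1$, which is the second term of the minimum. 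And, since the restricted system at level $t$ is the complete $|(a-1+t)L|$, for a divisor $F$ over $V\cong S^-$ one obtains $S\big(W^{S^-}_{\bullet,\bullet};F\big)=\frac{n(a^{n+1}-(a-1)^{n+1})}{(n+1)(a^n-(a-1)^n)}\,S^{L}_V(F)=\frac{n(a^{n+1}-(a-1)^{n+1})}{a(n+1)(a^n-(a-1)^n)}\,S^{-K_V}_V(F)$, using $S^{-K_V}_V=a\,S^{L}_V$; together with $A_{(S^-,0)}(F)=A_V(F)$ this yields the infimum $\frac{a(n+1)(a^n-(a-1)^n)}{n(a^{n+1}-(a-1)^{n+1})}\,\delta(V)$, the third term. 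Quasi-monomial combinations of $\operatorname{ord}_{S^-}$ with such $F$ only interpolate between these two estimates.

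The hard part will be the valuations centred on $S^+$, where $B$ is tangent to $S^+$ along $R$. Adjunction onto $S^+$ turns the pair into $\big(V,\tfrac12 R\big)$ with $-(K_Y+\tfrac12 B)|_{S^+}=-K_V$, yields $S_{(Y,\frac12 B)}(S^+)=2\lambda_n(a)$, and the same type of integral for divisors $F$ over $V$; but to bound the resulting ratios $A_{(V,\frac12 R)}(F)/S$ uniformly in terms of $d$ and $\mu$, rather than of the a priori unknown $\delta\big(V,\tfrac12 R\big)$, I would exploit that $\mu L$ is very ample: a general flag cut out by members of $|\mu L|$ lets one simultaneously control $S^{L}_V(F)$ and the multiplicity of $R$ along $F$, and the extremal configuration contributes the summand $d\mu^{n-2}$, producing the bound $1/k_n(a,d,\mu)$; the hypothesis $d\mu^{n-2}\ge 2$ is precisely what makes this regime non-vacuous (if $d\mu^{n-2}<2$ one lands in the toric situation of Example~\ref{example:Thibaud}, cf. Remark~\ref{remark:dmu-1}). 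Assembling the two adjunctions then gives $\delta\big(Y,\tfrac12 B\big)\ge\min$ of the three quantities. I expect the genuinely delicate step to be this last one — making the very ample twist $\mu L$ convert the tangency of $B$ along $R$ into the exact coefficient of $d\mu^{n-2}$ in $k_n$, and checking that no quasi-monomial valuation bridging $S^+$ and the base does worse — while the remaining ingredients, the $\mathbb{G}_m$-equivariant reduction and the two volume computations, are standard though calculation-heavy.
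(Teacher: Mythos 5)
Your computation of the easy half is fine: the identification $-(K_Y+\tfrac12 B)\sim_{\mathbb{Q}}S^-+aH$, its volume, and the Abban--Zhuang adjunction onto $S^-$ reproduce exactly the paper's Lemma~\ref{lemma:Kento}, which accounts for the second and third terms of the minimum. The gap is in the reduction that is supposed to make these two adjunctions sufficient. The subgroup $\Gamma\cong\mathbb{G}_m\subset\mathrm{Aut}(Y)$ fixing $S^{\pm}$ pointwise does \emph{not} preserve $B$: it rescales $u^+$, and the equation $(u^+)^2=\pi^*(f)(u^-)^2$ is not invariant, so $\Gamma$ does not act on the pair $(Y,\tfrac12 B)$ (the paper states this explicitly in Section~\ref{section:3-9}: $\Gamma$ does not descend to $Y$ because it does not commute with $\iota$, and $\mathrm{Aut}(Y,\Delta)$ is much smaller than $\mathrm{Aut}(X)$). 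Hence there is no equivariance available to restrict to ``$\Gamma$-invariant'' valuations, and even if there were, a numerical lower bound on $\delta$ itself (as opposed to a K-semistability statement) cannot be obtained by testing only invariant valuations without a further theorem. Nor is your parenthetical ``equivalently'' correct: adjunction to $S^-$ and $S^+$ only controls $\delta_P$ for points $P$ lying on those sections, while valuations centred at points of $Y\setminus(S^-\cup S^+)$ --- for instance blow-ups of points of $B$ away from $S^+$, or of points off $B$ entirely --- are not quasi-monomial combinations of $\mathrm{ord}_{S^\pm}$ with divisors over $V$ and are left uncontrolled.

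That uncontrolled locus is precisely where the paper's work lies and where $k_n(a,d,\mu)$ comes from. The paper proves (Proposition~\ref{proposition:delta}) that $\delta_P(Y,\Delta;D(a))\geqslant 1/k_n(a,d,\mu)$ for \emph{every} $P\notin S^-$, by taking the vertical flag $Y_{n-1}=\pi^*(V_{n-1})$ for a general $V_{n-1}\in|\mu L|$ through $\pi(P)$, running induction on $n$ via the identity $k_n(a,d,\mu)=S_{D(a)}(Y_{n-1})\,d\mu^{n-1}+\mathrm{Res}_n(a)$ with $\mathrm{Res}_n(a)>0$, and settling the base case $n=3$ by explicit Zariski decompositions on the surface $Y_1$ after an ordinary or a $(1,2)$-weighted blow-up, according to whether the fibre through $P$ meets $B$ transversally or tangentially; this is where the hypothesis $d\mu^{n-2}\geqslant 2$ enters. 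Your final paragraph gestures at using members of $|\mu L|$, but attaches this to valuations ``centred on $S^+$'' via the pair $(V,\tfrac12 R)$, which both misidentifies the locus that must be handled and leaves the decisive estimate (the exact coefficient $d\mu^{n-2}$ in $k_n$, and why no other valuation does worse) unproved. A small further slip: $B\cap S^+$ is the reduced divisor $R$, not $R$ with multiplicity two; the different is $\tfrac12 R$ as you use, but as written your two statements are inconsistent.
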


Let us describe the~structure of this paper.
First, in Section~\ref{section:CD-properties}, we will prove few basic properties of Casagrande--Druel varieties.
Then, in Section~\ref{section:delta}, we will prove Theorem~\ref{theorem:log-delta}.
In Sections~\ref{section:4-2} and \ref{section:3-9}, we will give proofs of Theorem~\ref{theorem:4-2} and Theorem~\ref{theorem:3-9}, respectively.
Finally, in Section \ref{section:moduli}, we will prove Corollary~\ref{corollary:4-2}, and we will show that $M^{\operatorname{Kps}}_{(4.2)}\cong\mathbb{P}(1,2,3)$.
We~will omit the proof of Corollary~\ref{corollary:3-9}, since it is similar to the proof of Corollary~\ref{corollary:4-2}.

\medskip
\noindent
\textbf{Acknowledgements.}
This paper was written during our visit to the~G\"okova Geometry Topology Institute in April 2023.
We are very grateful to the~institute for its hospitality.
We would like to thank Yuchen Liu for his help with the~proof of Corollary~\ref{corollary:4-2},
and we would like to thank Noam Elkies for his proof of Proposition~\ref{proposition:Elkies}.

Ivan Cheltsov was supported by EPSRC grant EP/V054597/1,
Tiago Duarte Guerreiro was supported by EPSRC grant EP/V055399/1, Kento Fujita was supported by JSPS KAKENHI Grant Number 22K03269,
Igor Krylov was supported by IBS-R003-D1 grant, and Jesus Martinez-Garcia was supported by EPSRC grant EP/V055399/1.

\section{Preliminaries}
\label{section:CD-properties}

Let $V$ be a (possibly non-projective) variety, let $L_1$ and $L_2$ be line bundles on $V$ such that $L_1+L_2\not\sim 0$ and $|L_1+L_2|\ne\varnothing$,
and let $f\in H^0(V,\mathcal{O}_V(L_1+L_2))$ that defines a~nonzero effective divisor $R$ on $V$.
Set
\begin{align*}
Y_1&=\mathbb{P}\big(\mathcal{O}_V\oplus\mathcal{O}(L_1)\big),\\
Y_2&=\mathbb{P}\big(\mathcal{O}_V\oplus\mathcal{O}(L_2)\big).
\end{align*}
Now, let $\pi_1\colon Y_1\to V$ and $\pi_2\colon Y_2\to V$ be the~natural projections,
and let $\xi_1$ and $\xi_2$~be the~tautological line bundles on $Y_1$ and $Y_2$, respectively.
We have isomorphisms:
\begin{align*}
H^0\big(Y_1, \mathcal{O}_{Y_1}(\xi_1)\big)&\cong  H^0\big(V,\mathcal{O}_V\big)\oplus H^0\big(V,\mathcal{O}_V(L_1)\big),\\
H^0\big(Y_1, \mathcal{O}_{Y_1}(\xi_1-\pi_1^*(L_1))\big)&\cong H^0\big(V,\mathcal{O}_V\big)\oplus H^0\big(V,\mathcal{O}_V(-L_1)\big),\\
H^0\big(Y_2, \mathcal{O}_{Y_2}(\xi_2)\big)&\cong H^0\big(V,\mathcal{O}_V\big)\oplus H^0\big(V,\mathcal{O}_V(L_2)\big),\\
H^0\big(Y_2, \mathcal{O}_{Y_2}(\xi_2-\pi_2^*(L_2))\big)&\cong H^0\big(V,\mathcal{O}_V\big)\oplus H^0\big(V,\mathcal{O}_V(-L_2)\big).
\end{align*}
Using these isomorphisms, fix sections
\begin{align*}
u_{1}^+& \in H^0\big(Y_1,\mathcal{O}_{Y_1}(\xi_1)\big),\\
u_{1}^-&\in H^0\big(Y_1, \mathcal{O}_{Y_1}(\xi_1-\pi_1^*(L_1))\big),\\
u_{2}^+&\in H^0\big(Y_2, \mathcal{O}_{Y_2}(\xi_2)\big),\\
u_{2}^-&\in H^0\big(Y_2, \mathcal{O}_{Y_2}(\xi_2-\pi_2^*(L_2))\big),
\end{align*}
that correspond to the~section $1\in H^0(V,\mathcal{O}_V)$.
Let
\begin{align*}
S_1^-&=\{u_1^-=0\}\subset Y_1,\\
S_1^+&=\{u_1^+=0\}\subset Y_1,\\
S_2^-&=\{u_2^-=0\}\subset Y_2,\\
S_2^+&=\{u_2^+=0\}\subset Y_2.
\end{align*}
For $i\in\{1,2\}$, the~divisors $S_i^-$ and $S_i^+$ are disjoint sections of the~natural projection~$\pi_i$
such that $S_i^-\vert_{S_i^-}\sim -L_i\sim -S_i^+\vert_{S_i^+}$, where we use isomorphisms $S_i^-\cong V\cong S_i^+$ induced~by~$\pi_i$.

Now, we set $Q=Y_1\times_V Y_2$.
Then we have canonical isomorphisms
$$
\mathbb{P}\Big(\mathcal{O}_{Y_1}\oplus\mathcal{O}_{Y_1}\big(\pi_1^*(L_2)\big)\Big)\cong Q\cong\mathbb{P}\Big(\mathcal{O}_{Y_2}\oplus\mathcal{O}_{Y_2}\big(\pi_2^*(L_1)\big)\Big),
$$
so that we have commutative Cartesian diagram
$$
\xymatrix{
&&Q\ar[dll]_{\rho_1}\ar[drr]^{\rho_2} &&\\
Y_1\ar[drr]_{\pi_1} &&&&Y_2\ar[dll]^{\pi_2} \\
&&V&&}
$$
where $\rho_1$ and $\rho_2$ are natural projections. Set $\vartheta=\pi_1\circ\rho_1=\pi_2\circ\rho_2$.

Set $F_1=\pi_1^*(R)\subset Y_1$.
Let $\phi_1\colon X\to Y_1$ be the~blowup along the~intersection $F_1\cap S_1^+$,
and let $E_1$ be the~$\phi_1$-exceptional divisor.
Since $F_1+S_{1}^-$ corresponds to
$$
\pi_1^*(f)u_{1}^-\in H^0\left(Y_1,\mathcal{O}_{Y_1}\big(\xi_1+\pi_1^*(L_2)\right),
$$
there~is~a~natural closed embedding $X\hookrightarrow Q$ over $V$ such that its image is the~effective divisor
defined by the~zeroes of the~section
$$
\vartheta^*(f)u_{1}^-u_{2}^--u_{1}^+u_{2}^+\in H^0\Big(Q,\mathcal{O}_{Q}\big(\rho_1^*(\xi_1)+\rho_2^*(\xi_2)\big)\Big),
$$
where we identified $H^0(Q,\mathcal{O}_{Q}(\rho_i^*(D)))=H^0(Y_i,\mathcal{O}_{Y_i}(D))$ for every $D\in\mathrm{Pic}(Y_i)$.

Let us identify $X$ with its image in $Q$. Set $\theta=\pi_1\circ\phi_1$.
Then $\theta$ is induced by $\vartheta$,
it is a~conic bundle, and $R$ is its discriminant divisor.~Set
\begin{align*}
S_1&=\phi_1^*(S_{1}^-),\\
S_2&=\phi_1^*(S_{1}^+)-E_1,\\
E_2&=\phi_1^*(F_1)-E_1.
\end{align*}
Then $S_1$, $S_2$, $E_2$ are effective Cartier divisors on the variety $X$
--- these are the~proper transforms of the~divisors $S_1^-$, $S_1^+$, $F_1$, respectively.
Moreover, the~divisors $S_1$ and $S_2$ are mutually disjoint sections of the~conic bundle $\theta$.
Furthermore, we have
\begin{center}
$S_1\big\vert_{S_1}\sim -L_1$ and $S_2\big\vert_{S_2}\sim -L_2$
\end{center}
where we use isomorphisms $S_1\cong V$ and $S_2\cong V$ induced by $\theta$.
Similarly, we see that the~divisor $E_1+E_2$ is given by zeroes of the~section
$$
\theta^*(f)\in H^0\Big(X,\mathcal{O}_X\big(\theta^*(L_1+L_2)\big)\Big)\cong H^0\big(V,\mathcal{O}_V(L_1+L_2)\big).
$$

Set $F_2=\pi_2^*(R)\subset Y_2$,
and let $\phi_2\colon X\to Y_2$ be the~morphism induced by $\rho_2\colon Q\to Y_2$.
Since the~defining equation of $X\subset Q$ is symmetric,
we conclude that $\phi_2$ is the~blowup along the~scheme-theoretic intersection $F_2\cap S_2^+$,
the~$\phi_2$-exceptional divisor is $E_2$, and there exists the~following commutative diagram:
\begin{equation}
\label{equation:diagram}
\xymatrix{
&&X\ar[dll]_{\phi_1}\ar[drr]^{\phi_2}\ar[dd]_{\theta}&&\\
Y_1\ar[drr]_{\pi_1} &&&&Y_2\ar[dll]^{\pi_2} \\
&&V&&}
\end{equation}
This is an~elementary transformation of the~$\mathbb{P}^1$-bundle $\pi_1$ in the~sense of Maruyama~\cite{maruyama2}.
Now, using \cite[Theorem 1.4]{maruyama2} and \cite[Proposition 1.6]{maruyama2}, we see that
\begin{align*}
S_1&=\phi_2^*(S_{2}^+)-E_2,\\
S_2&=\phi_2^*(S_{2}^-),\\
E_1&=\phi_2^*(F_1)-E_2.
\end{align*}

\begin{remark}
\label{remark:Kento}
Let $U=\mathbb{P}(\mathcal{O}_V\oplus\mathcal{O}_V(-L_1)\oplus\mathcal{O}_V(-L_2))$,
let $\xi_U$ be the tautological line bundle on the~variety $U$,
let $\pi_U\colon U\to V$ be the natural projection.
We have isomorphisms:
\begin{align*}
H^0\big(U,\mathcal{O}_U(\xi_U)\big)&\cong H^0\big(V, \mathcal{O}_V\big)\oplus H^0\big(V, \mathcal{O}_V(-L_1)\big)\oplus H^0\big(V,\mathcal{O}_V(-L_2)\big), \\
H^0\big(U,\mathcal{O}_U(\xi_U+\pi_U^*(L_1))\big)&\cong H^0\big(V, \mathcal{O}_V\big)\oplus  H^0\big(V, \mathcal{O}_V(L_1)\big)\oplus H^0\big(V,\mathcal{O}_V(L_1-L_2)\big), \\
H^0\big(U,\mathcal{O}_U(\xi_U+\pi_U^*(L_2))\big)&\cong H^0\big(V,\mathcal{O}_V\big)\oplus H^0\big(V, \mathcal{O}_V(L_2)\big)\oplus H^0\big(V, \mathcal{O}_V(L_2-L_1\big).
\end{align*}
Using these isomorphisms, fix sections
\begin{align*}
v_0&\in H^0\big(U,\mathcal{O}_U(\xi_U)\big),\\
v_1&\in H^0\big(U,\mathcal{O}_U(\xi_U+\pi_U^*(L_1))\big),\\
v_2&\in H^0\big(U,\mathcal{O}_U(\xi_U+\pi_U^*(L_2))\big),
\end{align*}
which correspond to the section $1\in H^0(V,\mathcal{O}_V)$.
One can show that there exists a closed embedding $X\hookrightarrow U$ over $V$
such that the image of $X$ is defined by
$$
\pi_U^*(f)v_0^2-v_1v_2=0,
$$
so that we can idendity $X$ with a Cartier divisor on $U$ such that $X\sim 2\xi_U+\pi_U^*(L_1+L_2)$.
\end{remark}

Starting from now, we assume, in addition, that $V$ is projective.

\begin{proposition}
\label{proposition:CD-Fano}
Suppose that $V$ is normal, and $K_V$ is $\mathbb{Q}$-Cartier.
Then $X$~is~normal, and $K_X$ is $\mathbb{Q}$-Cartier.
Moreover, the following assertion holds:
\begin{center}
$-K_X$ is ample $\iff$ $-K_V$, $-K_V-L_1$, $-K_V-L_2$ are~ample.
\end{center}
\end{proposition}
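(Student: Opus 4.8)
The plan is to compute $K_X$ explicitly using the embedding $X \hookrightarrow U$ from Remark~\ref{remark:Kento} (or equivalently via the blowup $\phi_1\colon X\to Y_1$), and then test ampleness of $-K_X$ against the curves that generate the relevant cone.

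\textbf{Normality and $\mathbb{Q}$-Cartier.} First I would note that $Y_1 = \mathbb{P}(\mathcal{O}_V\oplus\mathcal{O}_V(L_1))$ is normal with $K_{Y_1}$ $\mathbb{Q}$-Cartier, since $V$ is normal with $K_V$ $\mathbb{Q}$-Cartier and $\pi_1$ is a smooth $\mathbb{P}^1$-bundle. The center $F_1\cap S_1^+$ of the blowup $\phi_1$ is a Cartier divisor inside the section $S_1^+\cong V$ (it is the preimage of $R$), hence $\phi_1$ is the blowup of a codimension-two locally complete intersection subscheme, so $X$ is normal and $K_X$ is $\mathbb{Q}$-Cartier (with $E_1$ Cartier). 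Alternatively, use $X\hookrightarrow U$: since $U$ is normal with $\mathbb{Q}$-Cartier canonical class and $X\in|2\xi_U+\pi_U^*(L_1+L_2)|$ is a Cartier divisor, adjunction gives $K_X$ $\mathbb{Q}$-Cartier once we check $X$ is normal, which follows from $X$ being irreducible (the defining equation $\pi_U^*(f)v_0^2-v_1v_2$ is irreducible) and satisfying $R_1+S_2$ via the conic bundle structure.

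\textbf{Canonical class formula.} The main computation is to express $-K_X$ in terms of $S_1$, $S_2$, $E_1$, $E_2$ and $\theta^*(\text{divisors on } V)$. Using $K_{Y_1} = -2S_1^+ + \pi_1^*(K_V - L_1) = -2S_1^- + \pi_1^*(K_V+L_1)$ and the blowup formula $K_X = \phi_1^*K_{Y_1} + E_1$, together with the relations $S_1=\phi_1^*S_1^-$, $S_2 = \phi_1^*S_1^+ - E_1$, $E_2 = \phi_1^*F_1 - E_1 = \phi_1^*\theta^*(R) - E_1$ from the text, I would derive a clean expression such as
$$
-K_X \sim S_1 + S_2 + \theta^*(-K_V - L_1 - L_2) + (E_1+E_2),
$$
possibly after regrouping; the symmetric roles of $S_1,S_2$ and of $L_1,L_2$ (guaranteed by diagram~\eqref{equation:diagram}) serve as a consistency check. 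Here I use that $E_1 + E_2 \sim \theta^*(L_1+L_2)$, which is recorded in the text.

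\textbf{Testing ampleness.} With the formula in hand, $-K_X$ ample is tested via the Nakai–Moishezon/Kleiman criterion adapted to the conic bundle $\theta\colon X\to V$ and the birational contractions $\phi_1,\phi_2$. The ``only if'' direction: restricting $-K_X$ to the sections $S_1\cong V$ and $S_2\cong V$ gives (using $S_i|_{S_i}\sim -L_i$) that $-K_V - L_1$ and $-K_V - L_2$ are ample; pushing forward or intersecting with fibers of $\theta$ and using that $\theta_*(-K_X\cdot(\text{fiber curve class}))$ relates to $-K_V$ gives ampleness of $-K_V$. The ``if'' direction is the crux: assuming $-K_V$, $-K_V-L_1$, $-K_V-L_2$ all ample, I would show $-K_X$ is relatively ample over $V$ for $\theta$ (a conic bundle computation: check positivity on the components of reducible fibers over $R$, i.e. on $E_1|_{\text{fiber}}$ and $E_2|_{\text{fiber}}$, which reduces to $-K_X\cdot E_i \cdot (\text{section-type curve}) > 0$), and then combine relative ampleness with the pullback of an ample class on $V$ to conclude $-K_X$ is ample on $X$ via the standard fact that relatively ample plus pullback of ample is ample for a projective morphism.

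\textbf{Expected main obstacle.} The hard part will be the ``if'' direction, specifically verifying positivity of $-K_X$ on the two rulings $E_1, E_2$ of the degenerate conics over the discriminant $R$ (and on curves in $E_1\cap E_2$ lying over $\mathrm{Sing}(R)$ when $R$ is not smooth), and bookkeeping the intersection numbers correctly in the presence of all three ``boundary'' divisors $S_1, S_2, E_1+E_2$. Organizing this cleanly — perhaps by checking ampleness of $-K_X$ restricted to each of $S_1$, $S_2$, $E_1$, $E_2$ and to the fibers of $\theta$, then invoking a Nakai-type criterion — is where the real work lies; everything else is formal manipulation of the canonical bundle formula for $\mathbb{P}^1$-bundles and blowups.
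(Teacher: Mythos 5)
Your overall strategy coincides with the paper's: normality and the $\mathbb{Q}$-Cartier property are read off from the presentation of $X$ as a divisor in $U$ from Remark~\ref{remark:Kento}, and the ampleness criterion rests on the anticanonical decomposition together with adjunction along the two sections. Your proposed formula is indeed correct and is exactly the paper's \eqref{equation:anticanonical-divisor}, $-K_X\sim_{\mathbb{Q}}S_1+S_2+\theta^*(-K_V)$, after substituting $E_1+E_2\sim\theta^*(L_1+L_2)$ (though the two expressions you wrote for $K_{Y_1}$ have the roles of $\pm L_1$ swapped: the correct one is $K_{Y_1}=-S_1^+-S_1^-+\pi_1^*(K_V)=-2S_1^++\pi_1^*(K_V+L_1)$). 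However, your justification of normality has a real gap: ``the blowup of a codimension-two lci subscheme of a normal variety is normal'' is not a theorem --- blowing up the regular-sequence ideal $(x^2,y^2)\subset\mathbb{A}^2$ gives the non-normal chart $\{t\,y^2=x^2\}$ --- and the point is not academic here, since $R$, hence the center $F_1\cap S_1^+$, is allowed to be non-reduced (e.g.\ $R=2C$ in Theorems~\ref{theorem:4-2} and \ref{theorem:3-9}) and $V$ is only normal. The paper gets normality precisely from the explicit equation $\pi_U^*(f)v_0^2=v_1v_2$ of $X\in|2\xi_U+\pi_U^*(L_1+L_2)|$ (citing Swanson--Huneke); your fallback via $U$ is the right idea, but ``$R_1+S_2$ via the conic bundle structure'' is not a verification, and $S_2$ for a Cartier divisor in a merely normal ambient space is exactly the part that needs an argument.

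Both directions of the ampleness equivalence also have gaps in the mechanism. For ``if'', the fact you invoke is not what you need: $\theta$-ampleness of $-K_X$ plus an ample $H$ on $V$ only gives that $-K_X+m\theta^*H$ is ample for $m\gg0$, not that $-K_X$ itself is; note that $S_1+S_2$ is $\theta$-ample but not nef, since $(S_1+S_2)|_{S_1}\sim-L_1$, so one cannot split $-K_X$ as (nef and relatively ample) plus (pullback of ample); and checking ampleness of the restrictions to $S_1,S_2,E_1,E_2$ and to fibers is not a Nakai-type criterion, which requires all subvarieties. What closes the argument is either the paper's one-line route --- $-K_X\sim_{\mathbb{Q}}(\xi_U+\pi_U^*(-K_V))|_X$, and $\xi_U+\pi_U^*(-K_V)$ is ample as the tautological class of the ample bundle $\mathcal{O}_V(-K_V)\oplus\mathcal{O}_V(-K_V-L_1)\oplus\mathcal{O}_V(-K_V-L_2)$ --- or Kleiman applied to $S_1+S_2+\theta^*(-K_V)$ with curve classes split into those inside $S_1$ or $S_2$ (handled by adjunction, $-K_X|_{S_i}\sim_{\mathbb{Q}}-K_V-L_i$), vertical ones, and horizontal ones. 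For ``only if'', intersecting $-K_X$ with fibers of $\theta$ gives only the number $2$ and no information about $-K_V$; the genuine difficulty is curves contained in $R$, because $-2K_V\sim_{\mathbb{Q}}(-K_V-L_1)+(-K_V-L_2)+R$ and the effectivity of $R$ only helps on curves not contained in it. The paper's missing ingredient is the observation that $E_1\cap E_2\cong R$ with $-K_X|_{E_1\cap E_2}\sim-K_V|_R$, so ampleness of $-K_X$ gives positivity of $-K_V$ on curves inside $R$ as well, after which Kleiman applies. Without some step of this kind your argument for ampleness of $-K_V$ does not close.
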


\begin{proof}
The normality of the~variety $X$ follows from Remark~\ref{remark:Kento} and \cite[Proposition~5.24]{SwansonHuneke}.
Similarly, using notations introduced in Remark~\ref{remark:Kento}, we see that
$$
K_U\sim_{\mathbb{Q}}-3\xi_U+\pi_U^*\big(K_V-L_1-L_2),
$$
so $K_X$ is $\mathbb{Q}$-Cartier by the adjunction formula,
because $X$ is a Cartier divisor on $U$.

To prove the remaining assertion,
suppose that $-K_V$, $-K_V-L_1$, $-K_V-L_2$ are
ample. Then
$\xi_U+\pi^*_U(-K_V)$ in Remark~\ref{remark:Kento} is
ample. Then so is
$-K_X\sim_{\mathbb{Q}}\left(\xi_U+\pi^*_U(-K_V)\right)|_X$.
Alternatively, we can prove the ampleness of $-K_X$ directly.
Namely, observe that
\begin{equation}
\label{equation:anticanonical-divisor}
-K_X\sim_{\mathbb{Q}} S_1+S_2+\theta^*(-K_V).
\end{equation}
Moreover, applying the~adjunction formula to the~sections $S_1$ and $S_2$, we get
\begin{align*}
-K_X\big\vert_{S_1}&\sim_{\mathbb{Q}}-K_V-L_1,\\
-K_X\big\vert_{S_2}&\sim_{\mathbb{Q}}-K_V-L_2,
\end{align*}
where we used $S_1\cong V$ and $S_2\cong V$.
Hence, if  $-K_V$, $-K_V-L_1$, $-K_V-L_2$ are ample,
then the~divisor $-K_X$ is also ample by Kleiman's ampleness criterion.

This also shows that both divisors $-K_V-L_1$ and $-K_V-L_2$ are ample if $-K_X$ is ample.
Observe that $E_1\cap E_2\cong R$.
Using this isomorphism and \eqref{equation:anticanonical-divisor}, we get $-K_V\vert_{R}\sim -K_X\vert_{R}$.
On the~other hand, we have
$$
-2K_V\sim_{\mathbb{Q}}\big(-K_V-L_1\big)+\big(-K_V-L_2\big)+R.
$$
Hence, using Kleiman's criterion again, we see that $-K_V$ is ample if $-K_X$ is ample.
\end{proof}

\begin{example}
\label{example:4-7}
Suppose  $V=\mathbb{P}^1\times\mathbb{P}^1$,
and $L_1$ and $L_2$ are divisors of degrees $(1,0)$ and~$(0,1)$,
and $R$ is a smooth divisor in $|L_1+L_2|$.
Then $X$ is a smooth Fano 3-fold by Proposition~\ref{proposition:CD-Fano}.
One can show that $X$ is the~unique smooth Fano 3-fold in the~deformation family \textnumero 4.7.
Note that $X$ is K-polystable \cite[\S 3.3]{Book}.
\end{example}

\begin{remark}[{\cite[Lemma 9.8]{div-stability}}]
\label{remark:K-instability}
Suppose that $V$ is a smooth Fano variety, and $-K_V\sim_{\mathbb{Q}} a L$,
where $L$ is an~ample divisor in $\mathrm{Pic}(V)$, and $a\in\mathbb{Q}_{>0}$.
Suppose $R$ and $X$ are smooth, and
\begin{align*}
L_1&\sim_{\mathbb{Q}} a_1L,\\
L_2&\sim_{\mathbb{Q}} a_2L,
\end{align*}
where $a_1$ and $a_2$ are rational numbers such that $a_1\geqslant a_2$.
It follows from Proposition~\ref{proposition:CD-Fano} that $X$ is a Fano variety $\iff$ $a>a_1$.
Further, if $X$ is a Fano~variety, then it follows from the~proof of \cite[Lemma 9.8]{div-stability} that
\begin{center}
$\beta(S_2)<0$ $\iff$ $a_1>a_2$.
\end{center}
Therefore, if $a>a_1>a_2$, then $X$ is a K-unstable Fano variety.
\end{remark}

From now on, we also assume that $L_1=L_2$. Set $L=L_1$. Then $R\in |2L|$.
Set
$$
Y=\mathbb{P}\big(\mathcal{O}_V\oplus\mathcal{O}(L)\big).
$$
let $\pi\colon Y\to V$ be the~natural projection,
and let $\xi$ be the~tautological line bundle on $Y$.
Note that $Y\cong Y_1\cong Y_2$.
Using the isomorphisms
\begin{align*}
H^0\big(Y, \mathcal{O}_{Y}(\xi)\big)&\cong  H^0\big(V,\mathcal{O}_V\big)\oplus H^0\big(V,\mathcal{O}_V(L)\big),\\
H^0\big(Y, \mathcal{O}_{Y}(\xi-\pi^*(L))\big)&\cong H^0\big(V,\mathcal{O}_V\big)\oplus H^0\big(V,\mathcal{O}_V(-L)\big),
\end{align*}
fix  $u^+\in H^0(Y,\mathcal{O}_{Y}(\xi))$ and $u^-\in H^0(Y, \mathcal{O}_{Y}(\xi-\pi^*(L)))$
that correspond to $1\in H^0(V,\mathcal{O}_V)$.
Let $S^-=\{u^-=0\}$ and $S^+=\{u^+=0\}$. Then $S^+\sim S^-+\pi^*(L)$.

\begin{proposition}
\label{proposition:quotient}
There is a double cover $X\to Y$ ramified in a divisor $B\in |2S^+|$ such that
the projection $\pi$ induces a double cover $B\to V$ that is ramified in $R$.
\end{proposition}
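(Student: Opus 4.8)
The plan is to build the double cover explicitly out of the description of $X$ inside $U$ from Remark~\ref{remark:Kento}. Since $L_1=L_2=L$, that remark realises $X$ as the divisor $\{\pi_U^*(f)v_0^2=v_1v_2\}$ in $U=\mathbb{P}(\mathcal{O}_V\oplus\mathcal{O}_V(-L)\oplus\mathcal{O}_V(-L))$. First I would define $\rho\colon X\to Y$ by the pair of sections $v_0$ and $v_1+v_2$. As $v_0\in H^0(U,\mathcal{O}_U(\xi_U))$ and $v_1+v_2\in H^0(U,\mathcal{O}_U(\xi_U+\pi_U^*(L)))$, the ratio $[v_0:v_1+v_2]$ defines a rational map $U\dashrightarrow\mathbb{P}(\mathcal{O}_V\oplus\mathcal{O}_V(L))=Y$ over $V$, undefined exactly along the section $\{v_0=v_1+v_2=0\}$. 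On $X$ this section is empty: if $v_0=0$ then $v_1v_2=0$ on $X$, so $v_1=0$ or $v_2=0$, and then $v_1+v_2=0$ forces $v_1=v_2=v_0=0$, which is impossible. Hence $\rho\colon X\to Y$ is a morphism, and $\pi\circ\rho=\theta$.

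Next I would check that $\rho$ is a double cover and compute its branch divisor. Working over a point $x\in V$ and writing points of the fibre $\pi^{-1}(x)\cong\mathbb{P}^1$ as $[c_1:c_0]$ (where $c_1$ is the $u^+$-coordinate and $c_0$ the $u^-$-coordinate), the fibre of $\rho$ over $[c_1:c_0]$ consists of the points of $X$ with $v_0=c_0t$ and $v_1+v_2=c_1t$ for some scalar $t$; substituting into $v_1v_2=\pi_U^*(f)v_0^2$ shows that $v_1,v_2$ are the two roots of $\lambda^2-c_1t\,\lambda+f(x)c_0^2t^2$. Thus $\rho$ restricts to a degree-two morphism on each conic fibre of $\theta$, so it is finite of degree two, and its deck transformation is the transposition $v_1\leftrightarrow v_2$ --- the involution $\iota$ appearing in diagram~\eqref{equation:diagram-symmetric} that interchanges the sections $S_1$ and $S_2$ of $\theta$; in particular $Y\cong X/\iota$. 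The ramification of $\rho$ is where these roots coincide, i.e. where the discriminant $c_1^2-4f(x)c_0^2$ vanishes; translating back to $Y$ and absorbing the factor $4$ by rescaling $f$ (which does not change $R$), the branch divisor is
$$
B=\big\{(u^+)^2=\pi^*(f)(u^-)^2\big\}.
$$
Since $(u^+)^2-\pi^*(f)(u^-)^2\in H^0(Y,\mathcal{O}_Y(2\xi))$ and $S^+\sim\xi$, we get $B\in|2S^+|$, so $\rho\colon X\to Y$ is a double cover ramified in $B$.

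Finally I would identify $\pi|_B\colon B\to V$. As $B\sim 2\xi$ and $\xi$ has degree $1$ on a fibre of $\pi$, the divisor $B$ contains no fibre, so $\pi|_B$ is finite of degree $2$; over $x\in V$ it cuts out of $\pi^{-1}(x)\cong\mathbb{P}^1$ the binary quadratic form $(u^+)^2-f(x)(u^-)^2$, whose discriminant is a non-zero multiple of $f(x)$, so $\pi|_B$ is branched exactly over $R=\{f=0\}$. Therefore $\pi$ induces a double cover $B\to V$ ramified in $R$, as asserted. The one genuinely delicate point is the bookkeeping of the projective-bundle conventions: making sure that $[v_0:v_1+v_2]$ really lands in $Y=\mathbb{P}(\mathcal{O}_V\oplus\mathcal{O}(L))$, and in particular that the branch divisor has class precisely $2S^+$ rather than another twist of it. Once the conventions are aligned, everything else is an elementary fibrewise computation, and it goes through unchanged over $R$.
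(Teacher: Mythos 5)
Your proposal is correct, but it takes a genuinely different route from the paper. The paper never works inside $U$: it introduces the auxiliary $\mathbb{P}^2$-bundle $T=\mathbb{P}(\mathcal{O}_V\oplus\mathcal{O}_V(-L)\oplus\mathcal{O}_V(-2L))$, constructs a finite \emph{flat} degree-two morphism $\rho\colon Q\to T$ from an explicit surjection of sheaves (with $\rho^*(t_0)=u_1^-u_2^-$, $\rho^*(t_1)=\tfrac12(u_1^+u_2^-+u_1^-u_2^+)$, $\rho^*(t_2)=u_1^+u_2^+$), identifies its branch divisor with the relative conic $\{t_1^2=t_0t_2\}$ by quoting Fujita's criteria, and only then restricts to the divisor $Y_0=\{\varpi^*(f)t_0=t_2\}\cong Y$, obtaining $X=\rho^*(Y_0)\to Y$ as the restriction. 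Your map $[u^+:u^-]=[v_1+v_2:v_0]$ is in substance the same morphism (under the identification of $X\subset Q$ with the divisor of Remark~\ref{remark:Kento}, the sections $v_0,v_1,v_2$ restrict to $u_1^-u_2^-$, $u_1^+u_2^-$, $u_1^-u_2^+$, so your $(v_0,v_1+v_2)$ matches the paper's $(t_0,2t_1)$), but you build it directly on $X$ and verify everything by hand: base-point-freeness of the pair $(v_0,v_1+v_2)$ on $X$, the fibrewise quadratic $\lambda^2-c_1\lambda+fc_0^2$, and the deck involution $v_1\leftrightarrow v_2$, which also makes the identification with $\iota$ and the swap of $S_1,S_2$ transparent. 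What the paper's packaging buys is flatness (local criterion on $Q\to T$) and the standard cyclic-cover structure essentially for free; what yours buys is elementarity and explicitness. Two small caveats: you lean on Remark~\ref{remark:Kento}, whose proof the paper only sketches, whereas the paper's own argument uses the fully established embedding $X\subset Q$; and you do not address flatness of your finite degree-two map --- harmless for the statement as phrased, since you exhibit the branch divisor $\{(u^+)^2=4\pi^*(f)(u^-)^2\}\in|2S^+|$ directly, but if you later want $\rho_*\mathcal{O}_X\cong\mathcal{O}_Y\oplus\mathcal{O}_Y(-S^+)$ it is worth one extra line (constant fibre length $2$ over the integral variety $Y$, as your quadratic is monic in $\lambda$).
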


\begin{proof}
Let $T=\mathbb{P}(\mathcal{O}_V\oplus\mathcal{O}_V(-L))\oplus\mathcal{O}_V(-2L))$,
let $\varpi\colon T\to V$ be the~natural projection, and let $\xi_T$ be the~tautological line bundle on~$T$.
Observe that
\begin{align*}
H^0\big(T, \mathcal{O}_T(\xi_T)\big) &\cong H^0\big(V, \mathcal{O}_V\big)\oplus H^0\big(V,\mathcal{O}_V(-L)\big)\oplus H^0\big(V,\mathcal{O}_V(-2L)\big), \\
H^0\big(T, \mathcal{O}_T(\xi_T+\varpi^*(L))\big)&\cong H^0\big(V, \mathcal{O}_V\big)\oplus H^0\big(V, \mathcal{O}_V(L)\big)\oplus H^0\big(V, \mathcal{O}_V(-L)\big), \\
H^0\big(T, \mathcal{O}_T(\xi_T+\varpi^*(2L)\big) &\cong H^0\big(V, \mathcal{O}_V\big)\oplus H^0\big(V, \mathcal{O}_V(2L)\big)\oplus H^0\big(V, \mathcal{O}_V(L)\big).
\end{align*}
Using these isomorphisms, fix sections
\begin{align*}
t_0&\in H^0\big(T, \mathcal{O}_T(\xi_T)\big),\\
t_1&\in H^0\big(T, \mathcal{O}_T(\xi_T+\varpi^*(L))\big),\\
t_2&\in H^0\big(T, \mathcal{O}_T(\xi_T+\varpi^*(2L)\big)
\end{align*}
that corresponds to $1\in H^0(V,\mathcal{O}_V)$.
Then
\begin{align*}
\{t_0=0\}&\cong\mathbb{P}\big(\mathcal{O}_V(-L))\oplus\mathcal{O}_V(-2L)\big),\\
\{t_1=0\}&\cong\mathbb{P}\big(\mathcal{O}_V\oplus\mathcal{O}_V(-2L)\big),\\
\{t_2=0\}&\cong\mathbb{P}\big(\mathcal{O}_V\oplus\mathcal{O}_V(-L))\big).
\end{align*}

Now, we consider the~homomorphism
\begin{equation}
\label{involution-quotient}
\mathcal{O}_Q\oplus \mathcal{O}_Q\big(\vartheta^*(L)\big)\oplus\mathcal{O}_Q\big(\vartheta^*(2L)\big)\to \mathcal{O}_{Q}\big(\rho_1^*(\xi_1)+\rho_2^*(\xi_2)\big)
\end{equation}
defined by the~composition of
$$\renewcommand\arraystretch{1.3}
\begin{pmatrix}
1 & 0 & 0\\
0 & \frac{1}{2} & 0\\
0 & \frac{1}{2} & 0\\
0 & 0 & 1
\end{pmatrix}\colon
\mathcal{O}_Q\oplus \mathcal{O}_Q\big(\vartheta^*(L)\big)\oplus\mathcal{O}_Q\big(\vartheta^*(2L)\big)\to
\mathcal{O}_Q\oplus\mathcal{O}_Q\big(\vartheta^*(L)\big)\oplus\mathcal{O}_Q\big(\vartheta^*(L)\big)\oplus\mathcal{O}_Q\big(\vartheta^*(2L)\big)
$$
and the~surjection
$$
\mathcal{O}_Q\oplus\mathcal{O}_Q\big(\vartheta^*(L)\big)\oplus\mathcal{O}_Q\big(\vartheta^*(L)\big)\oplus\mathcal{O}_Q\big(\vartheta^*(2L)\big)\twoheadrightarrow \mathcal{O}_{Q}\big(\rho_1^*(\xi_1)+\rho_2^*(\xi_2)\big)
$$
obtained by the~tensor product of the~pullbacks of the~following
natural surjections
\begin{align*}
\mathcal{O}_{Y_1}\oplus\mathcal{O}_{Y_1}\big(\pi_1^*(L_1)\big)&\twoheadrightarrow\mathcal{O}_{Y_1}(\xi_1), \\
\mathcal{O}_{Y_2}\oplus\mathcal{O}_{Y_2}\big(\pi_2^*(L_2)\big)&\twoheadrightarrow\mathcal{O}_{Y_2}(\xi_2).
\end{align*}
Then~\eqref{involution-quotient} is surjective.
This gives the~morphism $\rho\colon Q\to T$ over $V$
with
\begin{align*}
\rho^*(t_0)&=u_{1}^-u_{2}^-,\\
\rho^*(t_1)&=\frac{1}{2}\left(u_{1}^+u_{2}^-+u_{1}^-u_{2}^+\right),\\
\rho^*(t_2)&=u_{1}^+u_{2}^+,
\end{align*}
where we identified $H^0(Q,\mathcal{O}_{Q}(\rho_i^*(D)))=H^0(Y_i,\mathcal{O}_{Y_i}(D))$ for $D\in\mathrm{Pic}(Y_i)$.

Using the~local criterion for flatness, we see that $\rho$ is flat.
Further, $\rho$ is finite of degree~$2$.
Now, using \cite[I~(6.11)]{takao} and \cite[I~(6.12)]{takao},
we see that the~morphism $\rho$ is branched over the~divisor $B_T\in|2(\xi_T+\varpi^*(L))|$ that is given by $t_1^2-t_0t_2=0$.

Let $Y_0$ be the~divisor in $|\xi_T+\varpi^*(2L)|$ that is given by
$$
\varpi^*(f)t_0-t_2=0,
$$
and let $\pi_0\colon Y_0\to V$ be the~morphism induced by $\varpi$.
Then $X=\rho^*(Y_0)$  as Cartier divisors,
so that the~restriction $X\to Y_0$ is a double cover branched over $B_T\vert_{Y_0}$.
Moreover, using the~exact sequence
$$
0\to\mathcal{O}_{V}(-2L)\xrightarrow{\begin{pmatrix}f\\0\\-1\end{pmatrix}}
\mathcal{O}_V\oplus\mathcal{O}_{V}(-L)\oplus\mathcal{O}_{V}(-2L)\xrightarrow{\begin{pmatrix}
1&0&f\\0&1&0\end{pmatrix}}\mathcal{O}_V\oplus\mathcal{O}_{V}(-L)\to 0,
$$
we get an isomorphism $Y_0\cong Y$ over $V$. Hence, we identify $Y=Y_0$.

Set $B=B_T\vert_Y$. Then $B$ is defined by
$$
(u^+)^2-\pi^*(f)(u^-)^2=0,
$$
which implies the remaining assertions of the proposition.
\end{proof}

Let $\iota\in\mathrm{Aut}(X)$ be the~Galois involution of the~double cover $X\to Y$ in \mbox{Proposition~\ref{proposition:quotient}}.
Then $\iota(S_1)=S_2$ and $\iota(E_1)=E_2$,
and it follows from the~proof of Proposition~\ref{proposition:quotient}
that the~conic bundle $\theta\colon X\to V$ is $\langle\iota\rangle$-equivariant with $\iota$ acting trivially on $V$.

\begin{proposition}
\label{proposition:unobstruction}
Suppose that $V$ is smooth, $L$ is nef, $X$ has Kawamata log terminal singularities, and $-K_X$ is ample.
Then the deformations of $X$ are unobstructed.
\end{proposition}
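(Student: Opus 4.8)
The plan is to deduce the vanishing of the obstruction space of $X$ from Kodaira-type vanishing on $V$ together with the projective-bundle structure of an ambient space. By Remark~\ref{remark:Kento}, applied with $L_1=L_2=L$, the variety $X$ is an effective Cartier divisor on $U=\mathbb{P}(\mathcal{O}_V\oplus\mathcal{O}_V(-L)\oplus\mathcal{O}_V(-L))$ with $X\sim 2\xi_U+\pi_U^*(2L)$; since $V$ is smooth, $U$ is smooth, so $X\hookrightarrow U$ is a Koszul-regular closed immersion and its relative cotangent complex is $\mathbf{L}_{X/U}\simeq\mathcal{O}_X(-X)[1]$. Deformations of $X$ are unobstructed once the obstruction space $T^2(X)=\operatorname{Ext}^2_{\mathcal{O}_X}(\mathbf{L}_X,\mathcal{O}_X)$ vanishes. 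Applying $R\Gamma\big(X,R\mathcal{H}om_{\mathcal{O}_X}(-,\mathcal{O}_X)\big)$ to the transitivity triangle $\Omega^1_U|_X\to\mathbf{L}_X\to\mathbf{L}_{X/U}\xrightarrow{+1}$ produces an exact sequence $H^1(X,\mathcal{O}_X(X))\to T^2(X)\to H^2(X,\mathcal{T}_U|_X)$, so it is enough to prove that $H^1(X,\mathcal{O}_X(X))=0$ and $H^2(X,\mathcal{T}_U|_X)=0$.

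Both of these reduce to cohomology on $U$: using $0\to\mathcal{O}_U\to\mathcal{O}_U(X)\to\mathcal{O}_X(X)\to 0$ the first follows from $H^2(U,\mathcal{O}_U)=0$ and $H^1(U,\mathcal{O}_U(X))=0$, while using $0\to\mathcal{T}_U(-X)\to\mathcal{T}_U\to\mathcal{T}_U|_X\to 0$ the second follows from $H^2(U,\mathcal{T}_U)=0$ and $H^3(U,\mathcal{T}_U(-X))=0$. All four groups are computed by pushing forward along the $\mathbb{P}^2$-bundle $\pi_U\colon U\to V$: one has $\pi_{U*}\mathcal{O}_U(\xi_U)=\mathcal{O}_V\oplus\mathcal{O}_V(-L)^{\oplus 2}$, $R^{\geqslant 1}\pi_{U*}\mathcal{O}_U(k\xi_U)=0$ for $k\geqslant -2$, and $R^\bullet\pi_{U*}\mathcal{O}_U(k\xi_U)=0$ for $k\in\{-1,-2\}$, and one feeds these into the relative Euler sequence $0\to\mathcal{O}_U\to\pi_U^*(\mathcal{O}_V\oplus\mathcal{O}_V(L)^{\oplus 2})\otimes\mathcal{O}_U(\xi_U)\to\mathcal{T}_{U/V}\to 0$ and $0\to\mathcal{T}_{U/V}\to\mathcal{T}_U\to\pi_U^*\mathcal{T}_V\to 0$. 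After this one is left with the vanishing on $V$ of $H^{>0}(V,\mathcal{O}_V)$, of $H^{>0}(V,\mathcal{O}_V(mL))$ for $m\in\{1,2\}$, of $H^{>0}(V,\mathcal{O}_V(-L))$, and of $H^2(V,\mathcal{T}_V)$, together with $R^\bullet\pi_{U*}\mathcal{O}_U(-\xi_U)=R^\bullet\pi_{U*}\mathcal{O}_U(-2\xi_U)=0$, which force $R^\bullet\pi_{U*}(\mathcal{T}_U(-X))=0$ and hence $H^3(U,\mathcal{T}_U(-X))=0$. Now by Proposition~\ref{proposition:CD-Fano}, ampleness of $-K_X$ implies ampleness of $-K_V$, so $H^{>0}(V,\mathcal{O}_V)=0$ and $H^2(V,\mathcal{T}_V)=0$ by Akizuki--Nakano vanishing on the smooth Fano variety $V$ (e.g.\ via $H^2(V,\mathcal{T}_V)\cong H^{\dim V-2}(V,\Omega^1_V\otimes\omega_V)^\vee$). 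Since $L$ is nef and $-K_V$ is ample, $mL-K_V$ is ample for $m\geqslant 0$, whence $H^{>0}(V,\mathcal{O}_V(mL))=0$ by Kodaira vanishing; and since $-K_V-L$ is ample (again by Proposition~\ref{proposition:CD-Fano}), writing $-L\sim K_V+(-K_V-L)$ gives $H^{>0}(V,\mathcal{O}_V(-L))=0$ by Kodaira vanishing. Assembling these facts yields $T^2(X)=0$.

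The bulk of this is routine bookkeeping of twists once the reduction to cohomology on $U$ is set up, so I do not anticipate a genuine obstacle; the care lies in organising the exact sequences correctly. The one substantive point is the pair of Kodaira vanishings on $V$: the vanishing of $H^{>0}(V,\mathcal{O}_V(mL))$ uses that $L$ is nef (so that $mL-K_V$ is ample), while the vanishing of $H^{>0}(V,\mathcal{O}_V(-L))$ uses that $-K_V-L$ is ample, which by Proposition~\ref{proposition:CD-Fano} is exactly the extra input carried by ``$-K_X$ ample'' beyond ``$-K_V$ ample''; this accounts for the hypotheses ``$L$ is nef'' and ``$-K_X$ is ample''. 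The Kawamata log terminal hypothesis on $X$ is not used in the argument beyond guaranteeing that we are dealing with an honest Casagrande--Druel variety.
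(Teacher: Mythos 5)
Your argument is correct, and it follows the same skeleton as the paper's proof (embed $X$ into $U=\mathbb{P}(\mathcal{O}_V\oplus\mathcal{O}_V(-L)\oplus\mathcal{O}_V(-L))$ via Remark~\ref{remark:Kento}, then kill the obstruction space using the conormal/transitivity sequence for the Cartier divisor $X\subset U$), but the execution is genuinely different in two respects. First, the paper does not invoke the full cotangent complex: it reduces unobstructedness to $\operatorname{Ext}^2_{\mathcal{O}_X}(\Omega^1_X,\mathcal{O}_X)=0$ via Sernesi and Sano (using that $X$ is a local complete intersection with canonical singularities), and then Serre-dualizes to $H^{n-2}(X,\Omega^1_X\otimes\omega_X)$; your route through Illusie-type obstruction theory with $\operatorname{Ext}^2(\mathbf{L}_X,\mathcal{O}_X)$ and the triangle $\Omega^1_U|_X\to\mathbf{L}_X\to\mathcal{O}_X(-X)[1]$ is an equally valid packaging of the same reduction. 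Second, and more substantively, the vanishings are obtained differently: the paper stays on the ``$\Omega^1\otimes\omega$'' side and uses Akizuki--Nakano on the smooth ambient $U$ (which requires knowing that $-K_U$ and $-K_U-X$ are ample, and this is where $L$ nef and $-K_V$, $-K_V-L$ ample enter) together with a Kawamata--Viehweg-type vanishing on the singular klt variety $X$ itself for the term $H^{n-1}(X,\mathcal{O}_U(-X)|_X\otimes\omega_X)\simeq H^1(X,K_X+(-K_U)|_X)^\vee$; you instead stay on the tangent side, push everything down the $\mathbb{P}^2$-bundle $\pi_U\colon U\to V$ via the relative Euler sequence and the elementary facts $R^\bullet\pi_{U*}\mathcal{O}_U(-\xi_U)=R^\bullet\pi_{U*}\mathcal{O}_U(-2\xi_U)=0$, and only ever use Kodaira/Nakano vanishing on the smooth Fano base $V$ (with $L$ nef giving $H^{>0}(V,mL)=0$ and $-K_V-L$ ample giving $H^{>0}(V,-L)=0$). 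Your version buys a slightly more elementary proof --- no vanishing theorem on the singular $X$, and no need to check ampleness of $-K_U$ --- and it makes transparent, as you note, that the klt hypothesis plays no role in the unobstructedness argument itself; the paper's version is shorter once one grants Akizuki--Nakano on $U$ and Kawamata--Viehweg on $X$, and avoids the bookkeeping of pushforwards and symmetric powers on $V$. Both uses of the hypotheses match: in each proof ``$-K_X$ ample'' enters exactly through Proposition~\ref{proposition:CD-Fano} (ampleness of $-K_V$ and $-K_V-L$), and ``$L$ nef'' through the positivity of $mL-K_V$.
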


\begin{proof}
By Remark \ref{remark:Kento}, $X$ can be embedded into $U=\mathbb{P}_V\left(\mathcal{O}_V\oplus \mathcal{O}_V(-L)\oplus \mathcal{O}_V(-L)\right)$
such that $X\in |2\xi_U+2\pi^*_U(L)|$, where $\xi_U$ is the tautological line bundle and $\pi_U$ is the natural projection.
Therefore, since $U$ is smooth, the variety $X$ has at worst canonical singularities,
and $X$ has at worst local complete intersection singularities.
Hence, it follows from \cite[Theorem 2.3.2]{sernesi}, \cite[Theorem~2.4.1]{sernesi}, \cite[Corollary 2.4.2]{sernesi},
\cite[Proposition~2.4]{taro}, \cite[Proposition~2.6]{taro} that
the deformations of $X$ are unobstructed if
$\operatorname{Ext}^2_{\mathcal{O}_X}\left(\Omega_X^1,\mathcal{O}_X\right)=0$.

Let us show that $\operatorname{Ext}^2_{\mathcal{O}_X}\left(\Omega_X^1,\mathcal{O}_X\right)=0$.
Set $n=\mathrm{dim}(X)$. As in \cite[\S 1.2]{taro}, we have
$$
\operatorname{Ext}^2_{\mathcal{O}_X}
\left(\Omega_X^1,\mathcal{O}_X\right)\simeq
\operatorname{Ext}^2_{\mathcal{O}_X}
\left(\Omega_X^1\otimes\omega_X,\omega_X\right)\simeq
H^{n-2}\left(X, \Omega_X^1\otimes\omega_X\right)^\vee.
$$
Since $-K_V$ and $-K_V-L$ are ample and $L$ is nef, we see that $\xi_U+\pi^*_U(-K_V)$ is~ample, and $\xi_U+\pi^*_U(L)$ is nef.
In particular, both divisors
\begin{eqnarray*}
    -K_U&\sim&3\xi_U+\pi^*_U(-K_V+2L), \\
    -K_U-X&\sim&\xi_U+\pi^*_U(-K_V)
\end{eqnarray*}
are ample. On the other hand, using the exact sequence of sheaves
$$
0\longrightarrow\mathcal{O}_U(-X)\big|_X\longrightarrow\Omega_U^1\big|_X\longrightarrow\Omega_X^1\longrightarrow 0,
$$
we get the following exact sequence:
$$
H^{n-2}\left(X, \Omega_U^1\big|_X\otimes\omega_X\right)\longrightarrow
H^{n-2}\left(X,\Omega_X^1\otimes\omega_X\right)\longrightarrow
H^{n-1}\left(X,\mathcal{O}_U(-X)\big|_X\otimes\omega_X\right).
$$
Moreover, using the Kodaira-type vanishing theorem, we get
$$
H^{n-1}\left(X,\mathcal{O}_U(-X)\big|_X\otimes\omega_X\right)\simeq H^1\left(X, K_X+(-K_U)\big|_X\right)^\vee=0.
$$
Furthermore, using the~exact sequence of sheaves
$$
0\longrightarrow \Omega_U^1\otimes\omega_U\longrightarrow\Omega_U^1\otimes\omega_U(X)\longrightarrow\Omega_U^1\big|_X\otimes\omega_X\longrightarrow 0,
$$
we get the exact sequence
$$
H^{n-2}\left(U, \Omega_U^1\otimes\omega_U(X)\right)\longrightarrow
H^{n-2}\left(X,\Omega_U^1|_X\otimes\omega_X\right)\longrightarrow
H^{n-1}\left(U,\Omega_U^1\otimes\omega_U\right).
$$
Since both $\omega_U$ and $\omega_U(X)$ are anti-ample,
the Akizuki--Nakano vanishing theorem gives
$$
H^{n-2}\left(U, \Omega_U^1\otimes\omega_U(X)\right)=H^{n-1}\left(U,\Omega_U^1\otimes\omega_U\right)=0.
$$
This gives $\operatorname{Ext}^2_{\mathcal{O}_X}\left(\Omega_X^1,\mathcal{O}_X\right)=0$, which completes the proof.
\end{proof}

\section{K-polystability criteria}
\label{section:delta}

The goal of this section is to prove Theorem~\ref{theorem:log-delta}.
To do this, fix a positive integer~$n\geqslant 3$.
Let $V$ be a smooth projective variety of dimension $n-1$, and let $L$ be an ample Cartier divisor on $V$.
Set $d=L^{n-1}$. Fix $\mu\in\mathbb{Q}_{>0}$ such that $\mu L$ is very ample. Let
$$
Y=\mathbb{P}\big(\mathcal{O}_{V}\oplus \mathcal{O}_{V}(L)\big),
$$
and let $\pi\colon Y\to V$ be the~natural projection.
Set $H=\pi^*(L)$. Let $S^-$ and $S^+$ be disjoint sections of the~projection $\pi$ such that $S^+\sim S^-+H$.

\begin{remark}
\label{remark:warning}
Unlike Section~\ref{section:intro}, we do not assume that $V$ is a Fano variety.
\end{remark}

Fix a positive rational number $a\geqslant 1$. Let $D(a)=S^-+aH$. Then $D(a)$ is nef and big.
Moreover, if $a>1$, then $D(a)$ is ample.

\begin{lemma}[{cf. \cite{ZhangZhou}}]
\label{lemma:Kento}
Let $P$ be a point in $S^-$. Then
$$
\delta_P(Y;D(a))\geqslant\mathrm{min}\Bigg\{\frac{(n+1)(a^n-(a-1)^n)}{(n+1-a)a^n+(a-1)^{n+1}},\frac{\delta(V;L)(n+1)(a^{n}-(a-1)^{n})}{n(a^{n+1}-(a-1)^{n+1})}\Bigg\},
$$
where $\delta_P(Y;D(a))$  is the~(local) $\delta$-invariant of the~variety $Y$ polarized by the~divisor $D(a)$,
and $\delta(V;L)$ is  the~$\delta$-invariant of $V$ polarized by $L$. Further, if $\delta(V;L)\leqslant a$, then
$$
\delta_P(Y;D(a))\geqslant\frac{\delta(V;L)(n+1)(a^{n}-(a-1)^{n})}{n(a^{n+1}-(a-1)^{n+1})}.
$$
\end{lemma}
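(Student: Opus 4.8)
The plan is to apply the Abban--Zhuang method with respect to the section $S^-$ (cf.\ \cite{ZhangZhou}). Since $S^-$ is a smooth prime divisor on the smooth variety $Y$ and $P\in S^-$, we have $A_Y(S^-)=1$, and the Abban--Zhuang inequality gives
$$
\delta_P\big(Y;D(a)\big)\ \geqslant\ \mathrm{min}\Bigg\{\frac{1}{S_{D(a)}(S^-)},\ \delta_P\big(S^-;W^{S^-}_{\bullet,\bullet}\big)\Bigg\},
$$
where $W^{S^-}_{\bullet,\bullet}$ is the refinement of the complete linear system $|D(a)|$ by $S^-$. So the proof reduces to two explicit computations on the $\mathbb{P}^1$-bundle $\pi\colon Y\to V$, for which I would use the isomorphism $S^-\cong V$ induced by $\pi$ (under which $H|_{S^-}\sim L$ and $S^-|_{S^-}\sim -L$) and the identity $D(a)-tS^-=aH+(1-t)S^-$.

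First I would compute the first term. For $t\in[0,1]$ the divisor $D(a)-tS^-=aH+(1-t)S^-$ is nef, being a non-negative combination of the nef divisors $H$ and $S^+=S^-+H$, and a direct intersection computation on the $\mathbb{P}^1$-bundle gives $\mathrm{vol}\big(D(a)-tS^-\big)=d\big(a^n-(a-1+t)^n\big)$; for $t>1$ the divisor $D(a)-tS^-$ meets a general fibre of $\pi$ negatively and so is not pseudo-effective. In particular $\mathrm{vol}(D(a))=d(a^n-(a-1)^n)$, and integrating yields
$$
S_{D(a)}(S^-)=\frac{1}{d(a^n-(a-1)^n)}\int_0^1 d\big(a^n-(a-1+t)^n\big)\,dt=\frac{(n+1-a)a^n+(a-1)^{n+1}}{(n+1)\big(a^n-(a-1)^n\big)},
$$
so $1/S_{D(a)}(S^-)$ equals the first quantity under the minimum; its denominator is positive for every $a\geqslant 1$ by Bernoulli's inequality.

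Next I would analyse the refined series. Because $D(a)-tS^-$ is nef for $t\in[0,1]$ and not pseudo-effective for $t>1$, the multigraded system $W^{S^-}_{\bullet,\bullet}$ is, at ``level $t\in[0,1]$'', the complete linear system of $\big(D(a)-tS^-\big)\big|_{S^-}\sim(a-1+t)L$ on $S^-\cong V$. Hence, for a prime divisor $F$ over $V$, the Abban--Zhuang formula for a refinement by one divisor, together with the scalings $S_{cL}(F)=c\,S_L(F)$ and $\int_0^\infty\mathrm{vol}_V(cL-uF)\,du=c^{n}d\,S_L(F)$, gives
$$
S\big(W^{S^-}_{\bullet,\bullet};F\big)=\frac{n}{\mathrm{vol}(D(a))}\int_0^1\!\!\int_0^\infty\mathrm{vol}_V\big((a-1+t)L-uF\big)\,du\,dt=\frac{n\big(a^{n+1}-(a-1)^{n+1}\big)}{(n+1)\big(a^n-(a-1)^n\big)}\,S_L(F).
$$
Taking the infimum over the $F$ whose centre on $V\cong S^-$ contains $\pi(P)$, and using $\delta_{\pi(P)}(V;L)\geqslant\delta(V;L)$, gives
$$
\delta_P\big(S^-;W^{S^-}_{\bullet,\bullet}\big)=\frac{(n+1)\big(a^n-(a-1)^n\big)}{n\big(a^{n+1}-(a-1)^{n+1}\big)}\,\delta_{\pi(P)}(V;L)\ \geqslant\ \frac{(n+1)\big(a^n-(a-1)^n\big)}{n\big(a^{n+1}-(a-1)^{n+1}\big)}\,\delta(V;L),
$$
which together with the displayed Abban--Zhuang inequality proves the first assertion.

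For the last assertion, assuming $\delta(V;L)\leqslant a$, it suffices to check that the second quantity under the minimum is at most the first. After cancelling the common positive factor $(n+1)\big(a^n-(a-1)^n\big)$ and using $\delta(V;L)\leqslant a$, this amounts to $n\big(a^{n+1}-(a-1)^{n+1}\big)\geqslant a\big((n+1-a)a^n+(a-1)^{n+1}\big)$, i.e.\ $(a-1)\big(a^{n+1}-(a+n)(a-1)^n\big)\geqslant 0$, which holds because $a\geqslant 1$ and $\big(1+\tfrac1{a-1}\big)^n\geqslant 1+\tfrac n{a-1}\geqslant 1+\tfrac na$ by Bernoulli's inequality. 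I expect the main obstacle to be the bookkeeping in the refinement step: one must verify that $S^-$ lies on the boundary of the nef cone, so that the Zariski decomposition of $D(a)-tS^-$ is trivial on $[0,1]$ and its pseudo-effective threshold equals exactly $1$, and that the restricted series is genuinely the complete linear system of $(a-1+t)L$ with no lower-order corrections, so that the reduction to $\delta(V;L)$ is clean; the intersection-theoretic and elementary parts are then routine.
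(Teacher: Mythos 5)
Your proposal is correct and follows essentially the same route as the paper: the Abban--Zhuang inequality with the flag given by the section $S^-$, the computation $S_{D(a)}(S^-)=\frac{(n+1-a)a^n+(a-1)^{n+1}}{(n+1)(a^n-(a-1)^n)}$ using nefness of $D(a)-tS^-$ on $[0,1]$ and pseudo-effective threshold $1$, the scaling identity giving $S\big(W^{S^-}_{\bullet,\bullet};F\big)=\frac{n(a^{n+1}-(a-1)^{n+1})}{(n+1)(a^n-(a-1)^n)}S_L(F)$, and the elementary Bernoulli-type inequality for the case $\delta(V;L)\leqslant a$. All intermediate formulas agree with the paper's, so there is nothing to correct.
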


\begin{proof}
It follows from \cite{AbbanZhuang,Book} that
$$
\delta_P(Y;D(a))\geqslant\min\left\{\frac{1}{S_{D(a)}(S^-)},\inf_{\substack{F/S^-\\P\in C_{S^-}(F)}} \frac{A_{S^-}(F)}{S(W^{S^-}_{\bullet,\bullet};F)}\right\},
$$
where $S(W^{S^-}_{\bullet,\bullet};F)$ is defined in \cite[Section~1.7]{Book},
and the~infimum is taken over all prime divisors over $S^-$ whose centers on $S^-$ contain $P$.
This easily implies the~required assertion.

Indeed, take $u\in\mathbb{R}_{\geqslant 0}$.
Then $D(a)-uS^-\sim_{\mathbb{R}}(1-u)S^-+aH$, so that
\begin{center}
$D(a)-uS^-$ is nef $\iff$ $D(a)-uS^-$ is pseudo-effective $\iff$ $u\leqslant 1$.
\end{center}
Thus, since $\mathrm{vol}(D(a))=D(a)^n=d(a^n-(a-1)^n)$, we have
\begin{multline*}
S_{D(a)}(S^-)=\frac{1}{D(a)^n}\int_0^{\infty} \mathrm{vol}(D(a)-uS^-)du=\\
=\frac{1}{d(a^n-(a-1)^n)}\int_0^{1}((1-u-a)^n(-1)^{n+1}d+a^nd)du=\frac{(n+1-a)a^n+(a-1)^{n+1}}{(n+1)(a^n-(a-1)^n)}.
\end{multline*}
Using $S^-\cong V$, we get $(D(a)-uS^-)\vert_{S^-}\sim_{\mathbb{R}}(a+u-1)H\vert_{S^-}\sim_{\mathbb{R}}(a+u-1)L$.

Let $F$ be any prime divisor over $S^-$. Then it follows from \cite[Section~1.7]{Book} that
\begin{align*}
    S(W^{S^-}_{\bullet,\bullet};F) &= \frac{n}{D(a)^n}\int_0^1\int_0^{\infty} \mathrm{vol}((D(a)-uS^-)\vert_{S^-}-vF)dvdu \\
    &=\frac{n}{D(a)^n}\int_0^1\int_0^{\infty} \mathrm{vol}((a+u-1)L-vF)dvdu \\
     &=\frac{n}{D(a)^n}\int_0^1(a+u-1)^n\int_0^{\infty} \mathrm{vol}(L-vF)dvdu \\
      &=\frac{n}{d(a^n-(a-1)^n)} \cdot \frac{a^{n+1}-(a-1)^{n+1}}{n+1}\int_0^{\infty} \mathrm{vol}(L-vF)dv \\
       &=\frac{n}{n+1}\frac{a^{n+1}-(a-1)^{n+1}}{d(a^n-(a-1)^n)} \cdot L^{n-1} S_L(F) \\
       &=\frac{n}{n+1}\frac{a^{n+1}-(a-1)^{n+1}}{a^n-(a-1)^n} S_L(F).
\end{align*}
This gives
$$
\frac{A_{S^-}(F)}{ S(W^{S^-}_{\bullet,\bullet};F)} = \frac{A_{S^-}(F)}{ S_L(F)} \cdot \frac{n+1}{n}\cdot \frac{a^n-(a-1)^n}{a^{n+1}-(a-1)^{n+1}}\leqslant\delta_P(V;L)\cdot \frac{n+1}{n}\cdot \frac{a^n-(a-1)^n}{a^{n+1}-(a-1)^{n+1}},
$$
which implies the~first part of the~assertion.

We now assume $\delta(V;L) \leqslant a$ and we want to show
$$
\frac{(n+1)(a^n-(a-1)^n)}{(n+1-a)a^n+(a-1)^{n+1}}\geqslant \frac{\delta(V;L)(n+1)(a^{n}-(a-1)^{n})}{n(a^{n+1}-(a-1)^{n+1})}.
$$
This inequality is equivalent to
$$
\delta(V;L) \leqslant \frac{n(a^{n+1}-(a-1)^{n+1})}{(n+1-a)a^n+(a-1)^{n+1}}
$$
We must show that the~right hand side of the~inequality above is at least $a$. But
$$
\frac{n(a^{n+1}-(a-1)^{n+1})}{(n+1-a)a^n+(a-1)^{n+1}} > a \iff a^{n+1}(a-1)-(a-1)^{n+1}(a+n)>0,
$$
which is clearly true.
\end{proof}

Now, fix a smooth divisor $B\in |2S^+|$.
Let $\eta\colon B\to V$ be the~morphism induced by $\pi$.
Suppose that $\eta$ is the double cover ramified over a smooth divisor $R\in |2L|$.
Set
$$
\Delta=\frac{1}{2}B.
$$
Note that $B\cap S^-=\varnothing$.  Let $k_n(a,d,\mu)$ be the~number defined in Theorem~\ref{theorem:delta}.

\begin{proposition}
\label{proposition:delta}
Let $P$ be a point in $Y\setminus S^-$. Suppose that $d\mu^{n-2}\geqslant 2$. Then
$$
\delta_P(Y,\Delta;D(a))\geqslant \frac{1}{k_n(a,d,\mu)},
$$
where $\delta_P(Y,\Delta;D(a))$  is the~(local) $\delta$-invariants of the~pair  $(Y,\Delta)$ polarized by $D(a)$.
\end{proposition}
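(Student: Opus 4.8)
The plan is to bound $\delta_P(Y,\Delta;D(a))$ below by the Abban--Zhuang method (\cite{AbbanZhuang}, \cite[\S1.7]{Book}) applied to a complete flag on $Y$ adapted to the $\mathbb P^1$-bundle $\pi$, arranged so that the ``horizontal'' steps of the flag produce the factor $d\mu^{n-2}$, so that $\delta(V;L)$ never enters the estimate (which is why the bound here is cleaner than in Lemma~\ref{lemma:Kento}), while the final ``vertical'' step produces $\lambda_n(a)$.

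Put $c=\pi(P)\in V$ and $\ell=\pi^{-1}(c)\cong\mathbb P^1$. Since $\mu L$ is very ample, Bertini lets us pick general smooth members $Z_i\in|\mu L|_{Z_{i-1}}|$ through $c$ (with $Z_0=V$ and $1\le i\le n-2$), of codimension $i$ in $V$, transverse to $R$, with $Z_{n-2}$ a smooth curve; set $W_i=\pi^{-1}(Z_i)$, a smooth divisor on $Y$ with $W_i\sim\mu H|_{W_{i-1}}$, passing through $P$ and not contained in $\operatorname{Supp}\Delta=\operatorname{Supp}B$, so that $W_{n-2}$ is a $\mathbb P^1$-bundle over $Z_{n-2}$ containing $\ell$. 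Completing with a curve $C\subset W_{n-2}$ through $P$ and then with $P$, Abban--Zhuang gives
\[
\delta_P(Y,\Delta;D(a))\ \ge\ \min_j\ \frac{A_j}{S_j}
\]
over the members of $Y\supset W_1\supset\cdots\supset W_{n-2}\supset C\supset\{P\}$. All the $S_j$ come from elementary Zariski decompositions: $D(a)-uH=S^-+(a-u)H$ is nef for $0\le u\le a-1$ and has positive part $(a-u)S^+$ for $a-1\le u\le a$, while $D(a)-uS^+=(1-u)S^++(a-1)H$ is nef for $0\le u\le1$, so $S_{D(a)}(H)=\frac{a^{n+1}-(a-1)^{n+1}}{(n+1)(a^n-(a-1)^n)}$ and $S_{D(a)}(S^+)=2\lambda_n(a)$. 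For the horizontal steps $A_{W_{i-1},\,\Delta|_{W_{i-1}}}(W_i)=1$, and a direct computation of the nested integrals shows every such ratio is $\ge 1/\bigl(S_{D(a)}(H)\,d\mu^{n-2}\bigr)\ge 1/k_n(a,d,\mu)$, using $d\ge1$. If $P$ does not lie on $B\cap S^+$ one takes $C=\ell$: then $A(C)=1$, and $A(P)=1-\operatorname{mult}_P\!\bigl(\tfrac12 B|_\ell\bigr)\ge\tfrac12$ since $B|_\ell$ has multiplicity $\le1$ at $P$ — the only non-reduced fibre is $B|_\ell=2(S^+\cap\ell)$ over $c\in R$, whose support lies on $S^+$, hence off $P$, and $B|_\ell$ never meets $S^-$ as $B\cap S^-=\varnothing$. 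Computing $S(W^{C}_{\bullet};P)$ from the same decompositions, the last ratio works out to be $\ge1/k_n(a,d,\mu)$, and it is exactly here that the hypothesis $d\mu^{n-2}\ge2$ is used, to dominate the $\lambda_n(a)$ term.

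The crux is the remaining locus $P\in B\cap S^+$, which is isomorphic to $R$ (the image of $E_1\cap E_2\subset X$): there $B$ is tangent to $S^+$ along $R$, so $C=\ell$ would give $\operatorname{mult}_P\!\bigl(\tfrac12 B|_\ell\bigr)=1$, i.e. $A(P)=0$, and is useless. One must choose $C$ more carefully — for instance as the component of $B\cap W_{n-2}$ through $P$, for which $A(C)=\tfrac12$ but $A(P)=1$, at the cost of computing $S(W^{C}_{\bullet};P)$ along a curve that is a double cover of $Z_{n-2}$ — or, alternatively, run for $P\in S^+$ the flag $Y\supset S^+\supset(S^+\cap W_1)\supset\cdots$, which (using $\operatorname{Diff}_{S^+}(\Delta)=\tfrac12R$ on $S^+\cong V$ and the identity $S(W^{S^+}_{\bullet};F)=n\,S_{D(a)}(H)\,S_L(F)$) reduces the estimate to a lower bound for $\delta_P\bigl(V,\tfrac12R;L\bigr)$ supplied by the analogous $|\mu L|$-flag on $V$ transverse to $R$; verifying that this lower bound still beats $1/k_n(a,d,\mu)$ when $d\mu^{n-2}\ge2$ is the delicate point. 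Granting this last case, taking the minimum over the flag in each range of $P$ gives $\delta_P(Y,\Delta;D(a))\ge1/k_n(a,d,\mu)$.
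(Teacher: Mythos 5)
Your overall strategy (Abban--Zhuang along a flag of $\pi$-preimages of general members of $|\mu L|$ through $\pi(P)$, so that the horizontal steps contribute the factor $d\mu^{n-2}$ and no $\delta(V)$-type invariant enters) is indeed the same geometric idea the paper uses, but your argument has a genuine gap exactly at the crucial case, and you say so yourself: for $P\in B\cap S^+$ --- equivalently, when the fiber $\ell$ through $P$ is tangent to $B$ at $P$, since over $c\in R$ one has $B\vert_\ell=2(S^+\cap\ell)$ --- you only sketch two possible flags and then write ``granting this last case.'' Neither fallback is carried out, and neither is obviously adequate. For the choice $C\subset B\cap W_{n-2}$ you get $A_{F_0\text{-type}}(C)=\tfrac12$, so you would need $S(W^{C}_{\bullet};\cdot)\leqslant\tfrac12 k_n(a,d,\mu)$, a strictly stronger inequality that you never verify. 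For the flag through $S^+$ you reduce to a lower bound for $\delta_P\big(V,\tfrac12R;L\big)$ at points of $R$, which reintroduces precisely the kind of dependence on $V$ that the statement of Proposition~\ref{proposition:delta} avoids; there is no a priori reason such a bound beats $1/k_n(a,d,\mu)$ using only ``$\mu L$ very ample and $R$ smooth,'' and checking it against $k_n$, which carries the $d\mu^{n-2}$ term, is the whole content of the missing case. The easier assertions (the horizontal ratios, the value $S_{D(a)}(S^+)=2\lambda_n(a)$, the transversal case with $C=\ell$) are plausible but are asserted rather than computed, so even there the claimed inequalities against $k_n(a,d,\mu)$ remain to be checked.

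For comparison, the paper proves the proposition by induction on $n$ (Section~\ref{subsection:induction}): the base case $n=3$ (Proposition~\ref{proposition:induction-step}) treats a point $P\in B$ on the surface $Y_1=\pi^*(V_1)$, $V_1\in|\mu L|$ general through $\pi(P)$, splitting into the transversal case (ordinary blow up of $P$) and the tangential case, which is handled in Lemma~\ref{lem:pinBdim3-tangential} by a $(1,2)$-weighted blow up of $P$ chosen so that the proper transforms of $B\vert_{Y_1}$ and of the fiber are separated, together with explicit two-parameter Zariski decompositions; the induction step then restricts to $Y_{n-1}=\pi^*(V_{n-1})$ and uses the identity $k_n(a,d,\mu)=S_{D(a)}(Y_{n-1})\,d\mu^{n-1}+\mathrm{Res}_n(a)$ of Lemma~\ref{res} to propagate the bound, applying the inductive estimate to the positive parts $S^-+(a-\mu u)H$ and $(a-\mu u)(S^-+H)$. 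In other words, the tangency along $B\cap S^+\cong R$ is resolved once and for all in dimension three by the weighted blow up, and never requires any information about $\delta_P(V,\tfrac12R;L)$; supplying an analogue of that local analysis (or the inductive mechanism that reduces to it) is exactly what your proposal is missing.
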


This result together with Lemma~\ref{lemma:Kento} implies Theorem~\ref{theorem:log-delta}.

\begin{proof}[Proof of Theorem~\ref{theorem:log-delta}.]
Note
that $V$ is a Fano variety and $-K_V\sim_{\mathbb{Q}} aL$.
Then
$$
-K_Y\sim 2S^+-\pi^*(K_V+L)\sim_{\mathbb{Q}} 2S^++(a-1)H,
$$
which gives
$$
-(K_Y+\Delta)\sim_{\mathbb{Q}} S^++(a-1)H\sim_{\mathbb{Q}} S^-+aH=D(a),
$$
so that $(Y,\Delta)$ is the~log Fano pair and
$$
\delta(Y,\Delta)=\delta(Y,\Delta;D(a)),
$$
where $\delta(Y,\Delta)$ is the~$\delta$-invariant of the~log Fano pair $(Y,\Delta)$.
Now, we can apply Lemma~\ref{lemma:Kento} and Proposition~\ref{proposition:delta} to get the~required assertion.
 \end{proof}

In the~remaining part of the~section, we will prove Proposition~\ref{proposition:delta} by induction on $n$

\subsection{Base of induction}
\label{subsection:threefolds}

Let $V$ be a smooth projective surface, let $L$ be an ample Cartier divisor on $V$,
let $\mu$ be the~smallest rational number such that $\mu L$ is very ample, let
$$
Y=\mathbb{P}\big(\mathcal{O}_{V}\oplus \mathcal{O}_{V}(L)\big),
$$
and let $\pi\colon Y\to V$ be the~natural projection. Set $H=\pi^*(L)$.
Let $S^-$ and $S^+$ be disjoint sections of the~projection $\pi$ such that $S^+\sim S^-+H$,
and let $B$ be an irreducible normal surface in $|2S^+|$
such that $\pi$ induces a double cover $B\to V$ which is ramified in a reduced curve $R\in |2L|$.
Fix $a\in\mathbb{Q}$ such that $a\geqslant 1$. Let
$$
D(a)=S^-+aH.
$$
Then $D(a)$ is nef and big, and $D(a)$ is ample for $a>1$. Set $\Delta=\frac{1}{2}B$ and $d=L^2$.

\begin{remark}
\label{remark:d-mu}
Since $\mu L$ is very ample and $L$ is Cartier, we have $d\mu=(\mu L)\cdot L\in\mathbb{Z}_{>0}$ and
$$
d\mu^2=(\mu L)^2\in\mathbb{Z}_{>0}.
$$
Moreover, if $d\mu=1$, then $\mu=1$, $d=L^2=1$, $V=\mathbb P^2$ and $L=\mathcal{O}_{\mathbb P^2}(1)$.
\end{remark}

Suppose, in addition, that $d\mu\geqslant 2$. Set
$$
k_3(a,d,\mu)=\frac{8d \mu a^3+6(1-2d\mu)a^2+8(d\mu-1)a-2d \mu+3}{8(3a^2-3a+1)}.
$$
Let $P$ be a point in $Y$ such that $P\not\in S^-$ and $P\not\in\mathrm{Sing}(B)$.

\begin{proposition}
\label{proposition:induction-step}
One has $\delta_P(Y,\Delta;D(a)) \geqslant\frac{1}{k_3(a,d,\mu)}$.
\end{proposition}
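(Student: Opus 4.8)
I want to bound the local $\delta$-invariant $\delta_P(Y,\Delta;D(a))$ from below by applying the Abban--Zhuang method to a suitable flag on $Y$. Since $P\notin S^-$ and $S^+\cap S^-=\varnothing$, the point $P$ either lies on $S^+$ or on a fiber $f=\pi^{-1}(\pi(P))$ disjoint from both sections; in all cases the natural divisor to use as the first member of the flag is $S^+$ itself, or rather its proper transform after possibly blowing up, because $B\sim 2S^+$ and the pair degenerates along $S^+$. So the first step is to compute the pseudo-effective threshold and the Zariski decomposition of $D(a)-uS^+$ for $u\geqslant 0$: writing $D(a)=S^-+aH$ and $S^+\sim S^-+H$, one has $D(a)-uS^+\sim_{\mathbb R}(1-u)S^-+(a-u)H$, which is pseudo-effective precisely for $u\leqslant 1$, nef for $u\leqslant a-1$ (roughly), and for $u$ between $a-1$ and $1$ one must subtract the appropriate multiple of $S^-$ in the Zariski decomposition. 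This gives $S_{D(a),\Delta}(S^+)$ as an explicit integral, and the combination $\frac{A_{Y,\Delta}(S^+)}{S_{D(a),\Delta}(S^+)}$ with $A_{Y,\Delta}(S^+)=1-\frac12=\frac12$ (because $B\sim 2S^+$ so $\operatorname{ord}_{S^+}\Delta=\frac12$, hmm — actually $B$ and $S^+$ are distinct members of $|2S^+|$-adjacent systems, so $\Delta$ does not contain $S^+$; I must check whether $B$ passes through $S^+$, and since $B$ is defined by $(u^+)^2=\pi^*(f)(u^-)^2$ it meets $S^+=\{u^+=0\}$ along $\{u^-=0\}\cap\ldots$, i.e. $B\cap S^+$ maps isomorphically to $R\subset V$) — so in fact $S^+\not\subset B$ and $A_{Y,\Delta}(S^+)=1$.

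**Second step: the refined invariant on $S^+\cong V$.** By Abban--Zhuang (as recalled in Lemma~\ref{lemma:Kento}'s proof),
$$
\delta_P(Y,\Delta;D(a))\geqslant\min\left\{\frac{A_{Y,\Delta}(S^+)}{S_{D(a),\Delta}(S^+)},\ \inf_{F/S^+,\,P\in C_{S^+}(F)}\frac{A_{S^+,\Delta_{S^+}}(F)}{S(W^{S^+}_{\bullet,\bullet};F)}\right\},
$$
where $\Delta_{S^+}$ is the different, which here is $\frac12 R$ (the restriction of $\Delta$ to $S^+$ via the identification $S^+\cong V$, since $B\cap S^+\cong R$). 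Now I must compute $S(W^{S^+}_{\bullet,\bullet};F)$ for $F$ a prime divisor (a point, since $\dim S^+=2$) over $S^+\cong V$. Because $(D(a)-uS^+)|_{S^+}\sim_{\mathbb R}(a-1+u)L$ up to the Zariski-decomposition correction, the double integral collapses, after evaluating the $u$-integral, to a constant times $\operatorname{vol}(L-vF)$ integrated against $v$, i.e. to a constant times $L\cdot S_L(F)=d\,S_L(F)$ plus lower-order contributions from the $\Delta$-part. Dividing $A_{S^+,\frac12 R}(F)$ by this and using $\delta_P(V,\tfrac12 R;L)$ — or just crude bounds on $A/S_L$ for a smooth surface $V$ with $\mu L$ very ample — produces a bound in terms of $a$, $d$, $\mu$ only. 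The explicit constant $k_3(a,d,\mu)=\frac{8d\mu a^3+6(1-2d\mu)a^2+8(d\mu-1)a-2d\mu+3}{8(3a^2-3a+1)}$ should emerge as $\max$ of the two pieces $S_{D(a),\Delta}(S^+)$ and the refined $S$-invariant, which is why the proposition is stated with $\min$ over their reciprocals rewritten as a single $\frac{1}{k_3}$.

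**The main obstacle.** The delicate point is the estimate of $\frac{A_{S^+,\frac12R}(F)}{S(W^{S^+}_{\bullet,\bullet};F)}$ over the surface $S^+\cong V$ uniformly in $P$: one cannot simply invoke $\delta(V;L)$ because the pair carries the boundary $\frac12R$ and $R$ can be an arbitrary smooth curve in $|2L|$, so $\delta(V,\frac12R;L)$ is not controlled. The trick — this is where the hypothesis $d\mu\geqslant2$ and the very-ampleness of $\mu L$ are used — is to bound $A_{S^+,\frac12R}(F)/S_L(F)\geqslant$ (something explicit) by a second application of Abban--Zhuang on $S^+$: choose a general curve $C\in|\mu L|$ through $\pi(P)$ (or through the center of $F$), estimate $S_L(C)$ and then the one-dimensional invariant on $C\cong$ a curve, where $R|_C$ has degree $2d\mu$ and $K_C$ has the genus given by adjunction on $V$. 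The arithmetic identity at the end (that the resulting minimum really equals $1/k_3(a,d,\mu)$, and in particular that the inequality $d\mu\geqslant2$ forces the surface-term to dominate correctly) is the calculation I would defer; I expect it to reduce, after clearing denominators, to checking positivity of a cubic in $a$ with coefficients linear in $d\mu$, valid for all $a\geqslant1$ and $d\mu\geqslant2$, exactly as in Remark~\ref{remark:dmu-1}. This base case $n=3$ then feeds the induction in Proposition~\ref{proposition:delta} by slicing with a general member of $|\mu H|$ and using that such a slice is again a variety of the same type in dimension one less.
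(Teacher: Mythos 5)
There is a genuine gap at the very first step: the Abban--Zhuang inequality you invoke bounds $\delta_P$ only when the first element of the flag passes through $P$ (the infimum in the second term runs over divisors over the flag divisor whose centers contain $P$). Your proposed flag starts with $S^+$, but the proposition must be proved for \emph{every} $P\in Y\setminus S^-$ away from $\operatorname{Sing}(B)$, and the hard points are the generic points of $B$, which do \emph{not} lie on $S^+$ (indeed $B\cap S^+$ is just a curve isomorphic to $R$). Saying that the pair ``degenerates along $S^+$'' because $B\sim 2S^+$ does not help: $\Delta=\frac12 B$ is a concrete divisor and $\operatorname{ord}_{S^+}\Delta=0$, as you yourself conclude. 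So for $P\notin S^+$ your construction gives no bound on $\delta_P$ at all, and in particular it never sees the case that actually determines the constant $k_3(a,d,\mu)$, namely a point $P\in B$ at which the fiber $f=\pi^{-1}(\pi(P))$ is tangent to $B$. Your fallback idea (slicing $S^+\cong V$ by a curve in $|\mu L|$) only refines the analysis on $S^+$ and does not repair this. There are also smaller slips: $D(a)-uS^+\sim_{\mathbb R}(1-u)S^-+(a-u)H$ is nef exactly for $u\leqslant 1$ (intersect with a fiber), so there is no intermediate Zariski-decomposition regime of the kind you describe, and your first guess $A_{Y,\Delta}(S^+)=\tfrac12$ (later corrected) reflects the same confusion about whether $S^+$ lies in the boundary.

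The paper's proof takes a different flag, which is the fix you would need: it sets $Y_1=\pi^*(V_1)$ for a \emph{general} curve $V_1\in|\mu L|$ through $\pi(P)$ (so $Y_1$ always contains $P$, wherever $P$ sits), computes $S_{D(a)}(Y_1)$ via the Zariski decomposition of $D(a)-uY_1$, and then performs a plt blow up $h\colon\widetilde Y_1\to Y_1$ at $P$ --- an ordinary blow up when $B$ meets the fiber $f$ transversally at $P$, and a $(1,2)$-weighted blow up when $B$ is tangent to $f$ --- estimating $S(V^{Y_1}_{\bullet,\bullet};E)$ and then $S(V^{\widetilde Y_1,E}_{\bullet,\bullet,\bullet};Q)$ for points $Q$ of the exceptional curve, with the different accounting for $\frac12\widetilde B_1$ and the quotient singularity. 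Each of the resulting quantities is then bounded by $k_3(a,d,\mu)$ using $d\mu\geqslant 2$ and $d\mu^2\geqslant 1$; it is the tangential, weighted-blow-up case that produces $k_3$ on the nose. Your closing remark about the induction on $n$ by slicing with members of $|\mu L|$ is consistent with the paper's Section on the induction step, but the base case as you set it up does not go through.
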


In the~remaining part of this subsection, we will prove this result.
We will only consider the~case $P\in B$, because the~case $P\not\in B$ is much simpler.

Let $V_1$ be a general curve in $|\mu L|$ that contains the~point $\pi(P)$,
and let $Y_1=\pi^*(V_1)$.
Then $V_1$ is a smooth curve, and $Y_1$ is a smooth surface. For simplicity, we set $D=D(a)$.
Take $u\in \mathbb R_{\geqslant 0}$.
Then
$$
D-uY_1 \sim_{\mathbb{R}} S^-+(a-\mu u)H,
$$
so that $D-uY_1$ is pseudo-effective $\iff$ $u\leqslant \frac{a}{\mu}$. We have
$$
(D-uY_1)\big\vert_{S^-}\sim_{\mathbb{R}} (S^-+(a-\mu u)H)\big\vert_{S^-}\sim_{\mathbb{R}} (a-1-\mu u)L,
$$
where we use isomorphism $S^-\cong V$ induced by $\pi$.
Hence, the~divisor $D-uY_1$ is nef if and only if $u \leqslant \frac{a-1}{\mu}$.
Moreover, the~Zariskis decomposition of $D-uY_1$ is
\begin{align*}
P(u)\equiv
\begin{cases}
S^-+(a-\mu u)H & \text{if}\ u\in[0,\frac{a-1}{\mu}],\\
(a-\mu u)(S^-+H)=(a-\mu u)S^+ & \text{if}\ u\in[\frac{a-1}{\mu},\frac{a}{\mu}],
\end{cases}
\end{align*}
and
\begin{align*}
N(u)=
\begin{cases}
0 & \text{if}\ u\in [0,\frac{a-1}{\mu}],\\
(\mu u +1-a)S^- & \text{if}\ u\in [\frac{a-1}{\mu},\frac{a}{\mu}],
\end{cases}
\end{align*}
where $P(u)$ is the~positive part, and $N(u)$ is the~negative part.

Note that $H^3=0$, $H^2 \cdot S^-=d$, $H\cdot (S^{-})^2=-d$, $(S^-)^3=d$. Then
\begin{align*}
S_{D}(Y_1)& =\frac{1}{D^3}\int_0^{\frac{a}{\mu}} \mathrm{vol}(D-uY_1)du\\
& =\frac{1}{(S^-+aH)^3}\Bigg(\int_0^{\frac{a-1}{\mu}} (S^-+(a-\mu u)H)^3du+ \int_{\frac{a-1}{\mu}}^{\frac{a}{\mu}} ((a-\mu u)(S^-+H))^3du \Bigg) \\
&=\frac{(2a-1)(2a^2-2a+1)}{4\mu(3a^2-3a+1)}.
\end{align*}

Let $f$ be the~fiber of the~$\mathbb{P}^1$-bundle $\pi$ that contains $P$.
Then there are two cases to consider: either $B$ intersects $f$ transversely at $P$ or tangentially.
For each case, we consider an appropriate plt blow up $h \colon \widetilde{Y}_1\rightarrow Y_1$ at the~point $P$
with smooth exceptional curve $E$.
We let $\Delta_1=\Delta\vert_{Y_1}$, and we denote by $\widetilde{\Delta}_1$ the~proper transform on $\widetilde{Y}_1$ of the~divisor $\Delta_1$.
Then it follows from \cite{AbbanZhuang,Book,Fujita2021} that
$$
\delta_P(Y,\Delta)\geqslant \min\bigg\{\frac{1}{S_{D}(Y_1)}, \frac{A_{Y_1,\Delta_1}(E)}{S(V^{Y_1}_{\bullet,\bullet};E)},
\inf_{\substack{Q \in E}} \frac{A_{E,\Delta_E}(Q)}{S(V^{\widetilde{Y}_1,E}_{\bullet,\bullet,\bullet};Q)}\bigg\}.
$$
where $S(V^{Y_1}_{\bullet,\bullet};E)$ and $S(V^{\widetilde{Y}_1,E}_{\bullet,\bullet,\bullet};Q)$ are defined in \cite[Section~1.7]{Book},
and $\Delta_E$ is the~different computed via the~adjunction formula
$$
K_E+\Delta_E=\big(K_{\widetilde{Y}_1}+\widetilde{\Delta}_1\big)\vert_{E}.
$$
For instance, if $h$ is the~ordinary blow up at the~point $P$, then $\Delta_E=\widetilde{\Delta}_1\vert_{E}$.
For simplicity, we rewrite the~last inequality as
\begin{equation}
\label{equation:base-induction-delta}
\frac{1}{\delta_P(Y,\Delta)}\leqslant \max\bigg\{S_{D}(Y_1),
\frac{S(V^{Y_1}_{\bullet,\bullet};E)}{A_{Y_1,\Delta_1}(E)}, \sup_{\substack{Q\in E}} \frac{S(V^{\widetilde{Y}_1,E}_{\bullet,\bullet,\bullet};Q)}{A_{E,\Delta_E}(Q)} \bigg\}.
\end{equation}
Thus, to prove Proposition~\ref{proposition:induction-step},
it is enough to bound each term in  \eqref{equation:base-induction-delta} by $k_3(a,d,\mu)$.

We set $S_1^-=S^-\vert_{Y_1}$, $H_1:=H\vert_{Y_1}$,  $B_1:=B\vert_{Y_1}$, $D_1=P(u)\vert_{Y_1}$. Note that $H_1 \equiv d \mu f$ and
\begin{align*}
D_1\equiv
\begin{cases}
S_1^-+(a-\mu u)d\mu f & \text{if}\ u\in[0,\frac{a-1}{\mu}],\\
(a-\mu u)(S_1^-+d \mu f) & \text{if}\ u\in[\frac{a-1}{\mu},\frac{a}{\mu}].
\end{cases}
\end{align*}
We denote by  $\widetilde{S}_1^-$, $\widetilde{B}_1$, $\widetilde{f}$ the~proper transforms on $\widetilde{Y}_1$ of the~curves $S_1^-$, $B_1$, $f$, respectively.

\begin{lemma}
\label{lem:pinBdim3-transversal}
Suppose $B$ intersects $f$ transversally. Then $\delta_P(Y,\Delta;D(a)) \geqslant\frac{1}{k_3(a,d,\mu)}$.
\end{lemma}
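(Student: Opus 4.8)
The plan is to bound each of the three terms on the right‑hand side of \eqref{equation:base-induction-delta} by $k_3(a,d,\mu)$ and conclude by that inequality. Since $B$ is transverse to the fibre $f$ at $P$, I take $h\colon\widetilde Y_1\to Y_1$ to be the ordinary blow‑up of $P$, with exceptional curve $E\cong\mathbb P^1$; this is an admissible plt blow‑up in \eqref{equation:base-induction-delta}, because $(Y_1,\Delta_1)$ is klt near $P$ and $\widetilde\Delta_1+E+\widetilde f$ has simple normal crossings along $E$. I will use the intersection numbers $(S_1^-)^2=-d\mu$, $S_1^-\cdot f=1$, $E^2=\widetilde f^2=-1$, $E\cdot\widetilde f=1$, $\widetilde S_1^-\cdot E=0$ (since $P\notin S^-$), $\widetilde S_1^-\cdot\widetilde f=1$, together with $D^3=d(3a^2-3a+1)$. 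The first term is immediate: the value $S_D(Y_1)=\tfrac{(2a-1)(2a^2-2a+1)}{4\mu(3a^2-3a+1)}$ is computed above, and $S_D(Y_1)\leqslant k_3(a,d,\mu)$ follows, after clearing the common denominator $8(3a^2-3a+1)$, from an elementary polynomial inequality in $a$ valid for all $a\geqslant 1$ and $d\mu\geqslant 2$ --- indeed the coefficient of $d\mu$ in $k_3$ is exactly $\mu\,S_D(Y_1)$, since $4a^3-6a^2+4a-1=(2a-1)(2a^2-2a+1)$.

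For the second term, I first note that $A_{Y_1,\Delta_1}(E)=\tfrac32$: because $P\in B\setminus\mathrm{Sing}(B)$ and $f$ is not tangent to $B$ at $P$, the curve $B_1=B\cap Y_1$ is smooth at $P$ with $\mathrm{mult}_PB_1=1$, so $A_{Y_1,\Delta_1}(E)=2-\tfrac12\mathrm{mult}_PB_1=\tfrac32$. To estimate $S(V^{Y_1}_{\bullet,\bullet};E)=\tfrac{3}{D^3}\int_0^{a/\mu}\!\int_0^{t(u)}\mathrm{vol}_{\widetilde Y_1}\!\big(h^*D_1-vE\big)\,dv\,du$, I would compute the Zariski decomposition of $h^*D_1-vE$ on $\widetilde Y_1$: it is nef for $v\in[0,D_1\cdot f]$, with positive part squared $D_1^2-v^2$; as $v$ increases, $\widetilde f$ enters the negative part, then $\widetilde S_1^-$ enters as well (at once when $u\in[\tfrac{a-1}\mu,\tfrac a\mu]$, where $D_1\cdot S_1^-=0$, and at $v=D_1\cdot f+D_1\cdot S_1^-$ when $u\in[0,\tfrac{a-1}\mu]$), up to the pseudo‑effective threshold $t(u)$ at which $P^2=0$. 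In the range $u\in[\tfrac{a-1}\mu,\tfrac a\mu]$ this already gives $t(u)=(a-\mu u)d\mu$ and $\int_0^{t(u)}P^2\,dv=\tfrac13(a-\mu u)^3d\mu(d\mu+1)$, which contributes $\tfrac{d(d\mu+1)}{12}$ to the double integral; the range $u\in[0,\tfrac{a-1}\mu]$ has one more subinterval but is handled in the same way. Substituting the resulting piecewise‑polynomial expression and dividing by $\tfrac32$, the bound $S(V^{Y_1}_{\bullet,\bullet};E)\leqslant\tfrac32 k_3(a,d,\mu)$ becomes an explicit rational inequality in $a$ with $d\mu\in\mathbb Z_{\geqslant2}$ a parameter.

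For the third term I observe that the different is $\Delta_E=\widetilde\Delta_1|_E=\tfrac12\widetilde B_1|_E$, a single reduced point $Q_B:=\widetilde B_1\cap E$ with coefficient $\tfrac12$; hence $A_{E,\Delta_E}(Q_B)=\tfrac12$ and $A_{E,\Delta_E}(Q)=1$ for every other $Q\in E$. Transversality of $B$ and $f$ at $P$ forces $Q_B\ne Q_f:=\widetilde f\cap E$, and $\widetilde S_1^-$ misses $E$. In the formula $S(V^{\widetilde Y_1,E}_{\bullet,\bullet,\bullet};Q)=\tfrac{3}{D^3}\int\!\!\int(P(u,v)\cdot E)^2\,dv\,du+\tfrac{6}{D^3}\int\!\!\int(P(u,v)\cdot E)\,\mathrm{ord}_Q\!\big(N(u,v)|_E\big)\,dv\,du$ from \cite[Section~1.7]{Book} the first integral is independent of $Q$, and since $N(u,v)$ is supported on $\widetilde f\cup\widetilde S_1^-$ with $\widetilde S_1^-\cap E=\varnothing$, the correction integral is nonzero only for $Q=Q_f$. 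Thus the inequalities to check are $S(V^{\widetilde Y_1,E}_{\bullet,\bullet,\bullet};Q)\leqslant k_3$ at a general $Q$, the same with the extra $Q_f$‑correction added at $Q=Q_f$, and --- the potentially binding one, because of the factor $\tfrac12$ --- $S(V^{\widetilde Y_1,E}_{\bullet,\bullet,\bullet};Q_B)\leqslant\tfrac12 k_3$. Writing out $P(u,v)\cdot E$ (it equals $v$ below the first wall and $1$ above it when $u\in[0,\tfrac{a-1}\mu]$, and is the analogous small rational function of $v$ when $u\in[\tfrac{a-1}\mu,\tfrac a\mu]$) and $\mathrm{ord}_{Q_f}(N(u,v)|_E)$ from the case analysis, these reduce once more to polynomial inequalities in $a$, parametrized by $d\mu$.

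The conceptual content is short; \textbf{the main obstacle is the explicit computation}: keeping track of the Zariski decomposition of $h^*D_1-vE$ on $\widetilde Y_1$, whose positive part changes across two or three walls whose positions depend on $u$, $a$ and $d\mu$, and then verifying that all the resulting closed‑form expressions are dominated by $k_3(a,d,\mu)$ for every real $a\geqslant 1$ and every integer $d\mu\geqslant 2$; the inequality at $Q_B$, where the denominator drops to $\tfrac12$, is exactly where the hypothesis $d\mu\geqslant 2$ is used. The complementary case $P\notin B$, postponed in the text, is easier in the same framework: there $\Delta_1=0$ near $P$, so $A_{Y_1,\Delta_1}(E)=2$ and $\Delta_E=0$, all log discrepancies are integral, and the computation closes with room to spare.
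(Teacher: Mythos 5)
Your plan is the same as the paper's proof: an ordinary blow-up $h$ at $P$ with exceptional curve $E$, the three-term bound \eqref{equation:base-induction-delta}, the Zariski decomposition of $h^*(D_1)-vE$ with the walls you describe (first wall at $v=D_1\cdot f$, second at $v=D_1\cdot f+D_1\cdot S_1^-$, degenerating when $u\geqslant\frac{a-1}{\mu}$), the values $A_{Y_1,\Delta_1}(E)=\frac32$, $A_{E,\Delta_E}(Q_B)=\frac12$, the observation that the correction term $F_Q$ is supported only at $Q_f=E\cap\widetilde f$ and that $Q_B\ne Q_f$ by transversality, and the final reduction to polynomial inequalities in $a$ with $d\mu\geqslant 2$ as parameter. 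The spot-checks you do carry out (e.g.\ $\int_0^{t(u)}\widetilde P(u,v)^2\,dv=\frac13(a-\mu u)^3 d\mu(d\mu+1)$ on the second $u$-range, and the identification of the $Q_B$-inequality as the one that genuinely needs $d\mu\geqslant 2$) agree with the paper's computed values.

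One step is justified incorrectly as stated: the bound on the first term. You claim that $S_D(Y_1)=\frac{(2a-1)(2a^2-2a+1)}{4\mu(3a^2-3a+1)}\leqslant k_3(a,d,\mu)$ reduces, after clearing the denominator $8(3a^2-3a+1)$, to a polynomial inequality in $a$ valid for all $a\geqslant1$ and $d\mu\geqslant2$. It does not: $S_D(Y_1)$ carries a free factor $\frac{1}{\mu}$, so after clearing denominators the inequality involves $\mu$ and $d\mu$ separately, and it can fail if one only assumes $d\mu\geqslant 2$ (e.g.\ formally taking $\mu$ small with $d\mu=2$ and $a=1$ gives $S_D(Y_1)=\frac{1}{4\mu}$ against $k_3=\frac{5}{8}$). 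Since $k_3(a,d,\mu)=S_D(Y_1)\,d\mu^2+\mathrm{Res}_3(a)$ with $\mathrm{Res}_3(a)>0$, what you actually need is $d\mu^2\geqslant 1$, which holds by Remark~\ref{remark:d-mu} because $\mu L$ is a very ample Cartier divisor; equivalently, the paper discards the first term by noting it is dominated by the $Q_f$-term, whose ratio to $S_D(Y_1)$ is exactly $d\mu^2\geqslant 1$. So the conclusion of your first step is correct in the setting of the lemma, but the cited reason must be replaced by an appeal to $d\mu^2\geqslant 1$. Beyond this, your write-up defers the remaining explicit volume integrals (the $u\in[0,\frac{a-1}{\mu}]$ range and the final comparisons with $k_3$), which is exactly the bulk of the paper's argument, so a complete proof still requires carrying those out as the paper does.
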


\begin{proof}
Let $h\colon \widetilde{Y}_1\rightarrow Y_1$ be the~ordinary blow up at $P$.
Recall that $E$ is the~$h$-exceptional curve.
We have $\widetilde{S}_1^-\sim h^* (S_1^-)$ and $\widetilde{f}\sim h^*(f)-E$.
Take $v\in\mathbb{R}_{\geqslant 0}$. Then
\begin{align*}
h^*(D_1)-vE\equiv
\begin{cases}
\widetilde{S}_1^-+(a-\mu u)d\mu \widetilde{f}+ ((a-\mu u)d\mu-v)E & \text{if}\ u\in[0,\frac{a-1}{\mu}],\\
(a-\mu u)(\widetilde{S}_1^-+d \mu \widetilde{f})+((a-\mu u)d\mu-v)E & \text{if}\ u\in[\frac{a-1}{\mu},\frac{a}{\mu}].
\end{cases}
\end{align*}
We have the~following intersection numbers:
\begin{center}
\renewcommand\arraystretch{1.6}
\begin{tabular}{|c||c|c|c|}
 \hline
 $\bullet$& $\widetilde{S}_1^-$ & $\widetilde{f}$ & $E$ \\
 \hline
\hline
 $\widetilde{S}_1^-$ & $-d\mu$ & $1$ & $0$ \\
 \hline
 $\widetilde{f}$ & $1$ & $-1$ & $1$ \\
 \hline
 $E$ & $0$ & $1$ & $-1$ \\
 \hline
\end{tabular}
\end{center}
This shows that $h^*(D_1)-vE$ is pseudo-effective $\iff$ $v\leqslant (a-\mu u)d\mu$.

If  $u\in [0,\frac{a-1}{\mu}]$, the~positive part of the~Zariski decomposition of $h^*(D_1)-vE$ is
\begin{align*}\hspace*{-1cm}
\widetilde{P}(u,v)\equiv
\begin{cases}
\widetilde{S}_1^-+(a-\mu u)d\mu \widetilde{f}+ ((a-\mu u)d\mu-v)E & \text{if }  v\in [0,1]\\
\widetilde{S}_1^- +((a-\mu u)d\mu+1-v)\widetilde{f}+  ((a-\mu u)d\mu-v)E     & \text{if }   v\in [1,1-d\mu^2u+ad\mu-d\mu] \\
\frac{-d\mu^2u+ad\mu-v}{d\mu-1}(\widetilde{S}_1^-+d\mu \widetilde{f}+(d\mu-1)E)    & \text{if } v\in [1-d\mu^2u+ad\mu-d\mu,(a-\mu u)d\mu],
\end{cases}
\end{align*}
and the~negative part is
\begin{align*}
\widetilde{N}(u,v)=
\begin{cases}
0 & \text{if }  v\in [0,1]\\
(v-1)\widetilde{f}      & \text{if }    v\in [1,1-d\mu^2u+ad\mu-d\mu] \\
\frac{d\mu(\mu u-a+v)}{d\mu -1}\widetilde{f}+\frac{d\mu^2u-ad\mu+d\mu+v-1}{d\mu -1}\widetilde{S}_1^-    & \text{if } v\in [1-d\mu^2u+ad\mu-d\mu,(a-\mu u)d\mu].
\end{cases}
\end{align*}
Similarly, if $u\in [\frac{a-1}{\mu},\frac{a}{\mu}]$, the~positive part of the~Zariski decomposition of $h^*(D_1)-vE$ is
\begin{align*}
\widetilde{P}(u,v)\equiv
\begin{cases}
(a-\mu u)(\widetilde{S}_1^-+d \mu \widetilde{f})+((a-\mu u)d\mu-v)E  & \text{if } v\in [0,a-\mu u] \\
 \frac{1}{d\mu -1}(-d\mu^2u+ad\mu-v)(\widetilde{S}_1^-+d\mu\widetilde{f}+(d\mu-1)E)      & \text{if } v\in [a-\mu u,(a-\mu u)d\mu].
\end{cases}
\end{align*}
and the~negative part is
\begin{align*}
\widetilde{N}(u,v)=
\begin{cases}
0   & \text{if } v\in [0,a-\mu u] \\
\frac{1}{d\mu -1}(d\mu(\mu u-a+v)\widetilde{f}+(\mu u-a+v)\widetilde{S}_1^-)   & \text{if } v\in [a-\mu u,(a-\mu u)d\mu].
\end{cases}
\end{align*}

Now, using results from \cite[Section~1.7]{Book}, we compute
\begin{align*}
S(W_{\bullet,\bullet}^{\widetilde{Y_1}};E) =\frac{3}{D^3}\int\limits_0^{\frac{a}{\mu}} \int\limits_0^{(a-\mu u)d\mu} \mathrm{vol}(D_1-vF)dvdu & =\frac{3}{(S^-+aH)^3}\int\limits_0^{\frac{a}{\mu}} \int\limits_0^{(a-\mu u)d\mu}\widetilde{P}(u,v)^2dvdu \\
&=\frac{4a^3d\mu+6(1-d\mu)a^2+4(d\mu-2)a-d\mu+3}{4(3a^2-3a+1)}.
\end{align*}
Moreover, we have $A_{Y_1,\Delta_1}(E)=2-\frac{1}{2}=\frac{3}{2}$, so that
$$
\frac{S(W_{\bullet,\bullet}^{Y_1};E)}{A_{Y_1,\Delta_1}(E)}=\frac{4a^3d\mu+6(1-d\mu)a^2+4(d\mu-2)a-d\mu+3}{6(3a^2-3a+1)}.
$$

Let $Q$ be a point in $E$. Then, using results from \cite[Section~1.7]{Book}, we compute
\begin{align*}
S(W_{\bullet,\bullet,\bullet}^{\widetilde{Y_1},E};Q)  & =\frac{3}{(S^-+aH)^3}\int\limits_0^{\frac{a}{\mu}} \int\limits_0^{(a-\mu u)d\mu}\big (\widetilde{P}(u,v) \cdot E \big)^2 dvdu+F_q(W_{\bullet,\bullet,\bullet}^{\widetilde{Y_1},E}) \\
&=\frac{6a^2-8a+3}{4(3a^2-3a+1)}+F_Q\big(W_{\bullet,\bullet,\bullet}^{\widetilde{Y_1},E}\big),
\end{align*}
where
$$
F_Q\big(W_{\bullet,\bullet,\bullet}^{\widetilde{Y_1},E}\big)=\frac{6}{(S^-+aH)^3}\int\limits_0^{\frac{a}{\mu}}\int\limits_0^{(a-\mu u)d\mu}\big(\widetilde{P}(u,v) \cdot E\big)\cdot \mathrm{ord}_Q\big(\widetilde{N}(u,v)\vert_E\big)dvdu,
$$
because $P\not\in\mathrm{Supp}(N(u))$ for  $u\in[0,\frac{a}{\mu}]$.
Notice that $F_Q(W_{\bullet,\bullet,\bullet}^{\widetilde{Y_1},E}) \not = 0$ only when $Q\in \widetilde{f}$.
Thus, there are three cases to consider.
\begin{itemize}
\item $Q=E\cap \widetilde{f}$. Then
$$
F_Q\big(W_{\bullet,\bullet,\bullet}^{\widetilde{Y_1},E}\big)=\frac{3-8a+6a^2+d\mu-4ad\mu+6a^2d\mu-4a^3d\mu}{4(3a^2-3a+1)}
$$
and $A_{E,\Delta_E}(Q)=1$ since $Q\not\in \widetilde{B}_1$. Hence, we have
$$
\frac{S(W_{\bullet,\bullet,\bullet}^{\widetilde{Y_1},E};Q)}{A_{E,\Delta_E}(Q)}=\frac{d\mu(2a-1)(2a^2-2a+1)}{4(3a^2-3a+1)}.
$$

\item $Q\in E \cap \widetilde{B}_1$. Then $A_{E,\Delta_E}(Q)=\frac{1}{2}$, so that
$$
\frac{S(W_{\bullet,\bullet,\bullet}^{\widetilde{Y_1},E};Q)}{A_{E,\Delta_E}(Q)}=\frac{6a^2-8a+3}{2(3a^2-3a+1)}.
$$

\item $Q\in E$ away from $\widetilde{f}$ and $\widetilde{B}_1$. Then $A_{E,\Delta_E}(Q)=1$, so that
$$
\frac{S(W_{\bullet,\bullet,\bullet}^{\widetilde{Y_1},E};Q)}{A_{E,\Delta_E}(Q)}=\frac{6a^2-8a+3}{4(3a^2-3a+1)}.
$$
\end{itemize}
The third case is smaller than the~previous one (exactly half) so we do not consider it.
So, using \eqref{equation:base-induction-delta}, we obtain the~inequality
\begin{multline}
\label{ineq:trans}
\frac{1}{\delta_P(Y,\Delta)}\leqslant\max\bigg\{\frac{(2a-1)(2a^2-2a+1)}{4\mu(3a^2-3a+1)},\\
\frac{4a^3d\mu+6(1-d\mu)a^2+4(d\mu-2)a-d\mu+3}{6(3a^2-3a+1)}, \\
\frac{d\mu(2a-1)(2a^2-2a+1)}{4(3a^2-3a+1)}, \frac{6a^2-8a+3}{2(3a^2-3a+1)}\bigg\}.
\end{multline}

Recall from Remark~\ref{remark:d-mu} that $d\mu^2\geqslant 1$.
This allows us to conclude
$$
\frac{d\mu(2a-1)(2a^2-2a+1)}{4(3a^2-3a+1)} \geqslant \frac{(2a-1)(2a^2-2a+1)}{4\mu(3a^2-3a+1)}
$$
so we can discard the~first term in \eqref{ineq:trans}.
Moreover, since $d\mu\geqslant 2$, we have
\begin{align*}
\frac{4a^3d\mu+6(1-d\mu)a^2+4(d\mu-2)a-d\mu+3}{6(3a^2-3a+1)}&\leqslant k_3(a,d,\mu),\\
\frac{d\mu(2a-1)(2a^2-2a+1)}{4(3a^2-3a+1)}&\leqslant k_3(a,d,\mu),\\
\frac{6a^2-8a+3}{2(3a^2-3a+1)}&\leqslant k_3(a,d,\mu),
\end{align*}
which gives $\delta_P(Y,\Delta;D(a)) \geqslant\frac{1}{k_3(a,d,\mu)}$.
\end{proof}

Now, we deal with the~case when $f$ is tangent to $B$ at the~point $P$.

\begin{lemma}
\label{lem:pinBdim3-tangential}
Suppose $B$ and $f$ are tangent at $P$. Then $\delta_P(Y,\Delta;D(a)) \geqslant\frac{1}{k_3(a,d,\mu)}$.
\end{lemma}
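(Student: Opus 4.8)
The plan is to run the argument of Lemma~\ref{lem:pinBdim3-transversal} verbatim, except that the ordinary blow up of $Y_1$ at $P$ is replaced by a weighted blow up chosen to separate $B_1=B\vert_{Y_1}$ from the fibre $f$. Since $B$ is smooth, $P\notin\mathrm{Sing}(B)$ and $V_1\in|\mu L|$ is general through $\pi(P)$, one may pick local coordinates $(x,y)$ at $P$ on the smooth surface $Y_1$ with $f=\{x=0\}$ and $B_1=\{x=y^2\}$; thus $B_1$ is smooth and meets $f$ at $P$ with contact order exactly $2$ (in this tangential case one also checks $\pi(P)\in R$ and $P=S^+\cap f$, so $P\in S^+$, but $S^+\vert_{Y_1}$ will play no role, since it never occurs in the negative parts below). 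First I would take $h\colon\widetilde{Y}_1\to Y_1$ to be the weighted blow up at $P$ with $\mathrm{wt}(x)=2$ and $\mathrm{wt}(y)=1$ (equivalently, the ordinary blow up of $P$ followed by the blow up of $\widetilde{f}\cap E_1$, then contracting the first exceptional curve). This is a plt blow up: its exceptional curve $E$ is a smooth rational curve carrying one cyclic quotient singularity of type $\frac{1}{2}(1,1)$, the proper transforms $\widetilde{f}$ and $\widetilde{B}_1$ become disjoint, each meeting $E$ transversally in a single smooth point of $\widetilde{Y}_1$, and one has $A_{Y_1}(E)=3$ with $\mathrm{ord}_E(f)=\mathrm{ord}_E(B_1)=2$, hence $A_{Y_1,\Delta_1}(E)=3-\frac{1}{2}\cdot 2=2$. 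Also $\widetilde{S}_1^-\sim h^*(S_1^-)$, $\widetilde{f}\sim h^*(f)-2E$, and
$$
\widetilde{S}_1^{-2}=-d\mu,\quad \widetilde{f}^2=-2,\quad E^2=-\tfrac{1}{2},\quad \widetilde{S}_1^-\cdot\widetilde{f}=1,\quad \widetilde{f}\cdot E=1,\quad \widetilde{S}_1^-\cdot E=0,\quad \widetilde{B}_1\cdot E=1,\quad \widetilde{B}_1\cdot\widetilde{f}=0.
$$

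The value $S_{D}(Y_1)=\frac{(2a-1)(2a^2-2a+1)}{4\mu(3a^2-3a+1)}$ is unchanged from Lemma~\ref{lem:pinBdim3-transversal}, since $D-uY_1$ and its Zariski decomposition on $Y_1$ are the same. The heart of the proof is then the Zariski decomposition of $h^*(D_1)-vE$ on $\widetilde{Y}_1$, computed separately in the two regimes $u\in[0,\frac{a-1}{\mu}]$ and $u\in[\frac{a-1}{\mu},\frac{a}{\mu}]$ and for $v$ up to the pseudo-effective threshold $(a-\mu u)d\mu$; because $\mathrm{ord}_E(f)=2$ and $E^2=-\frac12$, one expects up to three or four $v$-chambers in each regime, with the positive part $\widetilde{P}(u,v)$ a nonnegative combination of $\widetilde{S}_1^-$, $\widetilde{f}$ and $E$, and the negative part $\widetilde{N}(u,v)$ supported only on $\widetilde{f}$ and $\widetilde{S}_1^-$. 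From $\widetilde{P}(u,v)$ I would compute $S(W_{\bullet,\bullet}^{Y_1};E)=\frac{3}{D^3}\int\!\!\int\widetilde{P}(u,v)^2\,dv\,du$, hence $S(W_{\bullet,\bullet}^{Y_1};E)/A_{Y_1,\Delta_1}(E)=\frac12 S(W_{\bullet,\bullet}^{Y_1};E)$, and for $Q\in E$ the refined invariant $S(W_{\bullet,\bullet,\bullet}^{\widetilde{Y}_1,E};Q)=\frac{3}{D^3}\int\!\!\int(\widetilde{P}(u,v)\cdot E)^2\,dv\,du+F_Q(W_{\bullet,\bullet,\bullet}^{\widetilde{Y}_1,E})$, where $F_Q\neq 0$ only when a component of $\widetilde{N}(u,v)$ passes through $Q$.

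Three positions of $Q\in E$ then matter (a general point of $E$ giving a strictly smaller, negligible, contribution): the point $E\cap\widetilde{f}$, where $A_{E,\Delta_E}(Q)=1$ but $F_Q\neq 0$ because $\widetilde{f}$ occurs in $\widetilde{N}(u,v)$ for $u$ near $\frac{a}{\mu}$; the point $E\cap\widetilde{B}_1$, where $A_{E,\Delta_E}(Q)=1-\frac12=\frac12$ and $F_Q=0$ since $\widetilde{B}_1$ is never a component of $\widetilde{N}(u,v)$; and the quotient singularity $O\in E$, where $A_{E,\Delta_E}(O)=\frac12$ comes from the different of the $\frac12(1,1)$ point and $F_O=0$ because neither $\widetilde{f}$ nor $\widetilde{S}_1^-$ passes through $O$. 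Feeding these into \eqref{equation:base-induction-delta}, the reciprocal $1/\delta_P(Y,\Delta)$ is bounded above by the maximum of $S_{D}(Y_1)$, of $\frac12 S(W_{\bullet,\bullet}^{Y_1};E)$, of twice the integral term at the points $E\cap\widetilde{B}_1$ and $O$, and of the integral term plus $F_Q$ at $E\cap\widetilde{f}$. To finish I would check, exactly as in Lemma~\ref{lem:pinBdim3-transversal} and using $d\mu\geqslant 2$ together with $d\mu^2\geqslant 1$, that each of these four expressions is $\leqslant k_3(a,d,\mu)$, which yields $\delta_P(Y,\Delta;D(a))\geqslant\frac{1}{k_3(a,d,\mu)}$.

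The main obstacle will be the bookkeeping for this Zariski decomposition: the multiplicity $2$ of $E$ in $h^*(f)$ together with $E^2=-\frac12$ makes the $v$-chamber structure, and the breakpoints (which depend on $a,d,\mu,u$), noticeably more delicate than in the transversal case, and one must track carefully when $\widetilde{f}$ (and, for $u$ near $\frac{a}{\mu}$, also $\widetilde{S}_1^-$) enters $\widetilde{N}(u,v)$ — in particular confirming that $\widetilde{S}_1^-$ does not meet $E$, so it contributes to no $F_Q$. Once the positive and negative parts are correctly identified, the remaining integrations and the final inequalities are routine polynomial estimates of the same shape as those already carried out.
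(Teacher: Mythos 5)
Your plan is essentially the paper's own proof: the same $(1,2)$-weighted blow up separating $\widetilde{B}_1$ from $\widetilde{f}$, the same plt data ($A_{Y_1,\Delta_1}(E)=2$, $E^2=-\tfrac12$, $\widetilde{f}^2=-2$, the $\mathbb{A}_1$ point on $E$), the same Abban--Zhuang/\eqref{equation:base-induction-delta} framework, and the same three relevant points $E\cap\widetilde{f}$, $E\cap\widetilde{B}_1$ and the quotient singularity, followed by the same comparison with $k_3(a,d,\mu)$ as in Lemma~\ref{lem:pinBdim3-transversal}. Two small slips to fix when you carry out the computation: since $h^*(D_1)=\widetilde{S}_1^-+(a-\mu u)d\mu\,\widetilde{f}+2(a-\mu u)d\mu\,E$ (resp.\ its analogue in the second $u$-regime), the pseudo-effective threshold in $v$ is $2(a-\mu u)d\mu$, not $(a-\mu u)d\mu$; and $\widetilde{f}$ enters the negative part $\widetilde{N}(u,v)$ for every $u$ once $v$ exceeds the first breakpoint (namely $v>1$ for $u\in[0,\tfrac{a-1}{\mu}]$ and $v>a-\mu u$ for $u\in[\tfrac{a-1}{\mu},\tfrac{a}{\mu}]$), not only for $u$ near $\tfrac{a}{\mu}$, which is exactly why $F_Q\neq 0$ at $Q=E\cap\widetilde{f}$. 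With these corrections the integrations give $S(W^{Y_1}_{\bullet,\bullet};E)/A_{Y_1,\Delta_1}(E)=k_3(a,d,\mu)$ on the nose and the remaining terms are bounded by $k_3(a,d,\mu)$ using $d\mu\geqslant 2$ and $d\mu^2\geqslant 1$, as in the paper.
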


\begin{proof}
Now, we let $h\colon \widetilde{Y}_1\rightarrow Y_1$ be the~$(1,2)$-weighted blowup of the~point $P$
such that the~curves $\widetilde{B}_1$ and $\widetilde{f}$ are disjoint.
Then $\widetilde{f}=h^*(f)-2E$. Take $v\in\mathbb{R}_{\geqslant 0}$. Then
\begin{align*}
h^*(D_1)-vE\equiv
\begin{cases}
\widetilde{S}_1^-+(a-\mu u)d\mu \widetilde{f}+ (2(a-\mu u)d\mu-v)E & \text{if}\ u\in[0,\frac{a-1}{\mu}],\\
(a-\mu u)(\widetilde{S}_1^-+d \mu \widetilde{f})+(2(a-\mu u)d\mu-v)E & \text{if}\ u\in[\frac{a-1}{\mu},\frac{a}{\mu}].
\end{cases}
\end{align*}
Moreover, we have the~following intersection numbers:
 \begin{center}
\renewcommand\arraystretch{1.6}
\begin{tabular}{|c||c|c|c|}
 \hline
$\bullet$ & $\widetilde{S}_1^-$ & $\widetilde{f}$ & $E$ \\
 \hline
 \hline
 $\widetilde{S}_1^-$ & $-d\mu$ & $1$ & $0$ \\
 \hline
 $\widetilde{f}$ & $1$ & $-2$ & $1$ \\
 \hline
 $E$ & $0$ & $1$ & $-\frac{1}{2}$ \\
 \hline
\end{tabular}
\end{center}
Thus, the~divisor $h^*(D_1)-vE$ is pseudo-effective $\iff$ $v\leqslant 2(a-\mu u)d\mu$.

If $u\in [0,\frac{a-1}{\mu}]$, the~positive part of the~Zariski decomposition of $h^*(D_1)-vE$ is
\begin{align*}\hspace*{-1.2cm}
\widetilde{P}(u,v)\equiv
\begin{cases}
\widetilde{S}_1^-+(a-\mu u)d\mu \widetilde{f}+ (2(a-\mu u)d\mu-v)E & \text{if }    v\in [0,1]\\
\widetilde{S}_1^- +((a-\mu u)d\mu+\frac{1-v}{2})\widetilde{f}+  (2(a-\mu u)d\mu-v)E     & \text{if }    v\in [1,-2d\mu^2u+2ad\mu - 2d\mu+1] \\
\frac{-2d\mu^2u+2ad\mu-v}{2d\mu-1}(\widetilde{S}_1^-+d\mu \widetilde{f}+(2d\mu-1)E)    & \text{if }    v\in [-2d\mu^2u+2ad\mu - 2d\mu+1,2(a-\mu u)d\mu],
\end{cases}
\end{align*}
and the~negative part is
\begin{align*}
\widetilde{N}(u,v)=
\begin{cases}
0 & \text{if }    v\in [0,1]\\
\frac{v-1}{2}\widetilde{f}      & \text{if }     v\in [1,-2d\mu^2u+2ad\mu - 2d\mu+1] \\
\frac{d\mu(\mu u-a+v)}{2d\mu -1}\widetilde{f}+\frac{2d\mu^2u-2ad\mu+2d\mu+v-1}{2d\mu -1}\widetilde{S}_1^-    & \text{if }      v\in [-2d\mu^2u+2ad\mu - 2d\mu+1,2(a-\mu u)d\mu].
\end{cases}
\end{align*}
Similarly, if $u\in [\frac{a-1}{\mu},\frac{a}{\mu}]$, the~positive part of the~Zariski decomposition of $h^*(D_1)-vE$ is
\begin{align*}
\widetilde{P}(u,v)\equiv
\begin{cases}
(a-\mu u)(\widetilde{S}_1^-+d \mu \widetilde{f})+(2(a-\mu u)d\mu-v)E  & \text{if } v\in [0,a-\mu u] \\
 \frac{-2d\mu^2u+2ad\mu-v}{2d\mu -1}(\widetilde{S}_1^-+d\mu\widetilde{f}+(2d\mu-1)E)      & \text{if }  v\in [a-\mu u,2(a-\mu u)d\mu],
\end{cases}
\end{align*}
and the~negative part is
\begin{align*}
\widetilde{N}(u,v)=
\begin{cases}
0   & \text{if }   v\in [0,a-\mu u] \\
\frac{d\mu(\mu u-a+v)}{2d\mu -1}\widetilde{f}+\frac{\mu u-a+v}{2d\mu -1}\widetilde{S}_1^-   & \text{if }  v\in [a-\mu u,2(a-\mu u)d\mu].
\end{cases}
\end{align*}

Now, using results from \cite[Section~1.7]{Book}, we compute
\begin{align*}
S(W_{\bullet,\bullet}^{Y_1};E) =\frac{3}{D^3}\int\limits_0^{\frac{a}{\mu}} \int\limits_0^{2(a-\mu u)d\mu} \mathrm{vol}(D_1-vF)dvdu & =\frac{3}{(S^-+aH)^3}\int\limits_0^{\frac{a}{\mu}} \int\limits_0^{2(a-\mu u)d\mu}\widetilde{P}(u,v)dvdu \\
&=\frac{1}{4}\cdot \frac{8a^3d\mu+6(1-2d\mu)a^2+8(d\mu-1)a-2d\mu+3}{3a^2-3a+1}
\end{align*}
Moreover, since $A_{Y_1,\Delta_1}(E)=2$, we have
$$
\frac{S(W_{\bullet,\bullet}^{Y_1};E)}{A_{Y_1,\Delta_1}(E)}=\frac{1}{8}\cdot \frac{8a^3d\mu+6(1-2d\mu)a^2+8(d\mu-1)a-2d\mu+3}{3a^2-3a+1}.
$$

Let $Q$ be a point in $E$. Using results from \cite[Section~1.7]{Book}, we get
\begin{align*}
S(W_{\bullet,\bullet,\bullet}^{\widetilde{Y_1},E};Q)  & =\frac{3}{(S^-+aH)^3}\int\limits_0^{\frac{a}{\mu}} \int\limits_0^{2(a-\mu u)d\mu}\big (\widetilde{P}(u,v) \cdot E \big)^2 dvdu+F_Q\big(W_{\bullet,\bullet,\bullet}^{\widetilde{Y_1},E}\big) \\
&=\frac{1}{8}\cdot \frac{6a^2-8a+3}{3a^2-3a+1}+F_Q\big(W_{\bullet,\bullet,\bullet}^{\widetilde{Y_1},E}\big)
\end{align*}
where
$$
F_Q\big(W_{\bullet,\bullet,\bullet}^{\widetilde{Y_1},E}\big)=\frac{6}{(S^-+aH)^3}\int\limits_0^{\frac{a}{\mu}}\int\limits_0^{2(a-\mu u)d\mu}(\widetilde{P}(u,v) \cdot E)\cdot \mathrm{ord}_Q(\widetilde{N}(u,v)\vert_E)dvdu.
$$
There are three cases to consider.
\begin{itemize}
\item $Q=E\cap \widetilde{f}$. Then
$$
F_Q(W_{\bullet,\bullet,\bullet}^{\widetilde{Y_1},E})=\frac{1}{8}\frac{8a^3d \mu-6(2d\mu-1)a^2+8(d\mu+1)a-2d\mu-3}{3a^2-3a+1}
$$
and $A_{E,\Delta_E}(Q)=1$ since $Q\not \in \widetilde{B}_1$. Hence, we have
$$
\frac{S(W_{\bullet,\bullet,\bullet}^{\widetilde{Y_1},E};Q) }{A_{E,\Delta_E}(Q)}=\frac{d\mu}{4}\cdot \frac{(2a-1)(2a^2-2a+1)}{3a^2-3a+1}.
$$

\item $Q\in E \cap \widetilde{B}$. Then $A_{E,\Delta_E}(Q)=\frac{1}{2}$, so that
$$
\frac{S(W_{\bullet,\bullet,\bullet}^{\widetilde{Y_1},E};Q) }{A_{E,\Delta_E}(Q)}=\frac{1}{4}\cdot \frac{6a^2-8a+3}{3a^2-3a+1}.
$$

\item $Q\in E$ is the~$\mathbb{A}_1$ singularity. Then $A_{E,\Delta_E}(Q)=\frac{1}{2}$ and so
$$
\frac{S(W_{\bullet,\bullet,\bullet}^{\widetilde{Y_1},E};Q) }{A_{E,\Delta_E}(Q)}=\frac{1}{4}\cdot \frac{6a^2-8a+3}{3a^2-3a+1}.
$$
\end{itemize}
We have the~inequality:
\begin{multline} \label{ineq:tangent}
   \frac{1}{\delta_P(Y,\Delta)}\leqslant \max\bigg\{\frac{(2a-1)(2a^2-2a+1)}{4\mu(3a^2-3a+1)},\\
   \frac{1}{8}\cdot \frac{8a^3d\mu+6(1-2d\mu)a^2+8(d\mu-1)a-2d\mu+3}{3a^2-3a+1}, \\
    \frac{d\mu}{4}\cdot \frac{(2a-1)(2a^2-2a+1)}{3a^2-3a+1},\frac{1}{4}\cdot \frac{6a^2-8a+3}{3a^2-3a+1}\bigg\}.
\end{multline}
Now, arguing as in the~end of the~proof of Lemma~\ref{lem:pinBdim3-transversal}, we find
$$
\frac{1}{\delta_P(Y,\Delta)} \leqslant \frac{1}{8}\cdot \frac{8a^3d\mu+6(1-2d\mu)a^2+8(d\mu-1)a-2d\mu+3}{3a^2-3a+1},
$$
and the~result follows.
\end{proof}

Proposition~\ref{proposition:induction-step} is proved.

\subsection{The induction.}
\label{subsection:induction}

Let us use all assumption and notations introduced in Section~\ref{section:delta}.
Recall that $\mu$ is the~smallest rational number for which $\mu L$ is a very ample Cartier divisor on the~variety $V$ and $d=L^{n-1}$.
Then
$$
\mu^{n-1}d=(\mu L)^{n-1}\geqslant 1.
$$
Let us prove Proposition~\ref{proposition:delta} by induction on $\mathrm{dim}(Y)=n\geqslant 3$ --- the~base of induction (the case when $n=3$) is done in Section~\ref{section:delta}.

Therefore, we suppose that Proposition~\ref{proposition:delta} holds for varieties of dimension $n-1\geqslant 3$.
Let $P$ be a point in $Y$ such that $P\not\in S^-$. We must prove that
$$
\delta_P(Y,\Delta;D(a))\geqslant \frac{1}{k_n(a,d,\mu)},
$$
where $k_n(a,d,\mu)$ is presented in Theorem~\ref{theorem:delta}.
We will only consider the~case~when~$P\in B$, since the~case~$P\not\in B$ is simpler and similar.
Thus, we suppose that $P\in B$.

Let $V_{n-1}$ be a general divisor in $|\mu L|$ that contains the~point $\pi(P)$.
Set $Y_{n-1}=\pi^*(V_{n-1})$. For simplicity, set $D=D(a)$.
First, let us compute $S_{D}(Y_{n-1})$. Take $u\in\mathbb{R}_{\geqslant 0}$. Then
$$
D(a)-uY_{n-1}\sim_{\mathbb{R}} S^-+(a-\mu u)H,
$$
so $D(a)-uY_{n-1}$ is pseudo-effective $\iff$ $u\leqslant\frac{a}{\mu}$.
For $u\in[0,\frac{a}{\mu}]$, let $P(u)$ be the~positive part of the~Zariski decomposition of $D(a)-uY_{n-1}$,
and let $N(u)$ be its~negative part. Then
\begin{align*}
P(u)\equiv
\begin{cases}
S^-+(a-\mu u)H = D(a-\mu u) & \text{if}\ u\in[0,\frac{a-1}{\mu}],\\
(a-\mu u)(S^-+H)=(a-\mu u)D(1) & \text{if}\ u\in[\frac{a-1}{\mu},\frac{a}{\mu}],
\end{cases}
\end{align*}
and
\begin{align*}
N(u)=
\begin{cases}
0 & \text{if}\ u\in [0,\frac{a-1}{\mu}],\\
(\mu u +1-a)S^- & \text{if}\ u\in [\frac{a-1}{\mu},\frac{a}{\mu}].
\end{cases}
\end{align*}
Recall that $S^-\cap S^+=\varnothing$. Note that $(S^-)^n=(-1)^{n+1}d$ and $(S^+)^n=d$. Hence, we have
$$
D(a)^n=(S^-+aH)^n = ((1-a)S^-+aS^+)^n=d(a^n-(a-1)^n).
$$
Now, we compute
\begin{align*}
S_{D}(Y_{n-1})&=\frac{1}{D(a)^n}\int_0^{\infty} \mathrm{vol}(D(a)-uY_{n-1})du\\ &=
\frac{1}{D(a)^n}\int_0^{\frac{a-1}{\mu}}(S^-+(a-\mu u)H)^ndu
+\frac{1}{D(a)^n}\int_{\frac{a-1}{\mu}}^{\frac{a}{\mu}} ((a-\mu u)(S^-+H))^ndu\\&=
\frac{1}{D(a)^n}\int_0^{\frac{a-1}{\mu}} d((-1)^{n+1}(1-a+\mu u)^n+(a-\mu u)^n) du+\frac{1}{D(a)^n}\int_{\frac{a-1}{\mu}}^{\frac{a}{\mu}} d(a-\mu u)^n du\\&=\frac{a^{n+1}-(a-1)^{n+1}}{\mu (n+1)(a^n-(a-1)^n)}.
\end{align*}
Set
$$
\mathrm{Res}_n(a)=\frac{a^{n+1}-(a+n)(a-1)^{n}}{2(n+1)(a^n-(a-1)^n)}.
$$

\begin{lemma}
\label{res}
One has $k_n(a,d,\mu)=S_{D(a)}(Y_{n-1})d\mu^{n-1}+\mathrm{Res}_n(a)$ and  $\mathrm{Res}_n(a)>0$.
\end{lemma}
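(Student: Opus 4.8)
The plan is to dispatch the two assertions separately; both are elementary and rest on the volume computation for $S_{D(a)}(Y_{n-1})$ established just above the lemma. For the identity, I would substitute
$$S_{D(a)}(Y_{n-1})=\frac{a^{n+1}-(a-1)^{n+1}}{\mu(n+1)(a^n-(a-1)^n)}$$
into the product $S_{D(a)}(Y_{n-1})\,d\mu^{n-1}$, obtaining $\dfrac{(a^{n+1}-(a-1)^{n+1})\,d\mu^{n-2}}{(n+1)(a^n-(a-1)^n)}$, which is precisely the first summand of $k_n(a,d,\mu)$ in Theorem~\ref{theorem:delta}. Since $\mathrm{Res}_n(a)$ is by definition equal to the second summand, adding the two gives $k_n(a,d,\mu)=S_{D(a)}(Y_{n-1})\,d\mu^{n-1}+\mathrm{Res}_n(a)$ with no estimate required.

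For the positivity, note first that the denominator $2(n+1)(a^n-(a-1)^n)$ of $\mathrm{Res}_n(a)$ is strictly positive: when $a=1$ it is $2(n+1)$, and when $a>1$ one has $a>a-1\geqslant 0$, hence $a^n>(a-1)^n$. So it suffices to show the numerator $a^{n+1}-(a+n)(a-1)^n$ is positive for $a\geqslant 1$. I would put $t=a-1\geqslant 0$ and expand by the binomial theorem:
$$a^{n+1}-(a+n)(a-1)^n=(1+t)^{n+1}-(n+1+t)t^n=\sum_{k=0}^{n+1}\binom{n+1}{k}t^k-(n+1)t^n-t^{n+1}.$$
The $t^{n+1}$ and $t^n$ coefficients on the right cancel exactly, since $\binom{n+1}{n+1}=1$ and $\binom{n+1}{n}=n+1$, leaving $\sum_{k=0}^{n-1}\binom{n+1}{k}t^k$, whose constant term is $1$ and whose other terms are $\geqslant 0$. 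Thus the numerator is $\geqslant 1>0$, so $\mathrm{Res}_n(a)>0$.

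I do not anticipate any real obstacle here: the lemma is a bookkeeping step recording that $k_n$ decomposes as a ``main term'' coming from $S_{D(a)}(Y_{n-1})$ plus a positive remainder, which is exactly the shape needed to run the Abban--Zhuang estimate in the induction. The only point needing a moment's care is the sign of the numerator of $\mathrm{Res}_n(a)$, and the substitution $a=1+t$ --- after which the two leading binomial terms telescope against $(a+n)(a-1)^n$ --- settles it at once, and incidentally shows that $\mathrm{Res}_n(a)$ stays bounded away from $0$ as $a\to 1^+$.
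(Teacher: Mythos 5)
Your proposal is correct and follows essentially the same route as the paper: the identity is immediate from the formulas for $k_n(a,d,\mu)$ and $S_{D(a)}(Y_{n-1})$, and the positivity of $\mathrm{Res}_n(a)$ reduces to the inequality $a^{n+1}-(a+n)(a-1)^n>0$, which the paper also settles by an elementary binomial estimate (writing $\bigl(\tfrac{a}{a-1}\bigr)^n>1+\tfrac{n}{a-1}>\tfrac{a+n}{a}$ for $a>1$). Your variant with $t=a-1$, where the top two binomial terms of $(1+t)^{n+1}$ cancel $(n+1+t)t^n$ exactly, is just a cleaner packaging of the same computation and has the mild advantage of treating $a=1$ uniformly.
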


\begin{proof}
The equality follows from the~formulas for $k_n(a,d,\mu)$ and $S_{D(a)}(Y_{n-1})$.

Let us show that $\mathrm{Res}_n(a)>0$. We may assume that $a>1$. The denominator is clearly positive. Hence, we only need to verify that $a^{n+1}-(a+n)(a-1)^{n}>0$.
But
\[
\bigg(\frac{a}{a-1} \bigg)^n = \bigg(1+\frac{1}{a-1}
\bigg)^n=\sum_{i=0}^n \binom{n}{i} \bigg(\frac{1}{a-1}
\bigg)^i> 1+\frac{n}{a-1}> 1+\frac{n}{a}= \frac{a+n}{a},
\]
which gives $a^{n+1}-(a+n)(a-1)^{n}>0$. This shows that $\mathrm{Res}_n(a)>0$.
\end{proof}

Set $\Delta_{n-1}=\Delta\vert_{Y_{n-1}}$. Then $S_{D}(Y_{n-1})\leqslant k_{n}(a,d,\mu)$ by  Lemma~\ref{res}, since  $d\mu^{n-1}\geqslant 1$.
Therefore, using  \cite{AbbanZhuang}, we see that $\delta_P(Y,\Delta;D)\geqslant\frac{1}{k_{n}(a,d,\mu)}$
provided that
\begin{equation}
\label{equation:induction}
S(V^{Y_{n-1}}_{\bullet,\bullet};E)\leqslant k_n(a,d,\mu)A_{Y_{n-1},\Delta_{n-1}}(E),
\end{equation}
for every prime divisor $E$ over the~variety $Y_{n-1}$ such that its center on $Y_{n-1}$ contains $P$,
where $A_{Y_{n-1},\Delta_{n-1}}(E)$ is the~log discrepancy, and  $S(V^{Y_{n-1}}_{\bullet,\bullet};E)$ is defined in \cite[Section~1.7]{Book}.

Suppose that $n\geqslant 4$.
Let us prove \eqref{equation:induction} using Proposition~\ref{proposition:delta} applied to $(Y_{n-1},\Delta_{n-1})$.

Let $E$ be a prime divisor over $Y_{n-1}$ whose center in $Y_{n-1}$ contains $P$.
Since $P\not\in S^-$, it follows from \cite[Corollary~1.108]{Book} that
\begin{multline*}
S(V^{Y_{n-1}}_{\bullet,\bullet};E)=\frac{n}{D^n}\int\limits_0^{\frac{a}{\mu}}\Bigg( \int\limits_0^{\infty}\mathrm{vol}(P(u)\vert_{Y_{n-1}}-vE)dv \Bigg )du=\\
=\frac{n}{D^n}\int\limits_0^{\frac{a-1}{\mu}}\int\limits_0^{\infty}\mathrm{vol}(S^-+(a-\mu u)H-vE)dvdu+\frac{n}{D^n}\int\limits_{\frac{a-1}{\mu}}^{\frac{a}{\mu}}\int\limits_0^{\infty}\mathrm{vol}((a-\mu u)(S^-+H)-vE)dvdu=\\
=\frac{n}{D^n}\int\limits_0^{\frac{a-1}{\mu}}\int\limits_0^{\infty}\mathrm{vol}(S^-+(a-\mu u)H-vE)dvdu+\frac{n}{D^n}\int\limits_{\frac{a-1}{\mu}}^{\frac{a}{\mu}}(a-\mu u)^n\int\limits_0^{\infty}\mathrm{vol} (S^-+H-vE)dvdu.
\end{multline*}
Now, applying Proposition~\ref{proposition:delta} (induction step), we get
$$
\int\limits_0^{\infty}\mathrm{vol}(S^-+(a-\mu u)H-vE)dv\leqslant k_{n-1}(a-\mu u,d\mu,\mu)(S^-+(a-\mu u)H)^{n-1}A_{Y_{n-1},\Delta_{n-1}}(E)
$$
and
$$
\int\limits_0^{\infty}\mathrm{vol} (S^-+H-vE)dv\leqslant k_{n-1}(1,d\mu,\mu)(S^-+H)^{n-1}A_{Y_{n-1},\Delta_{n-1}}(E).
$$
Hence, combining, we obtain
\begin{multline*}
S(V^{Y_{n-1}}_{\bullet,\bullet};E)\leqslant \frac{n}{D^n}\int\limits_0^{\frac{a-1}{\mu}}k_{n-1}(a-\mu u,d\mu,\mu)(S^-+(a-\mu u)H)^{n-1}A_{Y_{n-1},\Delta_{n-1}}(E)du+\\
+\frac{n}{D^n}\int\limits_{\frac{a-1}{\mu}}^{\frac{a}{\mu}}(a-\mu u)^nk_{n-1}(1,d\mu,\mu)(S^-+H)^{n-1}A_{Y_{n-1},\Delta_{n-1}}(E)du=\\
=A_{Y_{n-1},\Delta_{n-1}}(E)\frac{n}{D^n}\int\limits_0^{\frac{a-1}{\mu}}k_{n-1}(a-\mu u,d\mu,\mu)(S^-+(a-\mu u)H)^{n-1}du+\\
+A_{Y_{n-1},\Delta_{n-1}}(E)\frac{n}{D^n}\int\limits_{\frac{a-1}{\mu}}^{\frac{a}{\mu}}(a-\mu u)^nk_{n-1}(1,d\mu,\mu) (S^-+H)^{n-1}du.
\end{multline*}
Let us compute these two integrals separately. We have
\begin{multline*}
A_1:=\int\limits_0^{\frac{a-1}{\mu}}k_{n-1}(a-\mu u,d\mu,\mu)(S^-+(a-\mu u)H)^{n-1}du=\\
=d\mu^{n-1}\int\limits_0^{\frac{a-1}{\mu}}\frac{d\mu ((-1)^{n-1}(1-a+\mu u)^{n}+(a-\mu u)^{n})}{\mu n}  du+\quad\quad\quad\quad\quad\\
\quad\quad\quad\quad+\int\limits_0^{\frac{a-1}{\mu}}\frac{d\mu ((a-\mu u)^n-(a-\mu u+n-1)(a-\mu u -1)^{n-1})}{2n}  du=\\
=\frac{d^2\mu^{n-1}}{\mu n(n+1)}(a^{n+1}-(a-1)^{n+1}-1)+ \frac{d}{2n(n+1)}(a^{n+1}-(a+n)(a-1)^{n}-1)
\end{multline*}
and
$$
A_2:=\int\limits_{\frac{a-1}{\mu}}^{\frac{a}{\mu}}(a_n-\mu u)^nk_{n-1}(1,d\mu,\mu) (S^-+H)^{n-1} du = \frac{d(2 d\mu^{n-2}+1)}{2n(n+1)}=\frac{d^2\mu^{n-1}}{\mu n(n+1)}+\frac{d}{2n(n+1)}.
$$
Adding these two integrals we get
\begin{align*}
    \frac{n}{D(a)^n}(A_1+A_2) &= \frac{d\mu^{n-1}}{\mu (n+1)}\frac{a^{n+1}-(a-1)^{n+1}}{a^n-(a-1)^n}+ \frac{1}{2(n+1)}\frac{a^{n+1}-(a+n)(a-1)^{n}}{a^n-(a-1)^n} \\
    &=S_{D(a)}(Y_{n-1})d\mu^{n-1}+\mathrm{Res}_n(a).
\end{align*}
This gives $S(V^{Y_{n-1}}_{\bullet,\bullet};E)\leqslant k_{n}(a,d,\mu)A_{Y_{n-1},\Delta_{n-1}}(E)$ by Lemma~\ref{res},
which proves \eqref{equation:induction} and completes the~proof of Proposition~\ref{proposition:delta}.

\subsection{Applications}
\label{subsection:fourfolds}
The only application of Theorem~\ref{theorem:delta} we could find is Theorem~\ref{theorem:double-spaces}.
Let us use assumptions and notations of Theorem~\ref{theorem:delta}.
Let $V=\mathbb P^{n-1}$ and $L=\mathcal{O}_{\mathbb P^{n-1}}(r)$.
Suppose that $1<\frac{n}{2}<r<n$.
Then $\mu=\frac{1}{r}$, $d=r^{n-1}$ and $a=\frac{n}{r}$.

\begin{lemma}
\label{lemma:Pn-Kn}
One has $k_n(a,d,\mu)<1$.
\end{lemma}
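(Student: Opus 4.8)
\textbf{Proof proposal for Lemma~\ref{lemma:Pn-Kn}.}

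The plan is to substitute the explicit data and reduce the inequality $k_n(a,d,\mu)<1$ to a single polynomial inequality in $a$ whose sign is already under control. With $V=\mathbb{P}^{n-1}$, $L=\mathcal{O}_{\mathbb{P}^{n-1}}(r)$ and $n/2<r<n$ we have $\mu=\tfrac1r$, $d=r^{n-1}$ and $a=\tfrac nr$, so the key simplification is $d\mu^{n-2}=r^{n-1}r^{-(n-2)}=r=\tfrac na$. Plugging this into the formula for $k_n$ from Theorem~\ref{theorem:delta} and putting everything over the common denominator $2(n+1)(a^n-(a-1)^n)$ gives
$$
k_n(a,d,\mu)=\frac{\tfrac{2n}{a}\bigl(a^{n+1}-(a-1)^{n+1}\bigr)+a^{n+1}-(a+n)(a-1)^{n}}{2(n+1)(a^n-(a-1)^n)}.
$$
Since $1<a<2$ (because $n/2<r<n$), both $a$ and $a^n-(a-1)^n$ are positive, so clearing denominators is harmless and $k_n(a,d,\mu)<1$ is equivalent, after multiplying through by $a$, to
$$
2n\bigl(a^{n+1}-(a-1)^{n+1}\bigr)+a^{n+2}-a(a+n)(a-1)^{n}<2(n+1)\bigl(a^{n+1}-a(a-1)^n\bigr).
$$

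First I would move everything to one side and collect terms. The pure powers of $a$ contribute $a^{n+2}+(2n-2(n+1))a^{n+1}=a^{n+1}(a-2)$, and, after factoring $(a-1)^n$ out of the remaining terms, the bracket simplifies to $-2n(a-1)-a(a+n)+2(n+1)a=-(a^2+na-2a-2n)=-(a-2)(a+n)$. Hence the inequality to be proved is exactly
$$
(a-2)\bigl(a^{n+1}-(a+n)(a-1)^{n}\bigr)<0.
$$
This is where the two ingredients come in: the hypothesis $n/2<r<n$ gives $a-2<0$, while Lemma~\ref{res} (equivalently, the estimate $(a/(a-1))^n>1+\tfrac{n}{a-1}>\tfrac{a+n}{a}$ in its proof) gives $a^{n+1}-(a+n)(a-1)^n>0$. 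The product of a negative and a positive number is negative, which is precisely what we need, and the lemma follows.

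The only part requiring care is the algebraic collapse of the $(a-1)^n$-coefficient to $-(a-2)(a+n)$ and the bookkeeping of the $a^{n+1}$ and $a^{n+2}$ terms; once that is done the conclusion is immediate from $1<a<2$ and Lemma~\ref{res}. I do not anticipate any genuine obstacle — the content is entirely in the identification $d\mu^{n-2}=n/a$ and the clean factorization above.
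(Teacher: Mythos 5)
Your proof is correct and is essentially the paper's argument in different variables: your factorization $(a-2)\bigl(a^{n+1}-(a+n)(a-1)^n\bigr)<0$ becomes, after substituting $a=\tfrac{n}{r}$ and clearing positive powers of $r$, exactly the paper's inequality $(n^n-(n-r)^n(r+1))(2r-n)>0$. Your appeal to the estimate $a^{n+1}-(a+n)(a-1)^n>0$ from (the proof of) Lemma~\ref{res} is legitimate, since $a=\tfrac nr>1$ here, so nothing is missing.
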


\begin{proof}
One has
$$
k_n(a,d,\mu)=
\frac{(2d\mu^{n-2}+1)a^{n+1}-(a+n)(a-1)^n-2d\mu^{n-2}(a-1)^{n+1}}{2(n+1)(a^n-(a-1)^n)}.
$$
Thus, it is enough to show that
$$
2(n+1)(a^n-(a-1)^n)-\big((2d\mu^{n-2}+1)a^{n+1}-(a+n)(a-1)^n-2d\mu^{n-2}(a-1)^{n+1}\big)>0.
$$
Substituting $\mu=\frac{1}{r}$, $d=r^{n-1}$, $a=\frac{n}{r}$, and multiplying by $r^{n+1}$, we get the~inequality
$$
(n^n-(n-r)^n(r+1))(2r-n)>0,
$$
which holds since $2r-n>0$ and $n>r>\frac{n}{2}$ by assumption.
\end{proof}

\begin{lemma}
\label{lemma:Pn-Kento-easy}
One has
$$
\frac{(n+1)(a^n-(a-1)^n)}{(n+1-a)a^n+(a-1)^{n+1}}>1.
$$
\end{lemma}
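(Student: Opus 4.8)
The plan is to reduce the stated inequality, by a direct cross-multiplication, to the positivity statement already proved in Lemma~\ref{res}. First I would record the sign information coming from the hypotheses: since $1<\frac{n}{2}<r<n$, we have $a=\frac{n}{r}>1$, hence $a-1>0$, so both $a^n-(a-1)^n>0$ and $(a-1)^{n+1}>0$; moreover $a=\frac{n}{r}\leqslant n<n+1$, so $n+1-a>0$, and therefore the denominator $(n+1-a)a^n+(a-1)^{n+1}$ is strictly positive. Consequently the claim is equivalent to
$$
(n+1)\big(a^n-(a-1)^n\big)>(n+1-a)a^n+(a-1)^{n+1}.
$$

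Next I would expand both sides. The left-hand side equals $(n+1)a^n-(n+1)(a-1)^n$, while the right-hand side equals $(n+1)a^n-a^{n+1}+(a-1)^{n+1}$. Subtracting and using $(a-1)^{n+1}+(n+1)(a-1)^n=(a-1)^n\big((a-1)+(n+1)\big)=(a+n)(a-1)^n$, the inequality becomes simply
$$
a^{n+1}-(a+n)(a-1)^n>0.
$$
This is exactly the numerator of $\mathrm{Res}_n(a)$, whose positivity (for $a>1$) was established in the proof of Lemma~\ref{res} via the binomial estimate $\big(\tfrac{a}{a-1}\big)^n=\big(1+\tfrac{1}{a-1}\big)^n>1+\tfrac{n}{a-1}>1+\tfrac{n}{a}=\tfrac{a+n}{a}$. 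Since $a=\frac{n}{r}>1$ here, that lemma applies verbatim and the proof is complete.

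There is no real obstacle: the only point requiring a moment's care is confirming the signs of the denominator and of $a^n-(a-1)^n$ so that the cross-multiplication preserves the inequality, and then recognizing that the resulting expression is precisely the quantity shown to be positive in Lemma~\ref{res}. Alternatively, one could cite Lemma~\ref{res} directly: $\mathrm{Res}_n(a)>0$ together with $a^n-(a-1)^n>0$ immediately yields $a^{n+1}-(a+n)(a-1)^n>0$, which is what is needed.
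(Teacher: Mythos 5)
Your proof is correct: the algebra is right, the sign checks that justify the cross-multiplication are in place, and the reduction to $a^{n+1}-(a+n)(a-1)^n>0$ is exactly the positivity of the numerator of $\mathrm{Res}_n(a)$ proved in Lemma~\ref{res}, which applies since $a=\tfrac{n}{r}>1$. The paper takes a different (more computational, less structural) route: after the same cross-multiplication it substitutes $a=\tfrac{n}{r}$, clears denominators by multiplying by $r^{n}$ and dividing by $n$, and reduces the claim to the integer inequality $n^{n}-(r+1)(n-r)^{n}>0$, which holds because $r>\tfrac{n}{2}$ gives $\bigl(\tfrac{n}{n-r}\bigr)^{n}>2^{n}>n+1>r+1$; this is the same template it uses for the neighboring Lemmas~\ref{lemma:Pn-Kn} and~\ref{lemma:Pn-Kento}, so all three become instances of one inequality in $(n,r)$. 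Your argument buys something the paper's does not make explicit: it shows the inequality of Lemma~\ref{lemma:Pn-Kento-easy} holds for every $a>1$ (i.e.\ the second term in the definition of $\gamma$ in Theorem~\ref{theorem:delta} always exceeds $1$), independently of the specialization $V=\mathbb{P}^{n-1}$, $a=\tfrac{n}{r}$, and it does so by reusing Lemma~\ref{res} rather than redoing a computation. Both proofs are valid; yours is arguably cleaner and more general, the paper's is uniform with the adjacent lemmas.
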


\begin{proof}
The inequality is equivalent to
$$
(n+1)(a^n-(a-1)^n)>(n+1-a)a^n+(a-1)^{n+1}.
$$
Substituting $a=\frac{n}{r}$, multiplying by $r^{n}$, and dividing by $n$, we get
$n^n-(r+1)(n-r)^n>0$, which holds since $1<\frac{n}{2}<r<n$.
\end{proof}

\begin{lemma}
\label{lemma:Pn-Kento}
One has
$$
\frac{a\delta(V)(n+1)(a^{n}-(a-1)^{n})}{n(a^{n+1}-(a-1)^{n+1})}>1.
$$
\end{lemma}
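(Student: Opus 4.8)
The plan is to reduce the asserted inequality to an elementary polynomial inequality that has already been used in this subsection. The one input that is not purely formal is the value of $\delta(V)$: since $V=\mathbb{P}^{n-1}$ is a K-polystable Fano variety, one has $\delta(V)=\delta(\mathbb{P}^{n-1})=1$ (alternatively, this is a direct computation — the infimum in $\delta(\mathbb{P}^{n-1};-K_{\mathbb{P}^{n-1}})$ is attained on a hyperplane $F$, for which $A_{V}(F)=1$ and $S_{-K_V}(F)=1$). Substituting $\delta(V)=1$ and clearing the strictly positive denominator $n(a^{n+1}-(a-1)^{n+1})$ (positive because $a=\tfrac{n}{r}>1$), the claim becomes
$$
a(n+1)\big(a^{n}-(a-1)^{n}\big)>n\big(a^{n+1}-(a-1)^{n+1}\big).
$$

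Next I would carry out the algebraic simplification. Writing $b=a-1>0$ and using $(b+1)(b+1)^{n}=(b+1)^{n+1}$ together with $(n+1)b^{n}(b+1)=(n+1)b^{n+1}+(n+1)b^{n}$, the difference of the two sides collapses to
$$
(b+1)^{n+1}-b^{n+1}-(n+1)b^{n}=\sum_{i=0}^{n-1}\binom{n+1}{i}b^{i},
$$
the last equality being the binomial theorem (exactly in the spirit of the proof of Lemma~\ref{res}). Since $b>0$, this sum is positive, which proves the inequality — and, incidentally, shows it holds for every $a>1$, not merely for $a=\tfrac{n}{r}$. If one prefers to stay closer to the proofs of Lemmas~\ref{lemma:Pn-Kn} and \ref{lemma:Pn-Kento-easy}, one may instead substitute $a=\tfrac{n}{r}$, multiply through by $r^{n+1}$, and observe that the inequality is equivalent to $n^{n}>(r+1)(n-r)^{n}$; this holds because $r>\tfrac{n}{2}$ forces $\tfrac{n}{n-r}>2$, whence $\big(\tfrac{n}{n-r}\big)^{n}>2^{n}\geqslant n+1>r+1$.

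There is essentially no obstacle here beyond bookkeeping. The only genuinely external ingredient is the equality $\delta(\mathbb{P}^{n-1})=1$; everything else is the binomial identity above plus the sign of the denominator. The sole place where a little care is needed is checking that $a(n+1)(a^{n}-(a-1)^{n})-n(a^{n+1}-(a-1)^{n+1})$ really does simplify to $(b+1)^{n+1}-b^{n+1}-(n+1)b^{n}$, which is a two-line computation after the substitution $b=a-1$.
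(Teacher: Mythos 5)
Your proof is correct. The only external input, $\delta(\mathbb{P}^{n-1})=1$, is exactly what the paper uses (and states without proof), and your reduction to
$a(n+1)\big(a^{n}-(a-1)^{n}\big)>n\big(a^{n+1}-(a-1)^{n+1}\big)$ matches the paper's first step. Where you diverge is in how this inequality is verified: the paper substitutes $a=\tfrac{n}{r}$, clears denominators, and reduces to the polynomial inequality $n^{n}-(r+1)(n-r)^{n}>0$, which it asserts holds under $1<\tfrac{n}{2}<r<n$ (the same inequality that appears in Lemma~\ref{lemma:Pn-Kento-easy}); your primary route instead sets $b=a-1$ and observes that the difference of the two sides equals $(b+1)^{n+1}-b^{n+1}-(n+1)b^{n}=\sum_{i=0}^{n-1}\binom{n+1}{i}b^{i}>0$ — I checked the collapse and it is exact. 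This binomial argument buys you a genuinely stronger statement: the third term in the definition of $\gamma$ exceeds $1$ for every $a>1$ with $\delta(V)\geqslant 1$, with no use of the hypothesis $r>\tfrac{n}{2}$, whereas the paper's route is tied to the specific substitution $a=\tfrac{n}{r}$. Your second, alternative route is precisely the paper's, and you even supply the justification the paper leaves implicit, via $\big(\tfrac{n}{n-r}\big)^{n}>2^{n}\geqslant n+1>r+1$ (valid since $r>\tfrac{n}{2}$ makes the first inequality strict and $r<n$ gives the last). Either route is complete; there is no gap.
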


\begin{proof}
We have $\delta(V)=\delta(\mathbb{P}^{n-1})=1$. Thus, the~required inequality is equivalent to
$$
n(a^{n+1}-(a-1)^{n+1})-a(n+1)(a^n-(a-1)^n)<0.
$$
Substituting $a=\frac{n}{r}$, multiplying by $r^{n+1}$, and dividing by $n$, we get
$n^n-(r+1)(n-r)^n>0$, which holds since $1<\frac{n}{2}<r<n$.
\end{proof}

Theorem~\ref{theorem:double-spaces} follows from Lemmas~\ref{lemma:Pn-Kn}, \ref{lemma:Pn-Kento-easy}, \ref{lemma:Pn-Kento} and Theorem~\ref{theorem:delta}.

\section{Proof of Theorem~\ref{theorem:4-2}}
\label{section:4-2}

The goal of this section is to prove Theorem~\ref{theorem:4-2} and  describe
singular K-polystable limits of smooth Fano 3-folds in the~deformation family \textnumero 4.2.
We start with the~following (probably well-known) result, which we fail to find in the~literature.

\begin{proposition}
\label{proposition:GIT-2-2-curves}
Let $C$ be a $(2,2)$-curve in $\mathbb P^1 \times \mathbb P^1$. Then $C$ is
\begin{itemize}
\item GIT stable for $\mathrm{PGL}_2(\mathbb{C})\times \mathrm{PGL}_2(\mathbb{C})$-action $\iff$ it is smooth,
\item GIT strictly polystable $\iff$ it is one of the~curves in Theorem~\ref{theorem:4-2}.
\end{itemize}
\end{proposition}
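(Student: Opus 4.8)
The plan is to apply the Hilbert--Mumford numerical criterion directly on $\mathbb{P}^8=\mathbb{P}\big(H^0(\mathbb{P}^1\times\mathbb{P}^1,\mathcal{O}(2,2))\big)$, with $G=\mathrm{SL}_2(\mathbb{C})\times\mathrm{SL}_2(\mathbb{C})$ acting through $\mathrm{PGL}_2(\mathbb{C})\times\mathrm{PGL}_2(\mathbb{C})$; since $\mathrm{SL}_2\times\mathrm{SL}_2$ has no nontrivial characters, there is a unique $G$-linearisation of $\mathcal{O}_{\mathbb{P}^8}(1)$, so the GIT notions are unambiguous. Writing a $(2,2)$-form as $f=\sum_{0\le i,j\le 2}a_{ij}\,x_0^ix_1^{2-i}y_0^jy_1^{2-j}$, a one-parameter subgroup $\lambda_{\alpha,\beta}$ of the diagonal maximal torus assigns the monomial indexed by $(i,j)$ the weight $2\alpha(i-1)+2\beta(j-1)$, so the ``balanced'' weight sits at the centre $(1,1)$ of the $3\times 3$ lattice square. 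Reducing the criterion to the torus and using convexity of the weight polytope, for $C=\{f=0\}$ one obtains: $C$ is $G$-semistable (resp.\ $G$-stable) $\iff$ for every choice of coordinates on the two rulings the point $(1,1)$ lies in $\mathrm{conv}\,\mathrm{Supp}(f)$ (resp.\ in its interior), with $\mathrm{Supp}(f)=\{(i,j):a_{ij}\neq 0\}$; and $C$ is $G$-polystable $\iff$ it is $G$-semistable and for every $\lambda$ with $\mu(C,\lambda)=0$ the limit $\lim_{t\to0}\lambda(t)\cdot C$ lies in the orbit $G\cdot C$.

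The second step is a ``dictionary'' lemma. Inspection of the $3\times 3$ square shows that the only candidate destabilising directions are the perpendiculars $(1,0),(0,1),(1,1),(1,-1)$ to the four lines through $(1,1)$ that contain two further lattice points of the square, together with their Weyl translates (a steeper direction such as $(1,2)$ destabilises only when one of these already does). A short computation shows, for $g=(g_1,g_2)\in G$: $\mathrm{Supp}(g\cdot f)$ avoids the coordinate line $\{i=1\}$ $\iff$ $C$ contains a fibre of the first projection; and $\mathrm{Supp}(g\cdot f)$ lies in a half-plane bounded by a diagonal line through $(1,1)$ $\iff$ $C$ is singular at one of the four coordinate points; moreover ``contains a fibre'' already implies ``singular''. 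Ranging over all $g$ gives the clean statement: $\mu(C,\lambda)\ge 0$ for some $\lambda$ $\iff$ $C$ is not a smooth curve; hence $C$ is $G$-stable $\iff$ $C$ is smooth. Here one uses that a smooth member of $|\mathcal{O}(2,2)|$ is connected by Kodaira vanishing, hence irreducible, hence contains no fibre and is nowhere singular.

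It remains to decide polystability among the non-stable semistable curves, which amounts to bookkeeping. A curve of the form (fibre) $+$ (irreducible $(1,2)$- or $(2,1)$-curve), two $(1,1)$-curves tangent at a point, (fibre) $+$ (fibre) $+$ (smooth $(1,1)$-curve), or an irreducible nodal or cuspidal curve, degenerates, under the destabilising $\lambda$ computed above, to a semistable curve with strictly smaller support (the four rulings, a double line, or two fibres), so it is strictly semistable. The three families of Theorem~\ref{theorem:4-2}, in suitable coordinates, instead have support inside the minimal face of $\mathrm{conv}\,\mathrm{Supp}(f)$ through $(1,1)$: the four-rulings curve (2) has support $\{(1,1)\}$; two smooth $(1,1)$-curves meeting in two points (1) can be normalised to $f=(x_0y_0-r_1x_1y_1)(x_0y_0-r_2x_1y_1)$ with support on the segment $[(0,0),(2,2)]$; and $2C$ with $C$ smooth (3) becomes $f=(x_0y_1-x_1y_0)^2$ with support on the segment $[(2,0),(0,2)]$. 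In each case the two endpoints of the relevant segment are unstable forms, so $G\cdot C$ is closed in the semistable locus and $C$ is polystable; one also notes that (2) is the common degeneration of (1) and (3), which explains its reappearance.

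The hard part will be this last step: showing the list of $G$-orbits of non-stable $(2,2)$-curves is complete, and that every orbit outside families (1)--(3) really degenerates to a \emph{different} semistable curve. The classification of reducible or singular $(2,2)$-curves up to $\mathrm{PGL}_2(\mathbb{C})\times\mathrm{PGL}_2(\mathbb{C})$ has more cases than one first expects --- two $(1,1)$-curves can meet transversally at two points, be tangent at one point, be tangent at two points, or share a component; a $(1,1)$-curve plus two fibres has several inequivalent configurations; an irreducible cuspidal curve has its cuspidal tangent in a non-fibre direction; and so on --- and for each one must pin down the limit of the appropriate one-parameter degeneration and check that it is semistable but lies off the orbit. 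By contrast the equivalence ``$C$ smooth $\iff$ $C$ stable'' is immediate once the dictionary lemma is available, and the dictionary lemma itself is a finite computation with the $3\times 3$ weight square.
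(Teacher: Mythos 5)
Your overall route is the same as the paper's: reduce to diagonal one-parameter subgroups, read stability off the $3\times 3$ weight square to get ``stable $\iff$ smooth'', and then settle polystability by deciding which strictly semistable orbits are closed; the paper's proof is exactly this computation, written with explicit $\lambda=(r_0,-r_0,r_1,-r_1)$ rather than with the weight polytope. However, two statements in your ``dictionary'' are wrong as written. The equivalence ``$\mathrm{Supp}(g\cdot f)$ avoids the line $\{i=1\}$ $\iff$ $C$ contains a fibre of the first projection'' fails: $f=x_0^2y_0^2+x_1^2y_1^2$ has support $\{(0,0),(2,2)\}$, which avoids that line, yet $C$ is two smooth $(1,1)$-curves meeting at two points (case (1) of Theorem~\ref{theorem:4-2}) and contains no fibre. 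What your argument actually needs, and what is true, is that $\mathrm{Supp}(g\cdot f)$ lies in a closed half-plane bounded by $\{i=1\}$ $\iff$ a linear form in $x_0,x_1$ divides $g\cdot f$, i.e.\ $C$ contains a fibre. Likewise the displayed criterion should be ``$\mu(C,\lambda)\leqslant 0$ for some $\lambda$ $\iff$ $C$ is singular''; as stated (``$\geqslant 0$ for some $\lambda$'') it holds for every curve. With these repairs your stability half coincides in content with the paper's argument for the first bullet of Proposition~\ref{proposition:GIT-2-2-curves}.

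For the second bullet you set up the correct closed-orbit criterion and check the three normal forms, but the justification ``the endpoints of the segment are unstable, hence the orbit is closed'' is only a heuristic (one must treat all one-parameter subgroups with $\mu=0$, not just those of the chosen torus), and you explicitly defer the completeness of the case analysis, i.e.\ that every strictly semistable curve outside families (1)--(3) degenerates to a semistable curve off its orbit. That deferred bookkeeping is precisely the content the paper compresses into ``this easily implies the description of GIT-polystable $(2,2)$-curves'' after describing the unstable locus: once one knows a non-stable semistable $f$ has, in suitable coordinates, $a_{00}=a_{10}=a_{01}=0$ with $(1,1)$ still in the convex hull of the support, the limit under the destabilizing torus direction lands on one of your three normal forms, and the remaining finite check identifies the closed orbits. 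So there is no missing idea relative to the paper, but for a complete proof you must fix the two misstatements above and actually carry out the finite polystability check you flag as ``the hard part''.
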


\begin{proof}
Choose homogeneous coordinates $x,\,y$ of degree $(1,0)$ on $\mathbb P^1 \times \mathbb P^1$, and choose homogeneous coordinates $u,\,v$ of degree $(0,1)$.
Then $C$ is given by
$$
 \sum_{i=0}^{2}\sum_{j=0}^{2} a_{ij}x^{2-i}y^iu^{2-j}v^j=0.
$$
Observe that any one parameter subgroup $\lambda \colon \mathbb{C}^* \rightarrow \mathrm{PSL}_2(\mathbb C) \times  \mathrm{PSL}_2(\mathbb C)$ is conjugate to a diagonal one of the~form
$$
t \longmapsto
\Bigg(\begin{pmatrix}
t^{r_0} &    0        \\
  0      & t^{-r_0}   \\

\end{pmatrix}
,
\begin{pmatrix}
               t^{r_1}  & 0\\
              0        & t^{-r_1}
\end{pmatrix}\Bigg)
$$
for some integers $r_1 \geqslant r_0 \geqslant 0$ and $r_1 >0$, which we will write as $\lambda=(r_0,-r_0,r_1,-r_1)$.
Then the~Hilbert--Mumford function is
$$
\mu(f,\lambda)= \max \{ r_0(2-2i)+r_1(2-2j),\, a_{ij} \not = 0\}.
$$
Clearly, if $\mu(f,\lambda) \leqslant 0$, then $a_{00}=a_{10}=a_{01}=0$.
Moreover, if this inequality is strict, then we additionally have $a_{11}=0$.
Furthermore, we have
$$
\mu(x^2v^2,\lambda)=-\mu(y^2u^2,\lambda).
$$
So, at least one of $a_{20}$ and $a_{02}$ is zero.
Without loss of generality, we assume that $a_{20}=0$.
Therefore, if $\mu(f,\lambda) < 0$, then $a_{00}=a_{10}=a_{01}=a_{11}=a_{20}=0$.

Suppose that $C$ is singular at the~point $([1:0],[1:0])$, so that $a_{00}=a_{10}=a_{01}=0$,
and consider the~one parameter subgroup $\lambda=(1,-1,1,-1)$. Then
$$
\mu(f,\lambda)=4-2(i+j),
$$
which is non-positive if and only if $i+j\geqslant 2$.
But, since $a_{ij}=0$ whenever $i+j<2$, we conclude that $\mu(f,\lambda) \leqslant 0$ and $C$ is not stable.

Conversely, suppose there exists a one parameter subgroup $\lambda$ for which $\mu(f,\lambda) \leqslant 0$. Note that
$$
\mu(x^{2-i}y^iu^{2-j}v^j, \lambda)>0
$$
for any one parameter subgroup $\lambda$ provided that $i+j< 2$.
This gives $a_{00}=a_{10}=a_{01}=0$,
so that the~curve $C$ is singular at $([1:0],[1:0])$.

Now, let us describe the~unstable locus.
Suppose that $a_{00}=a_{10}=a_{01}=a_{11}=a_{20}=0$.
Consider the~one parameter subgroup $\lambda = (1,-1,2,-2)$. Then
$$
\mu(f,\lambda)=6-2(i+2j)
$$
which is negative if and only if $i+2j>3$. But since $a_{ij}=0$ whenever $i+2j\leqslant 3$ it follows that  $\mu(f,\lambda)<0$.
Similarly, one can show that $C$ is GIT-unstable if it can be given by
$$
a_{02}x^2v^2+a_{12}xyv^2+a_{21}y^2uv+a_{22}y^2v^2=0.
$$
This describes all possibilities for the~curve $C$ to be GIT-semistable, which easily implies the~description of GIT-polystable $(2,2)$-curves.
\end{proof}

Now, we set $V=\mathbb{P}^1\times\mathbb{P}^1$.
Let $L=\mathcal{O}_{V}(1,1)$, let $R$ be a curve in $|2L|$, set
$$
Y=\mathbb{P}\big(\mathcal{O}_{V}\oplus \mathcal{O}_{V}(L)\big),
$$
let $\pi\colon Y\to V$ be the~natural projection, let~$S^-$ and $S^+$ be disjoint sections of $\pi$ such that
$$
S^+\sim S^-+\pi^*(L).
$$
Finally, we~set $F=\pi^*(R)$,
and let $\phi\colon X\to Y$ be the~blow up at the~intersection $S^+\cap F$.
If~$R$ is smooth, then $X$ is K-polystable \cite{Book}.
Theorem~\ref{theorem:4-2} says that $X$ is also K-polystable
in the~case when $R$ is one of the~following singular curves:
\begin{enumerate}
\item[$(\mathrm{1})$] $C_1+C_2$, where $C_1$ and $C_2$ are smooth curves in $|L|$ such that $|C_1\cap C_2|=2$;
\item[$(\mathrm{2})$] $\ell_1+\ell_2+\ell_3+\ell_4$, where $\ell_1$ and $\ell_2$ are two distinct smooth curves of degree $(1,0)$,
and $\ell_3$ and $\ell_4$ are two distinct smooth curves of degree $(0,1)$;
\item[$(\mathrm{3})$] $2C$, where $C$ is a smooth curve in $|L|$.
\end{enumerate}

Now, let us prove Theorem~\ref{theorem:4-2}. We start with

\begin{remark}
\label{remark:4-2-toric}
Suppose that $R=\ell_1+\ell_2+\ell_3+\ell_4$, where $\ell_1$ and $\ell_2$ are two distinct smooth curves in $V$ of degree $(1,0)$,
and $\ell_3$ and $\ell_4$ are two distinct smooth curves of degree $(0,1)$.
Then $X$ is toric,
and it corresponds to the moment polytope in $M_{\mathbb{R}}$ whose vertices are
$$
\begin{matrix}
(0,0,1),& (1,0,1),& (1,1,1),& (0,1,1),\\
(1,1,0),& (-1,1,0),& (-1,-1,0),& (1,-1,0),\\
(0,0,-1),& (-1,0,-1),& (-1,-1,-1),& (0,-1,-1).
\end{matrix}
$$
The barycenter of the moment polytope is the origin, so $X$ is K-polystable.
\end{remark}

Our next step is the~following simple lemma:

\begin{lemma}
\label{lemma:4-2-PGL-2}
Suppose $R=2C$ for a smooth curve $C\in |L|$.
Then $X$ is K-polystable.
\end{lemma}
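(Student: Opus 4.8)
The plan is to reduce to the quotient pair as in the other cases, and then recognize $X$ as a \emph{spherical} variety so that K-polystability can be read off from a barycenter, exactly as in Example~\ref{example:Thibaud} and Remark~\ref{remark:4-2-toric}. First, by Proposition~\ref{proposition:quotient} together with \cite{Liu-Zhu} (as explained in Section~\ref{section:intro}), $X$ is K-polystable if and only if the log Fano pair $\big(Y,\frac12 B\big)$ is K-polystable, where $B\in|2S^+|$ is the branch divisor. Writing $C=\{g=0\}$ with $g\in H^0(V,\mathcal O_V(L))$, the condition $R=2C$ means $B$ is defined by $(u^+)^2-\pi^*(g)^2(u^-)^2=0$, so $B=B_1+B_2$ with $B_i=\{u^+=\pm\pi^*(g)u^-\}$; each $B_i$ is a section of $\pi$ disjoint from $S^-$, and $B_1\cap B_2=S^+\cap\pi^{-1}(C)$, which is a smooth rational curve. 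Equivalently, in terms of the conic bundle $\theta\colon X\to V$ of diagram~\eqref{equation:diagram-symmetric}, the discriminant is $2C$ and over $V\setminus C$ the fibres are smooth conics.

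Next I would exhibit a large reductive symmetry group. Since $C$ is a smooth $(1,1)$-curve on $V=\mathbb P^1\times\mathbb P^1$ it is the graph of an isomorphism $\mathbb P^1\to\mathbb P^1$, so after a suitable choice of coordinates the diagonal subgroup $\mathrm{SL}_2(\mathbb C)\subset\mathrm{Aut}(V)$ preserves $C$; as $L=\mathcal O_V(1,1)$ is $\mathrm{Aut}(V)$-stable this action lifts to $Y$ and, together with the fibrewise torus $\Gamma\cong\mathbb G_m$, to $X$. Set $G=\mathrm{SL}_2(\mathbb C)\times\mathbb G_m$. The plan is to check that $X$ is a \emph{spherical} $G$-variety of rank $2$: the open $\mathrm{SL}_2$-orbit on $V$ is $\mathbb P^1\times\mathbb P^1\setminus C\cong\mathrm{SL}_2/T$ (a rank-one affine spherical space, complement of the ample divisor $C$), and over it the generic fibre of $\theta$ is a smooth conic on which $\Gamma$ acts with dense $\mathbb G_m$-orbit commuting with the stabilizer torus; a dimension count then shows a Borel of $G$ has a dense orbit on $X$. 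In particular $\mathrm{Aut}^0(X)\supseteq G$ is reductive, and (with the three $G$-invariant prime divisors $S_1$, $S_2$, $E_2$, i.e.\ the two sections and the divisor over $C$) one obtains the colored fan and, via the standard combinatorics, the moment polytope $\Delta_X$ of $(X,-K_X)$ and the associated Duistermaat--Heckman measure.

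Then K-polystability follows from Delcroix's criterion for Fano spherical varieties \cite{Delcroix2022}: $X$ is K-polystable as soon as the barycenter of $\Delta_X$ with respect to the Duistermaat--Heckman measure lies in the relative interior of the appropriate translate of the (negative dual) valuation cone. Here the extra finite symmetries --- the switch of the two $\mathbb P^1$-factors (which fixes $C$) and the involution $\iota$ swapping $S_1\leftrightarrow S_2$ --- force this barycenter to be the center of the polytope, so the criterion reduces, just as in Remark~\ref{remark:4-2-toric} and Example~\ref{example:Thibaud}, to the vanishing of a single barycenter integral, which one computes directly. (As a fallback, if the sphericity bookkeeping turned out to be awkward, one could instead argue $G$-equivariantly via Abban--Zhuang: for $P\in S^-$ the bound of Lemma~\ref{lemma:Kento} gives $\delta_P>1$ since $\delta(V;L)=2=a$, while for $P\notin S^-$ the computation of Subsection~\ref{subsection:threefolds} must be redone with $d=L^2=2$, $\mu=1$, $a=2$ --- the general estimate $1/k_3(2,2,1)=\tfrac{56}{71}<1$ of Proposition~\ref{proposition:induction-step} being insufficient --- using that $B=B_1+B_2$ is a sum of two sections and the $\mathrm{SL}_2$-action to reduce the points of $B_1\cap B_2$ to a single model point, and then concluding that $\delta_G(Y,\frac12 B)=1$ with equality only in the $\Gamma$-direction.)

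The main obstacle is the explicit spherical geometry: correctly identifying the lattice $N$, the valuation cone, the set of colors and which of them each invariant divisor dominates (the double cover $\eta\colon B\to V$ and the curve $C$ have to be translated faithfully into this data), and then evaluating the piecewise-polynomial Duistermaat--Heckman density over the resulting two-dimensional moment polytope to verify the barycenter condition; in the alternative Abban--Zhuang route the analogous difficulty is the refined local computation along $\mathrm{Sing}(B)=B_1\cap B_2$, precisely the locus excluded from Proposition~\ref{proposition:induction-step}, together with showing that equality in $\delta=1$ forces a product test configuration so that one gets K-polystability and not merely strict K-semistability.
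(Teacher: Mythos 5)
Your strategy is genuinely different from the paper's, but as written it has concrete gaps rather than being a complete alternative proof. The sphericity claim itself is fine (with $C$ the diagonal, $\mathrm{Aut}(X)^0\cong\mathrm{PGL}_2(\mathbb{C})\times\mathbb{G}_m$ and a Borel does have a dense orbit), but everything that would constitute the proof is deferred: you never determine the lattice, valuation cone, colors, moment polytope or the Duistermaat--Heckman barycenter. Moreover, two of the guiding assertions are shaky. First, the criterion you cite is stated for smooth Fano spherical varieties (and Example~\ref{example:Thibaud} is smooth), whereas this $X$ is singular along a curve, so you would need the $\mathbb{Q}$-Fano spherical version and to check its hypotheses. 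Second, "the finite symmetries force the barycenter to be the center of the polytope" is not justified: the DH density is weighted by the positive roots, so it is not symmetric, and for a non-horospherical spherical variety the K-polystability condition is not "barycenter at the center" but membership of $\mathrm{bar}_{DH}-2\rho$ in the relative interior of a cone determined by the valuation cone; the symmetries at best cut the check down to a one-parameter condition that still requires the integral you postpone. Your fallback route also misstates the equivariant setup: as noted in Section~\ref{section:3-9}, $\Gamma$ does not commute with $\iota$ and hence does not descend to $Y$, so "$\delta_G(Y,\frac12B)=1$ with equality only in the $\Gamma$-direction" is not meaningful on the quotient pair; and even a genuine equality $\delta_G=1$ would not give polystability without showing the minimizer comes from a product test configuration --- exactly the delicate point you list as an obstacle. (Your numerics are right: $1/k_3(2,2,1)=\tfrac{56}{71}<1$, so the general bound of Proposition~\ref{proposition:induction-step} is indeed insufficient here, which means the entire local analysis, not only along $\mathrm{Sing}(B)$, would have to be redone.)

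For comparison, the paper's proof is short and avoids all of this: it realizes $X$ via an $\mathrm{Aut}(X)$-equivariant diagram $W\leftarrow U\rightarrow X$, where $W=V\times\mathbb{P}^1$, $\alpha$ blows up $\widetilde{E}\cap\widetilde{S}^{\pm}$ for $\widetilde{E}=\varpi^*(C)$ and $\psi$ contracts the proper transform of $\widetilde{E}$ to $\mathrm{Sing}(X)$; this gives $\mathrm{Aut}(X)\cong\mathrm{PGL}_2(\mathbb{C})\times(\mathbb{G}_m\rtimes\mumu_2)\times\mumu_2$ and shows that the divisor $\widehat{E}$ is the unique $\mathrm{Aut}(X)$-invariant prime divisor over $X$. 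By Zhuang's equivariant criterion it then suffices to compute one beta-invariant, and an explicit Zariski decomposition gives $\beta(\widehat{E})=\tfrac{1}{14}>0$. If you want to pursue your route, the work to be done is precisely the spherical bookkeeping and barycenter integral (or, for the fallback, the refined Abban--Zhuang analysis along $B_1\cap B_2$ in the spirit of Lemma~\ref{lemma:3-9-PGL2-revisited}), none of which is yet on the page.
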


\begin{proof}
In this case, the~morphism $\phi$ is a weighted blow up at the~intersection $\pi^*(C)\cap S^+$,
and $X$ has non-isolated singularities along a smooth  curve, which we will denote by $\overline{C}$.
The threefold $X$ can be obtained in a slightly different way.
Let us describe it.

Set $W=V\times\mathbb{P}^1$,
let $\varpi\colon W\to V$ be the~natural projection,
let $\widetilde{S}^-$ and $\widetilde{S}^+$ be its disjoint sections,
and let $\widetilde{E}=\varpi^*(C)$.
Then there exists commutative diagram
$$\xymatrix{
&U\ar[dl]_{\alpha}\ar[dr]^{\psi}&\\
W\ar[dr]_{\varpi} &&X\ar[dl]^{\pi\circ\phi} \\
&V&}
$$
where $\alpha$ blows up the~intersection curves $\widetilde{E}\cap\widetilde{S}^-$ and $\widetilde{E}\cap\widetilde{S}^+$,
and~$\psi$ contracts the~proper transform of the~surface $\widetilde{E}$ to the~curve $\overline{C}$.
Moreover, we may assume that $\phi\circ\psi$ maps the~proper transforms of the~surfaces $\widetilde{S}^-$ and $\widetilde{S}^+$
to the~surfaces $S^-$ and $S^+$, respectively.

Let $\widehat{E}$ be the~proper transform on the~threefold $U$ of the~surface $\widetilde{E}$.
We may assume that the~curve $C$ is the~diagonal curve in $V=\mathbb{P}^1\times\mathbb{P}^1$.Using this, we see that
$$
\mathrm{Aut}(X)\cong\mathrm{Aut}(U)\cong\mathrm{Aut}\big(W,\widetilde{E}+\widetilde{S}^-+\widetilde{S}^+\big)\cong\mathrm{PGL}_2(\mathbb{C})\times(\mathbb{G}_m\rtimes\mumu_2)\times\mumu_2,
$$
and $\widehat{E}$ is the~only $\mathrm{Aut}(X)$-invariant prime divisor over $X$.
Thus, using \cite{Zhuang}, we conclude that the~threefold $X$ is K-polystable if $\beta(\widehat{E})>0$.
Let us compute $\beta(\widehat{E})$.

We let $F^-$ and $F^+$ be $\alpha$-exceptional surfaces such that $\alpha(F^-)\subset\widetilde{S}^-$ and $\alpha(F^+)\subset\widetilde{S}^+$,
let $\widehat{S}^-$ and $\widehat{S}^+$ be the~proper transforms on $U$ of the~surfaces $S^-$ and $S^+$, respectively.
Further, set
$H_1=(\mathrm{pr}_1\circ\alpha)^*(\mathcal{O}_{\mathbb{P}^1}(1))$,
$H_2=(\mathrm{pr}_2\circ\alpha)^*(\mathcal{O}_{\mathbb{P}^1}(1))$,
$H_3=(\mathrm{pr}_3\circ\alpha)^*(\mathcal{O}_{\mathbb{P}^1}(1))$,
where $\mathrm{pr}_1$, $\mathrm{pr}_2$, $\mathrm{pr}_3$ are projections $W\to\mathbb{P}^1$
such that $\mathrm{pr}_1$ and $\mathrm{pr}_2$ factors through $\varpi$.
Then
$$
\psi^*(-K_X)\sim -K_U\sim 2(H_1+H_2+H_3)-F^--F^+\sim 2\widehat{E}+\widehat{S}^-+\widehat{S}^++2(F^-+F^+).
$$
Now, we take $u\in\mathbb{R}_{\geqslant 0}$. Then the~divisor $\psi^*(-K_X)-u\widehat{E}$ is $\mathbb{R}$-rationally equivalent to
$$
(2-u)(H_1+H_2)+2H_3+(u-1)(F^-+F^+)\sim_{\mathbb{R}}(2-u)\widehat{E}+\widehat{S}^-+\widehat{S}^++2(F^-+F^+),
$$
and $\widehat{S}^-+\widehat{S}^++2(F^-+F^+)$ is not big,
so $\psi^*(-K_X)-u\widehat{E}$ is pseudoeffective $\iff$ $u\leqslant 2$.
Moreover, if $u\in[0,1]$, then the~divisor $\psi^*(-K_X)-u\widehat{E}$  is nef.
Furthermore, if $u\in[1,2]$, then the~Zariski decomposition of the~divisor  $\psi^*(-K_X)-u\widehat{E}$ is given by
$$
\psi^*(-K_X)-u\widehat{E}\sim_{\mathbb{R}}\underbrace{(2-u)(H_1+H_2)+2H_3}_{\text{positive part}}+\underbrace{(u-1)(F^-+F^+)}_{\text{negative part}}.
$$
Hence, we have
\begin{multline*}
\beta(\widehat{E})=1-\frac{1}{(-K_X)^3}\int_{0}^{2}\mathrm{vol}\Big(\psi^*(-K_X)-u\widehat{E}\Big)du=\\
=1-\frac{1}{28}\int_{0}^{1}\Big((2-u)(H_1+H_2)+2H_3+(u-1)(F^-+F^+)\Big)^3du-\frac{1}{28}\int_{1}^{2}\Big((2-u)(H_1+H_2)+2H_3\Big)^3du=\\
=1-\int_{0}^{1}8u^3-24u^2+28du-\int_{1}^{2}12(2-u)^2du=\frac{1}{14}>0,
\end{multline*}
which implies that $X$ is K-polystable.
\end{proof}

To complete the~proof of Theorem~\ref{theorem:4-2},
let us present $X$ as a codimension two complete intersection in a toric variety.
Let $T=(\mathbb{C}^7\setminus Z(I))/\mathbb{G}_m^2$, where the~$\mathbb{G}_m^2$-action is given by
$$
\left(\begin{array}{ccccccc}
x & y & z & w & u & v & s  \\
1 & 1 & 1 & 1 & 1 & 0 & 2 \\
0 & 0 & 0 & 0 & 1 & 1 & 0
\end{array}\right),
$$
and $I$ is the~irrelevant ideal $\langle x,y,z,w,s \rangle \cap \langle u, v\rangle$.
Let $\widetilde{\mathbb{P}} = \mathrm{Proj} ( \mathcal{O}_{\mathbb{P}^3} \oplus \mathcal{O}_{\mathbb{P}^3}(1))$.
Then we can identify $\widetilde{\mathbb{P}}$ with the~hypersurface in $T$ given by
$$
s=f(x,y,z,w),
$$
where $f(x,y,z,w)$ is any non-zero homogeneous polynomial of degree $2$.
Since $Y$ can be obtained by blowing up the~quadric cone over the~surface $\{xy=zw\}\subset\mathbb{P}^3$ at the~vertex,
we can identify $Y$ with the~complete intersection in $T$ given by
$$\left\{\aligned
&xy=zw , \\
&s=f(x,y,z,w),
\endaligned
\right.
$$
Then the~projection $\pi\colon T\to V$ is given by
$$
(x,y,z,w,u,v,s)\mapsto (x,y,z,w),
$$
where we identify $V$ with $\{xy=zw\}\subset\mathbb{P}^3$.
Then the~surface $S^-$ is cut out on $Y$ by~$v=0$.
Moreover, we can assume that $S^+$ is cut out on $Y$ by $u=0$,
and we can identify $R$ with the~curve in $S^+$ that is cut out by $s=0$.

Let $\varphi\colon \overline{T} \to T$ be the blow up of $T$ along $u = s = 0$.
Then $\overline{T}=(\mathbb{C}^8\setminus Z(\overline{I}))/\mathbb{G}_m^3$,
where the~torus action is given by the~matrix
$$
\left(\begin{array}{cccccccc}
x & y & z & w & u & v & s & t \\
1 & 1 & 1 & 1 & 1 & 0 & 2 & 0 \\
0 & 0 & 0 & 0 & 1 & 1 & 0 & 0\\
0 & 0 & 0 & 0 & 1 & 0 & 1 & -1
\end{array}\right),
$$
and the~irrelevant ideal
$$
\overline{I} = \langle x,y,z,w,s \rangle \cap \langle x,y,z,w,t \rangle \cap \langle u,v \rangle \cap \langle u,s \rangle \cap \langle v, t \rangle.
$$
Then $\varphi$ induces the~blow up of $Y$ along $R$.
Thus, we can identify $X$ with the~complete intersection in the~toric variety $\overline{T}$ given by
$$\left\{\aligned
&xy=zw , \\
&st=f(x,y,z,w).
\endaligned
\right.
$$
Now, the~subgroup $\Gamma\cong\mathbb{G}_m$ of the~group $\mathrm{Aut}(X)$ mentioned in Section~\ref{section:intro} can be explicitly seen
--- it consists of all automorphisms
$$
(x,y,z,w,u,v,s,t)\mapsto (x,y,z,w, \lambda u,v,s,t),
$$
where $\lambda\in\mathbb{C}^\ast$.
Similarly, we can choose the~involution $\iota\in\mathrm{Aut}(X)$ to be the~involution
$$
(x,y,z,w,u,v,s,t)\mapsto(x,y,z,w,v,u,t,s).
$$
Note that $\iota$ is not canonically defined, since we can conjugate it with an element in $\Gamma$.

Suppose that $R=C_1+C_2$, where $C_1$ and $C_2$ are smooth curves in $|L|$ that meet transversally at two points.
Then, up to a change of coordinates, we may assume that
$$
f(x,y,z,t)=xy-\lambda (z^2+w^2).
$$
where $\lambda\in\mathbb{C}$ such that $\lambda\not\in\{0,2,-2\}$.
Then $X$ is the~complete intersection in $\overline{T}$ given by
$$\left\{\aligned
&xy=zw, \\
&st=xy-\lambda (z^2+w^2),
\endaligned
\right.
$$
Note that $\mathrm{Aut}(X)$ contains automorphisms
$$
(x,y,z,w,u,v,s,t)\mapsto\Big(\mu x,\frac{y}{\mu},z,w,u,v,s,t\Big),
$$
where $\mu\in\mathbb{C}^\ast$. Similarly, the~group $\mathrm{Aut}(X)$ contains two involutions:
$$
(x,y,z,w,u,v,s,t)\mapsto (y,x,z,w,u,v,s,t)
$$
and
$$
(x,y,z,w,u,v,s,t)\mapsto (x,y,w,z,u,v,s,t).
$$
Let $G$ be the~subgroup in $\mathrm{Aut}(X)$ that is generated by all automorphisms described above.
Then $G\cong\mathbb{G}_m^2\rtimes\mumu_2$, and we have the~following result:

\begin{lemma}
\label{lemma:4-2-group-action}
The following assertions hold:
\begin{enumerate}
\item[$(\mathrm{a})$] $X$ does not contain $G$-fixed points,
\item[$(\mathrm{b})$] $X$ does not contain $G$-invariant irreducible curves,
\item[$(\mathrm{c})$] $X$ contains two $G$-invariant irreducible surfaces --- they are cut out by $z\pm w=0$.
\end{enumerate}
\end{lemma}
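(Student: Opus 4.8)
The plan is to use that $G$ contains the two–dimensional torus $\mathbb{T}=\Gamma\times\Gamma'$, where $\Gamma\cong\mathbb{G}_m$ is the subgroup scaling the coordinate $u$ and $\Gamma'\cong\mathbb{G}_m$ acts by $(x,y)\mapsto(\mu x,\mu^{-1}y)$, together with the three involutions $\iota$ (swapping $u\leftrightarrow v$, $s\leftrightarrow t$), $\sigma_1$ (swapping $x\leftrightarrow y$) and $\sigma_2$ (swapping $z\leftrightarrow w$), all of which lie in $G$. Since $\mathbb{T}\subset G$, every $G$–fixed point (resp. $G$–invariant irreducible curve, resp. $G$–invariant irreducible surface) is in particular $\mathbb{T}$–invariant. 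So I would first enumerate all $\mathbb{T}$–fixed points, all $\mathbb{T}$–invariant irreducible curves, and all $\mathbb{T}$–invariant irreducible surfaces of $X$, and then check, family by family, that each candidate is moved by one of $\iota,\sigma_1,\sigma_2$ --- except for the two surfaces $\{z\pm w=0\}\cap X$ in part~$(\mathrm{c})$, whose $G$–invariance and irreducibility I would verify directly.

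\textbf{Geometric input and part (a).} The main tool is the conic bundle $\theta\colon X\to V=\mathbb{P}^1\times\mathbb{P}^1$: here $\Gamma$ acts along the fibres (trivially on $V$) and $\Gamma'$ acts on $V$ as the standard diagonal $\mathbb{G}_m$ with four fixed points $P_1,\dots,P_4$. From the equations $xy=zw$, $st=f$ one computes that $\Gamma$ fixes pointwise precisely $S_1\cup S_2\cup(E_1\cap E_2)$ --- the two sections of $\theta$ together with the curve $E_1\cap E_2\cong R$ over the discriminant --- and acts with exactly two fixed points on every remaining fibre. Intersecting with $\operatorname{Fix}(\Gamma')$ shows $\operatorname{Fix}(\mathbb{T})\cap X$ consists of at most ten points: four on $S_1$, four on $S_2$, and two on $E_1\cap E_2$ lying over $P_3,P_4$. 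The eight points on $S_1\cup S_2$ are moved by $\iota$, which interchanges the disjoint surfaces $S_1$ and $S_2$; the two points on $E_1\cap E_2$ are interchanged by $\sigma_1$, which swaps $P_3$ and $P_4$. This gives $(\mathrm{a})$.

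\textbf{Parts (b) and (c).} A $\mathbb{T}$–invariant irreducible curve $Z$ is either $(\mathrm{i})$ pointwise fixed by $\Gamma$, hence contained in $S_1$, in $S_2$, or in $E_1\cap E_2\cong C_1\cup C_2$; or $(\mathrm{ii})$ a curve on which $\Gamma$ acts nontrivially. For type $(\mathrm{i})$: the $\Gamma'$–invariant curves inside $S_1\cong V$ (or $S_2$) --- the pencil $\{z=\rho w\}\cap V$ and four lines --- are all moved by $\iota$; the two curves $C_1,C_2$ inside $E_1\cap E_2$ are interchanged by $\sigma_2$, since $\sigma_2$ swaps the two roots $\rho_1,\rho_2$ (with $\rho_1\rho_2=1$) cutting out $C_i=\{z=\rho_i w\}\cap V$. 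For type $(\mathrm{ii})$: a short argument with the conic bundle --- $\Gamma$ acts fibrewise with only $S_1,S_2$ as fixed points, so $\theta$ has no irreducible $\Gamma$–invariant multisection, forcing $Z$ into a fibre over a $\Gamma'$–fixed point of $V$ --- shows the only possibilities are the fibres $\theta^{-1}(P_1),\theta^{-1}(P_2)$ and the four components of the two reducible fibres $\theta^{-1}(P_3),\theta^{-1}(P_4)$; the first pair is swapped by $\sigma_2$, the remaining ones by $\sigma_1$. Hence $(\mathrm{b})$. For $(\mathrm{c})$, the same dichotomy shows a $\mathbb{T}$–invariant irreducible surface is $S_1$, $S_2$, an irreducible component of $E_1$ or $E_2$ (all moved by $\iota$), or the preimage $\theta^{-1}(B_0)$ of a $\mathbb{T}$–invariant curve $B_0\subset V$; among the latter $\theta^{-1}(\mathrm{line})$ is moved by $\sigma_1$ (or $\sigma_2$), while $\sigma_2$ sends $\theta^{-1}(\{z=\rho w\}\cap V)$ to $\theta^{-1}(\{z=\rho^{-1}w\}\cap V)$, so this surface is $G$–invariant exactly when $\rho=\pm1$, producing the two irreducible conic bundles $\{z-w=0\}\cap X$ and $\{z+w=0\}\cap X$.

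\textbf{Main obstacle.} The delicate step is the completeness of the enumeration of $\mathbb{T}$–invariant irreducible curves and surfaces: ruling out unexpected invariant subvarieties --- extra $\mathbb{T}$–invariant sections of $\theta$ besides $S_1,S_2$, irreducible bisections, or sections of the ruled surfaces $E_1,E_2$ other than $E_1\cap E_2$ and $E_i\cap S_i$. I would handle this by restricting $\theta$ over each $\mathbb{T}$–invariant curve in $V$ and analysing the resulting elementary conic bundle over $\mathbb{P}^1$ from the explicit equations, supplemented by the toric structure of the ambient $\overline{T}$, which makes the list of torus orbit closures and fixed loci completely transparent.
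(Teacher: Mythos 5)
The paper gives no argument to compare against: its proof of this lemma is literally ``Left to the reader.'' Judged on its own, your proof is correct and is surely the intended one. The key computation checks out against the equations $xy=zw$, $st=f$: the fixed locus of $\Gamma$ (the scaling of $u$) is exactly $S_1\sqcup S_2\sqcup(E_1\cap E_2)$, and since $\Gamma$ is connected and acts trivially on $V$, any finite $\Gamma$-invariant subset of a fibre of $\theta$ consists of fixed points; this is what rules out invariant multisections, forces a $\mathbb{T}$-invariant irreducible curve with nontrivial $\Gamma$-action into a fibre over one of the four $\Gamma'$-fixed points, and forces a $\mathbb{T}$-invariant irreducible surface to be $S_1$, $S_2$, a component of $E_1$ or $E_2$, or $\theta^{-1}(B_0)$ for $B_0$ a $\Gamma'$-invariant irreducible curve in $V$ (the four rulings and the pencil $\{z=\rho w\}\cap V$). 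The final sifting by $\iota$, $x\leftrightarrow y$ and $z\leftrightarrow w$ is also right, and these elements are indeed among the listed generators of $G$.

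A few points to tighten in a written version: (i) fix the labelling explicitly --- the two nodes of $R$ are $[1:0:0:0]$ and $[0:1:0:0]$ (your $P_3,P_4$), swapped by $x\leftrightarrow y$, while $[0:0:1:0]$, $[0:0:0:1]$ are swapped by $z\leftrightarrow w$; as written the reader has to infer which pair is which. (ii) Fibres over points of $R$ carry three $\Gamma$-fixed points (the node, lying on $E_1\cap E_2$, plus one point on each of $S_1,S_2$), not two, and correspondingly the multisection argument should quote the full fixed locus $S_1\cup S_2\cup(E_1\cap E_2)$; neither change affects the conclusions. (iii) In part (c) record that the roots $\rho_1,\rho_2$ cutting out $C_1,C_2$ satisfy $\rho_1\rho_2=1$ and $\rho_i\neq\pm 1$ precisely because $R$ is a union of two \emph{distinct} conics, so $\{z\pm w=0\}\cap V\not\subset R$; irreducibility of the two surfaces $\{z\pm w=0\}\cap X$ then follows since they are conic bundles over an irreducible curve with irreducible generic fibre.
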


\begin{proof}
Left to the~reader.
\end{proof}

Now, we can complete the~proof of Theorem~\ref{theorem:4-2}.
Suppose that $X$ is not K-polystable.
Using \cite{Zhuang}, we see that there is a $G$-invariant prime divisor $\mathbf{F}$ over $X$ such that $\beta(\mathbf{F})\leqslant 0$.
Let $Z$ be the~center of this divisor on $X$.
By Lemma~\ref{lemma:4-2-group-action}, $Z$ is a surface and
$$
Z\sim (\pi\circ\phi)^*(L).
$$
Then, as in \cite{div-stability}, we compute $\beta(\mathbf{F})=\beta(Z)>0$.
This shows that $X$ is K-polystable.

\section{Proof of Theorem~\ref{theorem:3-9}}
\label{section:3-9}

In this section, we prove Theorem~\ref{theorem:3-9}.
This result describes all singular K-polystable limits of smooth Fano 3-folds in the~family~\textnumero 3.9.
To show this, we need

\begin{theorem}[{\cite[Theorem 2]{Stability-quartic-curves}, \cite[Example 7.13]{mukai-book}, \cite{Unstable-quartics}}]
\label{theorem:stable-quartics}
Let $C$ be a quartic curve in $\mathbb{P}^2$. Then the~curve $C$ is
\begin{itemize}
\item GIT stable for $\mathrm{PGL}_3(\mathbb{C})$-action $\iff$ it is smooth or has $\mathbb{A}_1$ or $\mathbb{A}_2$-singularities,
\item GIT strictly polystable $\iff$ it is one of the~remaining curves in Theorem~\ref{theorem:3-9}.
\end{itemize}
\end{theorem}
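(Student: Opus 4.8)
The plan is to derive this from the Hilbert--Mumford numerical criterion, following Mumford's classical treatment of plane curves specialized to degree~$4$. Write $C=\{f=0\}$ with $f\in H^0(\mathbb{P}^2,\mathcal{O}(4))$, viewed up to scalar as a point of the $14$-dimensional projective space of quartic curves; since the centre of $\mathrm{GL}_3$ acts trivially there, it is equivalent to work with the $\mathrm{SL}_3(\mathbb{C})$-action. Every nontrivial one-parameter subgroup of $\mathrm{SL}_3(\mathbb{C})$ is, after a linear change of the coordinates $x,y,z$, of the form $\lambda_{\mathbf r}\colon t\mapsto\mathrm{diag}(t^{r_0},t^{r_1},t^{r_2})$ with $r_0\geqslant r_1\geqslant r_2$ and $r_0+r_1+r_2=0$, and one computes
$$
\mu(f,\lambda_{\mathbf r})=\max\big\{r_0i+r_1j+r_2k\ :\ \text{the monomial }x^iy^jz^k\text{ occurs in }f\big\}.
$$
Thus $C$ is GIT semistable (respectively stable) if and only if, in every coordinate system and for every such $\mathbf r$, this maximum is nonnegative (respectively positive), and $C$ is GIT polystable if and only if it is semistable with closed $\mathrm{SL}_3$-orbit.

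The first step is to cut the set of $\mathbf r$ down to a finite list. On the two-dimensional rational cone $\{r_0\geqslant r_1\geqslant r_2,\ \sum r_i=0\}$ the function $\mathbf r\mapsto\mu(f,\lambda_{\mathbf r})$ is a maximum of finitely many linear functionals, hence convex, so its minimum over the cone is attained on an extremal ray or at a vertex of the chamber decomposition it induces; running through the finitely many possibilities shows that only $\mathbf r=(2,-1,-1)$, $\mathbf r=(1,1,-2)$ and $\mathbf r=(1,0,-1)$ (up to permuting coordinates and positive scaling) can be binding. I would then translate each condition into geometry via Newton polygons: in suitable coordinates, $\mu(f,(2,-1,-1))<0$ iff $C$ has a point of multiplicity $\geqslant3$; $\mu(f,(1,1,-2))<0$ iff $C$ contains a double line; and, crucially, $C$ is destabilized by a conjugate of $\lambda_{(1,0,-1)}$ iff some coordinate change puts every monomial of $f$ in the triangle $\{2i+j\leqslant4\}$, which forces a point of multiplicity $\geqslant2$ whose tangent cone is a double line and whose local degree-three term vanishes, i.e. a singularity of type $\mathbb{A}_{\geqslant3}$ (or of multiplicity $\geqslant3$). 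Combining these: a reduced quartic with only $\mathbb{A}_1$ and $\mathbb{A}_2$ singularities makes all three numbers strictly positive (a node has two distinct tangents, a cusp has its degree-three term present, and there is no multiple component), hence is stable; conversely, any quartic that is non-reduced, or has multiplicity $\geqslant3$ somewhere, or has a singularity of type $\mathbb{A}_{\geqslant3}$, $\mathbb{D}_{\geqslant4}$, etc., admits one of these three destabilizing subgroups. This yields the first bullet.

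For the second bullet I would analyze the closed orbits inside the strictly semistable locus. If $C$ is strictly semistable there is some $\mathbf r$ with $\mu(f,\lambda_{\mathbf r})=0$; the closure of the orbit then contains a unique closed orbit, represented by the sum of the monomials of $f$ of zero $\lambda_{\mathbf r}$-weight, and the orbit of $f$ is itself closed exactly when $f$ equals such a torus-fixed \emph{balanced} form. The walls attached to $(2,-1,-1)$ and $(1,1,-2)$ carry no degree-$4$ lattice point of zero weight, so the only possibility is the $(1,0,-1)$-wall, on which a balanced quartic is
$$
f=\alpha\,x^2z^2+\beta\,xy^2z+\gamma\,y^4,
$$
a binary quadratic form in $xz$ and $y^2$. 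Factoring $\alpha s^2+\beta st+\gamma t^2$ and checking semistability of each member distinguishes the cases: when $\alpha,\gamma\neq0$ and $\beta^2\neq4\alpha\gamma$ one gets two smooth conics tangent at $[1:0:0]$ and $[0:0:1]$ (case~(2) of Theorem~\ref{theorem:3-9}); when $\alpha,\gamma\neq0$ and $\beta^2=4\alpha\gamma$ one gets a double smooth conic (case~(4)); when exactly one of $\alpha,\gamma$ vanishes one gets $xz(\alpha xz+\beta y^2)$, a smooth conic together with its two tangent lines (case~(3)); and the remaining specializations are non-reduced and already unstable. Together with the first bullet this identifies the strictly GIT-polystable quartics exactly as listed in Theorem~\ref{theorem:3-9}.

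The main obstacle is the bookkeeping in the second paragraph: showing that the three numerical thresholds carve out precisely the reduced quartics with only $\mathbb{A}_1$ and $\mathbb{A}_2$ singularities requires going through all local analytic types of plane-quartic singularities together with their positions relative to the relevant flag --- in particular distinguishing the cusp $\mathbb{A}_2$ from the tacnode $\mathbb{A}_3$ by the presence or absence of the degree-three term, and treating reducible and non-reduced configurations on the same footing as the irreducible ones. Once that dictionary between weight conditions and singularity types is in place, the conic identifications in the last step are routine.
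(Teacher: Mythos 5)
You should first note that the paper does not actually prove this statement: it is quoted from Hyeon--Lee, Mukai's book and Alc\'antara--Aquino, and the only in-paper analogue is the proof of Proposition~\ref{proposition:GIT-2-2-curves} for $(2,2)$-curves, which uses exactly the Hilbert--Mumford strategy you propose. Your overall route is therefore the standard one, and its skeleton is sound: the reduction of all one-parameter subgroups to (conjugates of) $(2,-1,-1)$, $(1,1,-2)$, $(1,0,-1)$ does work (every weight-chamber state is contained in one of the three corresponding monomial sets), the first two rays detect a point of multiplicity $\geqslant 3$ and a double line, and the only wall carrying zero-weight quartic monomials is the $(1,0,-1)$ wall, with balanced forms $\alpha x^2z^2+\beta xy^2z+\gamma y^4$.

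There are, however, two concrete problems. First, your dictionary for the $(1,0,-1)$ wall is mis-stated. The set $\{2i+j\leqslant 4\}$ is the $\mu\leqslant 0$ (non-stability) condition, not instability, and it does \emph{not} say that the local degree-three term vanishes: the allowed cubic monomials are $xy^2z$, $xyz^2$, $xz^3$, so the condition is that the cubic term at the point is divisible by the tangent line (no $y^3$), which is exactly what separates $\mathbb{A}_2$ from $\mathbb{A}_{\geqslant 3}$. As literally written, your criterion would not even catch the cat-eye $x^2z^2+\beta xy^2z+y^4$, whose cubic term at $[1:0:0]$ is nonzero, so the corrected form is needed for the first bullet to come out right; relatedly, in the balanced family only $\gamma=0$ gives $xz(\alpha xz+\beta y^2)$, while $\alpha=0$ gives $y^2(\beta xz+\gamma y^2)$, which contains a double line and is unstable. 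Second, and more seriously, the polystability of the three balanced types is asserted rather than proved. Your argument correctly shows that a strictly polystable quartic is $\operatorname{SL}_3$-conjugate to a semistable balanced form, hence to one of the curves (2), (3), (4) of Theorem~\ref{theorem:3-9}; but ``checking semistability of each member'' does not give the converse, since a semistable torus-fixed form need not have a closed orbit, and the general principle you invoke (``orbit closed exactly when $f$ equals a balanced form'') is false in that direction. You still need an argument that the cat-eyes, the conic with two tangent lines, and the double conic have closed $\operatorname{SL}_3(\mathbb{C})$-orbits --- for instance by showing that every proper degeneration of each of them is unstable or by separating them with invariants and identifying them as the minimal orbits in their fibres. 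Once those two points are repaired, the proof goes through and agrees with the cited classical references.
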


Let us prove Theorem~\ref{theorem:3-9}. Set $V=\mathbb{P}^2$, set $L=\mathcal{O}_{\mathbb{P}^2}(2)$,
and set $Y=\mathbb{P}(\mathcal{O}_{V}\oplus \mathcal{O}_{V}(L))$.
Let $\pi\colon Y\to V$ be the~natural projection, set $H=\pi^*(L)$, let~$S^-$ and $S^+$ be disjoint sections of $\pi$
such that $S^+\sim S^-+H$, and let $R$ be one of the~following curves:
\begin{enumerate}
\item[$(\mathrm{1})$] a reduced quartic curve with at most $\mathbb{A}_1$ or $\mathbb{A}_2$ singularities;
\item[$(\mathrm{2})$] $C_1+C_2$, where $C_1$ and $C_2$ are smooth conics that are tangent at two points;
\item[$(\mathrm{3})$] $C+\ell_1+\ell_2$, where $C$ is a smooth conic, $\ell_1$ and $\ell_2$ are distinct lines tangent to $C$;
\item[$(\mathrm{4})$] $2C$, where $C$ is a smooth conic in $|L|$.
\end{enumerate}
Set~$F=\pi^*(R)$, and let $\phi\colon X\to Y$ be the~blow up at the~complete intersection $S^+\cap F$.
Then $X$ is a singular Fano threefold, and our Theorem~\ref{theorem:3-9} claims that $X$ is K-polystable.
To prove this, we start with the~most singular (and the~most symmetric case).

\begin{lemma}
\label{lemma:3-9-PGL-2}
Suppose that $R=2C$ for a smooth conic $C\subset\mathbb{P}^2$. Then $X$ is K-polystable.
\end{lemma}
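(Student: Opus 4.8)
The plan is to follow the strategy of the proof of Lemma~\ref{lemma:4-2-PGL-2}: realize $X$ as a mildly modified product, compute $\mathrm{Aut}(X)$, check that $X$ carries a unique equivariant divisorial valuation, and verify positivity of the corresponding $\beta$-invariant.

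First I would analyse the geometry of $X$ when $R=2C$. Writing $f=g^2$ for the conic $C=\{g=0\}$, the divisor $B\subset Y$ is $\{(u^+)^2=\pi^*(g)^2(u^-)^2\}$, which splits into two sections of $\pi$ meeting along a copy of $C$; so the scheme $S^+\cap F$ that $\phi$ blows up is the doubled conic $\{u^+=0,\ \pi^*(g)^2=0\}$, and a local model ($\mathrm{Bl}_{(x,y^2)}\mathbb A^3$) shows that $X$ acquires a transverse $\mathbb A_1$ singularity along a smooth rational curve $\overline C$. As in Lemma~\ref{lemma:4-2-PGL-2} I would then present $X$ through a diagram $W\xleftarrow{\alpha}U\xrightarrow{\psi}X$, where $W=\mathbb P^2\times\mathbb P^1$, $\varpi\colon W\to\mathbb P^2$ is the projection, $\widetilde S^-,\widetilde S^+$ are the two disjoint sections, $\widetilde E=\varpi^*(C)$, the morphism $\alpha$ blows up the disjoint conics $Z^\pm=\widetilde E\cap\widetilde S^\pm$ with exceptional divisors $F^-,F^+$, and $\psi$ contracts the strict transform $\widehat E$ of $\widetilde E$ to $\overline C$. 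Since $\psi$ is the minimal resolution of a transverse $\mathbb A_1$ it is crepant, so $\psi^*(-K_X)\sim -K_U\sim 3H_1+2H_3-F^--F^+$ and $\widehat E\sim 2H_1-F^--F^+$, where $H_1=\alpha^*\varpi^*\mathcal O_{\mathbb P^2}(1)$ and $H_3$ pulls back $\mathcal O_{\mathbb P^1}(1)$; I write $\widehat S^\pm$ for the strict transforms of $S^\pm$, so $\widehat S^\pm\sim H_3-F^\pm$.

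Next I would compute the automorphism group. Because $\psi$ and $\alpha$ are canonical, $\mathrm{Aut}(X)\cong\mathrm{Aut}(W,\widetilde E\cup\widetilde S^-\cup\widetilde S^+)\cong\mathrm{PGL}_2(\mathbb C)\times(\mathbb G_m\rtimes\mumu_2)$, the $\mathrm{PGL}_2$ being $\mathrm{Aut}(\mathbb P^2,C)$ and the $\mathbb G_m\rtimes\mumu_2$ acting on the $\mathbb P^1$-factor preserving $\{0,\infty\}$, with the $\mumu_2$ the involution $\iota$ (which swaps $S^-\leftrightarrow S^+$ and $F^-\leftrightarrow F^+$). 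Any $G$-invariant prime divisor over $X$, with $G=\mathrm{Aut}(X)$, has $G$-invariant center on $W$; since $\mathrm{PGL}_2$ acts transitively on $C$ and the $\mumu_2$ interchanges $\{0\}$ and $\{\infty\}$, the only $G$-invariant proper irreducible subvariety of $W$ is $\widetilde E=C\times\mathbb P^1$, whence $\widehat E$ is the unique $G$-invariant prime divisor over $X$. Therefore $\delta_G(X)=A_X(\widehat E)/S_X(\widehat E)$, and by \cite{Zhuang} it is enough to show $\beta_X(\widehat E)>0$.

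Finally I would compute $\beta_X(\widehat E)$. Since $\psi$ is crepant, $A_X(\widehat E)=1$ and $S_X(\widehat E)=\frac{1}{26}\int_0^\infty\mathrm{vol}\big(\psi^*(-K_X)-u\widehat E\big)\,du$, using $(-K_X)^3=26$ (the family \textnumero 3.9 volume; e.g.\ $2d(a^n-(a-1)^n)$ with $a=\tfrac32$, $d=4$). One has $\psi^*(-K_X)-u\widehat E\sim_{\mathbb R}(3-2u)H_1+2H_3+(u-1)(F^-+F^+)$, which rewrites as $\tfrac{3-2u}{2}\widehat E+\widehat S^-+\widehat S^++\tfrac32(F^-+F^+)$; it is nef for $u\in[0,1]$, pseudoeffective exactly for $u\le\tfrac32$, and for $u\in[1,\tfrac32]$ its Zariski decomposition has positive part $(3-2u)H_1+2H_3$ and negative part $(u-1)(F^-+F^+)$. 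Using $H_1^3=0$, $H_1^2H_3=1$, $H_1(F^\pm)^2=-2$, $(F^\pm)^3=-\deg N_{Z^\pm/W}=-4$, and the vanishing of the remaining triple products, one gets $\mathrm{vol}=16u^3-36u^2+26$ on $[0,1]$ and $\mathrm{vol}=6(3-2u)^2$ on $[1,\tfrac32]$, so $\int_0^{3/2}\mathrm{vol}\,du=18+1=19$ and $\beta_X(\widehat E)=1-\tfrac{19}{26}=\tfrac{7}{26}>0$, which gives the claim. I expect the only genuinely fiddly part to be setting up the model $W\xleftarrow{}U\xrightarrow{}X$ correctly (in particular verifying the transverse $\mathbb A_1$ locus and the crepancy of $\psi$) and bookkeeping the Zariski decomposition and intersection numbers on $U$; the rest is a direct transcription of the degree-$(1,1)$ case treated in Lemma~\ref{lemma:4-2-PGL-2}.
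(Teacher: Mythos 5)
Your proposal is correct and follows essentially the same route as the paper: the same diagram $W\xleftarrow{\alpha}U\xrightarrow{\psi}X$ with $W=\mathbb{P}^2\times\mathbb{P}^1$, the identification $\mathrm{Aut}(X)\cong\mathrm{PGL}_2(\mathbb{C})\times(\mathbb{G}_m\rtimes\mumu_2)$, the observation that $\widehat{E}$ is the unique invariant prime divisor over $X$, and the appeal to \cite{Zhuang} after computing $\beta(\widehat{E})=\frac{7}{26}>0$ via the same Zariski decomposition (your volumes $16u^3-36u^2+26$ on $[0,1]$ and $6(3-2u)^2$ on $[1,\frac{3}{2}]$ agree with the paper's). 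The only differences are notational and that you spell out the intersection numbers and the transverse $\mathbb{A}_1$/crepancy justification slightly more explicitly than the paper does.
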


\begin{proof}
In this case, the~threefold $X$ has non-isolated singularities along a smooth curve,
and the~proof is very similar to the~proof of Lemma~\ref{lemma:4-2-PGL-2}.
Namely, we have
\begin{equation}
\label{equation:3-9-PGL-2-Gm}
\mathrm{Aut}(X)\cong\mathrm{PGL}_2(\mathbb{C})\times(\mathbb{G}_m\rtimes\mumu_2),
\end{equation}
and there exists exactly one $\mathrm{Aut}(X)$-invariant prime divisor over $X$ ---
the exceptional divisor of the~blow up of $X$ along the~curve $\mathrm{Sing}(X)$.
So, to check that  $X$ is K-polystable, it is enough
to compute the~$\beta$-invariant of this prime divisor. Let us give details.

As in the~proof of Lemma~\ref{lemma:4-2-PGL-2}, we set $W=V\times\mathbb{P}^1$.
Let $\varpi\colon W\to V$ be the~natural projection,
let $\widetilde{S}^-$ and $\widetilde{S}^+$ be its disjoint sections,
and let $\widetilde{E}=\varpi^*(C)$.
Then there exists the~following commutative diagram:
\begin{equation}
\label{equation:3-9-PGL-2}
\xymatrix{
&U\ar[dl]_{\alpha}\ar[dr]^{\psi}&\\
W\ar[dr]_{\varpi} &&X\ar[dl]^{\pi\circ\phi} \\
&V&}
\end{equation}
such that
\begin{itemize}
\item $\alpha$ is a blow up along the~curves $\widetilde{E}\cap\widetilde{S}^-$ and $\widetilde{E}\cap\widetilde{S}^+$,
\item $\psi$ is a contraction of the~proper transform of $\widetilde{E}$ to the~curve $\mathrm{Sing}(X)$,
\item $\phi\circ\psi$ maps the~proper transforms of $\widetilde{S}^-$ and $\widetilde{S}^+$ to $S^-$ and $S^+$, respectively.
\end{itemize}
This easily implies \eqref{equation:3-9-PGL-2-Gm}.
Similarly, we see that \eqref{equation:3-9-PGL-2} is $\mathrm{Aut}(X)$-equivariant.

Let $\widehat{E}$ be the~$\psi$-exceptional divisor.
Then $\widehat{E}$ is the~only $\mathrm{Aut}(X)$-invariant prime divisor over the~threefold $X$.
Thus,  if $\beta(\widehat{E})>0$, them $X$ is K-polystable \cite{Zhuang}.

We let $F^-$ and $F^+$ be $\alpha$-exceptional surfaces such that $\alpha(F^-)\subset\widetilde{S}^-$ and $\alpha(F^+)\subset\widetilde{S}^+$,
let $\widehat{S}^-$ and $\widehat{S}^+$ be the~proper transforms on $U$ of the~surfaces $S^-$ and $S^+$, respectively.
Set~$H_1=(\mathrm{pr}_1\circ\alpha)^*(\mathcal{O}_{\mathbb{P}^1}(1))$ for
the~projection $\mathrm{pr}_1\colon W\to\mathbb{P}^1$, set $H_2=(\varpi\circ\alpha)^*(\mathcal{O}_{V}(1))$.
Then $\widehat{E}\sim 2H_2-F^--F^+$, which gives
$$
\psi^*(-K_X)\sim -K_U\sim 2H_1+3H_2-F^--F^+\sim_{\mathbb{Q}} 2H_1+\frac{3}{2}\widehat{E}+\frac{1}{2}(F^-+F^+).
$$
Take $u\in\mathbb{R}_{\geqslant 0}$. Then
$$
\psi^*(-K_X)-u\widehat{E}\sim_{\mathbb{R}}
2H_1+(3-2u)H_2+(u-1)(F^-+F^+)\sim_{\mathbb{R}} 2H_1+\frac{3-2u}{2}\widehat{E}+\frac{1}{2}(F^-+F^+).
$$
This shows that $\psi^*(-K_X)-u\widehat{E}$ is pseudoeffective $\iff$ $u\leqslant \frac{3}{2}$.
Moreover, if $u\in[0,1]$, then the~divisor $\psi^*(-K_X)-u\widehat{E}$  is nef.
If $1<u\leqslant\frac{3}{2}$, its Zariski decomposition is
$$
\psi^*(-K_X)-u\widehat{E}\sim_{\mathbb{R}}\underbrace{2H_1+(3-2u)H_2}_{\text{positive part}}+\underbrace{(u-1)(F^-+F^+)}_{\text{negative part}}.
$$
Hence, we have
\begin{multline*}
\beta(\widehat{E})=1-\frac{1}{(-K_X)^3}\int_{0}^{\frac{3}{2}}\mathrm{vol}\Big(\psi^*(-K_X)-u\widehat{E}\Big)du=\\
=1-\frac{1}{26}\int_{0}^{1}\Big(2H_1+(3-2u)H_2+(u-1)(F^-+F^+)\Big)^3du-\frac{1}{26}\int_{1}^{\frac{3}{2}}\Big(2H_1+(3-2u)H_2\Big)^3du=\\
=1-\frac{1}{26}\int_{0}^{1}16u^3-36u^2+26du-\frac{1}{26}\int_{1}^{\frac{3}{2}}24u^2-72u+54du=\frac{7}{26}>0,
\end{multline*}
which implies that $X$ is K-polystable.
\end{proof}

Similarly, we can show that $X$ is K-polystable if $R=C_1+C_2$, where $C_1$ and $C_2$ are smooth conics that are tangent at two points.
Indeed, in this case, the~full automorphism group $\mathrm{Aut}(X)$ contains a subgroup $G$ such that
$$
G\cong\big(\mathbb{G}_m\big)^2\rtimes\mumu_2^2,
$$
the~threefold $X$ does not contains $G$-fixed points,
and the~only $G$-invariant irreducible curve in $X$ is a smooth fiber of the~conic bundle $\pi\circ\phi$.
Therefore, arguing exactly as in the~proofs of \cite[Lemma~4.64]{Book} and \cite[Lemma~4.66]{Book},
we see that $X$ is K-polystable.

However, this approach fails in the~case when $R$ has a singular point of type $\mathbb{A}_1$ or $\mathbb{A}_2$.
To overcome this difficulty, we will use another approach described in the~end of Section~\ref{section:intro}.
Namely, we proved in Section~\ref{section:CD-properties} that $\mathrm{Aut}(X)$ contains an~involution $\iota$
such that $\iota$ swaps the~proper transforms of $S^-$ and $S^+$, $X/\iota\cong Y$,
and the~following diagram commutes:
$$\xymatrix{
&X\ar[dl]_{\phi}\ar[dr]^{\rho}&\\
Y\ar[dr]_{\pi} &&Y\ar[dl]^{\pi} \\
&V&}
$$
where $\rho$ is the~quotient map.
Moreover, we also proved that the~double cover $\rho$ is ramified over a divisor $B\in |2S^+|$ such that
the morphism $B\to V$ induced by $\pi$ is a double cover ramified in the~curve $R$.
Set $\Delta=\frac{1}{2}B$. Then
$$
-K_X\sim_{\mathbb{Q}}\rho^*\big(K_Y+\Delta\big),
$$
and $(Y,\Delta)$ has Kawamata log terminal singularities. Therefore, $(Y,\Delta)$ is a log Fano pair.
Moreover, it follows from \cite{Liu-Zhu} that
\begin{center}
$X$ is K-polystable $\iff$ $\left(Y, \frac{1}{2}B\right)$ is K-polystable.
\end{center}
However, everything in life comes with a price:
the action of the~group $\Gamma\cong\mathbb{G}_m$ described earlier in Section~\ref{section:intro}
does not descent to $Y$ via $\rho$, because $\Gamma$ does not commute with $\iota$.
Thus, the~group $\mathrm{Aut}(Y,\Delta)$ is much smaller than the~group $\mathrm{Aut}(X)$.

To explicitly describe $B\subset Y$,
consider $Y$ as the~toric variety $(\mathbb{C}^5\setminus Z(I))/\mathbb{G}_m^2$ such that
the~torus action is given by the~matrix
$$
\left(\begin{array}{ccccc}
x_1 & x_2 & x_3 & x_4 & x_5 \\
1  &  1  &  1  &  2  &  0  \\
0  &  0  &  0  &  1  &  1
\end{array}\right),
$$
with irrelevant ideal $I=\langle x_1, x_2, x_3 \rangle \cap\langle x_4, x_5 \rangle$.
Let us also consider $x_1,x_2,x_3$ as coordinates on $V=\mathbb{P}^2$,
so that the~projection $\pi$ is given by
$$
(x_1,x_2,x_3,x_4,x_5)\mapsto (x_1,x_2,x_3).
$$
Then $S^-=\{x_5=0\}$. Moreover, we may assume that $S^+=\{x_4=0\}$,
and $B$ is given by
$$
x_4^2-f_4(x_1,x_2,x_3)x_5^2=0,
$$
where $f_4(x_1,x_2,x_3)$ is a quartic polynomial such that $R=\{f_4(x_1,x_2,x_3)=0\}$.

In the~remaining part of the~section, we will prove that the~pair $(Y,\Delta)$ is K-polystable.
Recall that $H=\pi^*(L)$. Note also that
$$
-(K_Y+\Delta)\sim_{\mathbb{Q}}S^-+\frac{3}{2}H.
$$
We will split the~proof in several lemmas and propositions. We start with

\begin{lemma}
\label{lemma:3-9-S-minus}
Let $P$ be a point in $S^-$. Then $\delta_P(Y,\Delta)>1$.
\end{lemma}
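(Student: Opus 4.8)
The plan is to deduce everything from Lemma~\ref{lemma:Kento}, since near a point of $S^-$ the boundary $\Delta$ plays no role. First I would record that $B$ is disjoint from $S^-$: in the toric coordinates above $S^-=\{x_5=0\}$, $S^+=\{x_4=0\}$, and $B=\{x_4^2-f_4(x_1,x_2,x_3)x_5^2=0\}$, so $B\cap S^-\subseteq S^+\cap S^-=\varnothing$ (the equality $B\cap S^-=\varnothing$ already noted). In particular $P\notin\mathrm{Supp}(\Delta)$, and more is true: if $F$ is any prime divisor over $Y$ whose center on $Y$ contains $P$, then this center is not contained in $B=\mathrm{Supp}(\Delta)$, hence $\mathrm{ord}_F(\Delta)=0$ and $A_{(Y,\Delta)}(F)=A_Y(F)$. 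Since moreover
$$
-(K_Y+\Delta)\sim_{\mathbb{Q}}S^-+\tfrac{3}{2}H=D\big(\tfrac{3}{2}\big),
$$
the $S$-invariants computed with respect to $-(K_Y+\Delta)$ and with respect to $D(\tfrac{3}{2})$ coincide, and therefore $\delta_P(Y,\Delta)=\delta_P\big(Y;D(\tfrac{3}{2})\big)$.

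Next I would apply Lemma~\ref{lemma:Kento} with $n=\mathrm{dim}(Y)=3$, $a=\tfrac{3}{2}$, $V=\mathbb{P}^2$, and $L=\mathcal{O}_{\mathbb{P}^2}(2)$. Since $\mathcal{O}_{\mathbb{P}^2}(2)=\tfrac{2}{3}(-K_{\mathbb{P}^2})$ and $\delta(X;cL)=c^{-1}\delta(X;L)$, we get $\delta(V;L)=\tfrac{3}{2}\,\delta(\mathbb{P}^2)=\tfrac{3}{2}=a$. With $a-1=\tfrac{1}{2}$ one has $a^3-(a-1)^3=\tfrac{13}{4}$, $a^4-(a-1)^4=5$, and $(n+1-a)a^n+(a-1)^{n+1}=\tfrac{5}{2}\cdot\tfrac{27}{8}+\tfrac{1}{16}=\tfrac{17}{2}$, so Lemma~\ref{lemma:Kento} yields
$$
\delta_P\big(Y;D(\tfrac{3}{2})\big)\geqslant\min\Bigg\{\frac{4\cdot\frac{13}{4}}{\frac{17}{2}},\ \frac{\frac{3}{2}\cdot 4\cdot\frac{13}{4}}{3\cdot 5}\Bigg\}=\min\Big\{\frac{26}{17},\ \frac{13}{10}\Big\}=\frac{13}{10}>1
$$
(equivalently, since $\delta(V;L)=a$, the second bound of Lemma~\ref{lemma:Kento} applies and already gives $\tfrac{13}{10}$). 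Combined with the first paragraph this gives $\delta_P(Y,\Delta)\geqslant\tfrac{13}{10}>1$, which is the claim.

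There is essentially no serious obstacle in this lemma; the only thing that needs care is the bookkeeping of the polarization, namely that for $V=\mathbb{P}^2$ and $L=\mathcal{O}_{\mathbb{P}^2}(2)$ one has $a=\tfrac{3}{2}$ and $\delta(V;L)=\tfrac{3}{2}$ (not $1$), which is exactly what makes both terms in Lemma~\ref{lemma:Kento} strictly exceed $1$. The genuinely delicate estimates of this section are the complementary ones, for points $P\in Y\setminus S^-$ lying on $B$, where $\Delta$ interacts with $P$ and one is forced to blow up inside fibres of $\pi$ in the spirit of Section~\ref{section:delta}.
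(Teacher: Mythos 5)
Your proposal is correct and follows essentially the same route as the paper: apply Lemma~\ref{lemma:Kento} with $a=\tfrac{3}{2}$, use $\delta(V;L)=\tfrac{3}{2}\delta(\mathbb{P}^2)=\tfrac{3}{2}$, and conclude $\delta_P(Y,\Delta)\geqslant\tfrac{13}{10}>1$. Your explicit justification that $B\cap S^-=\varnothing$ forces $A_{Y,\Delta}(F)=A_Y(F)$ for every valuation centered through $P$, so that $\delta_P(Y,\Delta)=\delta_P\big(Y;D(\tfrac{3}{2})\big)$, merely spells out a step the paper leaves implicit.
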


\begin{proof}
Let us apply Lemma~\ref{lemma:Kento}. We have
$$
\delta_P(Y,\Delta)=\delta_P(Y;D(a))\geqslant
\mathrm{min}\Big\{\frac{4(a^3-(a-1)^3)}{(4-a)a^3+(a-1)^{4}},\frac{4(a^3-(a-1)^3)}{3(a^{4}-(a-1)^4)}\delta(V;L)\Big\},
$$
where $D(a)=-(K_Y+\Delta)$ and $a=\frac{3}{2}$. Thus, we have
$$
\delta_P(Y,\Delta)\geqslant\mathrm{min}\Big\{\frac{26}{17},\frac{13}{15}\delta(V;L)\Big\}.
$$
But
$$
\delta(V;L)=\delta\Big(V;\frac{2}{3}(-K_V)\Big)=\frac{3}{2}\delta(V;-K_V)=\frac{3}{2}\delta(V)=\frac{3}{2}\delta(\mathbb{P}^2)=\frac{3}{2},
$$
so that $\delta_P(Y,\Delta)\geqslant\frac{13}{10}$.
\end{proof}

Similarly, applying  Proposition~\ref{proposition:induction-step}, we obtain

\begin{lemma}
\label{lemma:3-9-P-not-in-B}
Let $P$ be a point $Y$ such that $P\not\in\mathrm{Sing}(B)$.
Then $\delta_P(Y,\Delta)>1$.
\end{lemma}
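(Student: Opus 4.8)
The plan is to deduce this lemma directly from Proposition~\ref{proposition:induction-step} (the base of the induction), which is exactly the local $\delta$-estimate at points of $Y$ lying away from $S^-$ and from $\mathrm{Sing}(B)$. First I would split according to whether $P\in S^-$. If $P\in S^-$, then $\delta_P(Y,\Delta)>1$ by Lemma~\ref{lemma:3-9-S-minus}, so we are done in that case; hence we may assume $P\in Y\setminus S^-$ and, by hypothesis, $P\notin\mathrm{Sing}(B)$.

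Next I would check the numerical hypotheses of Proposition~\ref{proposition:induction-step} in our situation. Since $V=\mathbb{P}^2$ and $L=\mathcal{O}_{\mathbb{P}^2}(2)$, we have $d=L^2=4$, and the smallest rational number $\mu$ such that $\mu L$ is very ample is $\mu=\frac12$ (as $\frac12 L=\mathcal{O}_{\mathbb{P}^2}(1)$). Thus $d\mu=2\geqslant 2$, as required. Moreover, in cases $(\mathrm{1})$, $(\mathrm{2})$, $(\mathrm{3})$ of Theorem~\ref{theorem:3-9} the ramification curve $R$ is reduced, so $B$ is an irreducible normal surface in $|2S^+|$ inducing a double cover $B\to V$ ramified in $R$, i.e.\ $B$ satisfies the standing hypotheses of Section~\ref{subsection:threefolds}. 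Finally, $-K_V\sim_{\mathbb{Q}}\frac32 L$ gives $-(K_Y+\Delta)\sim_{\mathbb{Q}}S^-+\frac32 H=D(\tfrac32)$, so $(Y,\Delta)$ is a log Fano pair with $a=\frac32$ and $\delta_P(Y,\Delta)=\delta_P(Y,\Delta;D(\tfrac32))$.

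Then Proposition~\ref{proposition:induction-step} yields $\delta_P(Y,\Delta;D(a))\geqslant \frac{1}{k_3(a,d,\mu)}$ with $a=\frac32$, $d=4$, $\mu=\frac12$. Substituting these values into the formula for $k_3$ gives $k_3(\tfrac32,4,\tfrac12)=\frac{49}{52}<1$, whence $\delta_P(Y,\Delta)\geqslant\frac{52}{49}>1$, which is the assertion. There is essentially no obstacle here: the statement is a specialization of Proposition~\ref{proposition:induction-step} together with Lemma~\ref{lemma:3-9-S-minus}, the only work being the elementary verifications that $d\mu\geqslant 2$ and that $k_3(\tfrac32,4,\tfrac12)<1$.
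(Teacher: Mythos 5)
Your argument is correct and coincides with the paper's own proof: after disposing of $P\in S^-$ by Lemma~\ref{lemma:3-9-S-minus}, you apply Proposition~\ref{proposition:induction-step} with $a=\tfrac{3}{2}$, $d=L^2=4$, $\mu=\tfrac{1}{2}$ to obtain $\delta_P(Y,\Delta)\geqslant\tfrac{52}{49}>1$. Your explicit checks that $d\mu=2\geqslant 2$ and $k_3\bigl(\tfrac{3}{2},4,\tfrac{1}{2}\bigr)=\tfrac{49}{52}<1$ match the paper's computation; the only (shared) looseness is that the hypotheses of Proposition~\ref{proposition:induction-step} are literally verified only for reduced $R$, which is exactly the level of detail the paper itself gives.
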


\begin{proof}
By Lemma~\ref{lemma:3-9-S-minus}, we may assume that $P\not\in S^-$.
Then Proposition~\ref{proposition:induction-step} gives
$$
\delta_P(Y,\Delta)=\delta_P(Y;D(a))\geqslant\frac{8(3a^2-3a+1)}{8d \mu a^3+6(1-2d\mu)a^2+8(d\mu-1)a-2d \mu+3},
$$
where $D(a)=-(K_Y+\Delta)$, $a=\frac{3}{2}$, $d=L^2=4$, $\mu=\frac{1}{2}$. This gives $\delta_P(Y,\Delta)\geqslant\frac{52}{49}$.
\end{proof}

The two most difficult parts of the~proof that $(Y,\Delta)$ is K-polystable are the~following two propositions,
which will be proved in Subsections~\ref{section:A1} and \ref{section:A2} later.

\begin{proposition}
\label{proposition:A1}
Let $P$ be a point in $B$ that such $B$ has singular point of type $\mathbb{A}_1$ at $P$,
and let $\mathbf{F}$ be a prime divisor over $Y$ such that $P=C_Y(\mathbf{F})$.
Then $\beta_{Y,\Delta}(\mathbf{F})>0$.
\end{proposition}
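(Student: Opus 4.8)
The plan is to work \'etale-locally at $P$, read off the shape of $B$ at its $\mathbb{A}_1$ point, and then run an Abban--Zhuang estimate on the exceptional divisor of a single blow up of $P$. Throughout set $D=D(\tfrac32)=S^-+\tfrac32H$, so that $-(K_Y+\Delta)\sim_{\mathbb{Q}}D$ and, with $a=\tfrac32$, $d=L^2=4$, one has $D^3=d\,(a^3-(a-1)^3)=13$. By Proposition~\ref{proposition:quotient}, $B=\{(u^+)^2=\pi^*(f_4)(u^-)^2\}$ and the only point of $B$ over the node $p\in R$ is $P=S^+\cap\pi^{-1}(p)$. Choosing coordinates $(y_1,y_2,z)$ at $P$ with $S^+=\{z=0\}$, $\pi(y_1,y_2,z)=(y_1,y_2)$ and $f_4=y_1y_2+(\mathrm{order}\ge 3)$, we get $B=\{z^2=f_4(y_1,y_2)\}$ with $\mathrm{mult}_P(B)=2$ and tangent cone $\{z^2=y_1y_2\}\subset\mathbb{P}(T_PY)\cong\mathbb{P}^2$ --- \emph{a smooth conic}. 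This is exactly the structural feature that fails at an $\mathbb{A}_2$ point of $R$ (there the tangent cone of $B$ degenerates to a pair of lines), which is why that case is handled separately in Proposition~\ref{proposition:A2}.

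Let $\sigma\colon\widetilde{Y}\to Y$ be the blow up of $P$ with exceptional divisor $E\cong\mathbb{P}^2$, so $A_{Y,\Delta}(E)=3-\tfrac12\mathrm{mult}_P(B)=2$. For $u\in[0,1]$ the divisor $\sigma^*D-uE$ is nef (check it against $E$, against the strict transform $\widetilde{S^+}$, and against the strict transform $\widetilde{f}$ of the fibre of $\pi$ through $P$), so $\mathrm{vol}(\sigma^*D-uE)=13-u^3$ on that range; for $u>1$ the curve $\widetilde{f}$ is a flopping $(-1,-1)$-curve, and one continues the volume computation across the flop (and possibly one further contraction) up to the pseudoeffective threshold $\tau_E<13^{1/3}$. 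Carrying this out gives $S_D(E)<2$, hence $\beta_{Y,\Delta}(E)>0$ and, more importantly, $A_{Y,\Delta}(E)/S_D(E)>1$.

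Next I would refine using the flag $P\in E\subset\widetilde{Y}$ over $(Y,\Delta)$, i.e.
\[
\delta_P(Y,\Delta;D)\ \ge\ \min\Bigl\{\ \frac{A_{Y,\Delta}(E)}{S_D(E)},\ \inf_{G/E}\frac{A_{E,\Delta_E}(G)}{S(W^{E}_{\bullet,\bullet};G)}\ \Bigr\},
\]
where $K_E+\Delta_E=(K_{\widetilde{Y}}+\widetilde{\Delta})|_E$, so $\Delta_E=\tfrac12\mathcal{C}$ with $\mathcal{C}=\{z^2=y_1y_2\}$ the smooth conic cut out on $E$ by the strict transform of $B$. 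Since $P$ lies on the section $S^+$ and on a single fibre of $\pi$, the positive parts $P(\sigma^*D-uE)|_E$ are all multiples of the hyperplane class of $E\cong\mathbb{P}^2$; integrating over the volume chambers from the previous step gives $S(W^E_{\bullet,\bullet};G)=c_0\,S_{\mathcal{O}_{\mathbb{P}^2}(1)}(G)$ for a divisor $G$ over $E$ whose centre avoids the finitely many distinguished curves ($\widetilde{S^+}\cap E$, the flopped curve, $\mathcal{C}$), with $c_0$ an explicit constant, the other divisors contributing only a further non-negative term. Because $-K_{\mathbb{P}^2}-\tfrac12\mathcal{C}=\mathcal{O}_{\mathbb{P}^2}(2)$ and the double cover of $\mathbb{P}^2$ branched over a smooth conic is $\mathbb{P}^1\times\mathbb{P}^1$ --- K-polystable by \cite{Liu-Zhu}, hence with $\delta=1$ --- one gets $\inf_{G/E}A_{E,\tfrac12\mathcal{C}}(G)/S_{\mathcal{O}(1)}(G)=\delta(\mathbb{P}^2,\tfrac12\mathcal{C};\mathcal{O}(1))=2$, and so $\inf_{G/E}A_{E,\Delta_E}(G)/S(W^E_{\bullet,\bullet};G)=2/c_0$. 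It then remains to verify the numerical inequality $c_0<2$ (the analogue, for $a=\tfrac32$, $d=4$, $\mu=\tfrac12$, of Lemmas~\ref{lemma:Pn-Kn}--\ref{lemma:Pn-Kento}); together with $A_{Y,\Delta}(E)/S_D(E)>1$ this gives $\delta_P(Y,\Delta)>1$, whence $\beta_{Y,\Delta}(\mathbf{F})>0$ for every $\mathbf{F}$ with $C_Y(\mathbf{F})=P$. If the refinement turns out to be only borderline, the divisors $G$ attaining $\delta(\mathbb{P}^2,\tfrac12\mathcal{C})=1$ are of ``half-ruling'' type for $\mathcal{C}$; one then checks that the corresponding divisor over $Y$ has centre strictly larger than $P$ (it is the fibre $f$, resp. $S^+$), so divisors with centre exactly $P$ still satisfy $\beta_{Y,\Delta}>0$, or alternatively one discards non-invariant candidates using the residual involution $z\mapsto -z$ of $(Y,\Delta)$ that fixes $S^+$ pointwise. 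Combined with Lemmas~\ref{lemma:3-9-S-minus} and \ref{lemma:3-9-P-not-in-B} this settles the proposition.

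The main obstacle is the refinement step: computing the Zariski/Mori-chamber decomposition of $\sigma^*D-uE$ for $u>1$ --- which involves the flop of $\widetilde{f}$ and controls both $S_D(E)$ and the constant $c_0$ --- and then confirming the arithmetic inequality $c_0<2$. Should the ordinary blow up of $P$ prove borderline, one replaces it by the weighted blow up of $P$ adapted to $B$; the smooth-conic tangent cone guarantees that the bottom of the flag is still the K-semistable pair $(\mathbb{P}^2,\tfrac12\cdot\mathrm{conic})$, so the scheme of proof goes through.
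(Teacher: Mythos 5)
Your skeleton agrees with the paper's: blow up $P$ once, observe $A_{Y,\Delta}(E)=3-\tfrac12\operatorname{mult}_P(B)=2$, compute $S$ of the exceptional divisor through the Zariski chambers (which forces the flop of the fibre through $P$), and then refine by Abban--Zhuang on the exceptional $\mathbb{P}^2$ carrying the different $\tfrac12\cdot(\text{smooth conic})$. One small slip first: your bound $\tau_E<13^{1/3}$ goes the wrong way --- vanishing to order $mu$ at a smooth point imposes at most roughly $(mu)^3/6$ conditions, so $\operatorname{vol}(\sigma^*D-uE)\geqslant 13-u^3$ and $\tau_E\geqslant 13^{1/3}$; in fact $\tau_E=3$, and the paper gets $S_A(F_0)=\tfrac{49}{26}<2$ (Lemma~\ref{lemma:A1-S-F0}), so the conclusion of this step stands, but not for the reason you give.

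The genuine gap is in the refinement. You claim the positive parts restrict to $E$ as multiples of the hyperplane class, so that $S(W^{E}_{\bullet,\bullet};G)=c_0\,S_{\mathcal{O}(1)}(G)$ and everything reduces to $\delta(\mathbb{P}^2,\tfrac12\mathcal{C};\mathcal{O}(1))=2$ against an arithmetic check $c_0<2$. That proportionality is false as soon as $u>1$: after the flop the strict transform of $E$ is $\mathbb{F}_1$, and the restricted positive parts are $u\mathbf{f}+\mathbf{e}$ on $[1,2]$ and $u\mathbf{f}+(3-u)\mathbf{e}$ on $[2,3]$, together with negative-part corrections supported on $\mathbf{e}$ --- none of these are pullbacks of $\mathcal{O}_{\mathbb{P}^2}(1)$. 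Nor is there a usable uniform comparison constant: the cheapest bound $P(u)\vert_{E}\leqslant u\,\theta^*\mathcal{O}_{\mathbb{P}^2}(1)$ gives $c_0=\tfrac{3}{13}\int_0^3u^3\,du=\tfrac{243}{52}>2$, so quoting the K-polystability of $(\mathbb{P}^2,\tfrac12\mathcal{C})$ cannot close the argument; the actual numbers are far too tight for any such shortcut. This is exactly where the paper's work lies: it computes the two-dimensional chamber decompositions on $\widetilde{F}_0\cong\mathbb{F}_1$ explicitly, first rules out centres meeting $\mathbf{e}$ (Lemma~\ref{lemma:A1-O-in-e}), and then performs plt blow-ups at points of the fibre direction --- including the delicate case where $\mathbf{f}$ is tangent to the conic, which needs a $(1,2)$-weighted blow up and yields the borderline ratios $\tfrac{52}{49}$ and $\tfrac{13}{12}$. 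Your fallback remarks do not repair this: the dangerous divisors over $E$ are not the minimizers of $\delta(\mathbb{P}^2,\tfrac12\mathcal{C})$ (because the filtration is not proportional to $\mathcal{O}(1)$, they are instead tied to the fibre/tangency geometry), and appealing to the involution $z\mapsto-z$ would at best prove the statement for invariant divisors, whereas Proposition~\ref{proposition:A1} is asserted, and later used, for every prime divisor with centre $P$.
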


\begin{proposition}
\label{proposition:A2}
Let $P$ be a point in $B$ such that $B$ has singular point of type $\mathbb{A}_2$ at $P$,
and let $\mathbf{F}$ be a prime divisor over $Y$ such that $P=C_Y(\mathbf{F})$.
Then $\beta_{Y,\Delta}(\mathbf{F})>0$.
\end{proposition}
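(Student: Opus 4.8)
The plan is to deduce Proposition~\ref{proposition:A2} from the local inequality $\delta_P(Y,\Delta)>1$. Indeed, since $-(K_Y+\Delta)\sim_{\mathbb{Q}}S^-+\tfrac32H=D(a)$ with $a=\tfrac32$, this is precisely the assertion that $\beta_{Y,\Delta}(\mathbf{F})=A_{Y,\Delta}(\mathbf{F})-S_{D(a)}(\mathbf{F})>0$ for every prime divisor $\mathbf{F}$ over $Y$ with $C_Y(\mathbf{F})=P$. Here $\pi(P)$ is the $\mathbb{A}_2$-point of the quartic $R$, $P\in S^+\cap B$, and the fibre of $\pi$ over $\pi(P)$ meets $B$ only at $P$; choosing local analytic coordinates $(x,y,z)$ on $Y$ centred at $P$ so that $\pi$ is $(x,y,z)\mapsto(x,y)$, $S^+=\{z=0\}$ and $R=\{y^2=x^3\}\subset S^+$, the surface $B=\{z^2=y^2-x^3\}$ has a Du Val singularity of type $\mathbb{A}_2$ at $P$. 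Recall also that $D(a)^3=13$, $S_{D(a)}(S^+)=\tfrac{9}{26}$ and $A_{Y,\Delta}(S^+)=1$.

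The obstruction to using the naive Abban--Zhuang flag $P\in S^+\subset Y$ is the following: $\mathrm{Diff}_{S^+}(\Delta)=\tfrac12 R$, but the ordinary blowup $E$ of $\pi(P)$ inside $S^+\cong\mathbb{P}^2$ has $A_{S^+,\frac12 R}(E)=1$ while the refined invariant computes to $S(W^{S^+}_{\bullet,\bullet};E)=\tfrac{20}{13}$, so this flag yields only $\delta_P(Y,\Delta)\geq\tfrac{13}{20}$, which does not suffice. Instead, I would use the blowup $\sigma\colon\widetilde Y\to Y$ of the \emph{point} $P$, with exceptional divisor $\mathbf{E}\cong\mathbb{P}^2$; then $A_{Y,\Delta}(\mathbf{E})=3-\tfrac12\,\mathrm{mult}_P(B)=2$, the strict transform $\widetilde B$ restricts to $\mathbf{E}$ along the two lines $L_{\pm}=\{z=\pm y\}$, and adjunction on $(\widetilde Y,\widetilde\Delta)$ with $\widetilde\Delta=\tfrac12\widetilde B$ gives $\mathrm{Diff}_{\mathbf{E}}(\widetilde\Delta)=\tfrac12 L_++\tfrac12 L_-$. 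Every $\mathbf{F}$ with $C_Y(\mathbf{F})=P$ is $\mathbf{E}$ itself or has centre on $\mathbf{E}$, so by \cite{AbbanZhuang,Book,Fujita2021} it suffices to verify $\beta_{Y,\Delta}(\mathbf{E})>0$ together with the inner inequalities along a well chosen secondary flag $Q\in C\subset\mathbf{E}$ on $\mathbb{P}^2$ — with $C$ the line $\mathbf{E}\cap\widetilde S^+$ (which passes through $L_+\cap L_-$), chosen so as to absorb the tangency of the $\mathbb{A}_2$-direction; should this choice still be too weak, one replaces the first blowup by the $(2,3,3)$-weighted blowup of $P$, which raises $\mathrm{ord}_{\mathbf{E}}(B)$ from $2$ to $6$ and thereby enlarges $A_{Y,\Delta}(\mathbf{E})$.

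Everything then reduces to computing $\mathrm{vol}(\sigma^*D(a)-t\mathbf{E})$ and its restriction to $\mathbf{E}$. For $t\in[0,1]$ the class $\sigma^*D(a)-t\mathbf{E}$ is nef with $\mathrm{vol}=13-t^3$; the substantive part is the range $t>1$, where the proper transform of the $\pi$-fibre through $P$, and a little later of the tangent line to $R$ at $\pi(P)$ inside $\widetilde S^+$, acquire negative intersection, so one must run the Zariski--Nakayama decomposition on the threefold $\widetilde Y$ and then again after restricting to $\mathbf{E}$ and to $C$. I expect this nested Zariski-decomposition bookkeeping past the successive nef thresholds — together with the correct identification of the differents on $\mathbf{E}$ and on $C$, where the $\mathbb{A}_2$-structure of $B$ reappears through the intersection points of $\widetilde B$ with the flag — to be the only real obstacle. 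Once the positive parts are written out, checking $\beta_{Y,\Delta}(\mathbf{E})>0$ and that every reciprocal refined invariant is $<1$ becomes a short list of explicit polynomial inequalities in the fixed data $a=\tfrac32$, $d=L^2=4$, $\mu=\tfrac12$, handled exactly as in the proof of Proposition~\ref{proposition:induction-step}. Proposition~\ref{proposition:A1} follows by the same scheme, with the simplification that at an $\mathbb{A}_1$-point $B=\{z^2=xy\}$, so $\widetilde B$ restricts to $\mathbf{E}\cong\mathbb{P}^2$ along a smooth conic and $\mathrm{Diff}_{\mathbf{E}}(\widetilde\Delta)=\tfrac12\{z^2=xy\}$.
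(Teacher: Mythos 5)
Your primary route --- the ordinary blowup of $P$ followed by an Abban--Zhuang refinement on $\mathbf{E}\cong\mathbb{P}^2$ --- cannot be made to work within the scheme you invoke, and this is exactly why the paper does not use it. With the ordinary blowup the induced boundary on $\mathbf{E}$ is $\Delta_{\mathbf{E}}=\tfrac12(L_++L_-)$, two lines meeting at the point $O$ given by the cuspidal direction. The inequality from \cite{AbbanZhuang,Fujita2021} bounds $\delta_P(Y,\Delta)$ below by the minimum of $A_{Y,\Delta}(\mathbf{E})/S_A(\mathbf{E})=\tfrac{52}{49}$ and the refined invariant $\inf_{E'}A_{\mathbf{E},\Delta_{\mathbf{E}}}(E')/S(W^{\mathbf{E}}_{\bullet,\bullet};E')$ over all divisors $E'$ over $\mathbf{E}$ with centre at the relevant locus, and this infimum is at most $\tfrac{26}{29}<1$: take $E'$ to be the exceptional divisor of the ordinary blowup of $O$, so that $A_{\mathbf{E},\Delta_{\mathbf{E}}}(E')=2-\mathrm{mult}_O\Delta_{\mathbf{E}}=1$, while $S(W^{\mathbf{E}}_{\bullet,\bullet};E')=\tfrac{29}{26}$, because the ambient Zariski-decomposition data after the point blowup (including the flop of the fibre through $P$) do not see $B$ at all and hence coincide with those of the $\mathbb{A}_1$ case, where this value is computed in Lemma~\ref{lemma:A1-f-B-transversal}. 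Since the refinement term is itself an infimum over all $E'$, no choice of secondary flag $Q\in C\subset\mathbf{E}$ can push the bound above $1$; the ordinary blowup is the wrong first step at an $\mathbb{A}_2$ point. Your concrete choice $C=\mathbf{E}\cap\widetilde{S}^+$ fails even more visibly: $C$, $L_+$ and $L_-$ are three distinct lines through $O$, so $A_{C,\Delta_C}(O)=1-\tfrac12-\tfrac12=0$ (the pair $(\mathbf{E},C+\Delta_{\mathbf{E}})$ is lc but not plt at $O$) and the point term of the refinement vanishes.

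What survives of your plan is the parenthetical fallback, the cusp-adapted weighted blowup with weights $(2,3,3)$ in your coordinates, which is precisely the $(3,2,3)$-blowup used in the paper, with $A_{Y,\Delta}(F_0)=5$. But essentially all of the proof lives there and is absent from your proposal: one must follow the Zariski decomposition of $\rho^*A-uF_0$ up to the pseudoeffective threshold $u=9$ (which requires two small $\mathbb{Q}$-factorial modifications), obtaining $S_A(F_0)=\tfrac{127}{26}$, so the first-step ratio is only $\tfrac{130}{127}$; one must then refine on the exceptional surface $F_0\cong\mathbb{P}(1,1,2)$, where the strict transform of $B$ is smooth and avoids the dangerous points, choosing the curve case by case ($\overline{C}_1$, $\overline{C}_3$, or a member of $|\overline{C}_1|$) and carrying out the point computations as in Lemmas~\ref{lemma:A2-Q-in-C1}--\ref{lemma:A2-Q-general}. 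Note also that the gain from the weighted blowup is not that it ``enlarges $A_{Y,\Delta}(\mathbf{E})$'' --- the ratio $A/S$ in fact drops from $\tfrac{52}{49}$ to $\tfrac{130}{127}$ --- but that the induced boundary on the new exceptional surface no longer has the node that destroys the refinement. As written, then, the proposal identifies the correct fallback but contains no proof of the proposition.
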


By Lemmas~\ref{lemma:3-9-S-minus} and \ref{lemma:3-9-P-not-in-B} and Propositions~\ref{proposition:A1} and \ref{proposition:A2},
the log pair $(Y,\Delta)$ is K-stable
in the~case when $R$ is a reduced plane quartic curve that has at most $\mathbb{A}_1$ or $\mathbb{A}_2$ singularities.
Therefore, to complete the~proof, we may assume that $R$ is one of the~following curves:
\begin{enumerate}
\item[$(\mathrm{2})$] $C_1+C_2$, where $C_1$ and $C_2$ are smooth conics that are tangent at two points;
\item[$(\mathrm{3})$] $C+\ell_1+\ell_2$, where $C$ is a smooth conic, $\ell_1$ and $\ell_2$ are distinct lines tangent to $C$;
\item[$(\mathrm{4})$] $2C$, where $C$ is a smooth conic in $|L|$.
\end{enumerate}
Hence, appropriately changing coordinates $x_1,x_2,x_3$, we may assume that
$$
f_4(x_1,x_2,x_3)=(x_1x_2-x_3^2)(x_1x_2-\lambda x_3^2),
$$
where one of the~following three cases holds:
\begin{enumerate}
\item[$(\mathrm{2})$] $\lambda\not\in\{0,1\}$, $R=C_1+C_2$, where $C_1=\{x_1x_2=x_3^2\}$ and $C_2=\{x_1x_2=\lambda x_3^2\}$;
\item[$(\mathrm{3})$] $\lambda=0$, $R=C+\ell_1+\ell_2$, where $C=\{x_1x_2=x_3^2\}$, $\ell_1=\{x_1=0\}$ and $\ell_2=\{x_2=0\}$;
\item[$(\mathrm{4})$] $\lambda=1$, $R=2C$, where $C=\{x_1x_2=x_3^2\}$.
\end{enumerate}
In each case,  the~group $\mathrm{Aut}(Y,\Delta)$ contains an involution $\tau$ such that
$$
\tau(x_1,x_2,x_3,x_4,x_5)=(x_2,x_1,x_3,x_4,x_5).
$$

\begin{lemma}
\label{lemma:3-9-cat-eye}
Suppose that $\lambda\not\in\{0,1\}$. Then $(Y,\Delta)$ is K-polystable.
\end{lemma}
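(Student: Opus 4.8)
The plan is to reduce the statement, via the equivalence "$X$ is K-polystable $\iff$ $(Y,\tfrac12 B)$ is K-polystable" recalled above, to an equivariant $\beta$-estimate, and then to feed in the local bounds of Lemmas~\ref{lemma:3-9-S-minus} and~\ref{lemma:3-9-P-not-in-B}. Concretely: besides the involution $\tau$, the group $\mathrm{Aut}(Y,\Delta)$ contains the torus $T\cong\mathbb{G}_m$ acting by $(x_1,x_2,x_3,x_4,x_5)\mapsto(tx_1,t^{-1}x_2,x_3,x_4,x_5)$, since $T$ fixes $x_3,x_4,x_5$ and the monomial $x_1x_2$, hence fixes $f_4=(x_1x_2-x_3^2)(x_1x_2-\lambda x_3^2)$ and therefore preserves $B=\{x_4^2-f_4x_5^2=0\}$. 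Conjugation by $\tau$ inverts $T$, so $G:=\langle T,\tau\rangle\cong\mathbb{G}_m\rtimes\mumu_2$ is a reductive subgroup of $\mathrm{Aut}(Y,\Delta)$. By \cite{Zhuang}, to prove that $(Y,\Delta)$ is K-polystable it is enough to show that $\beta_{Y,\Delta}(\mathbf{F})>0$ for every $G$-invariant prime divisor $\mathbf{F}$ over $Y$.

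The second ingredient is the location and nature of $\mathrm{Sing}(B)$. Since $\lambda\notin\{0,1\}$, the conics $C_1=\{x_1x_2=x_3^2\}$ and $C_2=\{x_1x_2=\lambda x_3^2\}$ meet exactly at the two points $P_1=[1:0:0]$ and $P_2=[0:1:0]$; completing the square in the fibre coordinate over each $P_i$ shows that $B$ has a singular point $Q_i\in\pi^{-1}(P_i)\cap S^+$ of type $\mathbb{A}_3$ and is smooth elsewhere. In particular $\mathrm{Sing}(B)=\{Q_1,Q_2\}$, and $\tau$ interchanges $P_1\leftrightarrow P_2$, hence interchanges $Q_1\leftrightarrow Q_2$.

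Now let $\mathbf{F}$ be a $G$-invariant prime divisor over $Y$ and set $Z=C_Y(\mathbf{F})$, an irreducible $\tau$-invariant closed subset of $Y$. If $\dim Z\geqslant 1$ then $Z\not\subseteq\mathrm{Sing}(B)$ because $\mathrm{Sing}(B)$ is finite; if $\dim Z=0$ then $Z$ is a $\tau$-fixed point, hence $Z\neq Q_1,Q_2$, so again $Z\not\subseteq\mathrm{Sing}(B)$. Thus $Z$ contains a point $P\in Y\setminus\mathrm{Sing}(B)$, and Lemma~\ref{lemma:3-9-P-not-in-B} gives $\delta_P(Y,\Delta)>1$; since $P\in C_Y(\mathbf{F})$, this forces $A_{Y,\Delta}(\mathbf{F})>S_{Y,\Delta}(\mathbf{F})$, i.e.\ $\beta_{Y,\Delta}(\mathbf{F})>0$. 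This verifies the criterion, so $(Y,\Delta)$---and hence $X$---is K-polystable. The only computational step is the identification of the singularity type of $B$ over $P_1$ and $P_2$, which is a routine local computation; everything else is the structural reduction to $G$-invariant divisors and the observation that their centres avoid $\mathrm{Sing}(B)$. (Alternatively, one could argue directly on $X$ using the larger reductive group $(\mathbb{G}_m)^2\rtimes\mumu_2^2\subseteq\mathrm{Aut}(X)$, exactly as for the toric cases and as in \cite[Lemmas~4.64 and~4.66]{Book}, but the route through $(Y,\Delta)$ is shorter and matches the setup already in place.)
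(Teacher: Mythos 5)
Your proposal is correct and follows essentially the same route as the paper: apply Zhuang's equivariant criterion, note that the two singular points of $B$ are swapped by $\tau$ so no invariant center can lie in $\mathrm{Sing}(B)$, and then invoke the local bounds $\delta_P(Y,\Delta)>1$ from Lemmas~\ref{lemma:3-9-S-minus} and~\ref{lemma:3-9-P-not-in-B} to contradict $\beta_{Y,\Delta}(\mathbf{F})\leqslant 0$. The extra torus in your group $G$ and the explicit identification of the singularities as $\mathbb{A}_3$ are harmless but not needed; the paper works just with $\langle\tau\rangle$-invariant divisors and a general point of the center.
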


\begin{proof}
Suppose $(Y,\Delta)$ is not K-polystable.
It follows from \cite{Zhuang} that there is a $\langle\tau\rangle$-invariant prime divisor $\mathbf{F}$ over $Y$ such that $\beta_{Y,\Delta}(\mathbf{F})\leqslant 0$.
Let $P$ be a general point in $C_Y(\mathbf{F})$. Then
$$
\delta_P\big(Y,\Delta\big)\leqslant 1.
$$
But $P\not\in\mathrm{Sing}(B)$, since $\mathrm{Sing}(B)$ consists of two singular points that are swapped by $\tau$.
Then $\delta_P(Y,\Delta)>1$ by Lemmas~\ref{lemma:3-9-S-minus} and \ref{lemma:3-9-P-not-in-B},
which is a contradiction.
\end{proof}

\begin{lemma}
\label{lemma:3-9-ox}
Suppose $\lambda=0$. Then $(Y,\Delta)$ is K-polystable.
\end{lemma}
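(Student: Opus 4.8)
With $\lambda=0$ we have $f_4=(x_1x_2-x_3^2)\,x_1x_2$, so $R=C+\ell_1+\ell_2$ with $C=\{x_1x_2=x_3^2\}$ a smooth conic and $\ell_1=\{x_1=0\}$, $\ell_2=\{x_2=0\}$ its tangent lines at $[0:1:0]$ and $[1:0:0]$, while $\ell_1\cap\ell_2=[0:0:1]\notin C$. The plan is to run the same equivariant argument as in Lemma~\ref{lemma:3-9-cat-eye}: assuming $(Y,\Delta)$ is not K-polystable, use \cite{Zhuang} to produce a $\tau$-invariant prime divisor $\mathbf{F}$ over $Y$ with $\beta_{Y,\Delta}(\mathbf{F})\leqslant 0$ and analyse its center. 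The new feature compared with Lemma~\ref{lemma:3-9-cat-eye} is that the preimage in $B$ of the node $\ell_1\cap\ell_2$ is $\tau$-fixed, so it is not excluded by symmetry alone and must be handled by Proposition~\ref{proposition:A1}.

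First I would identify $\mathrm{Sing}(B)$. Since $B\to V$ is the double cover branched over the reduced quartic $R$, the surface $B$ is singular exactly over $\mathrm{Sing}(R)=\{[0:1:0],[1:0:0],[0:0:1]\}$; write $P_1,P_2,P_0$ for the corresponding points of $B$. Near $[0:0:1]$ the curve $R$ is locally a node, so $B$ has an $\mathbb{A}_1$ singularity at $P_0$; moreover $P_0$ lies on the fibre $\pi^{-1}([0:0:1])$ and is fixed by $\tau$, whereas $\tau$ interchanges $P_1$ and $P_2$ (which lie over the two tangency points, where $\tau(x_1,\dots,x_5)=(x_2,x_1,x_3,x_4,x_5)$ exchanges $\ell_1$ and $\ell_2$). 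I will not need the precise singularity types of $B$ at $P_1,P_2$. Finally, $\mathrm{Sing}(B)\cap S^-=\varnothing$ since $B\cap S^-=\varnothing$. (For safety one may also use the torus $\Gamma_0\colon(x_1,\dots,x_5)\mapsto(tx_1,t^{-1}x_2,x_3,x_4,x_5)$, which lies in $\mathrm{Aut}(Y,\Delta)$ because it fixes $x_1x_2$ and hence $f_4$, and run the criterion of \cite{Zhuang} with the reductive group $G=\langle\Gamma_0,\tau\rangle$; the reasoning below is unchanged, as $\Gamma_0$ fixes $P_0$, $P_1$, $P_2$.)

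Now suppose $(Y,\Delta)$ is not K-polystable. By \cite{Zhuang} there is a $\tau$-invariant prime divisor $\mathbf{F}$ over $Y$ with $\beta_{Y,\Delta}(\mathbf{F})\leqslant 0$; set $Z=C_Y(\mathbf{F})$, a $\tau$-invariant irreducible subvariety. If $Z\not\subseteq\mathrm{Sing}(B)$, let $P$ be a general point of $Z$; then $P\notin\mathrm{Sing}(B)$, so $\delta_P(Y,\Delta)>1$ by Lemma~\ref{lemma:3-9-S-minus} (if $P\in S^-$) or Lemma~\ref{lemma:3-9-P-not-in-B} (otherwise), contradicting
$$
\delta_P(Y,\Delta)\leqslant\frac{A_{Y,\Delta}(\mathbf{F})}{S_{Y,\Delta}(\mathbf{F})}\leqslant 1 .
$$
Hence $Z\subseteq\mathrm{Sing}(B)=\{P_0,P_1,P_2\}$, so $Z$ is a single point; since $\tau$ swaps $P_1$ and $P_2$, $\tau$-invariance of $Z$ forces $Z=P_0$. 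But $B$ has an $\mathbb{A}_1$ singularity at $P_0$, so $\beta_{Y,\Delta}(\mathbf{F})>0$ by Proposition~\ref{proposition:A1}, again a contradiction. Therefore $(Y,\Delta)$ is K-polystable.

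The only genuinely nontrivial input is Proposition~\ref{proposition:A1}, which is established separately; inside this lemma the point to get right is the local description of $B$ over $\mathrm{Sing}(R)$ together with the observation that $P_0$ — unlike $P_1$ and $P_2$ — is $\tau$-fixed, which is precisely why the purely symmetry-based argument of Lemma~\ref{lemma:3-9-cat-eye} does not close the case by itself.
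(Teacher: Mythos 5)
Your proposal is correct and follows essentially the same route as the paper: identify $\mathrm{Sing}(B)$ as one $\tau$-fixed $\mathbb{A}_1$ point over the node $\ell_1\cap\ell_2$ plus two points over the tangency points swapped by $\tau$, then run the equivariant argument of Lemma~\ref{lemma:3-9-cat-eye} via \cite{Zhuang}, Lemmas~\ref{lemma:3-9-S-minus} and \ref{lemma:3-9-P-not-in-B}, and dispose of the $\tau$-fixed $\mathbb{A}_1$ point with Proposition~\ref{proposition:A1}. Your observation that the exact singularity type at the two swapped points is irrelevant is consistent with the paper's (terser) proof, which records them as $\mathbb{A}_3$ but only uses that $\tau$ interchanges them.
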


\begin{proof}
The surface $B$ has a singular point of type $\mathbb{A}_1$,
and two singular points of type~$\mathbb{A}_3$, that are swapped by $\tau$.
Arguing as in the~proof of Lemma~\ref{lemma:3-9-cat-eye} and using Propositions~\ref{proposition:A1},
we see that $X$ is K-polystable.
\end{proof}

\begin{lemma}[{cf. Lemma~\ref{lemma:3-9-PGL-2}}]
\label{lemma:3-9-PGL2-revisited}
Suppose $\lambda=1$. Then $(Y,\Delta)$ is K-polystable.
\end{lemma}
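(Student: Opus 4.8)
The plan is to deduce this case from the corresponding statement on the threefold $X$. When $\lambda=1$ the ramification curve is $R=2C$ for the smooth conic $C=\{x_1x_2=x_3^2\}\subset\mathbb{P}^2$, so Lemma~\ref{lemma:3-9-PGL-2} applies and tells us that the Casagrande--Druel threefold $X$ attached to this $R$ is K-polystable. By Proposition~\ref{proposition:quotient} we have $X/\iota\cong Y$ with $-K_X\sim_{\mathbb{Q}}\rho^*(K_Y+\Delta)$ and $\Delta=\frac12 B$, so \cite{Liu-Zhu} gives that $X$ is K-polystable if and only if $(Y,\Delta)$ is K-polystable. Hence $(Y,\Delta)$ is K-polystable. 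This is the argument I would actually write down; the reference to Lemma~\ref{lemma:3-9-PGL-2} in the statement is precisely this reduction.

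For completeness I would also indicate how one checks K-polystability of $(Y,\Delta)$ directly, staying on the pair $(Y,\Delta)$ as in the cases $\lambda\notin\{0,1\}$ and $\lambda=0$. When $\lambda=1$ the quartic $f_4=(x_1x_2-x_3^2)^2$ is a square, so $B$ is reducible: $B=B_1+B_2$ with $B_{\pm}=\{x_4=\pm(x_1x_2-x_3^2)x_5\}$, each $B_{\pm}$ a section of $\pi$ with $B_{\pm}\sim S^+$, and $B_1$, $B_2$ meeting transversally along the smooth conic $C\subset S^+$; thus $(Y,\Delta)=(Y,\frac12 B_1+\frac12 B_2)$ is an snc log Fano pair. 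The group $\mathrm{Aut}(Y,\Delta)$ contains the lift of $\mathrm{PGL}_2(\mathbb{C})$ acting on $V=\mathbb{P}^2$ and preserving $C$, together with the involution $(x_1,x_2,x_3,x_4,x_5)\mapsto(x_1,x_2,x_3,-x_4,x_5)$ swapping $B_1$ and $B_2$, and one checks that $\mathrm{Aut}^\circ(Y,\Delta)\cong\mathrm{PGL}_2(\mathbb{C})$ is reductive. Note that, in contrast to $X$, there is no residual $\mathbb{G}_m$-symmetry here, since the subgroup $\Gamma$ of Section~\ref{section:intro} does not preserve $\Delta$.

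By \cite{Zhuang} it then suffices to check $\beta_{Y,\Delta}(\mathbf{F})>0$ for every $\mathrm{Aut}(Y,\Delta)$-invariant prime divisor $\mathbf{F}$ over $Y$. The center of such $\mathbf{F}$ is one of the few $\mathrm{Aut}(Y,\Delta)$-invariant irreducible subvarieties of $Y$: the sections $S^-$ and $S^+$, the divisor $\pi^*(C)$, or the conic $C=\mathrm{Sing}(B)\subset S^+$, which is the closed orbit. For centers contained in (or meeting) $S^-$, and for centers whose general point lies off $\mathrm{Sing}(B)$, positivity is already guaranteed by Lemmas~\ref{lemma:3-9-S-minus} and \ref{lemma:3-9-P-not-in-B}; a short direct computation handles the invariant divisors $S^+$ and $\pi^*(C)$ on $Y$ itself; and the only genuinely remaining case is an exceptional divisor $\mathbf{F}$ with $C_Y(\mathbf{F})=C$, whose $\beta$-invariant one computes by a Zariski-decomposition argument on an appropriate weighted blow-up, exactly paralleling the computation of $\beta(\widehat{E})$ in the proof of Lemma~\ref{lemma:3-9-PGL-2} (indeed this $\mathbf{F}$ is the $\rho$-descent of that $\widehat{E}$). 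The main obstacle in this direct approach is precisely the enumeration of invariant divisors over $Y$ together with the Zariski-decomposition bookkeeping for the last one; since the reduction to Lemma~\ref{lemma:3-9-PGL-2} via \cite{Liu-Zhu} sidesteps all of it, that is the route I would carry out in the final text.
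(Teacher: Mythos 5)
Your main argument is correct, and it is in fact the very reduction the paper records in the opening lines of its own proof: since Lemma~\ref{lemma:3-9-PGL-2} shows that $X$ is K-polystable when $R=2C$, the equivalence of \cite{Liu-Zhu} between K-polystability of $X$ and of $(Y,\Delta)=(Y,\tfrac{1}{2}B)$ immediately gives the claim, and there is no circularity because Lemma~\ref{lemma:3-9-PGL-2} was proved earlier by a direct $\beta$-computation on $X$. The paper, however, then writes out a self-contained argument ``for consistency'': it identifies $\mathrm{Aut}(Y,\Delta)\cong\mathrm{PGL}_2(\mathbb{C})\times\mumu_2$ via an equivariant diagram through an auxiliary threefold $U$ over $W=V\times\mathbb{P}^1$, computes $\beta_{Y,\Delta}(\widehat{E})=\tfrac{7}{26}>0$ for the divisor extracted over $B_1\cap B_2$, and then excludes every other invariant divisor whose center is $B_1\cap B_2$ by the Abban--Zhuang estimate $S_A\big(W^{\widehat{E}}_{\bullet,\bullet};\mathbf{s}\big)=\tfrac{5}{13}<1$ applied to invariant sections $\widehat{Z}$ of $\widehat{E}\to B_1\cap B_2$, with Lemmas~\ref{lemma:3-9-S-minus} and \ref{lemma:3-9-P-not-in-B} eliminating all other centers. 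What the direct route buys is a verification carried out entirely on the pair $(Y,\Delta)$, independent of the earlier $X$-side computation and in the spirit of the rest of the section; what your route buys is brevity. One caution about your ``for completeness'' sketch: computing $\beta$ of a single weighted blow-up divisor with center $C=B_1\cap B_2$ (the descent of $\widehat{E}$) does not by itself dispose of all $\mathrm{Aut}(Y,\Delta)$-invariant divisors over $Y$ with that one-dimensional center --- this is precisely why the paper, after checking $\beta_{Y,\Delta}(\widehat{E})>0$, still needs the $S\big(W^{\widehat{E}}_{\bullet,\bullet};\widehat{Z}\big)$ computation via \cite{AbbanZhuang,Zhuang}. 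Since you defer to the \cite{Liu-Zhu} reduction for the actual proof, this gap in the sketch does not affect the correctness of your proposal.
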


\begin{proof}
In this case, we have $R=2C$, where $C$ is an irreducible conic. Then $B=B_1+B_2$,
where $B_1$ and $B_2$ are smooth surfaces in $|S^+|$
that intersect transversally along a smooth curve such that $\pi(B_1\cap B_2)=C$.

We already know from Lemma~\ref{lemma:3-9-PGL-2} that the~threefold $X$ is K-polystable in this case,
so~that $(Y,\Delta)$ is also K-polystable  \cite{Liu-Zhu}.
Let us prove this directly for consistency.

Let $W=V\times\mathbb{P}^1$,
let $\varpi\colon W\to V$ be the~natural projection,
let $\widetilde{S}^-$,  $\widetilde{B}_1$, $\widetilde{B}_2$ be its disjoint sections,
and let $\widetilde{E}=\varpi^*(C)$.
Then there exists the~following commutative diagram:
$$
\xymatrix{
&U\ar[dl]_{\alpha}\ar[dr]^{\psi}&\\
W\ar[dr]_{\varpi} &&Y\ar[dl]^{\pi} \\
&V&}
$$
such that $\alpha$ is a blow up along the~curve $\widetilde{E}\cap\widetilde{S}^-$,
the~morphism $\psi$ is a contraction of the~proper transform of the~surface $\widetilde{E}$ to the~intersection curve $B_1\cap B_2$
such that $\psi$ maps the~proper transforms of the~surfaces $\widetilde{S}^-$, $\widetilde{B}_1$, $\widetilde{B}_2$
to the~surfaces $S^-$, $B_1$, $B_2$, respectively. Then
$$
\mathrm{Aut}(Y,\Delta)\cong
\mathrm{Aut}(U)\cong\mathrm{Aut}\big(W,\widetilde{B}_1+\widetilde{B}_2+\widetilde{E}+\widetilde{S}^-\big)\cong
\mathrm{PGL}_2(\mathbb{C})\times\mumu_2.
$$
Note that the~commutative diagram above is $\mathrm{Aut}(Y,\Delta)$-equivariant.

Let $F$ be $\alpha$-exceptional surface, let $\widehat{E}$ be the~$\psi$-exceptional surface,
let $\widehat{B}_1$ and $\widehat{B}_2$ be the~proper transforms on $U$ of the~surfaces $B_1$ and $B_2$,
respectively. Set $\widehat{\Delta}=\frac{1}{2}(\widehat{B}_1+\widehat{B}_2)$. Then
$K_U+\widehat{\Delta}\sim_{\mathbb{Q}}\psi^*(K_Y+\Delta)$,
so that $\psi$ is log crepant for $(U,\widehat{\Delta})$.
Then $A_{Y,\Delta}(\widehat{E})=1$.

First, we compute $\beta_{Y,\Delta}(\widehat{E})$.
Set $H_1=(\mathrm{pr}_1\circ\alpha)^*(\mathcal{O}_{\mathbb{P}^1}(1))$ and $H_2=(\varpi\circ\alpha)^*(\mathcal{O}_{V}(1))$,
where  $\mathrm{pr}_1$ is the~natural projection $W\to\mathbb{P}^1$.
Then $\widehat{\Delta}\sim_{\mathbb{Q}} H_1$ and $\widehat{E}\sim 2H_2-F$, so that
$$
\psi^*\big(K_Y+\Delta\big)\sim_{\mathbb{Q}} K_U+\widehat{\Delta}\sim_{\mathbb{Q}} H_1+3H_2-F\sim_{\mathbb{Q}} H_1+\frac{3}{2}\widehat{E}+\frac{1}{2}F.
$$
Let $u$ be a non-negative real number. Then
$$
\psi^*\big(K_Y+\Delta\big)-u\widehat{E}\sim_{\mathbb{R}} H_1+(3-2u)H_2+(u-1)F\sim_{\mathbb{R}} H_1+\frac{3-2u}{2}\widehat{E}+\frac{1}{2}F,
$$
and this divisor is pseudoeffective $\iff$ $u\leqslant \frac{3}{2}$.
For $u\in[0,\frac{3}{2}]$, let $P(u)$ be the~positive part of the~Zariski decomposition of $\psi^*(K_Y+\Delta)-u\widehat{E}$,
and let $N(u)$ be the~negative part. Then
$$
P(u)\sim_{\mathbb{R}}\left\{\aligned
&H_1+(3-2u)H_2+(u-1)F\ \text{if $0\leqslant u\leqslant 1$}, \\
&H_1+(3-2u)H_2\ \text{if $1\leqslant u\leqslant \frac{3}{2}$},
\endaligned
\right.
$$
and
$$
N(u)=\left\{\aligned
&0\ \text{if $0\leqslant u\leqslant 1$}, \\
&(u-1)F\ \text{if $1\leqslant u\leqslant \frac{3}{2}$}.
\endaligned
\right.
$$
This gives
\begin{multline*}
\beta_{Y,\Delta}(\widehat{E})=A_{Y,\Delta}(\widehat{E})-\frac{1}{(-K_Y-\Delta)^3}\int_{0}^{\frac{3}{2}}\big(P(u)\big)^3du=\\
=1-\frac{1}{13}\int_{0}^{1}\Big(2H_1+(3-2u)H_2+(u-1)F\Big)^3du-\frac{1}{13}\int_{1}^{\frac{3}{2}}\Big(2H_1+(3-2u)H_2\Big)^3du=\\
=1-\int_{0}^{1}8u^3-18u^2+13du-\int_{1}^{\frac{3}{2}}12u^2-36u+27du=\frac{7}{26}>0.
\end{multline*}

Suppose that $(Y,\Delta)$ is not K-polystable.
By \cite{Zhuang}, there exists an $\mathrm{Aut}(Y,\Delta)$-invariant prime divisor $\mathbf{F}$ over $Y$ such that $\beta_{Y,\Delta}(\mathbf{F})\leqslant 0$.
Let $Z$ be its center  on $Y$. Then
$$
\delta_{P}(Y,\Delta)\leqslant 1
$$
for every point $P\in Z$. Hence, it follows from Lemmas~\ref{lemma:3-9-S-minus} and \ref{lemma:3-9-P-not-in-B} that $Z\subset B_1\cap B_2$.
Hence, since $Z$ is a $\mathrm{Aut}(Y,\Delta)$-invariant irreducible subvariety, we see that $Z=B_1\cap B_2$.

Let $\widehat{Z}$ be the~center of the~divisor $\mathbf{F}$ on the~threefold $U$.
Then $\widehat{Z}\ne\widehat{E}$, since $\beta(\widehat{E})>0$.
Moreover, since $\widehat{Z}\subset\widehat{E}$ and $\widehat{Z}$ is $\mathrm{Aut}(U)$-invariant,
we see that $\widehat{Z}$ is a $\mathrm{Aut}(U)$-invariant section of the~natural projection $\widehat{E}\to Z$.
Set $A=K_U+\widehat{\Delta}$. Then
$$
0\geqslant\beta_{Y,\Delta}(\mathbf{F})=A_{Y,\Delta}(\mathbf{F})-S_A(\mathbf{F})=A_{U,\widehat{\Delta}}(\mathbf{F})-S_A(\mathbf{F}),
$$
because $K_U+\widehat{\Delta}\sim_{\mathbb{Q}}\psi^*(K_Y+\Delta)$.
Moreover, it follows from \cite{AbbanZhuang,Book,Fujita2021} that
$$
1\geqslant\frac{A_{U,\widehat{\Delta}}(\mathbf{F})}{S_{A}(\mathbf{F})}
\geqslant\min\left\{\frac{1}{S_{A}(\widehat{E})},
\frac{1}{S_A\big(W_{\bullet,\bullet}^{\widehat{E}};\widehat{Z}\big)}\right\},
$$
where $S_A(W_{\bullet,\bullet}^{\widehat{E}};\widehat{Z})$ is defined in \cite[Section~1.7]{Book}.
But $S_{A}(\widehat{E})=\frac{19}{26}$, so $S_A(W_{\bullet,\bullet}^{\widehat{E}};\widehat{Z})\geqslant 1$.

Let us compute $S_A(W_{\bullet,\bullet}^{\widehat{E}};\widehat{Z})$.
Using \cite[Corollary 1.109]{Book}, we see that
$$
S_A\big(W_{\bullet,\bullet}^{\widehat{E}};\widehat{Z}\big)=
\frac{3}{A^3}\int_0^{\frac{3}{2}}\Big(P(u)\big\vert_{\widehat{E}}\Big)^2\mathrm{ord}_{\widehat{Z}}\big(N(u)\big\vert_{\widehat{E}}\big)+\frac{3}{A^3}\int_0^{\frac{3}{2}}\int_{0}^{\infty}\mathrm{vol}\big(P(u)\big\vert_{\widehat{E}}-v\widehat{Z}\big)dvdu,
$$
which is easy to compute, because $\widehat{E}\cong\mathbb{P}^1\times\mathbb{P}^1$. Let us do this.

Let $\mathbf{s}=F\cap \widehat{E}$.
Then $\mathbf{s}$ is a section of the~projection $\widehat{E}\to Z$.
Let $\mathbf{f}$ be a fiber of this projection. Then
$$
P(u)\big\vert_{\widehat{E}}=\left\{\aligned
&(6-4u)\mathbf{f}+u\mathbf{s}\ \text{if $0\leqslant u\leqslant 1$}, \\
&(6-4u)\mathbf{f}+\mathbf{s}\ \text{if $1\leqslant u\leqslant \frac{3}{2}$},
\endaligned
\right.
$$
and
$$
N(u)\big\vert_{\widehat{E}}=\left\{\aligned
&0\ \text{if $0\leqslant u\leqslant 1$}, \\
&(u-1)\mathbf{s}\ \text{if $1\leqslant u\leqslant \frac{3}{2}$}.
\endaligned
\right.
$$
Thus, we see that $S_A(W_{\bullet,\bullet}^{\widehat{E}};\widehat{Z})\leqslant S_A(W_{\bullet,\bullet}^{\widehat{E}};\mathbf{s})$ and
\begin{multline*}
S_A\big(W_{\bullet,\bullet}^{\widehat{E}};\mathbf{s}\big)=\frac{3}{13}\int_1^{\frac{3}{2}}\big((6-4u)\mathbf{f}+\mathbf{s}\big)^2(u-1)du+\frac{3}{13}\int_0^{1}\int_{0}^{u}\big((6-4u)\mathbf{f}+(u-v)\mathbf{s}\big)^2dvdu+\\
+\frac{3}{13}\int_1^{\frac{3}{2}}\int_{0}^{1}\big((6-4u)\mathbf{f}+(1-v)\mathbf{s}\big)^2dvdu=\frac{3}{13}\int_1^{\frac{3}{2}}2(6-4u)(u-1)du+\\
+\frac{3}{13}\int_0^{1}\int_{0}^{u}2(6-4u)(u-v)dvdu+\frac{3}{13}\int_1^{\frac{3}{2}}\int_{0}^{1}2(6-4u)(1-v)dvdu=\frac{5}{13}<1,
\end{multline*}
which is a contradiction.
\end{proof}

In the~remaining part of this sections, we will prove Proposition~\ref{proposition:A1} and \ref{proposition:A2}.

\subsection{Proof of Proposition~\ref{proposition:A1}}
\label{section:A1}

Let us use notations introduced in earlier in this section before Proposition~\ref{proposition:A1},
and let $P$ be an isolated ordinary double point of the~surface $B$.
Then, up to a change of coordinates, we may assume that $P=(0,0,1,0,1)$ and
$$
f_4(x_1,x_2,1)= x_1^2+x_2^2+\text{higher order terms}.
$$

Let $\rho \colon Y_0 \to Y$ be the~blow up at $P$.
Then $Y_0$ is the~toric variety $(\mathbb{C}^6\setminus Z(I_0))/\mathbb{G}_m^3$
for the~torus action given by
$$
M = \left(\begin{array}{cccccc}
x_0 & x_1 & x_2 & x_3 & x_4 & x_5 \\
 0  &  1  &  1  &  1  &  2  &  0  \\
 0  &  0  &  0  &  0  &  1  &  1  \\
 1  &  0  &  0  &  1  &  1  &  0
\end{array}\right)
$$
with irrelevant ideal $I_0=\langle x_1, x_2, x_3 \rangle \cap \langle x_1, x_2, x_4 \rangle \cap \langle x_4, x_5 \rangle \cap \langle x_0, x_3 \rangle \cap \langle x_0, x_5 \rangle$.
To describe its fan, denote the~vector generating the~ray corresponding to $x_i$ by $v_i$.
Then
\begin{align*}
v_0&= (1,1,1),&    v_1&= (1,0,0),& v_2&= (0,1,0),\\
v_3&= (-1,-1,-2),& v_4&= (0,0,1),& v_5&=(0,0,-1).
\end{align*}
The cone structure can be derived from the~irrelevant ideal $I_0$, and it can can be visualized via the~following diagram:
\begin{center}
\begin{tikzpicture}[node distance=1cm,auto]
\node[state, inner sep=1pt,minimum size=0pt] (q_0) {$v_0$};
\node[state, inner sep=1pt,minimum size=0pt] (q_1) at (2,2) {$v_1$};
\node[state,inner sep=1pt,minimum size=0pt] (q_2) at (2,-2) {$v_2$};
\node[state,inner sep=1pt,minimum size=0pt] (q_3) at (5,2) {$v_3$};
\node[state,inner sep=1pt,minimum size=0pt] (q_4) at (-2,0) {$v_4$};
\node[state,inner sep=1pt,minimum size=0pt] (q_5) at (4,0) {$v_5$};

\path[-] (q_0) edge node[swap] {} (q_1);
\path[-] (q_0) edge node[swap] {} (q_2);
\path[-] (q_0) edge node[swap] {} (q_4);
\path[-] (q_1) edge node[swap] {} (q_2);
\path[-] (q_1) edge node[swap] {} (q_3);
\path[-] (q_1) edge node[swap] {} (q_4);
\path[-] (q_1) edge node[swap] {} (q_5);
\path[-] (q_2) edge node[swap] {} (q_4);
\path[-] (q_2) edge node[swap] {} (q_5);
\path[-] (q_3) edge node[swap] {} (q_5);
\path[-, bend right = 50] (q_2) edge node {} (q_3);
\path[-, bend right = 50] (q_3) edge node {} (q_4);
\end{tikzpicture}
\end{center}

Let $F_i=\{x_i=0\}\subset Y_0$, and let $C_{ij} = F_i \cap F_j$ for $i\ne j$ such that $\mathrm{dim}(F_i \cap F_j)=1$.
Consider the~$\mathbb{Z}^3$-grading  of $\mathrm{Pic}(Y_0)$ given by $M$.
If $D_1$ and $D_2$ are two divisors in $\mathrm{Pic}(Y_0)$,
then it follows from  \cite[Chapter~5]{CLS} that
$$
D_1\sim D_2 \iff \deg_M(D_1)=\deg_M(D_2).
$$
Moreover, we have
$$
\overline{\mathrm{Eff}(Y_0)} = \langle F_0, F_1, F_5 \rangle
$$
and
$$
\overline{\mathrm{NE}(Y_0)}=\langle C_{12}, C_{15}, C_{01} \rangle.
$$
In particular, a  divisor $D$ with $\deg_M(D)=(a,b,c)$ is effective $\iff$ all $a,b,c\geqslant 0$.

\begin{lemma}
\label{lemma:Y0-intersections}
Intersections of divisors $F_0$, $F_1$, $F_5$ are given the~following table:
\begin{center}
\renewcommand\arraystretch{1.6}
\begin{tabular}{|c|c|c|c|c|c|c|c|c|c|}
\hline
$F_0^3$&$F_0^2F_1$&$F_0^2F_5$&$F_0F_1^2$&$F_0F_1F_5$&$F_0F_5^2$&$F_1^3$&$F_1^2F_5$&$F_1F_5^2$&$F_5^3$\\
\hline
  $1$  &   $-1$   &   $0$    &    $1$   &    $0$    &    $0$   &  $-1$ &   $1$    &   $-2$   & $4$  \\
\hline
\end{tabular}
\end{center}
\end{lemma}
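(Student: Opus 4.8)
The plan is a direct intersection-theoretic computation on the toric variety $Y_0$ using the description of $\operatorname{Pic}(Y_0)$, the effective and nef cones, and the $\mathbb{Z}^3$-grading given by the matrix $M$. Since $Y_0$ is a simplicial toric variety of dimension $3$ and $\overline{\mathrm{Eff}(Y_0)}=\langle F_0,F_1,F_5\rangle$ is simplicial, every pseudo-effective divisor class is a nonnegative combination of $F_0$, $F_1$, $F_5$; hence the ten triple products $F_0^3, F_0^2F_1,\dots,F_5^3$ determine all triple self-intersections on $Y_0$, and it suffices to compute exactly these ten numbers.

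First I would use the fan data: the rays $v_0=(1,1,1)$, $v_1=(1,0,0)$, $v_2=(0,1,0)$, $v_3=(-1,-1,-2)$, $v_4=(0,0,1)$, $v_5=(0,0,-1)$, together with the maximal cones read off from the complement of $I_0$. For each maximal cone $\sigma$ spanned by $\{v_i,v_j,v_k\}$ one has $F_i\cdot F_j\cdot F_k = 1/\mathrm{mult}(\sigma)$, where $\mathrm{mult}(\sigma)=|\det(v_i,v_j,v_k)|$; all other triple products of three \emph{distinct} boundary divisors vanish unless the three rays span a cone. This pins down, in particular, that $F_0 F_1 F_5 = 0$, $F_0^2 F_5 = 0$, $F_0 F_5^2 = 0$ (the relevant rays are not all in a common cone), and gives the value of the products that do correspond to smooth cones. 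Then, to get the self-intersection contributions, I would use the linear equivalences among boundary divisors coming from the grading: each lattice functional $m\in M$ gives $\sum_i \langle m, v_i\rangle F_i \sim 0$, equivalently the columns of $M$ encode $\deg_M$, so that $F_2, F_3, F_4$ can be expressed $\sim$-linearly in terms of $F_0, F_1, F_5$. Substituting these expressions into products like $F_0^3 = F_0\cdot F_0\cdot(\text{something involving only distinct divisors})$ reduces every self-intersection to a combination of the honest toric intersection numbers $F_i F_j F_k$ with $i,j,k$ distinct. Carrying this substitution through yields the ten entries; the claimed values $F_0^3=1$, $F_0^2F_1=-1$, $F_0^2F_5=0$, $F_0F_1^2=1$, $F_0F_1F_5=0$, $F_0F_5^2=0$, $F_1^3=-1$, $F_1^2F_5=1$, $F_1F_5^2=-2$, $F_5^3=4$ should drop out. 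As a consistency check I would verify $F_5^3 = 4 = L^2\cdot(\text{appropriate self-intersection})$ against $Y$: since $\rho$ is a blowup of a smooth point, $\rho^*S^+$ restricted suitably and the known $(S^+)^2\cdot\pi^*(\text{pt})$-type numbers on $Y=\mathbb{P}(\mathcal O\oplus\mathcal O(L))$ with $d=L^2=4$ must be reproduced, and the relation $F_5 = \rho^*S^+ - ?E$ (with $E=F_0$ the exceptional divisor over $P$) combined with $\deg_M$ fixes the coefficient.

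From the linear-equivalence point of view the cleanest route is: set $E=F_0$ (the $\rho$-exceptional divisor), note $F_5=\rho^*S^+$ since $x_5$ does not vanish at $P$, and write $F_1=\rho^*(\pi^*\ell)-E$ where $\ell$ is a line through $\pi(P)$ in $\mathbb{P}^2$ — this follows from comparing $\deg_M$ of $F_1$ with that of $\rho^*(\{x_1=0\})-E$. Then all ten products reduce to: $E^3$ (which is $(-1)^{3-1}=1$ for a smooth point blowup in dimension $3$, matching $F_0^3=1$), mixed products $E^2\cdot\rho^*(\cdots)=0$, and triple products of pullbacks which equal the corresponding numbers on $Y$, computable from $\xi^3, \xi^2 H, \xi H^2, H^3$ on $Y=\mathbb P(\mathcal O_V\oplus\mathcal O_V(L))$ with $V=\mathbb P^2$, $L=\mathcal O(2)$, $d=4$ (e.g.\ $H^3=0$, $H^2\cdot S^+=d=4$, etc.). Expanding $F_1=\rho^*(\pi^*\ell)-E$ and $F_5=\rho^*S^+$ and using $S^+|_{S^+}\sim L$, $S^+\sim S^-+H$ then gives each entry by a short calculation.

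The main obstacle is purely bookkeeping: correctly identifying which triples of rays span maximal cones of the fan from the irrelevant ideal $I_0$ (so that the vanishing products are assigned correctly), and tracking the multiplicities $|\det(v_i,v_j,v_k)|$ of the non-smooth cones — the variety $Y_0$ is not smooth (it has the cone containing $v_3,v_5$ and others), so several $\mathrm{mult}(\sigma)>1$ and a sign or a factor is easy to misplace. There is no conceptual difficulty; the risk is arithmetic, and the cross-check against the known intersection theory of $Y$ via $F_5=\rho^*S^+$, $F_0=E$ is the safeguard I would rely on to confirm the table.
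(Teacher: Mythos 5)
Your main route is exactly the paper's proof: compute the triple products of \emph{distinct} boundary divisors from the fan ($F_iF_jF_k=1/|\det(v_i,v_j,v_k)|$ when the three rays span a cone, $0$ otherwise) and then eliminate repeated factors via the linear equivalences coming from the rows of $M$, namely $F_2\sim F_1$, $F_3\sim F_0+F_1$, $F_4\sim F_0+2F_1+F_5$; carrying this out reproduces the stated table. Two corrections to your geometric cross-check, so that it does not spuriously contradict the (correct) table: the coordinate $x_5$ cuts out $S^-$, not $S^+$, so $F_5=\rho^*(S^-)$ (and $F_4$, not $F_5$, is the proper transform of $S^+$, which passes through $P$); with your identification one would get, for instance, $F_1F_5^2=\rho^*(\pi^*\ell)\cdot\rho^*(S^+)^2=\ell\cdot L=+2$ instead of the correct $\ell\cdot(S^-|_{S^-})=\ell\cdot(-L)=-2$, while your sanity check $F_5^3=4$ happens not to distinguish the two since $(S^+)^3=(S^-)^3=d=4$ here. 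Also, $Y_0$ is smooth in this $\mathbb{A}_1$ case ($Y$ is smooth and $\rho$ is an ordinary blow-up of a point; all eight maximal cones, including those containing $v_3$ and $v_5$, have multiplicity $1$), so no multiplicities greater than $1$ arise — that concern only becomes real for the weighted blow-up used in the subsequent $\mathbb{A}_2$ computation, where the analogous table does contain fractional entries.
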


\begin{proof}
Recall that for distinct torus-invariant divisors $F_i, F_j, F_k$ we may compute their intersection using the~fan and the~cone structure (or the~irrelevant ideal)
$$
F_i F_j F_k = \begin{cases}
0                                             & x_i x_j x_k \in I_0 \\
\frac{1}{\big|\det\{v_i, v_j, v_k\}\big|} & \text{otherwise}.
\end{cases}
$$
This fact together with the~linear equivalences implies the~required assertion.
\end{proof}

Using Lemma \ref{lemma:Y0-intersections}, we obtain the~following intersection table:
\begin{center}
\renewcommand\arraystretch{1.6}
\begin{tabular}{|c||c|c|c| }
\hline
$\bullet$         &      $F_0$    &    $F_1$ 	 & $F_5$ \\
\hline
\hline
$C_{12}$ &        $1$    &    $-1$	  & $1$ \\
\hline
$C_{15}$ &       $0$     &    $1$	  & $-2$  \\
\hline
$C_{01}$ &       $-1$    &     $1$    &  $0$  \\
\hline
\end{tabular}
\end{center}

Now, we set $A=-(K_Y+\Delta)$. Take $u\in\mathbb{R}_{\geqslant 0}$. Set
$$
L(u)=\rho^*(A)-uF_0.
$$
Then $L(u)\sim_{\mathbb{R}}(3-u)F_0+3F_1+F_5$. So, the divisor $L(u)$ is pseudo-effective $\iff$ $u\leqslant 3$.
Let us find a~Zariski decomposition of the~divisor $L(u)$ for $u\in[0,3]$.

The divisor $L(u)$ is nef for $u\in[0,1]$.
We have $L(1) \cdot C_{12} = 0$.
Since $C_{12}$ is a flopping curve, we have to consider a small $\mathbb{Q}$-factorial modification $Y_0\dasharrow Y_1$
such that
$$
Y_1=\big(\mathbb{C}^6 \setminus Z(I_1)\big)/\mathbb{G}_m^3,
$$
where the~torus-action is the~same (given by the~matrix $M$) and the~irrelevant ideal
$$
I_1=\langle x_1, x_2 \rangle \cap \langle x_4, x_5 \rangle \cap \langle x_0, x_3 \rangle,
$$
which is obtained from $I_0$ by replacing $\langle x_0, x_5 \rangle$ with $\langle x_1, x_2 \rangle$.
The fan of $Y_1$ is generated by the~same vectors, but the~cone structure is different:
\begin{center}
\begin{tikzpicture}[node distance=1cm,auto]
\node[state, inner sep=1pt,minimum size=0pt] (q_0) {$v_0$};
\node[state, inner sep=1pt,minimum size=0pt] (q_1) at (2,2) {$v_1$};
\node[state,inner sep=1pt,minimum size=0pt] (q_2) at (2,-2) {$v_2$};
\node[state,inner sep=1pt,minimum size=0pt] (q_3) at (5,2) {$v_3$};
\node[state,inner sep=1pt,minimum size=0pt] (q_4) at (-2,0) {$v_4$};
\node[state,inner sep=1pt,minimum size=0pt] (q_5) at (4,0) {$v_5$};

\path[-] (q_0) edge node[swap] {} (q_1);
\path[-] (q_0) edge node[swap] {} (q_2);
\path[-] (q_0) edge node[swap] {} (q_4);
\path[-] (q_0) edge node[swap] {} (q_5); 
\path[-] (q_1) edge node[swap] {} (q_3);
\path[-] (q_1) edge node[swap] {} (q_4);
\path[-] (q_1) edge node[swap] {} (q_5);
\path[-] (q_2) edge node[swap] {} (q_4);
\path[-] (q_2) edge node[swap] {} (q_5);
\path[-] (q_3) edge node[swap] {} (q_5);
\path[-, bend right = 50] (q_2) edge node {} (q_3);
\path[-, bend right = 50] (q_3) edge node {} (q_4);
\end{tikzpicture}
\end{center}

Abusing our previous notations, we denote the~divisor $\{x_i = 0\}\subset Y_1$ also by $F_i$,
and we let $C_{ij} = F_i \cap F_j$ for $i\ne j$ such that $F_i \cap F_j$ is a curve.
As above, we see that
$$
\overline{\mathrm{NE}(Y_1)} = \langle C_{01}, C_{15}, C_{05} \rangle.
$$
Moreover, intersections of divisors on $Y_1$ are described in the~following table:
\begin{center}
\renewcommand\arraystretch{1.6}
\begin{tabular}{|c|c|c|c|c|c|c|c|c|c|}
\hline
$F_0^3$&$F_0^2F_1$&$F_0^2F_5$&$F_0F_1^2$&$F_0F_1F_5$&$F_0F_5^2$&$F_1^3$&$F_1^2F_5$&$F_1F_5^2$&$F_5^3$\\
\hline
  $0$  &    $0$   &   $-1$   &    $0$   &    $1$    &   $-1$   &  $0$  &   $0$    &   $-1$   & $3$   \\
\hline
\end{tabular}
\end{center}
Using these intersections, we obtain the~following intersection table:
\begin{center}
\renewcommand\arraystretch{1.6}
\begin{tabular}{|c||c|c|c| }
\hline
$\bullet$         & $F_0$ & $F_1$ & $F_5$ \\
\hline
\hline
$C_{05}$ & $-1$  &  $1$  & $-1$   \\
\hline
$C_{15}$ &  $1$  &  $0$  & $-1$    \\
\hline
$C_{01}$ &  $0$  &  $0$  &  $1$    \\
\hline
\end{tabular}
\end{center}

The~proper transform on $Y_1$ of the~divisor $L(u)$ is nef for $u\in [1,2]$,
and it intersects the~curve $C_{15}$ trivially for $u=2$.
Note that $C_{15} \sim C_{25}$ on the~surface $F_5$, which implies that the~divisor $F_5$ is contained in the~negative part of the~Zariski decomposition of the~proper transform of the~divisor $L(u)$.
In fact, we have $N(u)=(u-2)F_5$ and
$$
P(u) = (3-u) (F_0+F_5)+3 F_1,
$$
where $N(u)$ is the~negative part of the~decomposition, and $P(u)$ is the~positive part.

\begin{lemma}
\label{lemma:A1-S-F0}
One has $A_{Y,\Delta}(F_0)=2$ and $S_{A}(F_0)=\frac{49}{26}$, so that
$$
\frac{A_{Y,\Delta}(F_0)}{S_{A}(F_0)}=\frac{52}{49}.
$$
\end{lemma}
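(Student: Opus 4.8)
The plan is to compute the two invariants separately and take the quotient.

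\textbf{The log discrepancy.} Since $\rho\colon Y_0\to Y$ is the blow up of the smooth threefold $Y$ at the point $P$, we have $K_{Y_0}=\rho^*(K_Y)+2F_0$, hence $A_Y(F_0)=3$. Because $B$ has an $\mathbb{A}_1$ singularity at $P$, the multiplicity $\operatorname{mult}_P B$ equals $2$, so $\operatorname{ord}_{F_0}\big(\rho^*(\Delta)\big)=\tfrac12\operatorname{mult}_P B=1$. Therefore
\[
A_{Y,\Delta}(F_0)=A_Y(F_0)-\operatorname{ord}_{F_0}\big(\rho^*\Delta\big)=3-1=2.
\]

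\textbf{The $S$-invariant.} Recall $A=-(K_Y+\Delta)\sim_{\mathbb{Q}}S^-+\tfrac32 H=D\big(\tfrac32\big)$, so $A^3=d\big((\tfrac32)^3-(\tfrac12)^3\big)=4\cdot\tfrac{13}{4}=13$. By definition
\[
S_A(F_0)=\frac{1}{A^3}\int_0^{3}\operatorname{vol}\big(\rho^*(A)-uF_0\big)\,du=\frac{1}{13}\int_0^{3}\operatorname{vol}\big(L(u)\big)\,du,
\]
and I would use the Zariski decompositions of $L(u)$ assembled above, together with the fact that $\operatorname{vol}$ is unchanged by the small $\mathbb{Q}$-factorial modification $Y_0\dashrightarrow Y_1$. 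On $u\in[0,1]$ the divisor $L(u)$ is nef on $Y_0$ and $\rho^*(A)\cdot F_0=0$, so $\operatorname{vol}(L(u))=(\rho^*A)^3-u^3F_0^3=13-u^3$ (using $F_0^3=1$ from the $Y_0$ table). On $u\in[1,2]$ the proper transform of $L(u)$ is nef on $Y_1$ with $N(u)=0$, so $\operatorname{vol}(L(u))$ is the self-intersection of $(3-u)F_0+3F_1+F_5$ computed with the intersection table of $Y_1$; this gives $\operatorname{vol}(L(u))=-3u^2+3u+12$. On $u\in[2,3]$ one has $N(u)=(u-2)F_5$ and $P(u)=(3-u)(F_0+F_5)+3F_1$, whence $\operatorname{vol}(L(u))=P(u)^3=-3(3-u)^3+9(3-u)^2$, again by the $Y_1$ table. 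Continuity at $u=1$ and $u=2$ (values $12$ and $6$) provides a sanity check.

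\textbf{Conclusion.} Integrating term by term,
\[
\int_0^1(13-u^3)\,du=\frac{51}{4},\qquad \int_1^2(-3u^2+3u+12)\,du=\frac{19}{2},\qquad \int_2^3\big(-3(3-u)^3+9(3-u)^2\big)\,du=\frac{9}{4},
\]
so $\int_0^{3}\operatorname{vol}(L(u))\,du=\frac{51}{4}+\frac{19}{2}+\frac{9}{4}=\frac{49}{2}$ and $S_A(F_0)=\frac{1}{13}\cdot\frac{49}{2}=\frac{49}{26}$. Hence $\dfrac{A_{Y,\Delta}(F_0)}{S_A(F_0)}=\dfrac{2}{49/26}=\dfrac{52}{49}$. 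The only genuinely delicate point is transporting the volume computation correctly through the flop $Y_0\dashrightarrow Y_1$: one must track which intersection table applies on which interval and verify that it is $F_5$ (and not another boundary divisor) that enters the negative part for $u>2$. All the data needed for this — the two intersection tables and the shape of the Zariski decomposition of $L(u)$ — were established before the statement, so the remaining work is the routine integration above.
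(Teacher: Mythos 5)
Your proposal is correct and follows essentially the same route as the paper: it uses the Zariski decompositions of $L(u)$ on $Y_0$ and $Y_1$ established before the statement, arrives at the same piecewise formula for $\operatorname{vol}(L(u))$ (your expression $-3(3-u)^3+9(3-u)^2$ equals the paper's $3u^3-18u^2+27u$), and integrates to get $S_A(F_0)=\tfrac{49}{26}$. The only addition is that you spell out the computation $A_{Y,\Delta}(F_0)=3-\tfrac12\operatorname{mult}_PB=2$, which the paper dismisses as obvious; this is a correct and harmless elaboration.
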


\begin{proof}
The equality $A_{Y,\Delta}(F_0)=2$ is obvious. Moreover, we have
$$
\mathrm{vol} (L(u))=\begin{cases}
-u^3+13 	   & u\in [0,1]\\
-3u^2+3u+12    & u\in [1,2]\\
3u^3-18u^2+27u & u\in [2,3].
\end{cases}
$$
Thus, we compute
$$
S_{A}(F_0) = \frac{1}{A^3} \int_0^3 \mathrm{vol} (L(u)) du = \frac{49}{26}
$$
as claimed.
\end{proof}

Now, we construct a common toric resolution $\widetilde{Y}$ for $Y_0$ and $Y_1$.
Such variety is easy to see from the~fans of $Y_0$ and $Y_1$, we want to add the~following ray:
$$
v_6 = (1,1,0) \in \langle v_1, v_2 \rangle \cap \langle v_0, v_5 \rangle,
$$
Set $\widetilde{Y}$ to be the~toric variety corresponding to $v_0,\dots,v_6$ with the~following cone structure:

\begin{center}
\begin{tikzpicture}[node distance=1cm,auto]
\node[state, inner sep=1pt,minimum size=0pt] (q_0) {$v_0$};
\node[state, inner sep=1pt,minimum size=0pt] (q_1) at (2,2) {$v_1$};
\node[state,inner sep=1pt,minimum size=0pt] (q_2) at (2,-2) {$v_2$};
\node[state,inner sep=1pt,minimum size=0pt] (q_3) at (5,2) {$v_3$};
\node[state,inner sep=1pt,minimum size=0pt] (q_4) at (-2,0) {$v_4$};
\node[state,inner sep=1pt,minimum size=0pt] (q_5) at (4,0) {$v_5$};
\node[state,inner sep=1pt,minimum size=0pt] (q_6) at (2,0) {$v_6$};

\path[-] (q_0) edge node[swap] {} (q_1);
\path[-] (q_0) edge node[swap] {} (q_2);
\path[-] (q_0) edge node[swap] {} (q_4);
\path[-] (q_0) edge node[swap] {} (q_6);
\path[-] (q_1) edge node[swap] {} (q_3);
\path[-] (q_1) edge node[swap] {} (q_4);
\path[-] (q_1) edge node[swap] {} (q_5);
\path[-] (q_1) edge node[swap] {} (q_6);
\path[-] (q_2) edge node[swap] {} (q_4);
\path[-] (q_2) edge node[swap] {} (q_5);
\path[-] (q_2) edge node[swap] {} (q_6);
\path[-] (q_3) edge node[swap] {} (q_5);
\path[-] (q_5) edge node[swap] {} (q_6);
\path[-, bend right = 50] (q_2) edge node {} (q_3);
\path[-, bend right = 50] (q_3) edge node {} (q_4);
\end{tikzpicture}
\end{center}

Let $\varphi_0 \colon \widetilde{Y} \to Y_0$ and $\varphi_1 \colon \widetilde{Y} \to Y_1$ be the~corresponding toric birational maps.
Then
\begin{itemize}
\item $\varphi_0$ is the~blow up of $Y_0$ along the~curve $C_{12}$,
\item $\varphi_1$ is the~blow up of $Y_1$ along the~curve $C_{05}$.
\end{itemize}
Set $\widetilde{F}_i=\{x_i = 0\}\subset \widetilde{Y}$.
Then $\widetilde{F}_6$ is the~exceptional divisor of $\varphi_0$ and $\varphi_1$.

The Zariski decomposition of the~divisor $\varphi_0^*(L(u))$ can be described as follows:
$$
\widetilde{P}(u)\sim_{\mathbb{R}}\begin{cases}
(3-u)\widetilde{F}_0+3\widetilde{F}_1+\widetilde{F}_5+3\widetilde{F}_6     & u\in[0,1],\\
(3-u)\widetilde{F}_0+3\widetilde{F}_1+\widetilde{F}_5+(4-u)\widetilde{F}_6 & u\in[1,2],\\
(3-u)(\widetilde{F}_0+\widetilde{F}_5)+3\widetilde{F}_1+(6-2u)\widetilde{F}_6  & u\in[2,3],
\end{cases}
$$
and
$$
\widetilde{N}(u)=
\begin{cases}
0                                & u\in[0,1],\\
(u-1) \widetilde{F}_6                   & u\in[1,2],\\
(u-2) \widetilde{F}_5+(2u-3) \widetilde{F}_6 & u\in[2,3],
\end{cases}
$$
where $\widetilde{P}(u)$ is the~positive part, and $\widetilde{N}(u)$ is the~negative part. Note that

Let $\sigma\colon\widetilde{F}_0\to F_0$ be the~morphism induced by $\phi_0$.
Then $\sigma$ is a blow up at one point. So, we have $\widetilde{F}_0 \cong \mathbb{F}_1$.
Let $\mathbf{e}$ be the~$\sigma$-exceptional curve,
and let $\mathbf{f}$ be a fiber of the~natural projection $\widetilde{F}_0\to \mathbb{P}^1$.
Then $\widetilde{F}_0\vert_{\widetilde{F}_0} \sim -\mathbf{e}-\mathbf{f}$, $\widetilde{F}_1\vert_{\widetilde{F}_0} \sim \mathbf{f}$,
$\widetilde{F}_5\vert_{\widetilde{F}_0}\sim 0$, $\widetilde{F}_6\vert_{\widetilde{F}_0}=\mathbf{e}$,
which gives
$$
\widetilde{P}(u)\big\vert_{\widetilde{F}_0} = \begin{cases}
u(\mathbf{f}+\mathbf{e})  & u\in[0,1],\\
u\mathbf{f}+\mathbf{e}    & u\in[1,2],\\
u\mathbf{f}+(3-u)\mathbf{e} & u\in[2,3],
\end{cases}
$$
and
$$
\widetilde{N}(u)\big\vert_{\widetilde{F}_0} = \begin{cases}
0       & u\in[0,1],\\
(u-1)\mathbf{e}  & u\in[1,2],\\
(2u-3)\mathbf{e} & u\in[2,3].
\end{cases}
$$

We are ready to apply \cite{AbbanZhuang,Book,Fujita2021}.
Set $B_{F_0}=\rho_*^{-1}(B)\vert_{F_0}$ and $\Delta_{F_0}=\frac{1}{2}B_{F_0}$.
Set
$$
\delta\big(F_0,\Delta_{F_0};V^{\widetilde{F}_0}_{\bullet,\bullet}\big)=
\inf_{E/\widetilde{F}_0}\frac{A_{F_0,\Delta_{F_0}}(E)}{S(W^{\widetilde{F}_0}_{\bullet,\bullet};E)}
$$
where the~infimum is taken over all prime divisors $E$ over $\widetilde{F}_0$,
and
$$
S(W^{\widetilde{F}_0}_{\bullet,\bullet};E)=\frac{3}{A^3}\int\limits_{0}^3\big(\widetilde{P}(u)\big\vert_{\widetilde{F}_0}\big)^2\mathrm{ord}_{E}\big(\widetilde{N}(u)\big\vert_{\widetilde{F}_0}\big)du+
\frac{3}{A^3}\int\limits_{0}^3\int\limits_{0}^\infty\mathrm{vol}\big(\widetilde{P}(u)\big\vert_{\widetilde{F}_0}-vE\big)dvdu.
$$
Let $\mathbf{F}$ be a prime divisor over $Y$ such that $P=C_Y(\mathbf{F})$. Recall that
$$
\beta_{Y,\Delta}(\mathbf{F})=A_{Y,\Delta}(\mathbf{F})-S_A(\mathbf{F})=A_{Y,\Delta}(\mathbf{F})-\frac{1}{A^3}\int_{0}^\infty\mathrm{vol}\big(A-u\mathbf{F}\big)du.
$$
It follows from \cite[Theorem~4.8]{Fujita2021} and \cite[Corollary~4.9]{Fujita2021} that
\begin{equation}
\label{equation:A1-non-strict}
\frac{A_{Y,\Delta}(\mathbf{F})}{S_A(\mathbf{F})}\geqslant\delta_P(Y,\Delta)\geqslant
\min\left\{\frac{A_{Y,\Delta}(F_0)}{S_{A}(F_0)},
\delta\big(F_0,\Delta_{F_0};V^{\widetilde{F}_0}_{\bullet,\bullet}\big)\right\}.
\end{equation}

Suppose $\beta_{Y,\Delta}(\mathbf{F})\leqslant 0$.
Then it follows from \eqref{equation:A1-non-strict} and Lemma~\ref{lemma:A1-S-F0} that
there is a~prime divisor $E$ over $\widetilde{F}_0$ such that
\begin{equation}
\label{equation:A1-strict}
S(W^{\widetilde{F}_0}_{\bullet,\bullet};E)\geqslant A_{F_0,\Delta_{F_0}}(E).
\end{equation}
Let $Z$ be the~center of the~divisor $E$ on the~surface $\widetilde{F}_0$. Note that~$\sigma(\mathbf{e})\not\in B_{F_0}$.

\begin{lemma}
\label{lemma:A1-O-in-e}
One has $Z\cap \mathbf{e}=\varnothing$.
\end{lemma}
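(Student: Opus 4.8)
The plan is to bound the local invariants of $(F_0,\Delta_{F_0})$ along $\mathbf e$ strictly above $1$ and derive a contradiction with \eqref{equation:A1-strict}. Suppose $Z\cap\mathbf e\ne\varnothing$; since $Z$ is irreducible, either $Z=\mathbf e$ or $Z$ contains a point $Q\in\mathbf e$. First I record the two log discrepancies along $\mathbf e$. Recall $\sigma\colon\widetilde F_0\to F_0\cong\mathbb P^2$ is the blow up of the single point $\sigma(\mathbf e)$, and $\sigma(\mathbf e)\notin B_{F_0}=\operatorname{Supp}(\Delta_{F_0})$, so $A_{F_0,\Delta_{F_0}}(\mathbf e)=2$; moreover $\widetilde F_0$ is smooth and the proper transform of $\Delta_{F_0}$ is disjoint from $\mathbf e$, so the different of $(\widetilde F_0,\frac12\rho_*^{-1}(B)|_{\widetilde F_0})$ along $\mathbf e$ vanishes and $A_{\mathbf e,\Delta_{\mathbf e}}(Q)=1$ for every $Q\in\mathbf e$.

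Next I would compute the two $S$-invariants using the Zariski decompositions $\widetilde P(u)\vert_{\widetilde F_0}$, $\widetilde N(u)\vert_{\widetilde F_0}$ already recorded above. Write $\widetilde P(u)\vert_{\widetilde F_0}=u\mathbf f+q(u)\mathbf e$ with $q(u)=u,\,1,\,3-u$ on $[0,1],[1,2],[2,3]$, and use $\mathbf f^2=0$, $\mathbf e\cdot\mathbf f=1$, $\mathbf e^2=-1$ on $\widetilde F_0\cong\mathbb F_1$. Since $q(u)\leqslant u$ throughout, one checks that $\widetilde P(u)\vert_{\widetilde F_0}-v\mathbf e$ is nef for $v\in[0,q(u)]$ and not pseudo-effective for $v>q(u)$; in particular its negative part vanishes on the whole range, which is exactly what makes the integrals clean and forces the $F_Q$-term below to vanish. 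A direct integration against the formula for $S(W^{\widetilde F_0}_{\bullet,\bullet};\mathbf e)$ displayed above gives $S(W^{\widetilde F_0}_{\bullet,\bullet};\mathbf e)=\tfrac{14}{13}+\tfrac{6}{13}=\tfrac{20}{13}$, while for $Q\in\mathbf e$ the vanishing of the negative parts reduces the refined invariant to $S(W^{\widetilde F_0,\mathbf e}_{\bullet,\bullet,\bullet};Q)=\tfrac{3}{13}\int_0^3\int_0^{q(u)}\big((\widetilde P(u)\vert_{\widetilde F_0}-v\mathbf e)\cdot\mathbf e\big)^2\,dv\,du=\tfrac{3}{13}\int_0^3\int_0^{q(u)}(u-q(u)+v)^2\,dv\,du=\tfrac{10}{13}$. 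Hence
\[
\frac{A_{F_0,\Delta_{F_0}}(\mathbf e)}{S(W^{\widetilde F_0}_{\bullet,\bullet};\mathbf e)}=\frac{13}{10}>1,\qquad\frac{A_{\mathbf e,\Delta_{\mathbf e}}(Q)}{S(W^{\widetilde F_0,\mathbf e}_{\bullet,\bullet,\bullet};Q)}=\frac{13}{10}>1\quad\text{for every }Q\in\mathbf e.
\]

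Finally, if $Z=\mathbf e$ then \eqref{equation:A1-strict} reads $\tfrac{20}{13}=S(W^{\widetilde F_0}_{\bullet,\bullet};\mathbf e)\geqslant A_{F_0,\Delta_{F_0}}(\mathbf e)=2$, which is absurd. Otherwise $Z$ contains a point $Q\in\mathbf e$, so $E$ is a prime divisor over $\widetilde F_0$ whose center contains $Q$, and the Abban--Zhuang inequality \cite{AbbanZhuang} applied to the flag $\widetilde F_0\supset\mathbf e\supset Q$ gives
\[
\frac{A_{F_0,\Delta_{F_0}}(E)}{S(W^{\widetilde F_0}_{\bullet,\bullet};E)}\geqslant\min\left\{\frac{A_{F_0,\Delta_{F_0}}(\mathbf e)}{S(W^{\widetilde F_0}_{\bullet,\bullet};\mathbf e)},\ \frac{A_{\mathbf e,\Delta_{\mathbf e}}(Q)}{S(W^{\widetilde F_0,\mathbf e}_{\bullet,\bullet,\bullet};Q)}\right\}=\frac{13}{10}>1,
\]
contradicting \eqref{equation:A1-strict}. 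Therefore $Z\cap\mathbf e=\varnothing$. The main obstacle is the bookkeeping in the two $S$-computations — one has to pin down the pseudo-effective threshold of $\widetilde P(u)\vert_{\widetilde F_0}-v\mathbf e$ on $\mathbb F_1$ and confirm that no negative part appears in the relevant range; everything else is a routine evaluation of polynomial integrals.
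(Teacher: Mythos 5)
Your proof is correct and follows essentially the same route as the paper's: the same flag $\widetilde F_0\supset\mathbf e\ni Q$, the same Zariski decompositions and vanishing of the negative parts, the same values $S(W^{\widetilde F_0}_{\bullet,\bullet};\mathbf e)=\tfrac{20}{13}$ and $S(W^{\widetilde F_0,\mathbf e}_{\bullet,\bullet,\bullet};Q)=\tfrac{10}{13}$, and the same application of the Abban--Zhuang/Fujita inequality to contradict \eqref{equation:A1-strict}. Note that the paper prints $S(W^{\widetilde F_0,\mathbf e}_{\bullet,\bullet,\bullet};O)=\tfrac{20}{13}$, which is evidently a misprint for your (correct) value $\tfrac{10}{13}$, since only a value below $1$ produces the contradiction asserted there.
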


\begin{proof}
Note that $A_{F_0,\Delta_{F_0}}(\mathbf{e})=2$.
Let us compute $S(W^{\widetilde{F}_0}_{\bullet,\bullet};\mathbf{e})$.
For  $u\in[0,3]$, let
$$
t(u)=\sup\Big\{v\in \mathbb R_{\geqslant 0} \ \big|\ \text{$\widetilde{P}(u)\vert_{\widetilde{F}_0}-v\mathbf{e}$ is pseudoeffective}\Big\}.
$$
For every $v\in[0,t(u)]$, let us denote by $P(u,v)$ and $N(u,v)$ the~positive and the~negative parts of the~Zariski decompositions of
the~divisor $\widetilde{P}(u)\vert_{\widetilde{F}_0}-v\mathbf{e}$, respectively.
Then
$$
S(W^{\widetilde{F}_0}_{\bullet,\bullet};\mathbf{e})=
\frac{3}{A^3}\int\limits_{0}^3\big(P(u,0)\big)^2\mathrm{ord}_{\mathbf{e}}\big(\widetilde{N}(u)\big\vert_{\widetilde{F}_0}\big)du+\frac{3}{A^3}\int\limits_{0}^3\int\limits_{0}^{t(u)}\big(P(u,v)\big)^2dvdu.
$$
Observe that
$$
\mathrm{ord}_{\mathbf{e}}\big(\widetilde{N}(u)\big\vert_{\widetilde{F}_0}\big)=\begin{cases}
0	      & u\in[0,1],\\
u-1 	  & u\in[1,2],\\
2u-3      & u\in[2,3].
\end{cases}
$$
Moreover, we have
$$
t(u) = \begin{cases}
u & u\in[0,1],\\
1 & u\in[1,2],\\
3-u & u\in[2,3].
\end{cases}
$$
Furthermore, we have $N(u,v)=0$ for every $u\in[0,3]$ and $v\in[0,t(u)]$. Finally, we have
$$
P(u,v) = \begin{cases}
uf+(u-v) e & u\in[0,1], v\in[0,u],\\
uf+(1-v) e & u\in[1,2], v\in[0,1],\\
uf+(3-u-v) e & u\in[2,3], v\in[0,3-u],
\end{cases}
$$
which gives
$$
\big(P(u,v)\big)^2= \begin{cases}
u^2 - v^2           & u\in[0,1], v\in[0,u],\\
u^2 - (1-v-u)^2     & u\in[1,2], v\in[0,1],\\
u^2 - (3-2u-v)^2    & u\in[2,3], v\in[0,3-u].
\end{cases}
$$
Integrating, we get $S(W^{\widetilde{F}_0}_{\bullet,\bullet};\mathbf{e})=\frac{20}{13}<2=A_{F_0,\Delta_{F_0}}(\mathbf{e})$,
so that $Z\ne\mathbf{e}$ by \eqref{equation:A1-strict}.

Suppose that $Z\cap \mathbf{e}\ne\varnothing$. Let $O$ be a point of the~intersection $Z\cap \mathbf{e}$.
Then it follows from \cite[Theorem~4.17]{Fujita2021} and \cite[Corollary~4.18]{Fujita2021} that
$$
\frac{A_{F_0,\Delta_{F_0}}(E)}{S(W^{\widetilde{F}_0}_{\bullet,\bullet};E)}
\geqslant
\min\left\{\frac{2}{S(W^{\widetilde{F}_0}_{\bullet,\bullet};\mathbf{e})},
\frac{1}{S(W^{\widetilde{F}_0,\mathbf{e}}_{\bullet,\bullet,\bullet,};O)}\right\}=
\min\left\{\frac{13}{10},\frac{1}{S(W^{\widetilde{F}_0,\mathbf{e}}_{\bullet,\bullet,\bullet};O)}\right\},
$$
where
$$
S\big(W^{\widetilde{F}_0,\mathbf{e}}_{\bullet,\bullet,\bullet};O\big)=\frac{3}{A^3}\int\limits_0^3\int\limits_0^{t(u)}\big(P(u,v)\cdot\mathbf{e}\big)^2dvdu.
$$
Integrating, we get $S(W^{\widetilde{F}_0,\mathbf{e}}_{\bullet,\bullet,\bullet};O)=\frac{20}{13}$, which contradicts \eqref{equation:A1-strict}.
\end{proof}

Thus, we see that $Z$ is disjoint from $\mathbf{e}$.
In particular, we see that
$$
Z\cap\mathrm{Supp}\big(\widetilde{N}(u)\big\vert_{\widetilde{F}_0}\big)=\varnothing
$$
for every $u\in[0,3]$. This will simplify some formulas in the~following.

Let $B_{\widetilde{F}_0}$ be the~strict transform on $\widetilde{F}_0$ of the~curve $B_{F_0}$.
Then $B_{\widetilde{F}_0}$ is a smooth irreducible curve in $|2(\mathbf{e}+\mathbf{f})|$.
Set~$\Delta_{\widetilde{F}_0}=\frac{1}{2}B_{\widetilde{F}_0}$.
Let $O$ be a point in $Z$. We may assume that $O\in\mathbf{f}$.
Then there are three cases to consider:
\begin{itemize}
\item[$(\mathrm{1})$] $O\not\in B_{\widetilde{F}_0}$,
\item[$(\mathrm{2})$] $O\in B_{\widetilde{F}_0}\cap\mathbf{f}$, and $\mathbf{f}$  intersects $B_{\widetilde{F}_0}$ transversely at the~point $O$,
\item[$(\mathrm{3})$] $O=B_{\widetilde{F}_0}\cap\mathbf{f}$, and $\mathbf{f}$ is tangent to $B_{\widetilde{F}_0}$ at  the~point $O$.
\end{itemize}
Let $\theta\colon\widehat{F}_0\rightarrow \widetilde{F}_0$ be a plt blow up  of the~point $O$ defined as follows:
\begin{itemize}
\item the~map $\theta$ is an ordinary blow up in the~case when $O\not\in B_{\widetilde{F}_0}$, or when $O\in B_{\widetilde{F}_0}\cap\mathbf{f}$, and the~fiber $\mathbf{f}$  intersects the~curve $B_{\widetilde{F}_0}$ transversely at the~point $O$,
\item the~map $\theta$ is a weighted blow up at the~point $O=B_{\widetilde{F}_0}\cap\mathbf{f}$ with weights $(1,2)$ such that the~proper transforms on $\widehat{F}_0$ of  the~curves $B_{\widetilde{F}_0}$ and $\mathbf{f}$ are disjoint in the case when the~fiber $\mathbf{f}$ is tangent to the~curve $B_{\widetilde{F}_0}$ at  the~point $O$.
\end{itemize}
Let $C$ be the~$\theta$-exceptional curve.
We have $C\cong\mathbb{P}^1$.
Let $B_{\widehat{F}_0}$ be the~proper transform on the~surface $\widehat{F}_0$ of the~curve $B_{\widehat{F}_0}$.
Set $\Delta_{\widehat{F}_0}=\frac{1}{2}B_{\widehat{F}_0}$.
Let $\Delta_C$ be the~effective $\mathbb{Q}$-divisor on the~curve $C$ known as the~different, which can be defined via the~adjunction formula:
$$
K_C+\Delta_C=\big(K_{\widehat{F}_0}+\Delta_{\widehat{F}_0}\big)\big\vert_{C}.
$$
If $\theta$ is a~usual blow up, then $\Delta_C=\Delta_{\widehat{F}_0}\vert_{C}$.
Similarly, if $\theta$ is a weighted blow up, then
$$
\Delta_C=\Delta_{\widehat{F}_0}\big\vert_{C}+\frac{1}{2}\mathbf{o},
$$
where $\mathbf{o}$ is the~singular point of the~surface $\widehat{F}_0$ contained in $C$ --- $\mathbf{o}$ is an ordinary double point,
which is not contained in the~proper transforms of the~curves $B_{\widetilde{F}_0}$ and $\mathbf{f}$.

Now, for $u\in[0,3]$, we let
$$
\hat{t}(u)=\sup\Big\{v\in \mathbb R_{\geqslant 0} \ \big|\ \text{$\theta^*\big(\widetilde{P}(u)\vert_{\widetilde{F}_0}\big)-vC$ is pseudoeffective}\Big\}.
$$
For every $v\in[0,\hat{t}(u)]$, let us denote by $\widehat{P}(u,v)$ and $\widehat{N}(u,v)$ the~positive and the~negative parts of the~Zariski decompositions of
the~divisor $\theta^*(\widetilde{P}(u)\vert_{\widetilde{F}_0})-vC$, respectively.
Then
\begin{equation}
\label{equation:A1-blow-up}
1\geqslant\frac{A_{F_0,\Delta_{F_0}}(E)}{S(W^{\widetilde{F}_0}_{\bullet,\bullet};E)}
\geqslant
\min\left\{\frac{A_{F_0,\Delta_{F_0}}(C)}{S(W^{\widetilde{F}_0}_{\bullet,\bullet};C)},
\inf_{Q\in C}\frac{A_{C,\Delta_{C}}(Q)}{S\big(W^{\widehat{F}_0,C}_{\bullet,\bullet,\bullet};Q\big)}\right\}
\end{equation}
by \eqref{equation:A1-strict} and \cite[Corollary~4.18]{Fujita2021}, where the~infimum is taken by all points $Q\in C$, and
$$
S\big(W^{\widehat{F}_0,C}_{\bullet,\bullet,\bullet,};Q\big)=\frac{3}{A^3}\int\limits_0^3\int\limits_0^{\hat{t}(u)}\big(\widehat{P}(u,v)\cdot C\big)^2dvdu+F_Q\big(W^{\widehat{F}_0,C}_{\bullet,\bullet,\bullet}\big)
$$
for
$$
F_Q\big(W^{\widehat{F}_0,C}_{\bullet,\bullet,\bullet}\big)=\frac{6}{A^3}\int\limits_0^3\int\limits_0^{\hat{t}(u)}\big(\widehat{P}(u,v)\cdot C\big)\mathrm{ord}_{Q}\big(\widehat{N}(u,v)\big\vert_{C}\big)dvdu.
$$
Denote by $\widehat{\mathbf{e}}$ and $\widehat{\mathbf{f}}$ the~proper transforms of the curves $\mathbf{e}$ and $\mathbf{f}$, respectively.

\begin{lemma}
\label{lemma:A1-f-B-transversal}
Suppose that $\theta$ is an ordinary blow up. Let $Q$ be a point in $C$.
Then
$$
\frac{A_{F_0,\Delta_{F_0}}(C)}{S(W^{\widetilde{F}_0}_{\bullet,\bullet};C)}\geqslant\frac{39}{29}
$$
and
$$
\frac{A_{C,\Delta_{C}}(Q)}{S\big(W^{\widehat{F}_0,C}_{\bullet,\bullet,\bullet};Q\big)}\geqslant \frac{13}{10}.
$$
\end{lemma}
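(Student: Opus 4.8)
The plan is to make the surface $\widehat{F}_0$ and the curve $C$ completely explicit and then feed the resulting Zariski decompositions into the estimate \eqref{equation:A1-blow-up}. Since $\theta$ is an ordinary blow up of a point $O\in\mathbf{f}$ with $O\notin\mathbf{e}$ (by Lemma~\ref{lemma:A1-O-in-e} we know $Z$, hence $O$, is disjoint from $\mathbf{e}$), the surface $\widehat{F}_0$ is $\widetilde{F}_0=\mathbb{F}_1$ blown up at one point lying off the $(-1)$-section, i.e.\ a del Pezzo surface of degree $7$; its only negative curves are the three $(-1)$-curves $\widehat{\mathbf{e}}$, $\widehat{\mathbf{f}}$, $C$, with $\widehat{\mathbf{e}}\cdot C=0$ and $\widehat{\mathbf{f}}\cdot C=\widehat{\mathbf{f}}\cdot\widehat{\mathbf{e}}=1$. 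Because $O\in\mathbf{f}$ and $O\notin\mathbf{e}$, we have $\theta^{*}\mathbf{f}=\widehat{\mathbf{f}}+C$ and $\theta^{*}\mathbf{e}=\widehat{\mathbf{e}}$, so pulling back the three pieces of $\widetilde{P}(u)\vert_{\widetilde{F}_0}$ recorded above gives explicit classes; moreover, since $Z$ is disjoint from $\mathrm{Supp}(\widetilde{N}(u)\vert_{\widetilde{F}_0})$ for every $u$, the first summand of $S(W^{\widetilde{F}_0}_{\bullet,\bullet};C)$ vanishes, and likewise the different $\widetilde{N}$-contributions simplify.

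Next I would compute, for $u$ in each of the three subintervals $[0,1]$, $[1,2]$, $[2,3]$ and for $v\geqslant 0$, the Zariski decomposition of $\theta^{*}(\widetilde{P}(u)\vert_{\widetilde{F}_0})-vC$. Using the intersection numbers on $\widehat{F}_0$ one checks that the pseudoeffective threshold is $\hat{t}(u)=u$ throughout, that the divisor is nef up to an intermediate value of $v$, and that past that value the only curve entering the negative part is $\widehat{\mathbf{f}}$ (the coefficient of $\widehat{\mathbf{e}}$ in the positive part stays nonnegative, so $\widehat{\mathbf{e}}$ never contributes). Carrying out the two double integrals, and using $A^{3}=(S^{-}+\tfrac32H)^{3}=13$, then yields $S(W^{\widetilde{F}_0}_{\bullet,\bullet};C)=\frac{29}{26}$, the main part $\frac{3}{A^{3}}\int\!\!\int(\widehat{P}(u,v)\cdot C)^{2}=\frac{9}{26}$, and $F_Q(W^{\widehat{F}_0,C}_{\bullet,\bullet,\bullet})=0$ for all $Q$ except $Q^{*}=C\cap\widehat{\mathbf{f}}$, where it equals $\frac{11}{26}$; hence $S(W^{\widehat{F}_0,C}_{\bullet,\bullet,\bullet};Q)=\frac{9}{26}$ for general $Q$ and $S(W^{\widehat{F}_0,C}_{\bullet,\bullet,\bullet};Q^{*})=\frac{10}{13}$.

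It remains to pin down the log discrepancies. From $\sigma(\mathbf{e})\notin B_{F_0}$ one gets $\sigma^{*}\Delta_{F_0}=\Delta_{\widetilde{F}_0}$, and combined with $K_{\widehat{F}_0}=\theta^{*}K_{\widetilde{F}_0}+C$ this gives $A_{F_0,\Delta_{F_0}}(C)=2-\frac12\mathrm{mult}_{O}B_{\widetilde{F}_0}$, which is $2$ in case $(\mathrm{1})$ and $\frac32$ in case $(\mathrm{2})$ (where $B_{\widetilde{F}_0}$ is smooth at $O$). In either case $A_{F_0,\Delta_{F_0}}(C)\geqslant\frac32$, so $A_{F_0,\Delta_{F_0}}(C)/S(W^{\widetilde{F}_0}_{\bullet,\bullet};C)\geqslant\frac{3/2}{29/26}=\frac{39}{29}$, which is the first inequality. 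For the second, $\Delta_{C}=\Delta_{\widehat{F}_0}\vert_{C}$ is either $0$ (case $(\mathrm{1})$) or $\frac12$ times a single point distinct from $Q^{*}$ (case $(\mathrm{2})$: since $\mathbf{f}$ is transverse to $B_{\widetilde{F}_0}$ at $O$, the curves $\widehat{\mathbf{f}}$ and $B_{\widehat{F}_0}$ meet $C$ at different points). Thus $A_{C,\Delta_C}(Q^{*})=1$, giving ratio $\frac{1}{10/13}=\frac{13}{10}$, while at every other $Q$ either $A_{C,\Delta_C}(Q)=1$ with $S=\frac{9}{26}$, ratio $\frac{26}{9}$, or $A_{C,\Delta_C}(Q)=\frac12$ with $S=\frac{9}{26}$, ratio $\frac{13}{9}$; the minimum over $Q\in C$ is $\frac{13}{10}$, which is the second inequality.

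The main obstacle I anticipate is purely the bookkeeping in the middle step: keeping the Zariski decomposition of $\theta^{*}(\widetilde{P}(u)\vert_{\widetilde{F}_0})-vC$ correct across all the walls in the $(u,v)$-region — in particular verifying that no negative curve other than $\widehat{\mathbf{f}}$ ever appears, that the coefficient of $\widehat{\mathbf{e}}$ in the positive part never becomes negative, and that $\hat{t}(u)=u$ holds uniformly — because an error there would propagate into both the $S(W^{\widetilde{F}_0}_{\bullet,\bullet};C)$ and the $S(W^{\widehat{F}_0,C}_{\bullet,\bullet,\bullet};Q)$ computations. Everything else reduces to a short list of one-variable integrals.
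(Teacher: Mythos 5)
Your proposal is correct and follows essentially the same route as the paper: pull back the piecewise Zariski decomposition to $\widehat{F}_0$, verify $\hat{t}(u)=u$ with only $\widehat{\mathbf{f}}$ in the negative part, integrate to get $S(W^{\widetilde{F}_0}_{\bullet,\bullet};C)=\tfrac{29}{26}$, $S(W^{\widehat{F}_0,C}_{\bullet,\bullet,\bullet};Q)=\tfrac{9}{26}+F_Q$ with $F_{Q^*}=\tfrac{11}{26}$ only at $Q^*=C\cap\widehat{\mathbf{f}}$, and then split into the cases $O\notin B_{\widetilde{F}_0}$ and $O\in B_{\widetilde{F}_0}$ (transverse) for the log discrepancies, noting $Q^*\notin B_{\widehat{F}_0}$. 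All your intermediate values and case ratios ($\tfrac{39}{29}$ or better, and $\tfrac{13}{10}$, $\tfrac{13}{9}$, $\tfrac{26}{9}$) agree with the paper's computation.
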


\begin{proof}
One has
$$
\theta^*\big(\widetilde{P}(u)\vert_{\widetilde{F}_0}\big)\sim_{\mathbb{R}}
\begin{cases}
u(\widehat{\mathbf{f}}+\widehat{\mathbf{e}}+C)     & u\in[0,1],\\
u(\widehat{\mathbf{f}}+C)+\widehat{\mathbf{e}}     & u\in[1,2],\\
u(\widehat{\mathbf{f}}+C)+(3-u)\widehat{\mathbf{e}} & u\in[2,3].
\end{cases}
$$
This easily implies that $\hat{t}(u)=u$ and
$$
\widehat{N}(u,v) = \begin{cases}
0 & u\in[0,1], v\in[0,u],\\
0 & u\in[1,2], v\in[0,1],\\
(v-1)\widehat{\mathbf{f}} & u\in[1,2], v\in[1,u],\\
0  & u\in[2,3], v\in[0,3-u],\\
(v+u-3) \widehat{\mathbf{f}} & u\in[2,3], v\in[3-u,u],
\end{cases}
$$
so that
$$
\widehat{P}(u,v) = \begin{cases}
u(\widehat{\mathbf{f}}+\widehat{\mathbf{e}})+(u-v)C              & u\in[0,1], v\in[0,u],\\
u \widehat{\mathbf{f}}+(u-v)C+\widehat{\mathbf{e}}             & u\in[1,2], v\in[0,1],\\
(u-v+1) \widehat{\mathbf{f}}+(u-v)C+\widehat{\mathbf{e}}       & u\in[1,2], v\in[1,u],\\
u \widehat{\mathbf{f}}+(u-v)C+\widehat{\mathbf{e}}             & u\in[2,3], v\in[0,3-u],\\
(3-v) \widehat{\mathbf{f}}+(u-v)C+(3-u)\widehat{\mathbf{e}}   & u\in[2,3], v\in[3-u,u],
\end{cases}
$$
which gives
$$
\big(\widehat{P}(u,v)\big)^2 = \begin{cases}
u^2 - v^2               & u\in[0,1], v\in[0,u],\\
-v^2+2u-1               & u\in[1,2], v\in[0,1],\\
2u-2v                   & u\in[1,2], v\in[1,u],\\
-3u^2-v^2+12u-9         & u\in[2,3], v\in[0,3-u],\\
-2u^2+2uv+6u-6v         & u\in[2,3], v\in[3-u,u].
\end{cases}
$$
Thus, integrating, we get $S(W^{\widetilde{F}_0}_{\bullet,\bullet};C)=\frac{29}{26}$. Note that
$$
A_{F_0,\Delta_{F_0}}(C)= \begin{cases}
\frac{3}{2}   &   O \in B_{\widetilde{F}_0},\\
2             &   O \not\in B_{\widetilde{F}_0}.
\end{cases}
$$
This gives the first required inequality. Similarly, we compute
$$
S\big(W^{\widehat{F}_0,C}_{\bullet,\bullet,\bullet};Q\big) = \frac{9}{26}+F_Q\big(W^{\widehat{F}_0,C}_{\bullet,\bullet,\bullet}\big)
$$
where
$$
F_Q\big(W^{\widehat{F}_0,C}_{\bullet,\bullet,\bullet}\big)=\begin{cases}
    \frac{11}{26}   & Q = \widehat{\mathbf{f}}\cap C,\\
    0               & \text{otherwise}.
\end{cases}
$$
Observe that
$$
A_{C,\Delta_C}(Q) = \begin{cases}
\frac{1}{2} & Q\in B_{\widehat{F}_0},\\
1           & Q \not\in B_{\widehat{F}_0}.
\end{cases}
$$
Moreover, if $O\in B_{\widetilde{F}_0}\cap\mathbf{f}$, the intersection $C\cap \widehat{\mathbf{f}}$ consists of a single point,
which is not contained in $B_{\widehat{F}_0}$.
Thus, we have
$$
\frac{A_{C,\Delta_{C}}(Q)}{S\big(W^{\widehat{F}_0,C}_{\bullet,\bullet,\bullet};Q\big)}=\begin{cases}
        \frac{13}{10}   & Q=C\cap\widehat{\mathbf{f}},\\
        \frac{13}{9}    & Q=C\cap B_{\widehat{F}_0},\\
        \frac{26}{9}    & \text{otherwise}.
\end{cases}
$$
which implies the second required inequality.
\end{proof}

Thus, it follows from \eqref{equation:A1-blow-up} and Lemma~\ref{lemma:A1-f-B-transversal}
that $O=B_{\widetilde{F}_0}\cap\mathbf{f}$, so $\mathbf{f}$ and $B_{\widetilde{F}_0}$ are tangent at the~point $O$.
Then $\theta$ is a weighted blow up with weights $(1,2)$. We have
$$
\theta^*\big(\widetilde{P}(u)\big\vert_{\widetilde{F}_0}\big)\sim_{\mathbb{R}}\begin{cases}
u (\widehat{\mathbf{f}}+\widehat{\mathbf{e}}+2C)     & u\in[0,1],\\
u (\widehat{\mathbf{f}}+2C)+\widehat{\mathbf{e}}       & u\in[1,2],\\
u (\widehat{\mathbf{f}}+2C)+(3-u) \widehat{\mathbf{e}} & u\in[2,3].
\end{cases}
$$
This gives $\hat{t}(u)=2u$. Moreover, we have
$$
\widehat{N}(u,v) = \begin{cases}
       0                         & u\in[0,1], v\in[0,u],\\
(v-u)(\widehat{\mathbf{f}}+\widehat{\mathbf{e}})           & u\in[0,1], v\in[u,2u],\\
        0                        & u\in[1,2], v\in[0,1],\\
\frac{v-1}{2} \widehat{\mathbf{f}}            & u\in[1,2], v\in[1,2u-1],\\
(v-u) \widehat{\mathbf{f}}+(v-2u+1)\widehat{\mathbf{e}}    & u\in[1,2], v\in[1,2u-1],\\
        0                        & u\in[2,3], v\in[0,3-u],\\
\frac{v+u-3}{2}\widehat{\mathbf{f}}           & u\in[2,3], v\in[0,3u-3],\\
(v-u)\widehat{\mathbf{f}}+(v+3-3u)\widehat{\mathbf{e}}   & u\in[2,3], v\in[3u-3,2u],
\end{cases}
$$
and
$$
\widehat{P}(u,v) = \begin{cases}
(2u-v)C +u \widehat{\mathbf{f}}+u\widehat{\mathbf{e}}                    & u\in[0,1], v\in[0,u],\\
(2u-v)(C+\widehat{\mathbf{f}}+\widehat{\mathbf{e}})                        & u\in[0,1], v\in[u,2u],\\
(2u-v)C+u\widehat{\mathbf{f}}+\widehat{\mathbf{e}}                     & u\in[1,2], v\in[0,1],\\
(2u-v)C+\frac{2u-v+1}{2}\widehat{\mathbf{f}}+\widehat{\mathbf{e}}            & u\in[1,2], v\in[1,2u-1],\\
(2u-v)(C+\widehat{\mathbf{f}}+\widehat{\mathbf{e}})                        & u\in[1,2], v\in[1,2u-1],\\
        (2u-v)C+u\widehat{\mathbf{f}}+(3-u)\widehat{\mathbf{e}}        & u\in[2,3], v\in[0,3-u],\\
(2u-v)C+\frac{u-v+3}{2}\widehat{\mathbf{f}}+(3-u)\widehat{\mathbf{e}}  & u\in[2,3], v\in[0,3u-3],\\
(2u-v)(C+\widehat{\mathbf{f}}+\widehat{\mathbf{e}})                        & u\in[2,3], v\in[3u-3,2u].
\end{cases}
$$
Then
$$
\big(\widehat{P}(u,v)\big)^2 = \begin{cases}
u^2-\frac{v^2}{2}          & u\in[0,1], v\in[0,u],\\
\frac{(2u-v)^2}{2}         & u\in[0,1], v\in[u,2u],\\
2u-1-\frac{v^2}{2}         & u\in[1,2], v\in[0,1],\\
2u-v-\frac{1}{2}           & u\in[1,2], v\in[1,2u-1],\\
\frac{(2u-v)^2}{2}         & u\in[1,2], v\in[1,2u-1],\\
12u-9-3u^2-\frac{v^2}{2}   & u\in[2,3], v\in[0,3-u],\\
\frac{(5u-2v-3)(u-3)}{2}   & u\in[2,3], v\in[0,3u-3],\\
\frac{(2u-v)^2}{2}         & u\in[2,3], v\in[3u-3,2u].
\end{cases}
$$
Now, integrating, we get $S(W^{\widetilde{F}_0}_{\bullet,\bullet};C)=\frac{49}{26}$.
Thus, since $A_{F_0,\Delta_{F_0}}(C)=2$, we get
$$
\frac{A_{F_0,\Delta_{F_0}}(C)}{S(W^{\widetilde{F}_0}_{\bullet,\bullet};C)}=\frac{52}{49},
$$
so it follows from \eqref{equation:A1-blow-up} that there is a point $Q\in C$ such that $S(W^{\widehat{F}_0,C}_{\bullet,\bullet,\bullet};Q)\geqslant A_{C,\Delta_{C}}(Q)$.
On the other hand, we compute
$$
S\big(W^{\widehat{F}_0,C}_{\bullet,\bullet,\bullet};Q\big) = \frac{9}{52}+F_Q\big(W^{\widehat{F}_0,C}_{\bullet,\bullet,\bullet}\big)
$$
where
$$
F_Q\big(W^{\widehat{F}_0,C}_{\bullet,\bullet,\bullet}\big)=\begin{cases}
    \frac{3}{4}   & Q=C\cap\widehat{\mathbf{f}},\\
    0             & \text{otherwise}.
\end{cases}
$$
Recall that $B_{\widehat{F}_0}$ and $\widehat{\mathbf{f}}$ are disjoint and do not contain the singular point of the~surface $\widehat{F}_0$.
Moreover, we have
$$
A_{C,\Delta_C}(Q) = \begin{cases}
\frac{1}{2} & Q=C\cap B_{\widehat{F}_0},\\
\frac{1}{2} & Q=\mathrm{Sing}(\widehat{F}_0),\\
1           & \text{otherwise}.
\end{cases}
$$
Thus, summarizing, we get
$$
\frac{A_{C,\Delta_{C}}(Q)}{S\big(W^{\widehat{F}_0,C}_{\bullet,\bullet,\bullet};Q\big)}=
\begin{cases}
        \frac{13}{12}   & Q = C\cap\widehat{\mathbf{f}},\\
        \frac{26}{9}    & Q=C\cap B_{\widehat{F}_0},\\
        \frac{26}{9}    & Q=\mathrm{Sing}(\widehat{F}_0),\\
        \frac{52}{9}    & \text{otherwise}.
\end{cases}
$$
In particular, we see that $S(W^{\widehat{F}_0,C}_{\bullet,\bullet,\bullet};Q)<A_{C,\Delta_{C}}(Q)$ in every possible case.
The obtained contradiction completes the proof of Proposition~\ref{proposition:A1}.

\subsection{Proof of Proposition~\ref{proposition:A2}}
\label{section:A2}

Let us use notations introduced earlier in this section before Proposition~\ref{proposition:A2},
and let $P$ be a singular point of type $\mathbb{A}_2$ of the surface $B\in |2S^+|$.
Then, up to a change of coordinates, we may assume that $P=(0,0,1,0,1)$ and
$$
f_4(x_1,x_2,1)=x_1^2+x_2^3+\text{higher order terms}.
$$

Let $\rho\colon Y_0\to Y$ be the~blow up if the point $P$ with weights $(3,2,3)$ with respect to variables $(x_1,x_2,x_4)$.
We may describe $Y_0$ as a toric variety given as $(\mathbb{C}^6\setminus Z(I_0))/\mathbb{G}_m^3$, where the~action is given by the~matrix
$$
M = \left(\begin{array}{cccccc}
x_0 & x_1 & x_2 & x_3 & x_4 & x_5 \\
 0  &  1  &  1  &  1  &  2  &  0  \\
 0  &  0  &  0  &  0  &  1  &  1  \\
 1  &  0  &  1  &  3  &  3  &  0
\end{array}\right),
$$
where the~irrelevant ideal $I_0=\langle x_1, x_2, x_3 \rangle \cap \langle x_1, x_2, x_4 \rangle \cap \langle x_4, x_5 \rangle \cap \langle x_0, x_3 \rangle \cap \langle x_0, x_5 \rangle$.
To~describe the~fan of the toric threefold $Y_0$,
we denote by $v_i$ the~vector generating the~ray corresponding to $x_i$.
Then
\begin{align*}
v_0&=(3,2,3), & v_1&=(1,0,0), & v_2&= (0,1,0),\\
v_3&=(-1,-1,-2), & v_4&=(0,0,1), & v_5&=(0,0,-1),
\end{align*}
and the cone structure can be visualized with the~following diagram:
\begin{center}
\begin{tikzpicture}[node distance=1cm,auto]
\node[state, inner sep=1pt,minimum size=0pt] (q_0) {$v_0$};
\node[state, inner sep=1pt,minimum size=0pt] (q_1) at (2,2) {$v_1$};
\node[state,inner sep=1pt,minimum size=0pt] (q_2) at (2,-2) {$v_2$};
\node[state,inner sep=1pt,minimum size=0pt] (q_3) at (5,2) {$v_3$};
\node[state,inner sep=1pt,minimum size=0pt] (q_4) at (-2,0) {$v_4$};
\node[state,inner sep=1pt,minimum size=0pt] (q_5) at (4,0) {$v_5$};

\path[-] (q_0) edge node[swap] {} (q_1);
\path[-] (q_0) edge node[swap] {} (q_2);
\path[-] (q_0) edge node[swap] {} (q_4);
\path[-] (q_1) edge node[swap] {} (q_2);
\path[-] (q_1) edge node[swap] {} (q_3);
\path[-] (q_1) edge node[swap] {} (q_4);
\path[-] (q_1) edge node[swap] {} (q_5);
\path[-] (q_2) edge node[swap] {} (q_4);
\path[-] (q_2) edge node[swap] {} (q_5);
\path[-] (q_3) edge node[swap] {} (q_5);
\path[-, bend right = 50] (q_2) edge node {} (q_3);
\path[-, bend right = 50] (q_3) edge node {} (q_4);
\end{tikzpicture}
\end{center}

Let $F_i=\{x_i=0\}\subset Y_0$ and $C_{ij} = F_i \cap F_j$ for $i\ne j$ such that $\mathrm{dim}(F_i \cap F_j)=1$.
Then
$$
\overline{\mathrm{Eff}(Y_0)}=\langle F_0, F_1, F_5\rangle
$$
and
$$
\overline{\mathrm{NE}(Y_0)}= \langle C_{12}, C_{15}, C_{01} \rangle.
$$
Intersections of divisors $F_0$, $F_1$, $F_5$ are described in following table:
\begin{center}
\renewcommand\arraystretch{1.6}
\begin{tabular}{|c|c|c|c|c|c|c|c|c|c|c|}
\hline
$F_0^3$      & $F_0^2F_1$   &  $F_0^2F_5$ & $F_0F_1^2$    & $F_0F_1F_5$ & $F_0F_5^2$   &  $F_1^3$     & $F_1^2F_5$   &   $F_1F_5^2$  &  $F_5^3$\\
\hline
$\frac{1}{18}$ &$-\frac{1}{6}$&   $0$        &$\frac{1}{2}$ &    $0$      &      $0$     &$-\frac{3}{2}$&       $1$    & $-2$ & $4$\\
\hline
\end{tabular}
\end{center}
This gives the~following intersection table:
\begin{center}
\renewcommand\arraystretch{1.6}
\begin{tabular}{|c||c|c|c| }
\hline
 $\bullet$        &      $F_0$    &    $F_1$ 	 & $F_5$ \\
\hline
\hline$C_{12}$ & $\frac{1}{3}$ &    $-1$ 	 & $1$ \\
\hline
$C_{15}$ &      $0$      &    $1$	 & $-2$  \\
\hline
$C_{01}$ & $-\frac{1}{6}$ & $\frac{1}{2}$ &  $0$  \\
\hline
\end{tabular}
\end{center}

Now, we set $A=-(K_Y+\Delta)$. Take $u\in\mathbb{R}_{\geqslant 0}$. Set $L(u)=\rho^*(A) - u F_0$.
Then
$$
L(u)\sim_{\mathbb{R}} (9-u)F_0+3F_1+F_5,
$$
so $L(u)$ is pseudo-effective $\iff$ $u\leqslant 9$.
Let us find the~Zariski decomposition for $L(u)$.

Observe that $L(u)$ is nef for $u\in[0,3]$.
Since $L(3) \cdot C_{12} = 0$ and $C_{12}$ is unique in its numerical equivalence class,
we consider a small $\mathbb{Q}$-factorial modification $Y_0\dasharrow Y_1$ along the~curve $C_{12}$ such that
$$
Y_1=\big(\mathbb{C}^6 \setminus Z(I_1)\big)/\mathbb{G}_m^3,
$$
where the~torus-action is the~same, and the~irrelevant ideal
$$
I_1 = \langle x_1, x_2 \rangle \cap \langle x_4, x_5 \rangle \cap \langle x_0, x_3 \rangle.
$$
The fan of $Y_1$ is generated by the~same vectors, but the~cone structure is different:
\begin{center}
\begin{tikzpicture}[node distance=1cm,auto]
\node[state, inner sep=1pt,minimum size=0pt] (q_0) {$v_0$};
\node[state, inner sep=1pt,minimum size=0pt] (q_1) at (2,2) {$v_1$};
\node[state,inner sep=1pt,minimum size=0pt] (q_2) at (2,-2) {$v_2$};
\node[state,inner sep=1pt,minimum size=0pt] (q_3) at (5,2) {$v_3$};
\node[state,inner sep=1pt,minimum size=0pt] (q_4) at (-2,0) {$v_4$};
\node[state,inner sep=1pt,minimum size=0pt] (q_5) at (4,0) {$v_5$};

\path[-] (q_0) edge node[swap] {} (q_1);
\path[-] (q_0) edge node[swap] {} (q_2);
\path[-] (q_0) edge node[swap] {} (q_4);
\path[-] (q_0) edge node[swap] {} (q_5); 
\path[-] (q_1) edge node[swap] {} (q_3);
\path[-] (q_1) edge node[swap] {} (q_4);
\path[-] (q_1) edge node[swap] {} (q_5);
\path[-] (q_2) edge node[swap] {} (q_4);
\path[-] (q_2) edge node[swap] {} (q_5);
\path[-] (q_3) edge node[swap] {} (q_5);
\path[-, bend right = 50] (q_2) edge node {} (q_3);
\path[-, bend right = 50] (q_3) edge node {} (q_4);
\end{tikzpicture}
\end{center}

Abusing our previous notations, we denote the~divisor $\{x_i = 0\}\subset Y_1$ also by $F_i$,
and we let $C_{ij} = F_i \cap F_j$ for $i\ne j$ such that $F_i \cap F_j$ is a curve.
Then $\overline{\mathrm{NE}(Y_1)} = \langle C_{01}, C_{15}, C_{05} \rangle$,
and intersections on $Y_1$ are described in the following two tables:
\begin{center}
\renewcommand\arraystretch{1.6}
\begin{tabular}{|c|c|c|c|c|c|c|c|c|c|c|}
\hline
$F_0^3$      & $F_0^2F_1$   &  $F_0^2F_5$ & $F_0F_1^2$    & $F_0F_1F_5$ & $F_0F_5^2$   &  $F_1^3$     & $F_1^2F_5$   &   $F_1F_5^2$  &  $F_5^3$\\
\hline

 $0$	     &    $0$	  &$-\frac{1}{6}$&     $0$      &$\frac{1}{2}$&$-\frac{1}{2}$&    $0$	    &$-\frac{1}{2}$&$-\frac{1}{2}$& $\frac{5}{2}$\\
\hline
\end{tabular}
\end{center}
\begin{center}
\renewcommand\arraystretch{1.6}
\begin{tabular}{|c||c|c|c| }
\hline
$\bullet$         &      $F_0$    &       $F_1$      & $F_5$ \\
\hline
\hline
$C_{05}$ & $-\frac{1}{6}$ & $\frac{1}{2}$ & $-\frac{1}{2}$ \\
\hline
$C_{15}$ &  $\frac{1}{2}$ & $-\frac{1}{2}$ & $-\frac{1}{2}$  \\
\hline
$C_{01}$ &       $0$ &     $0$ &  $\frac{1}{2}$  \\

\hline
\end{tabular}
\end{center}

Thus, we see that the~proper transform on $Y_1$  of the divisor $L(u)$ is nef for $u\in[3,5]$,
and it intersects the~curve $C_{15}$ trivially for $u=5$.
Since $C_{15}$ is unique in its numerical equivalence class,
we consider another small $\mathbb{Q}$-factorial modification $Y_1\dasharrow Y_2$
such that
$$
Y_2=\big(\mathbb{C}^6 \setminus Z(I_2)\big)/\mathbb{G}_m^3,
$$
where the~torus-action is again given by the~matrix $M$ and the~irrelevant ideal
$$
I_2 = \langle x_1, x_2 \rangle \cap \langle x_4, x_5 \rangle \cap \langle x_1, x_5 \rangle \cap \langle x_0, x_2, x_3 \rangle \cap \langle x_0, x_3, x_4 \rangle.
$$
Then the fan of $Y_2$ is generated by the~same vectors, but the~cone structure is different:

\begin{center}
\begin{tikzpicture}[node distance=1cm,auto]
\node[state, inner sep=1pt,minimum size=0pt] (q_0) {$v_0$};
\node[state, inner sep=1pt,minimum size=0pt] (q_1) at (2,2) {$v_1$};
\node[state,inner sep=1pt,minimum size=0pt] (q_2) at (2,-2) {$v_2$};
\node[state,inner sep=1pt,minimum size=0pt] (q_3) at (5,2) {$v_3$};
\node[state,inner sep=1pt,minimum size=0pt] (q_4) at (-2,0) {$v_4$};
\node[state,inner sep=1pt,minimum size=0pt] (q_5) at (4,0) {$v_5$};

\path[-] (q_0) edge node[swap] {} (q_1);
\path[-] (q_0) edge node[swap] {} (q_2);
\path[-] (q_0) edge node[swap] {} (q_3); 
\path[-] (q_0) edge node[swap] {} (q_4);
\path[-] (q_0) edge node[swap] {} (q_5); 
\path[-] (q_1) edge node[swap] {} (q_3);
\path[-] (q_1) edge node[swap] {} (q_4);
\path[-] (q_2) edge node[swap] {} (q_4);
\path[-] (q_2) edge node[swap] {} (q_5);
\path[-] (q_3) edge node[swap] {} (q_5);
\path[-, bend right = 50] (q_2) edge node {} (q_3);
\path[-, bend right = 50] (q_3) edge node {} (q_4);
\end{tikzpicture}
\end{center}

We abuse our~notations again and denote the~divisor $\{x_i = 0\}\subset Y_2$ also by $F_i$.
Similarly, we let $C_{ij} = F_i \cap F_j$ for $i\ne j$ such that $F_i \cap F_j$ is a curve.
Then $\overline{\mathrm{NE}}(Y_2)=\langle C_{01}, C_{03}, C_{05} \rangle$,
and intersections on $Y_2$ are described in the~following two tables:
\begin{center}
\renewcommand\arraystretch{1.6}
\begin{tabular}{|c|c|c|c|c|c|c|c|c|c|c|}
\hline
$F_0^3$      & $F_0^2F_1$   &  $F_0^2F_5$ & $F_0F_1^2$    & $F_0F_1F_5$ & $F_0F_5^2$   &  $F_1^3$     & $F_1^2F_5$   &   $F_1F_5^2$  &  $F_5^3$\\
\hline
$-\frac{1}{2}$&$\frac{1}{2}$ & $\frac{1}{3}$&$-\frac{1}{2}$&    $0$      &      $-1$    &$\frac{1}{2}$ &       $0$    & $0$ & $3$\\
\hline
\end{tabular}
\end{center}
\begin{center}
\renewcommand\arraystretch{1.6}
\begin{tabular}{|c||c|c|c| }
\hline
$\bullet$         &      $F_0$    &       $F_1$      & $F_5$ \\
\hline
\hline
$C_{05}$ & $\frac{1}{3}$ &       $0$      &     $-1$ \\
\hline
$C_{03}$ &  $\frac{-2}{3}$ &    $1$      &    $1$  \\
\hline
$C_{01}$ &  $\frac{1}{2}$ & $-\frac{1}{2}$ &    $0$  \\
\hline
\end{tabular}
\end{center}

The proper transform on $Y_2$ of the divisor $L(u)$ is nef for $u\in [5,6]$,
and it intersects both~curves $C_{01}$ and $C_{05}$ trivially for $u=6$.
Furthermore, if $u\in[6,9]$, then the negative part of the Zariski decomposition of the divisor $L(u)$ on the threefold $Y_2$ is
$$
N(u)=(u-6)F_1+\frac{u-6}{3}F_5,
$$
while the positive part is $P(u)\sim_{\mathbb{R}}(9-u)(F_0+F_1+\frac{1}{3}F_5)$.
This gives
$$
\mathrm{vol}\big(L(u)\big) = \begin{cases}
13-\frac{u^3}{18}			   & u\in [0,3],\\
\frac{-u^2+3+23}{2}				            & u\in [3,5],\\
\frac{1}{2}u^3 - 8 u^2+\frac{3}{2} u		& u\in [5,6],\\
-\frac{1}{9} u^3 +3 u^2 - 27u+81		    & u\in [6,9].
\end{cases}
$$
Integrating, we get $S_{A}(F_0)=\frac{127}{26}$. Since $A_{Y,\Delta}(F_0) = 5$, we get $\frac{A_{Y,\Delta}(F_0)}{S_{A}(F_0)}=\frac{130}{127}>1$.

Next we construct a partial common toric resolution for $Y_0$, $Y_1$, $Y_2$,
which is easy to see from fan toric picture: we want to add the~following rays:
\begin{align*}
&v_6 = (3,2,0) \in \langle v_1, v_2 \rangle \cap \langle v_0, v_5 \rangle, \\
&v_7 = (1,0,-1) \in \langle v_0, v_3 \rangle \cap \langle v_0, v_3 \rangle, \\
&v_8 = (3,1,0) \in \langle v_1, v_2 \rangle \cap \langle v_0, v_3 \rangle.
\end{align*}
Set $\widetilde{Y}$ be the toric variety corresponding to $v_0,\dots,v_8$ with the~following cone structure:
\begin{center}
\begin{tikzpicture}[node distance=1cm,auto]
\node[state, inner sep=1pt,minimum size=0pt] (q_0) {$v_0$};
\node[state, inner sep=1pt,minimum size=0pt] (q_1) at (2,2) {$v_1$};
\node[state,inner sep=1pt,minimum size=0pt] (q_2) at (2,-2) {$v_2$};
\node[state,inner sep=1pt,minimum size=0pt] (q_3) at (5,2) {$v_3$};
\node[state,inner sep=1pt,minimum size=0pt] (q_4) at (-2,0) {$v_4$};
\node[state,inner sep=1pt,minimum size=0pt] (q_5) at (4,0) {$v_5$};
\node[state,inner sep=1pt,minimum size=0pt] (q_6) at (2,0) {$v_6$};
\node[state,inner sep=1pt,minimum size=0pt] (q_7) at (2.8,1.2) {$v_7$};
\node[state,inner sep=1pt,minimum size=0pt] (q_8) at (2,0.8) {$v_8$};
\path[-] (q_0) edge node[swap] {} (q_1);
\path[-] (q_0) edge node[swap] {} (q_2);
\path[-] (q_0) edge node[swap] {} (q_4);
\path[-] (q_0) edge node[swap] {} (q_6);
\path[-] (q_0) edge node[swap] {} (q_8);
\path[-] (q_1) edge node[swap] {} (q_3);
\path[-] (q_1) edge node[swap] {} (q_4);
\path[-] (q_1) edge node[swap] {} (q_8);
\path[-] (q_1) edge node[swap] {} (q_7);
\path[-] (q_2) edge node[swap] {} (q_4);
\path[-] (q_2) edge node[swap] {} (q_5);
\path[-] (q_2) edge node[swap] {} (q_6);
\path[-] (q_3) edge node[swap] {} (q_5);
\path[-] (q_3) edge node[swap] {} (q_7);
\path[-] (q_5) edge node[swap] {} (q_6);
\path[-] (q_5) edge node[swap] {} (q_7);
\path[-] (q_6) edge node[swap] {} (q_8);
\path[-] (q_6) edge node[swap] {} (q_7);
\path[-] (q_7) edge node[swap] {} (q_8);
\path[-, bend right = 50] (q_2) edge node {} (q_3);
\path[-, bend right = 50] (q_3) edge node {} (q_4);
\end{tikzpicture}
\end{center}
Then we have the following toric diagram:
$$
\begin{tikzcd}
	&& {\widetilde{Y}} \\
	& {Y_{12}^\prime} && {Y_{01}^\prime} \\
	& {Y_{12}} && {Y_{01}} \\
	{Y_2} && {Y_1} && {Y_0}
	\arrow[dashed, from=3-2, to=3-4]
	\arrow["{\sigma_2}"', from=3-2, to=4-1]
	\arrow["{\sigma_1}", from=3-2, to=4-3]
	\arrow["{\psi_1}"', from=3-4, to=4-3]
	\arrow["{\psi_0}", from=3-4, to=4-5]
	\arrow[dashed, from=4-5, to=4-3]
	\arrow[dashed, from=4-3, to=4-1]
	\arrow["{\psi_{01}}", from=1-3, to=2-4]
	\arrow["{\sigma_{12}}"', from=1-3, to=2-2]
	\arrow["{\sigma^\prime}"', from=2-2, to=3-2]
	\arrow["{\psi^\prime}", from=2-4, to=3-4]
\end{tikzcd}$$
where toric maps can be described as follows:
\begin{center}
\renewcommand\arraystretch{1.6}
\begin{tabular}{|c|c|c|c|c| }
\hline
 map        &    center    &  weights           &   exceptional divisor  & relation\\
\hline
\hline
$\psi_0$        &  $x_1 = x_2 = 0$ &    $(3,2)$             &     $\{x_6=0\}$              & $3 v_1+2 v_2 = v_6$  \\
\hline
$\psi_1$        &  $x_0 = x_5 = 0$ &    $(1,3)$             &     $\{x_6=0\}$              & $v_0+3 v_5 = v_6$  \\
\hline
$\sigma_1$      &  $x_1 = x_5 = 0$ &    $(1,1)$             &     $\{x_7=0\}$              & $v_1+v_5 = v_7$  \\
\hline
$\sigma_2$      &  $x_0 = x_3 = 0$ &    $(1,2)$             &     $\{x_7=0\}$              & $v_0+2 v_3 = v_7$  \\
\hline
$\psi^\prime$   &  $x_1 = x_5 = 0$ &    $(1,1)$             &     $\{x_7=0\}$              & $v_1+v_5 = v_7$  \\
\hline
$\sigma^\prime$ &  $x_0 = x_5 = 0$ &    $(1,3)$             &     $\{x_6=0\}$              & $v_0+3 v_5 = v_6$  \\
\hline
$\psi_{01}$     &  $x_1 = x_6 = 0$ &    $\frac{1}{2}(3,1)$  &     $\{x_8=0\}$              & $3 v_1+v_6 = 2 v_8$  \\
\hline
$\sigma_{12}$   &  $x_0 = x_7 = 0$ &    $\frac{1}{2}(1,3)$  &     $\{x_8=0\}$              & $v_1+3 v_7 = 2 v_8$  \\
\hline
\end{tabular}
\end{center}
Here, $\frac{1}{2}(a,b)$ indicates that the~variety has an $\mathbb{A}_1$-singularity along the~center of blow up.

Now, we set $\varphi_0=\psi_{01}\circ\psi^\prime\circ\psi_0$, $\varphi_1=\psi_{01}\circ\psi^\prime\circ\psi_1$, $\varphi_2=\sigma_{12} \circ \sigma^\prime \circ \sigma_2$.
Let $\widetilde{F}_i$ be the toric divisor $\{x_i=0\}\subset\widetilde{Y}$. Then
\begin{align*}
\varphi_0^*(F_0)&\sim_{\mathbb{Q}} \widetilde{F}_0,\\
\varphi_0^*(F_1)&\sim_{\mathbb{Q}} \widetilde{F}_1+3\widetilde{F}_6+\widetilde{F}_7+3\widetilde{F}_8,\\
\varphi_0^*(F_5)&\sim_{\mathbb{Q}} \widetilde{F}_5+\widetilde{F}_7,\\
\varphi_1^*(F_0)&\sim_{\mathbb{Q}} \widetilde{F}_0+\widetilde{F}_6+\frac{1}{2}\widetilde{F}_8,\\
\varphi_1^*(F_1)&\sim_{\mathbb{Q}} \widetilde{F}_1+\widetilde{F}_7+\frac{3}{2}\widetilde{F}_8,\\
\varphi_1^*(F_5)&\sim_{\mathbb{Q}} \widetilde{F}_5+3 \widetilde{F}_6+\widetilde{F}_7+\frac{3}{2}\widetilde{F}_8,\\
\varphi_2^*(F_0)&\sim_{\mathbb{Q}} \widetilde{F}_0+\widetilde{F}_6+\widetilde{F}_7+2\widetilde{F}_8,\\
\varphi_2^*(F_1)&\sim_{\mathbb{Q}} \widetilde{F}_1,\\
\varphi_2^*(F_5)&\sim_{\mathbb{Q}} \widetilde{F}_5+3\widetilde{F}_6.
\end{align*}
Using this, we describe the Zariski decomposition of the divisor $\varphi_0^*(L(u))$ as follows:
$$
\widetilde{P}(u) \sim_{\mathbb{R}} \begin{cases}
    (9-u) \widetilde{F}_0+3 \widetilde{F}_1+\widetilde{F}_5+9 \widetilde{F}_6+4 \widetilde{F}_7+9 \widetilde{F}_8 & u\in[0,3],\\
    (9-u) \widetilde{F}_0+3 \widetilde{F}_1+\widetilde{F}_5+(12-u) \widetilde{F}_6+4 \widetilde{F}_7+\frac{21-u}{2} \widetilde{F}_8 & u\in[3,5],\\
    (9-u) \widetilde{F}_0+3 \widetilde{F}_1+\widetilde{F}_5+(12-u) \widetilde{F}_6+(9-u) \widetilde{F}_7+2(9-u) \widetilde{F}_8 & u\in[5,6],\\
    (9-u) (\widetilde{F}_0+\widetilde{F}_1+\frac{1}{3} \widetilde{F}_5+2 \widetilde{F}_6+\widetilde{F}_7+2 \widetilde{F}_8) & u\in[6,9],
\end{cases}
$$
and
$$
\widetilde{N}(u)=\begin{cases}
0                                                                           & u\in[0,3],\\
(u-3)\widetilde{F}_6+\frac{u-3}{2}\widetilde{F}_8                       & u\in[3,5],\\
(u-3)\widetilde{F}_6+(u-5) \widetilde{F}_7+(2u-9)\widetilde{F}_8      & u\in[5,6],\\
(u-6)\widetilde{F}_1+\frac{u}{3}\widetilde{F}_5+(2u-9)\widetilde{F}_6+(u-5)\widetilde{F}_7+(2u-9)\widetilde{F}_8              & u\in[6,9].
\end{cases}
$$
where $\widetilde{P}(u)$ is the positive part, and $\widetilde{N}(u)$ is the negative part.

Now, we describe $\widetilde{P}(u)\vert_{\widetilde{F}_0}$ and $\widetilde{N}(u)\vert_{\widetilde{F}_0}$ for every $u\in[0,9]$.
We have $\widetilde{Y}=(\mathbb{C}^9 \setminus \widetilde{I})/\mathbb{G}_m^6$,
where the~torus action is given by the~matrix
$$
\widetilde{M}=\left(\begin{array}{ccccccccc}
x_0 & x_1 & x_2 & x_3 & x_4 & x_5 & x_6 & x_7 & x_8\\
 0  &  1  &  1  &  1  &  2  &  0  &  0  &  0  &  0 \\
 0  &  0  &  0  &  0  &  1  &  1  &  0  &  0  &  0 \\
 1  &  0  &  1  &  3  &  3  &  0  &  0  &  0  &  0 \\
 0  &  0  &  1  &  3  &  6  &  0  &  1  &  0  &  0 \\
 0  &  0  &  1  &  1  &  3  &  0  &  0  &  1  &  0 \\
 0  &  0  &  2  &  3  &  6  &  0  &  0  &  0  &  1
\end{array}\right),
$$
and the~irrelevant ideal
\begin{align*}
\widetilde{I} = &\langle x_0, x_3 \rangle \cap \langle x_0, x_5 \rangle \cap \langle x_0, x_7 \rangle \cap \langle x_1, x_2 \rangle \cap \langle x_1, x_5 \rangle \cap \langle x_1, x_6 \rangle \cap \langle x_2, x_7 \rangle \cap  \langle x_2, x_8 \rangle \\
& \cap  \langle x_3, x_6 \rangle
\cap \langle x_3, x_8 \rangle \cap  \langle x_4, x_5 \rangle \cap  \langle x_4, x_6 \rangle \cap  \langle x_4, x_7 \rangle \cap  \langle x_4, x_8 \rangle \cap  \langle x_5, x_8 \rangle.
\end{align*}
To obtain a similar description of the~surface $\widetilde{F}_0$, set $x_0=0$, eliminate the~first row in~$\widetilde{M}$,
and set $x_3=x_5=x_7=1$, since $\widetilde{I} \subset \langle x_0, x_3 \rangle \cap \langle x_0, x_5 \rangle \cap \langle x_0, x_7 \rangle$.
The resulting matrix is
$$
\left(\begin{array}{cccccc}
x_1 & x_2 & x_4 & x_6 & x_8\\
 3  &  2  &  3  &  0  &  0 \\
 0  &  0  &  3  &  1  &  0 \\
 0  &  1  &  3  &  0  &  1
\end{array}\right).
$$
Using this, we see that $\widetilde{F}_0=(\mathbb{C}^5 \setminus Z(I_{\widetilde{F}_0}))/\mathbb{G}_m^3$, where the~torus action is given by
$$
\left(\begin{array}{cccccc}
z_1 & z_2 & z_3 & z_4 & z_5\\
 1  &  1  &  2  &  0  &  0 \\
 0  &  1  &  0  &  1  &  0 \\
 0  &  1  &  1  &  0  &  1
\end{array}\right),
$$
and $I_{\widetilde{F}_0} = \langle z_1, z_3 \rangle \cap \langle z_1, z_4 \rangle \cap \langle z_2, z_4 \rangle \cap \langle z_2, z_5 \rangle \cap \langle z_3, z_5 \rangle$.
We can see from the~matrices that
$$
x_1\big\vert_{\widetilde{F}_0} = z_1,\quad x_2^3\big\vert_{\widetilde{F}_0} = z_3,\quad x_4\big\vert_{\widetilde{F}_0} = z_2,\quad x_6^3\big\vert_{\widetilde{F}_0} = z_4,\quad x_8^3\big\vert_{\widetilde{F}_0} = z_5.
$$
The fan of the toric surface $\widetilde{F}_0$ is given by
$$
w_1=(1,0), \quad w_2=(-1,-2), \quad w_3=(0,1), \quad w_4=(1,2), \quad w_5=(1,1)
$$
with obvious cone structure.
For $i\in\{1,2,3,4,5\}$, let $C_i$ be the~curve in $\widetilde{F}_0$ given $z_i=0$.
The cone of effective divisors of the~surface $\widetilde{F}_0$ is generated by the curves $C_1$, $C_4$, $C_5$,
and their intersection form is given in the~following table:
\begin{center}
\renewcommand\arraystretch{1.6}
\begin{tabular}{|c||c|c|c| }
\hline
$\bullet$         &     $C_1$    &       $C_4$      & $C_5$ \\
\hline
\hline
$C_1$ &  $-\frac{1}{2}$ &       $0$      &     $1$ \\
\hline
$C_4$ &         $0$     &       $-1$      &    $1$  \\
\hline
$C_5$ &      $1$        &        $1$       &    $-2$  \\
\hline
\end{tabular}
\end{center}
Further, we compute
$$
\widetilde{P}(u)\big\vert_{\widetilde{F}_0}\sim_{\mathbb{R}} \begin{cases}
 \frac{u}{3}C_1+\frac{u}{3} C_4+\frac{u}{3} C_5                    &    u \in [0,3]\\
 \frac{u}{3}C_1+C_4+(\frac{1}{2}+\frac{u}{6})C_5                 &    u \in [3,5]\\
 \frac{u}{3}C_1+C_4+(3 - \frac{u}{3}) C_5                          &    u \in [5,6]\\
 (6 - \frac{2u}{3}) C_1+(3 - \frac{u}{3}) C_4+(3 - \frac{u}{3}) C_5   &    u \in [6,9],
\end{cases}
$$
and
$$
\widetilde{N}(u)\big\vert_{\widetilde{F}_0} =
\begin{cases}
0                                       & u\in[3,5],\\
\frac{u-3}{6}(2C_4+C_5)              & u\in[3,5],\\
\frac{u-3}{3}C_4+\frac{2u-9}{3} C_5  &   u\in[5,6],\\
(u-6)C_1+\frac{2u-9}{3}(2C_4+C_5)   &   u\in[6,9].
\end{cases}
$$

Let $\theta\colon\widetilde{F}_0\to F_0$ be the morphism induced by $\varphi_0$.
Then $\theta$ is a birational morphism that contracts $C_4$ and $C_5$.
Set $\overline{C}_1=\theta(C_1)$, $\overline{C}_2=\theta(C_2)$, $\overline{C}_3=\theta(C_3)$,
identify $F_0=\mathbb{P}(1,1,2)$ with coordinates $\bar{z}_1$, $\bar{z}_2$, $\bar{z}_3$ such that
$\overline{C}_1=\{\bar{z}_1=0\}$, $\overline{C}_2=\{\bar{z}_2=0\}$, $\overline{C}_3=\{\bar{z}_3=0\}$,
where $\bar{z}_1$ and $\bar{z}_2$ are coordinates of weight $1$, and $\bar{z}_3$ is a coordinate of weight $2$.
Then
$$
\theta\big(C_4\big)=\theta\big(C_5\big)=\overline{C}_1\cap\overline{C}_3=[0:1:0],
$$
and $\theta$ is a composition of the ordinary blow up at the point $[0:1:0]$
with the~consecutive blow up at the~point on the proper transform of the curve $\overline{C}_3$.
Note that $C_5$ is the proper transform of the exceptional curve for the first blow up and $C_4$ is the exceptional curve for the second blow up.

Let $B_0$ be the proper transform on $Y_0$ of the surface $B$. Set $\Delta_0=\frac{1}{2}B_0$ and $B_{F_0}=B_0\vert_{F_0}$.
Then, changing the coordinates $\bar{z}_1$, $\bar{z}_2$, $\bar{z}_3$,
we may also assume that
$$
B_{F_0}=\big\{\overline{z}_1^2+\overline{z}_2^2=\overline{z}_3\big\}\subset F_0.
$$
This curve is smooth, it does not contain the~singular point of   $F_0$, and $[0:1:0]\not\in B_{F_0}$.
The geometry of the surface $F_0$ can be illustrated by the following picture:
\begin{center}
\begin{tikzpicture}[scale=1.5]
\draw [red] (-0.2,-0.2) -- (1.3,1.3);
\node at (0.3,0.7) {\tiny$\color{red}C_5$};
\draw (0.6,0.8) -- (3.4,2.2);
\node at (2.05,1.75) {\tiny$C_1$};
\draw (2.8,2.4) -- (4.2,-0.4);
\node at (3.7,1.2) {\tiny$C_2$};
\draw (4.3,0.1) -- (0.7,-1.1);
\node at (3.5,-0.4) {\tiny$C_3$};
\draw [red] (1.2,-1.2) -- (-0.2,0.2);
\node at (0.3,-0.7) {\tiny$\color{red}C_4$};
\draw [blue] plot [smooth] coordinates {(1.5,2) (2,-0.7) (3,-0.3) (4,1)};
\node at (2,0.5) {\tiny{$\color{blue} B_{F_0}$}};
\end{tikzpicture}
\end{center}

Note that the surface $Y_0$ is singular along the curve $\overline{C}_3$. We set
$$
\Delta_{F_0}=\frac{1}{2}B_{F_0}+\frac{2}{3}\overline{C}_3.
$$
Then $K_{F_0}+\Delta_{F_0}\sim_{\mathbb{Q}}(K_{Y_0}+\Delta_0)\vert_{F_0}$,
and $\Delta_{F_0}$ is the~corresponding different \cite{Prokhorov}.

Now, we are ready to apply \cite{AbbanZhuang,Book,Fujita2021}.
Let $Q$ be a point in $F_0$, let $C$ be a smooth curve in the~surface $F_0$ that contains $Q$,
let $\widetilde{C}$ be its proper transform on~$\widetilde{F}_0$. For $u\in[0,9]$, let
$$
t(u)=\inf\Big\{v\in \mathbb R_{\geqslant 0} \ \big|\ \text{the divisor $\widetilde{P}(u)\big\vert_{\widetilde{F}_0}-v\widetilde{C}$ is pseudo-effective}\Big\}.
$$
For real number  $v\in[0,t(u)]$, let $P(u,v)$ and  $N(u,v)$ be the~positive part and the~negative part of the~Zariski decomposition of
the~divisor \mbox{$\widetilde{P}(u)\vert_{\widetilde{F}_0}-v\widetilde{C}$}, respectively.
Set
$$
S_L\big(W^{F_0}_{\bullet,\bullet};C\big)=\frac{3}{A^3}\int\limits_0^{9}\big(\widetilde{P}(u)\big\vert_{\widetilde{F}_0}\big)^2\mathrm{ord}_{\widetilde{C}}\big(\widetilde{N}(u)\big\vert_{\widetilde{F}_0}\big)du+\frac{3}{A^3}\int\limits_0^{9}\int\limits_0^{t(u)}\big(P(u,v)\big)^2dvdu.
$$
Write $\theta^*(C)=\widetilde{C}+\Sigma$ for an effective divisor $\Sigma$ on the~surface $\widetilde{F}_0$.
For $u\in[0,9]$, write
$$
\widetilde{N}(u)\big\vert_{\widetilde{F}_0}=d(u)\widetilde{C}+N^\prime(u),
$$
where $d(u)=\mathrm{ord}_{\widetilde{C}}(\widetilde{N}(u)\vert_{\widetilde{F}_0})$, and $N^\prime(u)$ is an effective divisor~on~$\widetilde{F}_0$.~Set
$$
S\big(W_{\bullet, \bullet,\bullet}^{F_0,C};Q\big)=\frac{3}{A^3}
\int\limits_0^{9}\int\limits_0^{t(u)}\big(P(u,v)\cdot\widetilde{C}\big)^2dvdu+F_Q\big(W_{\bullet, \bullet,\bullet}^{F_0,C}\big)
$$
for
$$
F_Q\big(W_{\bullet, \bullet,\bullet}^{F_0,C}\big)=\frac{6}{A^3}\int\limits_0^{9}\int\limits_0^{t(u)}\big(P(u,v)\cdot\widetilde{C}\big)\cdot\mathrm{ord}_Q\Big(\big(N^\prime(u)+N(u,v)-(v+d(u))\Sigma\big)\big|_{\widetilde{C}}\Big)dvdu,
$$
where we consider $Q$ as a point in $\widetilde{C}$ using the~isomorphism $\widetilde{C}\cong C$ induced by $\theta$.

We will choose $C$ such that the pair $(F_0,C+\Delta_{F_0}-\mathrm{ord}_C(\Delta_{F_0})C)$ has purely log terminal singularities.
In this case, the curve $C$ is equipped with an effective divisor $\Delta_C$
such that
$$
K_C+\Delta_C\sim_{\mathbb{Q}}\big(K_{F_0}+C+\Delta_{F_0}-\mathrm{ord}_C(\Delta_{F_0})C \big)\big\vert_{C},
$$
and the pair $(C,\Delta_C)$ has Kawamata log terminal singularities.
The $\mathbb{Q}$-divisor $\Delta_C$ is known as the~different,
and it can be computed locally near any point in $C$, see \cite{Prokhorov} for details.

Let $\mathbf{F}$ be a prime divisor over $Y$ such that $P=C_Y(\mathbf{F})$. Recall that
$$
\beta_{Y,\Delta}(\mathbf{F})=A_{Y,\Delta}(\mathbf{F})-S_A(\mathbf{F})=A_{Y,\Delta}(\mathbf{F})-\frac{1}{A^3}\int_{0}^\infty\mathrm{vol}\big(A-u\mathbf{F}\big)du.
$$
Suppose $\beta_{Y,\Delta}(\mathbf{F})\leqslant 0$. Then, using \cite[Corollary 4.18]{Fujita2021}, we obtain
$$
1\geqslant\frac{A_{Y,\Delta}(\mathbf{F})}{S_A(\mathbf{F})}\geqslant
\delta_P(Y,\Delta)\geqslant\min\left\{
\frac{A_{Y,\Delta}(F_0)}{S_{A}(F_0)},
\inf_{Q\in F_0}
\min\left\{\frac{A_{F_0,\Delta_{F_0}}(C)}{S_A(W^{F_0}_{\bullet,\bullet};C)},
\frac{A_{C,\Delta_C}(Q)}{S(W_{\bullet,\bullet,\bullet}^{F_0,C};Q)}\right\}\right\},
$$
where the choice of $C$ in the infimum depends on $Q$.
Thus, since $\frac{A_{Y,\Delta}(F_0)}{S_{A}(F_0)}\geqslant 1$, we have
$$
\inf_{Q\in F_0}
\min\left\{\frac{A_{F_0,\Delta_{F_0}}(C)}{S_A(W^{F_0}_{\bullet,\bullet};C)},
\frac{A_{C,\Delta_C}(Q)}{S(W_{\bullet,\bullet,\bullet}^{F_0,C};Q)}\right\}\leqslant 1.
$$
In fact, since $\frac{A_{Y,\Delta}(F_0)}{S_{A}(F_0)}=\frac{130}{127}>1$,
it follows from \cite[Corollary 4.18]{Fujita2021} and \cite[Theorem 3.3]{AbbanZhuang} that
we have a strict inequality:
$$
\inf_{Q\in F_0}
\min\left\{\frac{A_{F_0,\Delta_{F_0}}(C)}{S_A(W^{F_0}_{\bullet,\bullet};C)},
\frac{A_{C,\Delta_C}(Q)}{S(W_{\bullet,\bullet,\bullet}^{F_0,C};Q)}\right\}<1.
$$
Let us use this to obtain a contradiction, which would finish the proof of Proposition~\ref{proposition:A2}.

Namely, we will show that for every point $Q\in F_0$, there exists a smooth irreducible curve $C\subset F_0$ such that
$Q\in C$, the~log pair $(F_0,C+\Delta_{F_0}-\mathrm{ord}_C(\Delta_{F_0})C)$ has purely log terminal singularities,
and the following two inequalities hold:
\begin{equation}
\label{equation:A2-case-1}
S_A\big(W^{F_0}_{\bullet,\bullet};C\big)\leqslant A_{F_0,\Delta_{F_0}}(C)
\end{equation}
and
\begin{equation}
\label{equation:A2-case-2}
S(W_{\bullet,\bullet,\bullet}^{F_0,C};Q)\leqslant A_{C,\Delta_C}(Q).
\end{equation}
To be precise, we will choose the~curve $C$ as follows:
\begin{itemize}
\item if $Q\in\overline{C}_1$, we let $C=\overline{C}_1$,
\item if $Q\not\in\overline{C}_1$ and $Q\in\overline{C}_3$, we let $C=\overline{C}_3$,
\item if $Q\not\in\overline{C}_1\cup\overline{C}_3$, we let $C$ to be the unique curve in $|\overline{C}_1|$ such that $Q\in C$.
\end{itemize}

\begin{lemma}
\label{lemma:A2-Q-in-C1}
Let $Q$ be a point in $\overline{C}_1$. Set $C=\overline{C}_1$. Then \eqref{equation:A2-case-1} and \eqref{equation:A2-case-2} hold.
\end{lemma}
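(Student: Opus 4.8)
The plan is to run the Abban--Zhuang refinement over the surface $F_0$ exactly in the shape that the machinery \cite{AbbanZhuang,Book,Fujita2021} recalled just above prescribes, with $C=\overline{C}_1$. First I would dispose of the elementary points. Since $\overline{C}_1$ is neither a component of $B_{F_0}$ nor of $\overline{C}_3$, we have $\operatorname{ord}_{\overline{C}_1}(\Delta_{F_0})=0$, so $A_{F_0,\Delta_{F_0}}(\overline{C}_1)=1$ and the auxiliary pair is $(F_0,\overline{C}_1+\tfrac12 B_{F_0}+\tfrac23\overline{C}_3)$. This pair is purely log terminal: $\overline{C}_1\cong\mathbb{P}(1,2)$ is smooth, it passes through the unique singular point $[0:0:1]$ of $F_0=\mathbb{P}(1,1,2)$ (a cyclic quotient singularity of type $\tfrac12(1,1)$) and meets $B_{F_0}$ and $\overline{C}_3$ transversally at two further distinct smooth points of $F_0$, while all the boundary coefficients $1,\tfrac12,\tfrac23$ are at most one. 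On the toric resolution $\theta\colon\widetilde{F}_0\to F_0$ the strict transform $\widetilde{C}$ of $\overline{C}_1$ is $C_1$; comparing intersection numbers on $\widetilde{F}_0$ via the intersection table of $C_1,C_4,C_5$ gives $\theta^{*}(\overline{C}_1)=C_1+C_4+C_5$, so $\Sigma=C_4+C_5$, and both $C_4$ and $C_5$ are contracted by $\theta$ to the point $[0:1:0]=\overline{C}_1\cap\overline{C}_3$.

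To prove \eqref{equation:A2-case-1} I would compute $S_A(W^{F_0}_{\bullet,\bullet};\overline{C}_1)$ directly. On each of the four intervals $u\in[0,3],[3,5],[5,6],[6,9]$ the positive part $\widetilde{P}(u)\vert_{\widetilde{F}_0}$ is the explicit combination of $C_1,C_4,C_5$ listed above; using their intersection form I would read off the pseudo-effective threshold $t(u)$ of $\widetilde{P}(u)\vert_{\widetilde{F}_0}-vC_1$ together with its Zariski decomposition $P(u,v)+N(u,v)$, each interval splitting into a few $v$-subintervals. Since $\operatorname{ord}_{C_1}(\widetilde{N}(u)\vert_{\widetilde{F}_0})$ equals $0$ for $u\leqslant 6$ and $u-6$ for $u\in[6,9]$, substituting into the formula for $S_A(W^{F_0}_{\bullet,\bullet};C)$ and integrating produces a rational number; I expect it to be at most $\tfrac12$, hence in particular at most $A_{F_0,\Delta_{F_0}}(\overline{C}_1)=1$, and the value $\tfrac12$ is exactly what the point estimate below will need.

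For \eqref{equation:A2-case-2} I would first determine the different $\Delta_{\overline{C}_1}$ from the adjunction formula (see \cite{Prokhorov}): it is supported at $[0:0:1]$ with coefficient $\tfrac12$ (the cyclic quotient contribution), at $\overline{C}_1\cap B_{F_0}$ with coefficient $\tfrac12$, and at $[0:1:0]=\overline{C}_1\cap\overline{C}_3$ with coefficient $\tfrac23$; hence $A_{\overline{C}_1,\Delta_{\overline{C}_1}}(Q)$ equals $1$ for a general point of $\overline{C}_1$, $\tfrac12$ at the first two points, and $\tfrac13$ at $[0:1:0]$. Because $\Sigma=C_4+C_5$ and every $N^\prime(u)$ are supported on curves contracted by $\theta$ to $[0:1:0]$, and the secondary negative parts $N(u,v)$ are supported on the same curves, the refinement term $F_Q(W^{F_0,\overline{C}_1}_{\bullet,\bullet,\bullet})$ vanishes for all $Q\ne[0:1:0]$. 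Thus for $Q\ne[0:1:0]$ it is enough to check $\tfrac{3}{A^3}\int_0^9\int_0^{t(u)}(P(u,v)\cdot C_1)^2\,dv\,du\leqslant\tfrac12$, which is the computation of the previous paragraph stripped of the $\operatorname{ord}$-term; and at $Q=[0:1:0]$ one additionally computes $F_{[0:1:0]}$ from $\operatorname{ord}_{[0:1:0]}\!\big((N^\prime(u)+N(u,v)-(v+d(u))\Sigma)\vert_{C_1}\big)$ and checks that the total stays $\leqslant\tfrac13$.

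The main obstacle is the layered Zariski-decomposition bookkeeping on the singular surface $\widetilde{F}_0$: getting $t(u)$, $P(u,v)$ and $N(u,v)$ right on all four $u$-ranges and on the $v$-subranges into which they split, and then the delicate estimate of $F_{[0:1:0]}$. The point $[0:1:0]$ is the only place where $A_{\overline{C}_1,\Delta_{\overline{C}_1}}$ drops all the way to $\tfrac13$, and it is simultaneously the point where the collapsed curves $C_4,C_5$ contribute to the correction term, so the inequality \eqref{equation:A2-case-2} is tight there and must be carried out with care.
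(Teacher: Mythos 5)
Your plan follows the paper's proof essentially step for step: the same resolution $\theta\colon\widetilde{F}_0\to F_0$, the same data $d(u)$, $t(u)$, $P(u,v)$, $N(u,v)$ on the four $u$-ranges, the same identification $\Sigma=C_4+C_5$ with both components contracted to $\overline{C}_1\cap\overline{C}_3=[0:1:0]$, and the same different $\Delta_C$ (coefficient $\tfrac12$ at the singular point of $F_0$ and at $\overline{C}_1\cap B_{F_0}$, coefficient $\tfrac23$ at $\overline{C}_1\cap\overline{C}_3$), so indeed $F_Q=0$ away from $\overline{C}_1\cap\overline{C}_3$. One step is wrong as stated, though: the quantity to bound at a point $Q\neq\overline{C}_1\cap\overline{C}_3$ is $\tfrac{3}{A^3}\int\!\!\int\big(P(u,v)\cdot\widetilde{C}\big)^2\,dv\,du$, and this is \emph{not} the computation of your first paragraph ``stripped of the ord-term'' --- that latter quantity is $\tfrac{3}{A^3}\int\!\!\int\big(P(u,v)\big)^2\,dv\,du$, a self-intersection on the surface rather than a degree on the curve, and the two numbers differ substantially. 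The surface-level integral gives $S(W^{F_0}_{\bullet,\bullet};\overline{C}_1)=\tfrac{10}{13}$, so your expectation that it is at most $\tfrac12$ is false; this is harmless for \eqref{equation:A2-case-1}, where only $\leqslant 1=A_{F_0,\Delta_{F_0}}(\overline{C}_1)$ is needed, but if you fed $\tfrac{10}{13}$ into the point estimate it would appear to fail at the two points where $A_{C,\Delta_C}(Q)=\tfrac12$. The correct curve-level integral equals $\tfrac{9}{52}$, which passes comfortably there, and at $Q=\overline{C}_1\cap\overline{C}_3$ one finds $F_Q=\tfrac{1}{12}$, giving $\tfrac{9}{52}+\tfrac{1}{12}=\tfrac{10}{39}<\tfrac13$, confirming your prediction that this is the delicate point. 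With that conflation repaired and the Zariski decompositions carried out, your argument coincides with the paper's.
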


\begin{proof}
Note that $A_{F_0,\Delta_{F_0}}(C)=1$ and $\Sigma=\overline{C}_4+\overline{C}_5$. We have
$$
d(u)=\begin{cases}
0	& u\in[0,6]\\
u - 6	& u\in[6,9],
\end{cases}
$$
and
$$
t(u)=\begin{cases}
\frac{u}{3}		& u\in[0,6],\\
6-\frac{2u}{3}  & u\in[6,9].
\end{cases}
$$
Moreover we have
$$
N(u,v) = \begin{cases}
		   v(C_4+C_5)				&    u \in [0,3],~v\in[0,\frac{u}{3}], \\
		\frac{v}{2} C_5				&    u \in [3,5],~v\in[0,\frac{u}{3}-1],           \\
\frac{3v+3-u}{3}C_4 +\frac{6v+3-u}{6}C_5	&    u \in [3,5],~v\in[\frac{u}{3}-1,\frac{u}{3}],           \\
			0			        &    u \in [5,6],~v\in[0,u-5],\\
		\frac{v+5-u}{2} C_5		        &    u \in [5,6],~v\in[u-5,\frac{u}{3}-1],\\
   \frac{3v+3-u}{3}C_4+\frac{3v+9-2u}{3}C_5        &    u \in [5,6],~v\in[\frac{u}{3}-1,\frac{u}{3}],\\
			0				&    u \in [6,9],~v\in[0,3-\frac{u}{3}],\\
		\frac{3v+u-9}{3}(C_4+C_5)		&    u \in [6,9],~v\in[3-\frac{u}{3},6-\frac{2u}{3}],
\end{cases}
$$
and
$$
P(u,v)\sim_{\mathbb{R}} \begin{cases}
			\frac{u-3v}{3}(C_1+C_4+C_5)			&    u \in [0,3],~v\in[0,\frac{u}{3}], \\
\frac{u-3v}{3}C_1+C_4+\frac{3+u-3v}{6}C_5	&    u \in [3,5],~v\in[0,\frac{u}{3}-1],           \\
			\frac{u-3v}{3}(C_1+C_4+C_5)			&    u \in [3,5],~v\in[\frac{u}{3}-1,\frac{u}{3}],  \\
	\frac{u-3v}{3}C_1+C_4+\frac{9-u}{3}C_5			&    u \in [5,6],~v\in[0,u-5],\\
\frac{u-3v}{3}C_1+C_4+\frac{3+u-3v}{6}C_5     &    u \in [5,6],~v\in[u-5,\frac{u}{3}-1],\\
			\frac{u-3v}{3}(C_1+C_4+C_5)			&    u \in [5,6],~v\in[\frac{u}{3}-1,\frac{u}{3}],\\
 (\frac{18-2u-3v}{3}C_1+\frac{9-u}{3}(C_4+C_5)     &    u \in [6,9],~v\in[0,3-\frac{u}{3}],      \\
			\frac{18-2u-3v}{3}(C_1+C_4+C_5)		&    u \in [6,9],~v\in[3-\frac{u}{3},6-\frac{2u}{3}],
\end{cases}
$$
which gives
$$
\big(P(u,v)\big)^2 = \begin{cases}
		   \frac{(u-3v)^2}{18}			&    u \in [0,3],~v\in[0,\frac{u}{3}], \\
		\frac{u}{3}-v-\frac{1}{2}		&    u \in [3,5],~v\in[0,\frac{u}{3}-1],           \\
		\frac{(u-3v)^2}{18}				&    u \in [3,5],~v\in[\frac{u}{3}-1,\frac{u}{3}],           \\
 -\frac{u^2}{2}+uv-\frac{v^2}{2}-13+\frac{16}{3} u - 6v &    u \in [5,6],~v\in[0,u-5],\\
		\frac{u}{3}-v-\frac{1}{2}	        &    u \in [5,6],~v\in[u-5,\frac{u}{3}-1],\\
  	 	\frac{(u-3v)^2}{18}			        &    u \in [5,6],~v\in[\frac{u}{3}-1,\frac{u}{3}],\\
	-2u+9+\frac{u^2}{9} - \frac{v^2}{2}		&    u \in [6,9],~v\in[0,3-\frac{u}{3}],      \\
		\frac{(18-2u-3v)^2}{18}   	&    u \in [6,9],~v\in[3-\frac{u}{3},6-\frac{2u}{3}],
\end{cases}
$$
and
$$
P(u,v)\cdot C = \begin{cases}
\frac{u-3v}{6}	&    u \in [0,3],~v\in[0,\frac{u}{3}], \\
\frac{1}{2}		&    u \in [3,5],~v\in[0,\frac{u}{3}-1],           \\
\frac{u-3v}{6}			&    u \in [3,5],~v\in[\frac{u}{3}-1,\frac{u}{3}],           \\
\frac{6-u+v}{2} 	&    u \in [5,6],~v\in[0,u-5],\\
\frac{1}{2}	        &    u \in [5,6],~v\in[u-5,\frac{u}{3}-1],\\
\frac{u-3v}{6}        &    u \in [5,6],~v\in[\frac{u}{3}-1,\frac{u}{3}],\\
\frac{v}{2}		&    u \in [6,9],~v\in[0,3-\frac{u}{3}],      \\
\frac{18-2u-3v}{6}	&    u \in [6,9],~v\in[3-\frac{u}{3},6-\frac{2u}{3}].
\end{cases}
$$
Integrating, we get $S(W_{\bullet,\bullet}^{F_0};C)=\frac{10}{13}<1= A_{F_0,\Delta_{F_0}}(C)$, so \eqref{equation:A2-case-1} holds.

Similarly, we compute
$S(W_{\bullet,\bullet,\bullet}^{F_0,C};Q)= \frac{9}{52}+F_Q(W^{F_0,C}_{\bullet,\bullet,\bullet})$,
where
$$
F_Q\big(W^{F_0,C}_{\bullet,\bullet,\bullet}\big) = \begin{cases}
\frac{1}{12}	& Q = \overline{C}_1 \cap \overline{C}_3,\\
0		& \text{otherwise.}
\end{cases}
$$
Observe that
$$
A_{C,\Delta_C}(Q) = \begin{cases}
\frac{1}{2}	& Q=\overline{C}_1\cap B_{F_0},\\
\frac{1}{2}	& Q=\overline{C}_1\cap\overline{C}_2,\\
\frac{1}{3}	& Q=\overline{C}_1\cap\overline{C}_3,\\
1		& \text{otherwise}.
\end{cases}
$$
Thus, we have
$$
\frac{A_{C,\Delta_C}(Q)}{S(W_{\bullet,\bullet,\bullet}^{F_0,C};Q)} = \begin{cases}
\frac{13}{10}	& Q = \overline{C}_1 \cap \overline{C}_3,\\
\frac{26}{9}	& Q = \overline{C}_1 \cap \overline{C}_2,\\
\frac{26}{9}	& Q=\overline{C}_1\cap B_{F_0},\\
\frac{52}{9}	& \text{otherwise}.
\end{cases}
$$
which implies \eqref{equation:A2-case-2}.
\end{proof}

\begin{lemma}
\label{lemma:A2-Q-in-C3}
Let $Q$ be a point in $\overline{C}_3\setminus\overline{C}_1$. Set $C=\overline{C}_3$.
Then \eqref{equation:A2-case-1} and \eqref{equation:A2-case-2} hold.
\end{lemma}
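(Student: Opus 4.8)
The argument is the exact analogue of the proof of Lemma~\ref{lemma:A2-Q-in-C1}, carried out with $\overline{C}_1$ replaced by $C=\overline{C}_3$ everywhere, so I will only describe the points where the input data change and where a small extra observation is needed. Since $\overline{C}_3$ appears in the different $\Delta_{F_0}=\tfrac12 B_{F_0}+\tfrac23\overline{C}_3$ with coefficient $\tfrac23$, we have $A_{F_0,\Delta_{F_0}}(\overline{C}_3)=\tfrac13$, and
$$
\overline{C}_3+\Delta_{F_0}-\mathrm{ord}_{\overline{C}_3}(\Delta_{F_0})\overline{C}_3=\overline{C}_3+\tfrac12 B_{F_0},
$$
which is a log pair that is plt along $\overline{C}_3$: indeed $F_0$ is smooth at every point of $\overline{C}_3$ (the unique singular point of $F_0=\mathbb{P}(1,1,2)$ lies on $\overline{C}_1\cap\overline{C}_2$, hence off $\overline{C}_3$), the curve $\overline{C}_3$ is smooth, and $B_{F_0}=\{\bar z_1^2+\bar z_2^2=\bar z_3\}$ meets $\overline{C}_3=\{\bar z_3=0\}$ transversally in the two distinct points cut out by $\bar z_1^2+\bar z_2^2=0$. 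By adjunction the different is $\Delta_{\overline{C}_3}=\tfrac12\big(B_{F_0}\vert_{\overline{C}_3}\big)$, so $A_{\overline{C}_3,\Delta_{\overline{C}_3}}(Q)=\tfrac12$ at the two points of $\overline{C}_3\cap B_{F_0}$ and $A_{\overline{C}_3,\Delta_{\overline{C}_3}}(Q)=1$ at all other points of $\overline{C}_3$, in particular at $\overline{C}_3\cap\overline{C}_2$, which is allowed since $Q$ ranges over $\overline{C}_3\setminus\overline{C}_1$.

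Next I would record the data on $\widetilde{F}_0$. Reading off the toric grading of $\widetilde{F}_0$ one finds $C_3\sim 2C_1+C_5$ in $\mathrm{Pic}(\widetilde{F}_0)$, and since $\overline{C}_3\sim 2\overline{C}_1$ on $F_0$ this gives $\theta^*(\overline{C}_3)=2\theta^*(\overline{C}_1)=2(C_1+C_4+C_5)$, whence $\Sigma=\theta^*(\overline{C}_3)-C_3=2C_4+C_5$. Because $\widetilde{N}(u)\vert_{\widetilde{F}_0}$ is supported on $C_1,C_4,C_5$ for every $u\in[0,9]$, we get $d(u)=\mathrm{ord}_{C_3}\big(\widetilde{N}(u)\vert_{\widetilde{F}_0}\big)=0$ for all $u$, so the boundary term in $S_A(W^{F_0}_{\bullet,\bullet};\overline{C}_3)$ vanishes.

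The computational core is then to determine, for each $u\in[0,9]$, the threshold $t(u)=\sup\{v\ge 0\mid \widetilde{P}(u)\vert_{\widetilde{F}_0}-vC_3\text{ is pseudo-effective}\}$ together with the Zariski decomposition $\widetilde{P}(u)\vert_{\widetilde{F}_0}-vC_3=P(u,v)+N(u,v)$, all inside the cone $\langle C_1,C_4,C_5\rangle$ using the intersection table of $\widetilde{F}_0$; the four ranges $[0,3]$, $[3,5]$, $[5,6]$, $[6,9]$ split into $v$-subintervals exactly as in Lemma~\ref{lemma:A2-Q-in-C1}. For instance on $u\in[0,3]$ one finds $t(u)=\tfrac{u}{6}$, $N(u,v)=v\Sigma=2vC_4+vC_5$ and $P(u,v)=(\tfrac{u}{3}-2v)(C_1+C_4+C_5)$, so $\big(P(u,v)\big)^2=\tfrac12(\tfrac{u}{3}-2v)^2$ and $P(u,v)\cdot C_3=\tfrac{u}{3}-2v$. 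Feeding the piecewise formulas into
$$
S_A\big(W^{F_0}_{\bullet,\bullet};\overline{C}_3\big)=\frac{3}{A^{3}}\int_0^{9}\int_0^{t(u)}\big(P(u,v)\big)^2\,dv\,du
$$
and integrating yields a value $\le\tfrac13=A_{F_0,\Delta_{F_0}}(\overline{C}_3)$, which is \eqref{equation:A2-case-1}. For \eqref{equation:A2-case-2} the key observation is that the correction term $F_Q\big(W^{F_0,\overline{C}_3}_{\bullet,\bullet,\bullet}\big)$ is supported at the $\theta$-images of the points where $C_3$ meets $\mathrm{Supp}\big(N^\prime(u)+N(u,v)-(v+d(u))\Sigma\big)\subset C_1\cup C_4\cup C_5$; since $C_3\cdot C_1=C_3\cdot C_5=0$ and $C_3\cdot C_4=1$ with $C_4$ contracted by $\theta$ to the point $\overline{C}_1\cap\overline{C}_3$, this locus reduces to $\{\overline{C}_1\cap\overline{C}_3\}$, a point excluded by the hypothesis $Q\notin\overline{C}_1$. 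Hence $F_Q=0$, so $S(W^{F_0,\overline{C}_3}_{\bullet,\bullet,\bullet};Q)=\tfrac{3}{A^{3}}\int_0^{9}\int_0^{t(u)}(P(u,v)\cdot C_3)^2\,dv\,du$ is a single constant, which I expect to be $\le\tfrac12\le A_{\overline{C}_3,\Delta_{\overline{C}_3}}(Q)$; this gives \eqref{equation:A2-case-2}.

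The only genuine obstacle is the same bookkeeping as in Lemma~\ref{lemma:A2-Q-in-C1}: keeping the Zariski chambers of $\widetilde{P}(u)\vert_{\widetilde{F}_0}-vC_3$ straight across all $(u,v)$ and evaluating the resulting piecewise-polynomial double integrals correctly. Everything else is immediate once one notes that $\overline{C}_3$ avoids both the singular point of $F_0$ and, away from $\overline{C}_1$, the $\theta$-exceptional locus.
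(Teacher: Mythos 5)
Your framework is exactly the paper's: you reduce everything to the data on $\widetilde{F}_0$, and the preparatory facts you state are all correct and agree with the paper --- $A_{F_0,\Delta_{F_0}}(\overline{C}_3)=\tfrac13$ because of the coefficient $\tfrac23$ in the different, $\Delta_{\overline{C}_3}=\tfrac12 B_{F_0}\vert_{\overline{C}_3}$ since the singular point of $F_0$ and the point $\overline{C}_1\cap\overline{C}_3$ are avoided, $\theta^*(\overline{C}_3)=C_3+2C_4+C_5$ so $\Sigma=2C_4+C_5$, $d(u)=0$ for all $u$, the first Zariski chamber $t(u)=\tfrac u6$, $N(u,v)=2vC_4+vC_5$, $P(u,v)=(\tfrac u3-2v)(C_1+C_4+C_5)$ on $u\in[0,3]$, and the vanishing $F_Q(W^{F_0,\overline{C}_3}_{\bullet,\bullet,\bullet})=0$ for $Q\notin\overline{C}_1$ because $C_3$ meets the support $C_1\cup C_4\cup C_5$ of the relevant negative parts and of $\Sigma$ only in $C_3\cap C_4$, which maps to the excluded point $\overline{C}_1\cap\overline{C}_3$.

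The gap is that you never actually establish the two inequalities that constitute the lemma. You work out only the chamber $u\in[0,3]$, defer the Zariski decompositions on $u\in[3,5]$, $[5,6]$, $[6,9]$ (where the chamber structure in $v$ is genuinely more involved than in Lemma~\ref{lemma:A2-Q-in-C1}, since one subtracts $vC_3\sim 2vC_1+vC_5$ rather than a multiple of $C_1+C_4+C_5$), and then merely assert that the integrals come out small enough: ``yields a value $\le\tfrac13$'' and ``I expect to be $\le\tfrac12$''. These bounds are not a priori clear and the margins are not generous: the paper computes $S_A(W^{F_0}_{\bullet,\bullet};\overline{C}_3)=\tfrac{10}{39}$ against the threshold $\tfrac13=\tfrac{13}{39}$, and $S(W^{F_0,\overline{C}_3}_{\bullet,\bullet,\bullet};Q)=\tfrac{9}{26}$ against $\tfrac12=\tfrac{13}{26}$, so \eqref{equation:A2-case-1} and \eqref{equation:A2-case-2} do hold, but only after carrying out the full piecewise decomposition and evaluating both double integrals. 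Since this computation is the entire content of the lemma, replacing it by an expectation leaves the proof incomplete; your plan would succeed, but as written the decisive step is missing.
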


\begin{proof}
For $u\in[0,9]$, we have $d(u)=0$ and $N^\prime(u)=\widetilde{N}(u)\vert_{\widetilde{F}_0}$.
Since $\widetilde{C}\sim C_3+2C_4+C_5$, we have
$$
t(u) = \begin{cases}
\frac{u}{6}	& u\in[0,6],\\
\frac{9-u}{3}	& u\in[6,9].
\end{cases}
$$
We compute
$$
N(u,v) = \begin{cases}
2 v C_4+v C_5					&    u \in [0,3],~v\in[0,\frac{u}{6}], \\
	0					&    u \in [3,5],~v\in[0,\frac{u-3}{6}],           \\
\frac{u-3}{6}(2C_4+C_5) 			&    u \in [3,5],~v\in[\frac{u-3}{6},\frac{u}{6}],\\
  	0					&    u \in [5,6],~v\in[0,\frac{6-u}{3}],\\
\frac{3v+u-6}{3}(C_4)				&    u \in [5,6],~v\in[\frac{6-u}{3},\frac{2u-9}{3}]\\
\frac{6v+3-u}{3}(C_4)+\frac{v+9-2u}{3}(C_4)	&    u \in [5,6],~v\in[\frac{2u-9}{3},\frac{u}{6}],\\
\frac{2u-9}{3}(C_4+C_5) 			&    u \in [6,9],~v\in[0,\frac{9-u}{3}],
\end{cases}
$$
and
$$
P(u,v) \sim \begin{cases}
	\frac{u-6v}{3}(C_1+C_4+C_5)		&    u \in [0,3],~v\in[0,\frac{u}{6}], \\
\frac{u-6v}{3}C_1+\frac{3+u-6v}{6}C_5+C_4	&    u \in [3,5],~v\in[0,\frac{u-3}{6}],           \\
	\frac{u-6v}{3}(C_1+C_4+C_5) 		&    u \in [3,5],~v\in[\frac{u-3}{6},\frac{u}{6}],\\
\frac{u-6v}{3}C_1+\frac{9-u-3v}{3} C_5+C_4  &    u \in [5,6],~v\in[0,\frac{6-u}{3}],\\
\frac{u-6v}{3}C_1+\frac{9-u-3v}{3}(C_5+C_4) &    u \in [5,6],~v\in[\frac{6-u}{3},\frac{2u-9}{3}],\\
	\frac{u-6v}{3}(C_1+C_4+C_5) 		&    u \in [5,6],~v\in[\frac{2u-9}{3},\frac{u}{6}],\\
	\frac{9-u-3v}{3}(2C_1+C_4+C_5)		&    u \in [6,9],~v\in[0,\frac{9-u}{3}],
\end{cases}
$$
which gives
$$
\big(P(u,v)\big)^2 = \begin{cases}
\frac{u^2}{18}+2v^2-\frac{2}{3}uv		&    u \in [0,3],~v\in[0,\frac{u}{6}], \\
\frac{u}{3}-2v-\frac{1}{2}			&    u \in [3,5],~v\in[0,\frac{u-3}{6}],           \\
\frac{u^2}{18}+2v^2-\frac{2}{3}uv		&    u \in [3,5],~v\in[\frac{u-3}{6},\frac{u}{6}],\\
\frac{16}{3}u-2v-\frac{13}{2}u^2		&    u \in [5,6],~v\in[0,\frac{6-u}{3}],\\
4u-6v-9-\frac{7}{18}u^2+v^2+\frac{2}{3}uv	&    u \in [5,6],~v\in[\frac{6-u}{3},\frac{2u-9}{3}]\\
9-6v-2u+v^2+\frac{u^2}{9}+\frac{2}{3}uv		&    u \in [5,6],~v\in[\frac{2u-9}{3},\frac{u}{6}],\\
\frac{2u-9}{3}(C_4+C_5) 			&    u \in [6,9],~v\in[0,\frac{9-u}{3}],
\end{cases}
$$
and
$$
P(u) \cdot C = \begin{cases}
\frac{u}{3}-2v	&    u \in [0,3],~v\in[0,\frac{u}{6}], \\
1	&    u \in [3,5],~v\in[0,\frac{u-3}{6}],           \\
\frac{u}{3}-2v	&    u \in [3,5],~v\in[\frac{u-3}{6},\frac{u}{6}],\\
1	&    u \in [5,6],~v\in[0,\frac{6-u}{3}],\\
3-v-\frac{u}{3}	&    u \in [5,6],~v\in[\frac{6-u}{3},\frac{2u-9}{3}]\\
\frac{u}{3}-2v	&    u \in [5,6],~v\in[\frac{2u-9}{3},\frac{u}{6}],\\
3-\frac{u}{3}-v	&    u \in [6,9],~v\in[0,\frac{9-u}{3}].
\end{cases}
$$
Thus, integrating we get $S(W^{F_0}_{\bullet,\bullet};C)=\frac{10}{39}<\frac{1}{3}=A_{F_0,\Delta_{F_0}}(C)$, so \eqref{equation:A2-case-1} holds.

Since $Q\neq \overline{C}_1 \cap \overline{C}_3$, we have $F_Q(W^{F_0,C}_{\bullet,\bullet,\bullet})=0$,
which gives $S(W_{\bullet,\bullet,\bullet}^{F_0,\overline{C}_3};Q) = \frac{9}{26}$.
But
$$
A_{C,\Delta_C}(Q) = \begin{cases}
\frac{1}{2}	& Q \in B_{F_0},\\
1	& Q \not\in B_{F_0}.
\end{cases}
$$
Thus, we have
$$
\frac{A_{C,\Delta_C}(Q)}{S(W^{F_0}_{\bullet,\bullet};C)} = \begin{cases}
\frac{13}{10}	& Q \in B_{F_0},\\
\frac{26}{9}	& Q \not\in B_{F_0},
\end{cases}
$$
which implies \eqref{equation:A2-case-1}.
\end{proof}

\begin{lemma}
\label{lemma:A2-Q-general}
Let $Q$ be a point in $F_0$ such that $Q\not\in\overline{C}_1 \cup \overline{C}_3$,
and let $C$ be the unique curve in the~pencil $|\overline{C}_1|$ that contains $Q$.
Then \eqref{equation:A2-case-1} and \eqref{equation:A2-case-2} hold.
\end{lemma}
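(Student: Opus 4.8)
The plan is to run the same Abban--Zhuang flag argument used in Lemmas~\ref{lemma:A2-Q-in-C1} and \ref{lemma:A2-Q-in-C3}, but now with the \emph{moving} curve $C\in|\overline{C}_1|$ in place of a fixed boundary component. First I would record the geometry of a general member $C$. The only base point of $\theta$ lying on $\overline{C}_1$ is $[0:1:0]=\overline{C}_1\cap\overline{C}_3$, and it is contained only in the member $\overline{C}_1$ itself; a general $C$ avoids it, so $\theta$ is an isomorphism near $C$. Hence $\theta^*(C)=\widetilde{C}$ (so $\Sigma=0$), $\widetilde{C}\equiv C_1+C_4+C_5=\theta^*(\overline{C}_1)$, the curve $\widetilde{C}$ is disjoint from $C_4\cup C_5$, and it meets $C_1$ only at the $A_1$-point of $\widetilde{F}_0$ lying over $\mathrm{Sing}(F_0)\in\overline{C}_1$. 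Since the effective cone of $\widetilde{F}_0$ is spanned by $C_1,C_4,C_5$, every negative part restricts to $\widetilde{C}$ supported at that single point, whence $d(u)=\mathrm{ord}_{\widetilde{C}}(\widetilde{N}(u)\vert_{\widetilde{F}_0})=0$ for all $u\in[0,9]$ and $N'(u)=\widetilde{N}(u)\vert_{\widetilde{F}_0}$. I would then check $(F_0,C+\Delta_{F_0})$ is plt: the only candidate non-plt loci are $C\cap B_{F_0}$, $C\cap\overline{C}_3$ and $C\cap\mathrm{Sing}(F_0)$; because $B_{F_0}=\{\overline{z}_3=\overline{z}_1^2+\overline{z}_2^2\}$ is a graph transverse to the pencil direction, $C$ meets $B_{F_0}$ and $\overline{C}_3$ transversally at smooth points of $F_0$, and at the $A_1$-point $C$ is a coordinate curve. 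This also pins down the different $\Delta_C=\tfrac12(C\cap B_{F_0})+\tfrac23(C\cap\overline{C}_3)+\tfrac12(C\cap\mathrm{Sing}(F_0))$; since the last two points lie on $\overline{C}_1\cup\overline{C}_3$, for $Q\notin\overline{C}_1\cup\overline{C}_3$ we get $A_{C,\Delta_C}(Q)=\tfrac12$ if $Q\in B_{F_0}$ and $A_{C,\Delta_C}(Q)=1$ otherwise, while $A_{F_0,\Delta_{F_0}}(C)=1$.

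Next I would compute $S_A(W^{F_0}_{\bullet,\bullet};C)$ exactly as in the earlier lemmas. Using $\widetilde{C}\equiv\theta^*(\overline{C}_1)$ and the expressions for $\widetilde{P}(u)\vert_{\widetilde{F}_0}$ already recorded on the four ranges $u\in[0,3],[3,5],[5,6],[6,9]$, one determines $t(u)$ and the Zariski decomposition of $\widetilde{P}(u)\vert_{\widetilde{F}_0}-v\widetilde{C}$: on $[0,3]$ the divisor is $(\tfrac u3-v)\theta^*(\overline{C}_1)$, hence nef with $t(u)=\tfrac u3$ and $P(u,v)^2=\tfrac12(\tfrac u3-v)^2$, $P(u,v)\cdot\widetilde{C}=\tfrac12(\tfrac u3-v)$; on the remaining ranges the negative part is supported on $C_4,C_5$ (and on $C_1$ for $u$ near $9$), along the same chamber pattern as in Lemma~\ref{lemma:A2-Q-in-C1}. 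Since $d(u)\equiv0$, the formula reduces to $S_A(W^{F_0}_{\bullet,\bullet};C)=\tfrac{3}{13}\int_0^9\int_0^{t(u)}P(u,v)^2\,dv\,du$, which should evaluate to a rational number strictly below $1$, establishing \eqref{equation:A2-case-1}.

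Finally, for \eqref{equation:A2-case-2}, since $F_Q\big(W^{F_0,C}_{\bullet,\bullet,\bullet}\big)=\tfrac{6}{13}\int\int(P(u,v)\cdot\widetilde{C})\,\mathrm{ord}_Q\big((N'(u)+N(u,v))\vert_{\widetilde{C}}\big)=0$ for every $Q\notin\overline{C}_1\cup\overline{C}_3$ (all negative parts restrict to $\widetilde{C}$ at the point over $\mathrm{Sing}(F_0)\ne Q$), we obtain $S\big(W^{F_0,C}_{\bullet,\bullet,\bullet};Q\big)=\tfrac{3}{13}\int_0^9\int_0^{t(u)}(P(u,v)\cdot\widetilde{C})^2\,dv\,du$, a number independent of $Q$. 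One checks that this number is $\leqslant\tfrac12$; since $\tfrac12=\min_Q A_{C,\Delta_C}(Q)$, this gives \eqref{equation:A2-case-2} in every case. Together with Lemmas~\ref{lemma:A2-Q-in-C1} and \ref{lemma:A2-Q-in-C3} and the discussion preceding them, \eqref{equation:A2-case-1}--\eqref{equation:A2-case-2} contradict the strict inequality $\inf_{Q\in F_0}\min\{\cdots\}<1$ forced by $\beta_{Y,\Delta}(\mathbf{F})\leqslant 0$, completing the proof of Proposition~\ref{proposition:A2}.

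The main obstacle I anticipate is the piecewise bookkeeping of the Zariski decomposition of $\widetilde{P}(u)\vert_{\widetilde{F}_0}-v\widetilde{C}$ on the singular toric surface $\widetilde{F}_0$: one must subdivide the $(u,v)$-domain into the correct chambers on which the negative part is supported on prescribed subsets of $\{C_1,C_4,C_5\}$, and then integrate $P(u,v)^2$ and $P(u,v)\cdot\widetilde{C}$ chamber by chamber (this is where an arithmetic slip is most likely). A secondary point, already flagged above, is getting the different $\Delta_C$ exactly right — in particular the coefficient $\tfrac12$ from the $A_1$-point of $F_0$ through which $C$ passes — and confirming that no member of $|\overline{C}_1|$ is tangent to $B_{F_0}$, which is precisely what keeps $(F_0,C+\Delta_{F_0})$ plt and makes the bound \eqref{equation:A2-case-2} uniform in $Q$.
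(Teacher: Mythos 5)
Your route is the paper's route: the same flag $Q\in C\subset F_0$, the same choice of $C$ as the pencil member of $|\overline{C}_1|$ through $Q$, and the same reductions. The parts you pin down are correct and agree with the paper's proof: since $Q\notin\overline{C}_1\cup\overline{C}_3$, the curve $C$ avoids $[0:1:0]$, so $\Sigma=0$ and $\widetilde{C}\sim C_1+C_4+C_5$; $\widetilde{C}$ is disjoint from $C_4\cup C_5$ and meets $C_1$ only at the $\mathbb{A}_1$-point over $\mathrm{Sing}(F_0)$, whence $d(u)=0$, $N^\prime(u)=\widetilde{N}(u)\vert_{\widetilde{F}_0}$ and $F_Q\big(W^{F_0,C}_{\bullet,\bullet,\bullet}\big)=0$; $A_{F_0,\Delta_{F_0}}(C)=1$ and $A_{C,\Delta_C}(Q)\in\{\tfrac12,1\}$; and on $u\in[0,3]$ your $t(u)=\tfrac{u}{3}$, $P(u,v)^2=\tfrac12(\tfrac u3-v)^2$, $P(u,v)\cdot\widetilde{C}=\tfrac12(\tfrac u3-v)$ coincide with the paper's formulas.

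The gap is that the decisive content of the lemma --- the Zariski chambers for $\widetilde{P}(u)\vert_{\widetilde{F}_0}-v\widetilde{C}$ on $u\in[3,9]$ and the two integrals --- is only promised (``should evaluate to''), and the chamber structure you predict is not the right one: the negative part $N(u,v)$ is never supported on $C_4$ or $C_5$; it is $0$ except that $C_1$ enters, namely $N(u,v)=(u+v-6)C_1$ for $u\in[5,6]$, $v\in[6-u,1]$, and $N(u,v)=vC_1$ for $u\in[6,9]$ (indeed $(\widetilde{P}(u)\vert_{\widetilde{F}_0}-v\widetilde{C})\cdot C_1=3-\tfrac u2-\tfrac v2$ there), with pseudo-effective thresholds $t(u)=1$ on $[3,6]$ and $t(u)=\tfrac{9-u}{3}$ on $[6,9]$. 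Carrying out the integration with the correct chambers, the paper obtains $S\big(W^{F_0}_{\bullet,\bullet};C\big)=\tfrac{9}{26}<1$ and $S\big(W^{F_0,C}_{\bullet,\bullet,\bullet};Q\big)=\tfrac{10}{39}<\tfrac12$, which confirms the bounds you anticipated but not via the decomposition you describe; as you executed it, the chamber-by-chamber integrands would be wrong. A secondary point: your formula for the different $\Delta_C$ tacitly assumes the points $C\cap B_{F_0}$, $C\cap\overline{C}_3$ and $\mathrm{Sing}(F_0)$ are distinct. Tangency is indeed impossible ($C\cdot B_{F_0}=C\cdot\overline{C}_3=1$), but if $Q$ lies on one of the two pencil members through the points of $B_{F_0}\cap\overline{C}_3$, then $C\cap B_{F_0}=C\cap\overline{C}_3$ and the pair $(F_0,C+\Delta_{F_0})$ is not plt at that point (local coefficient sum $1+\tfrac12+\tfrac23>2$); this does not affect $A_{C,\Delta_C}(Q)$ for your $Q$ (which avoids $\overline{C}_3$) nor the $S$-values, but it is a case your plt claim, as stated, does not cover and would need a word of justification.
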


\begin{proof}
Note that $A_{F_0,\Delta_{F_0}}(C)=1$, and $\widetilde{C}\sim C_1+C_4+C_5$.
We have
$$
t(u) = \begin{cases}
\frac{u}{3}	& u\in[0,3],\\
1       	& u\in[3,6],\\
\frac{9-u}{3}	    & u\in[6,9].
\end{cases}
$$
For every $u\in[0,9]$, we have $d(u)=0$ and $N^\prime(u)=\widetilde{N}(u)\vert_{\widetilde{F}_0}$.
We compute
$$
N(u,v) = \begin{cases}
	0	&    u \in [0,3],~v\in[0,\frac{u}{3}], \\
	0	&    u \in [3,5],~v\in[0,1],           \\
	0       &    u \in [5,6],~v\in[0,6-u],\\
  (v+u-6)C_1    &    u \in [5,6],~v\in[6-u,1],\\
   v C_1	&    u \in [6,9],~v\in[0,3-\frac{u}{3}],
\end{cases}
$$
and
$$
P(u,v) \sim \begin{cases}
 \frac{u-3v}{3}(C_1+C_4+C_5)     				 &    u \in [0,3],~v\in[0,\frac{u}{3}],\\
 \frac{u-3v}{3}C_1+(1-v) C_4+\frac{3+u-6v}{6}C_5  &    u \in [3,5],~v\in[0,1],\\
 \frac{u-3v}{3}C_1+(1-v) C_4+\frac{9-u-3v}{3}C_5           &    u \in [5,6],~v\in[0,6-u],\\
\frac{18-2u-6v}{3}C_1+(1-v) C_4+(\frac{9-u-3v}{3}C_5  	 &    u \in [5,6],~v\in[6-u,1],\\
\frac{9-u-3v}{3}(2 C_1+C_4+C_5) 	 	 &    u \in [6,9],~v\in[0,3-\frac{u}{3}].
\end{cases}
$$
which gives
$$
\big(P(u,v)\big)^2 = \begin{cases}
		   \frac{(u-3v)^2}{18}				 &    u \in [0,3],~v\in[0,\frac{u}{3}] \\
-\frac{1}{2}+\frac{u}{3}-\frac{1}{3}uv+\frac{1}{2}v^2		 &    u \in [3,5],~v\in[0,1]           \\
-\frac{u^2}{2}-\frac{uv}{3}+\frac{v^2}{2}-13+\frac{16}{3} u	 &    u \in [5,6],~v\in[0,6-u]\\
	5+\frac{2uv}{3}+v^2 - \frac{2u}{3} - 6v        	 &    u \in [5,6],~v\in[6-u,1]\\
		\frac{(3-\frac{u}{3}-v)^2}{2}				 &    u \in [6,9],~v\in[0,3-\frac{u}{3}],
\end{cases}
$$
and
$$
P(u) \cdot \widetilde{C} = \begin{cases}
\frac{u-3v}{6}	 &    u \in [0,3],~v\in[0,\frac{u}{3}] \\
\frac{u-3v}{6}	 &    u \in [3,5],~v\in[0,1]  \\
\frac{u-3v}{6}	 &    u \in [5,6],~v\in[0,6-u]\\
\frac{9-u-3v}{3} &    u \in [5,6],~v\in[6-u,1]\\
\frac{9-u-3v}{3} &    u \in [6,9],~v\in[0,3-\frac{u}{3}].
\end{cases}
$$
Thus, integrating we get $S(W^{F_0}_{\bullet,\bullet};C)=\frac{9}{26}<1=A_{F_0,\Delta_{F_0}}(C)$, so \eqref{equation:A2-case-1} holds.

Since $Q\not\in\overline{C}_1\cup \overline{C}_3$, we have $F_Q(W^{F_0,C}_{\bullet,\bullet,\bullet})=0$ and
$$
A_{C,\Delta_C}(Q) = \begin{cases}
\frac{1}{2}	& Q \in B_{F_0},\\
1	& Q \not\in B_{F_0}.
\end{cases}
$$
Integrating, we get $S(W_{\bullet,\bullet,\bullet}^{F_0,C};Q)=\frac{10}{39}$, so that
$$
\frac{A_{C,\Delta_C}(Q)}{S(W^{F_0}_{\bullet,\bullet};C)} = \begin{cases}
\frac{39}{20}	& Q \in B_{F_0},\\
\frac{39}{10}	& Q \not\in B_{F_0},
\end{cases}
$$
which implies \eqref{equation:A2-case-1}.
\end{proof}

Lemmas~\ref{lemma:A2-Q-in-C1}, \ref{lemma:A2-Q-in-C1}, \ref{lemma:A2-Q-general} completes the proof of Proposition~\ref{proposition:A2}.

\section{On the K-moduli spaces}
\label{section:moduli}

In this section, we prove Corollary \ref{corollary:4-2}.
The proof of Corollary~\ref{corollary:3-9} is almost identical, so we omit it.
To start with, let us present the following well known assertion.

\begin{lemma}
\label{lemma:HHR}
Let $X$ be a smooth Fano threefold. Then
$$
h^0\left(X,T_X\right)-h^1\left(X,T_X\right)
=\chi(X,T_X)=\frac{-K_X^3}{2}-18+b_2(X)-\frac{b_3(X)}{2},
$$
where $b_2(X)$ and $b_3(X)$ are the second and the third Betti numbers of $X$, respectively.
\end{lemma}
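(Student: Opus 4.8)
The plan is to derive the formula from the Hirzebruch--Riemann--Roch theorem together with a few standard vanishing statements. First I would dispose of the equality $h^0(X,T_X)-h^1(X,T_X)=\chi(X,T_X)$. By Serre duality $h^i(X,T_X)=h^{3-i}(X,\Omega_X^1\otimes\omega_X)^\vee$, and since $-K_X$ is ample, the Akizuki--Nakano vanishing theorem gives $H^q(X,\Omega_X^1\otimes\omega_X)=0$ for $q\leqslant 1$; hence $h^2(X,T_X)=h^3(X,T_X)=0$ and $\chi(X,T_X)=\sum_i(-1)^ih^i(X,T_X)$ reduces to $h^0(X,T_X)-h^1(X,T_X)$.

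For the numerical identity I would apply Hirzebruch--Riemann--Roch to the rank-three bundle $T_X$ on the threefold $X$. Writing $c_i=c_i(T_X)$ and expanding the degree-three component of $\mathrm{ch}(T_X)\,\mathrm{td}(T_X)$, using $\mathrm{ch}(T_X)=3+c_1+\tfrac12(c_1^2-2c_2)+\tfrac16(c_1^3-3c_1c_2+3c_3)$ and $\mathrm{td}(T_X)=1+\tfrac12c_1+\tfrac1{12}(c_1^2+c_2)+\tfrac1{24}c_1c_2$, one obtains
$$\chi(X,T_X)=\frac{1}{2}c_1^3-\frac{19}{24}c_1c_2+\frac{1}{2}c_3.$$
This is the only computational step; the coefficient bookkeeping (in particular the $-\tfrac{19}{24}$ in front of $c_1c_2$) is where a little care is needed, but it is entirely routine.

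It then remains to rewrite the three characteristic numbers. One has $c_1(T_X)=-K_X$, so $c_1^3=-K_X^3$. Applying Hirzebruch--Riemann--Roch to $\mathcal{O}_X$, namely $\chi(\mathcal{O}_X)=\tfrac1{24}c_1c_2$, and combining with Kodaira vanishing ($H^i(X,\mathcal{O}_X)=0$ for $i>0$ since $-K_X$ is ample, so $\chi(\mathcal{O}_X)=1$), gives $c_1c_2=24$. Finally $c_3(T_X)$ is the topological Euler characteristic $e(X)$; since $b_0(X)=b_6(X)=1$, $b_1(X)=b_5(X)=0$ (from $H^1(X,\mathcal{O}_X)=0$ and Poincar\'e duality) and $b_4(X)=b_2(X)$, we have $e(X)=2+2b_2(X)-b_3(X)$. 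Substituting,
$$\chi(X,T_X)=\frac{-K_X^3}{2}-\frac{19}{24}\cdot24+\frac{1}{2}\bigl(2+2b_2(X)-b_3(X)\bigr)=\frac{-K_X^3}{2}-18+b_2(X)-\frac{b_3(X)}{2},$$
which is the asserted identity. The main obstacle, such as it is, is purely the arithmetic of the HRR expansion; conceptually nothing deeper than Serre duality, Akizuki--Nakano and Kodaira vanishing is involved. As a sanity check, for $X=\mathbb{P}^3$ the formula returns $\chi(X,T_X)=15=\dim\mathrm{Aut}(\mathbb{P}^3)$.
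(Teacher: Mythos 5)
Your proof is correct and follows exactly the route the paper indicates: Akizuki--Nakano vanishing (via Serre duality) to reduce $\chi(X,T_X)$ to $h^0-h^1$, Hirzebruch--Riemann--Roch for $T_X$, and the identities $c_1c_2=24\chi(\mathcal{O}_X)=24$ (equivalently $-K_X\cdot c_2(X)=24$) and $c_3=e(X)=2+2b_2(X)-b_3(X)$; the coefficient $-\tfrac{19}{24}$ and the final arithmetic check out. The paper merely states this in one line, so your write-up is simply the worked-out version of the same argument.
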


\begin{proof}
The required assertion immediately follows from the Akizuki--Nakano vanishing theorem and the Hirzebruch--Riemann--Roch theorem, since $-K_X\cdot c_2(X)=24$.
\end{proof}

Now, let us use notations and assumptions introduced in Corollary~\ref{corollary:4-2}.

\begin{lemma}
\label{lemma:GIT}
Let $f\in T$ and $X_f$ be the~Casagrande--Druel 3-fold constructed from $\{f=0\}$.
Suppose that $f$ is GIT semistable with respect to the $\Gamma$-action. Then $X_f$ is K-semistable.
\end{lemma}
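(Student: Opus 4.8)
\textbf{Proof plan for Lemma~\ref{lemma:GIT}.}
The plan is to compare the GIT picture for $(2,2)$-curves on $V=\mathbb{P}^1\times\mathbb{P}^1$ with the K-stability picture for the associated Casagrande--Druel threefolds. First I would note that by Proposition~\ref{proposition:GIT-2-2-curves}, if $f$ is GIT semistable then $C=\{f=0\}$ is either smooth or is one of the three strictly polystable curves listed in Theorem~\ref{theorem:4-2} (up to the $\Gamma$-action), since the GIT-semistable locus is the union of the stable locus and the orbits of the strictly polystable points. In the smooth case $X_f$ is a smooth Fano threefold in family \textnumero 4.2, which is K-polystable (hence K-semistable) by \cite[Theorem 6.1]{IltenSuess} and \cite{Book}. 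In each of the three singular strictly polystable cases, $X_f$ is K-polystable by Theorem~\ref{theorem:4-2}. So in all GIT-semistable cases $X_f$ is in fact K-polystable, which is stronger than what is claimed.

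The one subtlety I would address carefully is that Proposition~\ref{proposition:GIT-2-2-curves} is stated only up to projective equivalence (the curves in Theorem~\ref{theorem:4-2} are described only up to the action of $\Gamma$), whereas Lemma~\ref{lemma:GIT} is a statement about an arbitrary $f\in T$. This is harmless: K-semistability of $X_f$ depends only on the isomorphism class of $X_f$, and the construction $f\mapsto X_f$ is equivariant for the $\Gamma$-action on $T$ in the sense that $X_{\gamma\cdot f}\cong X_f$ for all $\gamma\in\Gamma$, since $\gamma$ induces an isomorphism of the ramification data $(V,L,R)$ and hence of the double cover $B\to V$ and of the blow-up $X$. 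Therefore it suffices to treat one representative in each $\Gamma$-orbit of GIT-semistable points, and those representatives are precisely the curves covered by the smooth case and by Theorem~\ref{theorem:4-2}.

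Concretely, the steps in order are: (1) invoke Proposition~\ref{proposition:GIT-2-2-curves} to classify $C=\{f=0\}$ when $f$ is GIT semistable, observing that the semistable locus is the closure of the stable locus together with the strictly polystable orbits, so $C$ is smooth or lies in the $\Gamma$-orbit of one of the curves (1), (2), (3) of Theorem~\ref{theorem:4-2}; (2) record that the construction $X_f$ is $\Gamma$-invariant up to isomorphism, so we may replace $f$ by any representative of its $\Gamma$-orbit; (3) in the smooth case cite the K-polystability of the unique smooth member of family \textnumero 4.2 via \cite[Theorem 6.1]{IltenSuess} and \cite{Book}; (4) in the three singular cases cite Theorem~\ref{theorem:4-2}; (5) conclude that $X_f$ is K-polystable, hence K-semistable, in every case.

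I do not expect a genuine obstacle here, since this lemma is purely a bookkeeping combination of Proposition~\ref{proposition:GIT-2-2-curves} and Theorem~\ref{theorem:4-2}. The only point requiring a sentence of justification is the $\Gamma$-equivariance of $f\mapsto X_f$ used in step~(2), and the observation that ``GIT semistable'' for $(2,2)$-curves, in this case, coincides orbit-wise with ``smooth or strictly polystable'' because there are no strictly semistable non-polystable boundary strata beyond the ones enumerated — i.e. every semistable orbit specializes to a polystable one whose corresponding threefold we have already shown to be K-polystable, and K-semistability is an open condition in families, so it passes from the special member to nearby semistable members if needed. In fact, since we obtain K-\emph{polystability} directly for each representative, even this openness argument is unnecessary, and the proof is immediate.
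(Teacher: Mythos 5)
There is a genuine gap, and it is the central claim of your step (1): the GIT-semistable locus is \emph{not} the union of the stable locus and the strictly polystable orbits. There are strictly semistable $(2,2)$-curves whose orbits are not closed in $T^{\operatorname{ss}}$ — for instance an irreducible $(2,2)$-curve with a single node, or $C_1+C_2$ with the two $(1,1)$-curves tangent at one point; these are semistable (they are not in the unstable strata described in the proof of Proposition~\ref{proposition:GIT-2-2-curves}) but are neither smooth nor among the three polystable types of Theorem~\ref{theorem:4-2}. For such $f$ your case analysis says nothing, and your stronger conclusion that $X_f$ is K-\emph{polystable} for every semistable $f$ is in fact false in general: for a strictly semistable non-polystable $f$ one only expects K-semistability, with the K-polystable object in the moduli being the threefold attached to the polystable limit of $f$. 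This is precisely why Lemma~\ref{lemma:GIT} is stated with ``K-semistable'' rather than ``K-polystable''.

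The argument you relegate to a parenthetical ``if needed'' remark, and then discard, is the actual proof (and is the one the paper gives): choose a one-parameter subgroup $\lambda\colon\mathbb{G}_m\to\Gamma$ with $[f_0]=\lim_{t\to 0}\lambda(t)\cdot[f]$ GIT polystable; then $X_{f_0}$ is K-polystable by Theorem~\ref{theorem:4-2} (or by the smooth case), $\lambda$ induces an isotrivial flat degeneration of $X_f$ to $X_{f_0}$, and openness of K-semistability in families then gives that $X_f$ is K-semistable. Your remarks on the $\Gamma$-equivariance of $f\mapsto X_f$ and on the smooth and strictly polystable cases are fine, but to repair the proof you must make the degeneration-plus-openness step the main argument rather than an optional aside, and weaken your conclusion to K-semistability.
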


\begin{proof}
There exists a one-parameter subgroup
$\lambda\colon\mathbb{G}_m\to\Gamma$ such that
$$
[f_0]=\lim_{t\to 0}\lambda(t)\cdot{[}f{]}
$$
is a GIT polystable point in $T$.
Let $X_{0}$ be the corresponding Casagrande--Druel threefold constructed from $\{f_0=0\}$.
Then it follows from Theorem~\ref{theorem:4-2} that $X_{0}$ is K-polystable.
On the~other hand, the~subgroup $\lambda$ gives isotrivial flat
degeneration of $X_{f}$ to $X_{0}$, which implies that $X_f$ is K-semistable, because K-semistability is an open condition.
\end{proof}

Now, we are ready to prove Corollary \ref{corollary:4-2}.

\begin{proof}[Proof of Corollary \ref{corollary:4-2}]
Since the~construction of Casagrande--Druel 3-folds is functorial,
there exists a $\Gamma$-equivariant flat morphism $\pi_T\colon X_T\to T$
such that
$$
\pi^{-1}_T({[}f{]})\cong X_f.
$$
We set $X_{T^{\operatorname{ss}}}=\pi_T^{-1}(T^{\operatorname{ss}})$.
Then the restriction morphism $X_{T^{\operatorname{ss}}}\to T^{\operatorname{ss}}$ is a $\Gamma$-equivariant
flat family of K-semistable Fano 3-folds by Lemma \ref{lemma:GIT}.

Let $\{T^{\operatorname{ss}}/\Gamma\}$ be the fibered category over $(\operatorname{Sch}/\mathbb{C})_{\operatorname{fppf}}$ in the sense of \cite[Example 4.6.7]{olsson}.
Then the family $X_{T^{\operatorname{ss}}}\to T^{\operatorname{ss}}$ gives a morphism $\{T^{\operatorname{ss}}/\Gamma\}\to\mathcal{M}^{\operatorname{Kss}}_{3,28}$ of fibered categories.
This induces the morphism
$$
\big[T^{\operatorname{ss}}/\Gamma\big]\to \mathcal{M}^{\operatorname{Kss}}_{3,28}
$$
between Artin stacks, since $[T^{\operatorname{ss}}/\Gamma]$ is the stackification of $\{T^{\operatorname{ss}}/\Gamma\}$ (see \cite[Remark 4.6.8]{olsson}).

Since $M$ is the good moduli space of $[T^{\operatorname{ss}}/\Gamma]$,
it follows from \cite[Theorem 6.6]{Alper} that there exists a natural morphism
$$
\Phi\colon M\to M^{\operatorname{Kps}}_{3,28}
$$
that maps $[f]$ to $[X_f]$. This morphism is injective.
Indeed, if $f_1$ and $f_2$ are points in $T$,
then the corresponding Casagrande--Druel 3-folds $X_{f_1}$ and $X_{f_2}$ are isomorphic if and only if the~points $f_1$ and $f_2$ are contained in one $\Gamma$-orbit.

Observe that $M$ is normal. Take $[f]\in M$. Since the deformations of the 3-fold $X_f$ are unobstructed by Proposition~\ref{proposition:unobstruction},
the variety $M^{\operatorname{Kps}}_{3,28}$ is also normal at $[X_f]$ by Luna's \'etale slice theorem \cite[Theorem 1.2]{AHR}.
Moreover, if $X_f$ is smooth, then
$$
\dim_{[X_f]}\big(M^{\operatorname{Kps}}_{3,28}\big)\leqslant h^1\big(X_f,T_{X_f}\big)=\mathrm{dim}(M)
$$
by Lemma \ref{lemma:HHR}, since $h^0(X,T_X)=\dim(\operatorname{Aut}(X))=1$.
Therefore, using the injectivity~of~$\Phi$, we see that the image $\Phi(M)\subset M^{\operatorname{Kps}}_{3,28}$ is a connected component,
and $\Phi$ is an isomorphism onto this connected component by Zariski's main theorem.
\end{proof}

The variety $M^{\operatorname{Kps}}_{(3.9)}$ is well-studied \cite{Stability-quartic-curves}.
Let us describe $M^{\operatorname{Kps}}_{(4.2)}\cong T^{\operatorname{ss}}\mathbin{/\mkern-6mu/}\Gamma$.
Recall that
$$
T=\mathbb{P}\left(H^0\left(V,\mathcal{O}_V(2,2)\right)^\vee\right)
$$
and $\Gamma=\left(\operatorname{SL}_2(\mathbb{C})\times\operatorname{SL}_2(\mathbb{C})\right)\rtimes\mumu_2$,
where $V=\mathbb{P}^1\times\mathbb{P}^1$.
Set $\Gamma_0=\operatorname{SL}_2(\mathbb{C})\times\operatorname{SL}_2(\mathbb{C})$.

\begin{proposition}[Noam Elkies]
\label{proposition:Elkies}
One has $T^{\operatorname{ss}}\mathbin{/\mkern-6mu/}\Gamma_0\cong T^{\operatorname{ss}}\mathbin{/\mkern-6mu/}\Gamma\cong \mathbb{P}(1,2,3)$.
\end{proposition}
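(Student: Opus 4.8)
The plan is to reinterpret $T$ as a space of quadratic forms and reduce Proposition~\ref{proposition:Elkies} to an explicit computation of a classical invariant ring. First I would record the elementary identification $W := H^0(V,\mathcal{O}_V(2,2)) \cong \mathrm{Sym}^2 V_1^\vee \otimes \mathrm{Sym}^2 V_2^\vee$, where $V_1 \cong V_2 \cong \mathbb{C}^2$ are the two standard $\mathrm{SL}_2$-representations whose projectivizations are the factors of $V = \mathbb{P}^1\times\mathbb{P}^1$. Since $V_1\otimes V_2 \cong \mathbb{C}^4$ is the standard representation of $\mathrm{SO}_4$ under the spin isogeny $\mathrm{SL}_2\times\mathrm{SL}_2 = \mathrm{Spin}_4 \twoheadrightarrow \mathrm{SO}_4$, and since $\mathrm{Sym}^2(V_1\otimes V_2) = \big(\mathrm{Sym}^2 V_1\otimes \mathrm{Sym}^2 V_2\big)\oplus\big(\Lambda^2 V_1\otimes\Lambda^2 V_2\big)$ with the second summand the $\mathrm{SO}_4$-invariant line spanned by the standard form, this exhibits $W$ as the space $\mathrm{Sym}^2_0(\mathbb{C}^4)$ of trace-free symmetric $4\times 4$ matrices, with $\Gamma_0$ acting through $\mathrm{SO}_4$ by $Q\mapsto gQg^{\mathsf T}$. (The central $\mumu_2\times\mumu_2\subset\mathrm{SL}_2\times\mathrm{SL}_2$ acts trivially, so only the induced $\mathrm{PSO}_4 = \mathrm{SO}_3\times\mathrm{SO}_3$-action is relevant.)

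Next I would compute $\mathbb{C}[W]^{\Gamma_0} = \mathbb{C}[\mathrm{Sym}^2_0(\mathbb{C}^4)]^{\mathrm{SO}_4}$. Passing from $Q$ to the operator $M = J^{-1}Q$, where $J$ is the standard form (which over $\mathbb{C}$ may be taken to be the identity), the action becomes conjugation $M\mapsto gMg^{-1}$ and the constraint becomes $\mathrm{tr}\,M = 0$; the natural invariants are then the coefficients $c_2,c_3,c_4$ of the characteristic polynomial $t^4 + c_2 t^2 + c_3 t + c_4$ of $M$, of degrees $2,3,4$ in the entries of $Q$. To see that these generate the whole invariant ring I would diagonalize the generic form: over $\mathbb{C}$ a generic trace-free symmetric $Q$ is $\mathrm{SO}_4$-equivalent to $\mathrm{diag}(d_1,\dots,d_4)$ with $\sum d_i=0$; two such diagonal forms are $\mathrm{SO}_4$-equivalent precisely when the tuples $(d_i)$ agree up to permutation (conjugation by a permutation matrix corrected by a $\pm1$-diagonal matrix of the appropriate determinant shows that $\mathrm{SO}_4$ — not merely $\mathrm{O}_4$ — realizes all of $S_4$ on the eigenvalues); and the stabilizer of a generic diagonal form is finite. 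Hence the categorical quotient is $\{\sum d_i = 0\}/S_4 \cong \mathbb{C}^3/S_4$ for the standard $3$-dimensional reflection representation of $S_4$, and by Chevalley–Shephard–Todd its coordinate ring is the polynomial ring $\mathbb{C}[c_2,c_3,c_4]$ with $\deg c_i = i$.

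Then I would assemble the conclusion. By definition $T^{\mathrm{ss}}/\!\!/\Gamma \cong \mathrm{Proj}\big(\bigoplus_{m\geq 0}(\mathrm{Sym}^m W)^{\Gamma}\big)$, and likewise for $\Gamma_0$; since $\Gamma_0$ has finite index in $\Gamma$ the two groups have the same one-parameter subgroups, so $T^{\mathrm{ss}}$ is the same for both, and the residual $\mumu_2\subset\Gamma$ (swapping the two $\mathbb{P}^1$-factors, i.e.\ conjugating $M$ by an outer involution of $\mathbb{C}^4$) fixes $c_2,c_3,c_4$. Therefore $T^{\mathrm{ss}}/\!\!/\Gamma_0 \cong T^{\mathrm{ss}}/\!\!/\Gamma \cong \mathrm{Proj}\,\mathbb{C}[c_2,c_3,c_4] = \mathbb{P}(2,3,4)$, and the standard reduction of weights (the prime $2$ divides the weights $2$ and $4$ but not $3$, so those two may be halved) gives $\mathbb{P}(2,3,4)\cong\mathbb{P}(1,3,2) = \mathbb{P}(1,2,3)$. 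Non-emptiness of $T^{\mathrm{ss}}$ is guaranteed by Proposition~\ref{proposition:GIT-2-2-curves}, since smooth $(2,2)$-curves are GIT stable.

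I expect the crux to be the middle step: establishing that $c_2,c_3,c_4$ generate the \emph{entire} invariant ring rather than a subalgebra of finite index. The diagonalization argument settles this, but its delicate point — that the determinant-$1$ restriction still allows every permutation of the eigenvalues of a generic trace-free symmetric form — must be verified carefully; an alternative would be to invoke directly the first fundamental theorem of invariant theory for the orthogonal group acting on symmetric matrices.
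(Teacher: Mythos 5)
Your proposal is correct in outline but takes a genuinely different route from the paper. The paper works directly with the coefficient variables $a_{ij}$: it computes the Hilbert series of the invariant algebra $S^{\Gamma_0}$ by the Molien--Weyl integral, obtaining $\frac{1}{(1-t^2)(1-t^3)(1-t^4)}$, then exhibits explicit invariants $J_2,J_3,J_4$ of degrees $2,3,4$ (the characteristic-polynomial coefficients of an explicit $4\times 4$ matrix in the $a_{ij}$), checks they are algebraically independent, and concludes $S^{\Gamma_0}=\mathbb{C}[J_2,J_3,J_4]$, hence $T^{\operatorname{ss}}\mathbin{/\mkern-6mu/}\Gamma_0\cong\mathbb{P}(2,3,4)\cong\mathbb{P}(1,2,3)$; the equality with $T^{\operatorname{ss}}\mathbin{/\mkern-6mu/}\Gamma$ follows because $J_2,J_3,J_4$ are also $\Gamma$-invariant (and the paper notes the classical Peano--Turnbull provenance). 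Your argument replaces the Hilbert-series verification by the conceptual identification $W\cong\operatorname{Sym}^2_0(\mathbb{C}^4)$ via $\operatorname{Spin}_4\to\operatorname{SO}_4$, so that the invariant ring becomes that of $\operatorname{SO}_4$ acting by conjugation on trace-free self-adjoint operators; indeed the paper's explicit $4\times 4$ matrix is exactly your operator $M=J^{-1}Q$ written in coordinates, so the two sets of generators coincide. What the paper's route buys is a short, fully checkable computation with no structure theory; what yours buys is an explanation of why the answer is a polynomial ring in degrees $2,3,4$ (a Chevalley-restriction statement for the symmetric pair $(\mathfrak{gl}_4,\mathfrak{so}_4)$, with little Weyl group $S_4$) and why the generators are characteristic-polynomial coefficients. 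Your handling of the $\Gamma$ versus $\Gamma_0$ step and of the weight reduction $\mathbb{P}(2,3,4)\cong\mathbb{P}(1,2,3)$ matches the paper.

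One step needs tightening: you write that the diagonalization argument ``settles'' generation, but separating \emph{generic} orbits by $c_2,c_3,c_4$ only shows that the inclusion $\mathbb{C}[c_2,c_3,c_4]\subseteq R:=\mathbb{C}[\operatorname{Sym}^2_0(\mathbb{C}^4)]^{\operatorname{SO}_4}$ is birational, not that it is an equality; a categorical quotient is not determined by its generic fibers. The gap is easily closed along the lines you already have: restriction to the space $\mathfrak{a}$ of trace-free diagonal matrices is injective on invariants (since $\operatorname{SO}_4\cdot\mathfrak{a}$ is dense, generic symmetric matrices over $\mathbb{C}$ being orthogonally diagonalizable), its image lies in $\mathbb{C}[\mathfrak{a}]^{S_4}=\mathbb{C}[e_2,e_3,e_4]$ by your permutation-times-sign-matrix observation, and the restrictions of $c_2,c_3,c_4$ are (up to sign) exactly $e_2,e_3,e_4$; injectivity then forces $R=\mathbb{C}[c_2,c_3,c_4]$. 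Alternatively, a finiteness argument (the common zero locus of the $c_i$ is the nullcone) plus normality, or your FFT suggestion combined with the remark that there are no nonzero relative invariants for the determinant character on $\operatorname{Sym}^2\mathbb{C}^4$ (generic stabilizers contain reflections, so $\operatorname{SO}_4$- and $\operatorname{O}_4$-invariants agree), completes the proof.
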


\begin{proof}
Let $W=H^0\left(V,\mathcal{O}_V(2,2)\right)$,
let $S$ be the symmetric algebra of $W^\vee$,
let $S^{\Gamma_0}$ be its subalgebra of invariants for the natural $\Gamma_0$-action,
and let $H(t)$ be its Hilbert series:
$$
H(t)=\sum_{k\geqslant 0}\dim\Big(\big(\mathrm{Sym}^k(W^\vee)\big)^{\Gamma_0}\Big)t^k.
$$
Then its follows from \cite[\S 11.9]{Procesi} or \cite[\S 4.6]{DK} that
$$
H(t)=\int_0^1\int_0^1\frac{2-z_1^2-z_1^{-2}}{2}\cdot
\frac{2-z_2^2-z_2^{-2}}{2}\cdot
\prod_{j_1,j_2\in\{-1,0,1\}}\frac{1}{1-t\cdot
z_1^{2j_1}z_2^{2j_2}}d\phi_1 d\phi_2
$$
with $|t|<1$, where $z_1=e^{2\pi\sqrt{-1}\phi_1}$ and $z_2=e^{2\pi\sqrt{-1}\phi_2}$. This gives
$$
H(t)=\frac{1}{(1-t^2)(1-t^3)(1-t^4)}.
$$

Let us find generators of $S^{\Gamma_0}$. Consider the standard basis
$$
x_0^2y_0^2, x_0^2y_0y_1, x_0^2y_1^2, x_0x_1y_0^2, x_0x_1y_0y_1, x_0x_1y_1^2, x_1^2y_0^2, x_1^2y_0y_1, x_1^2y_1^2
$$
of the space $W$, let
$a_{00}, a_{01}, a_{02}, a_{10}, a_{11}, a_{12}, a_{20}, a_{21}, a_{22}$ be the dual basis of the space $W^\vee$,
and let $J_2$, $J_3$, $J_4$ be the~coefficients of the characteristic polynomial of the matrix
$$
\begin{pmatrix}
\frac{1}{2}a_{11} & -a_{10} & -a_{01} & 2a_{00} \\
a_{12} & -\frac{1}{2}a_{11} & -2a_{02} & a_{01} \\
a_{21} & -2a_{20} & -\frac{1}{2}a_{11} & a_{10} \\
2a_{22} & -a_{21} & -a_{12} & \frac{1}{2}a_{11}
\end{pmatrix}
$$
such that $J_k\in\mathrm{Sym}^k(W^\vee)$ for $k\in\{2,3,4\}$.
Then $J_2$, $J_3$, $J_4$ are $\Gamma_0$-invariant,
and these polynomials are algebraically independent, which gives $S^{\Gamma_0}=\mathbb{C}[J_2,J_3,J_4]$,
so that
$$
T^{\operatorname{ss}}\mathbin{/\mkern-6mu/}\Gamma_0\cong \mathbb{P}(2,3,4)\cong\mathbb{P}(1,2,3).
$$
Since the polynomials $J_2$, $J_3$, $J_4$ are also $\Gamma$-invariant, we also get $T^{\operatorname{ss}}\mathbin{/\mkern-6mu/}\Gamma_0\cong T^{\operatorname{ss}}\mathbin{/\mkern-6mu/}\Gamma$.
\end{proof}

\begin{remark}
\label{remark:Elkies}
In fact, Proposition~\ref{proposition:Elkies} is a classical result ---
Peano \cite{Peano} and Turnbull \cite{Turnbull} showed that $S^{\Gamma_0}$ is generated by $J_2$, $J_3$, $J_4$,
see  \cite[\S 12]{Turnbull} and \cite[Pages 242--246]{Olver}.
\end{remark}

The surface $M^{\operatorname{Kps}}_{(4.2)}$ is a component of the K-moduli space of smoothable Fano threefolds.
Another two-dimensional component of this K-moduli~space has been described in \cite{Banff},
and all its one-dimensional components have been described~in~\cite{London}.


\begin{thebibliography}{23}

\bibitem{London}
H.~Abban, I.~Cheltsov, E.~Denisova, E.~Etxabarri-Alberdi, A.-S. Kaloghiros, D.~Jiao,
J.~Martinez-Garcia, T.~Papazachariou, \emph{One-dimensional components in the K-moduli of smooth Fano 3-folds}, to appear.

\bibitem{AbbanZhuang}
H.~Abban, Z.~Zhuang, \emph{K-stability of Fano varieties via admissible flags}, Forum of Math. Pi \textbf{10} (2022), 1--43.

\bibitem{Unstable-quartics}
C.~Alc\'{a}ntara, J.~Aquino, \emph{Classification of unstable quartic plane curves}, Bol. Soc. Mat. Mex. \textbf{29} (2023), paper No. 6, 17 pp.

\bibitem{Alper}
J.~Alper, \emph{Good moduli spaces for Artin stacks}, Annales de l'Institut Fourier \textbf{63} (2013), 2349--2402.

\bibitem{AHR}
J.~Alper, J.~Hall, D.~Rydh, \emph{A Luna \'etale slice theorem for algebraic stacks}, Ann.\ of Math.\ \textbf{191} (2020), no.\ 3, 675--738.

\bibitem{Book}
C.~Araujo, A.-M.~Castravet, I.~Cheltsov, K.~Fujita, A.-S.~Kaloghiros, J.~Martinez-Garcia, C.~Shramov, H.~S\"u\ss, N.~Viswanathan,
\emph{The Calabi problem for Fano threefolds}, LMS Lecture Notes in Mathematics \textbf{485}, Cambridge University Press, 2023.

\bibitem{BCW}
L.~Bonavero, F.~Campana, J.~Wi\'sniewski, \emph{Vari\'et\'es complexes dont l'\'eclat\'ee en un point est de Fano},
C. R. Math. Acad. Sci. Paris \textbf{334} (2002), 463--468.

\bibitem{CD}
C.~Casagrande, S.~Druel, \emph{Locally unsplit families of rational curves of large anticanonical degree on Fano manifolds},
Int. Math. Res. Not. \textbf{2015}, no.\ 21, 10756--10800 (2015).

\bibitem{Banff}
I.~Cheltsov, A.~Thompson, \emph{K-moduli of Fano threefolds in the~family \textnumero 3.10}, to appear.

\bibitem{CLS}
D.~Cox, J.~Little, H.~Schenck, \emph{Toric varieties}, AMS, Graduate Studies in Math. \textbf{124} (2011).

\bibitem{Delcroix2020}
T.~Delcroix, \emph{K-Stability of Fano spherical varieties},
Annales Scientifiques de l'Ecole Normale Superieure \textbf{53} (2020), 615--662.

\bibitem{Delcroix2022}
T.~Delcroix, \emph{Examples of K-unstable Fano manifolds}, Annales de l'Institut Fourier \textbf{72} (2022), 2079--2108.

\bibitem{DK}
H.~Derksen, G.~Kemper, \emph{Computational invariant theory}, Encyclopaedia Math.\ Sci. \textbf{130}, Springer, Heidelberg, 2015.

\bibitem{Dervan2}
R.~Dervan, \emph{On K-stability of finite covers},  Bull. Lond. Math. Soc. \textbf{48} (2016), 717--728.

\bibitem{divpt}
K.~Fujita, \emph{Fano manifolds having $(n-1,0)$-type extremal rays with large Picard number}, preprint, arXiv:1212.4977, 2012.

\bibitem{div-stability}
K.~Fujita, \emph{On K-stability and the~volume functions of $\mathbb{Q}$-Fano varieties}, Proc. Lond. Math. Soc. \textbf{113} (2016), 541--582.

\bibitem{Fujita2021}
K.~Fujita, \emph{On K-stability for Fano threefolds of rank $3$ and degree $28$}, to appear in IMRN.

\bibitem{takao}
T.~Fujita, \emph{Classification theories of polarized varieties}, LMS Lecture Note Series \textbf{155},
Cambridge University Press, Cambridge, 1990.

\bibitem{HeubergerPetracci}
L.~Heuberger, A.~Petracci, \emph{On K-moduli of Fano threefolds with degree $28$ and Picard rank $4$},
preprint, arXiv:2303.12562, 2023.

\bibitem{Stability-quartic-curves},
D.~Hyeon, Y.~Lee, \emph{Log minimal model program for the~moduli space of stable curves of genus three}, Math. Res. Lett. \textbf{17} (2010), 625--636.
		
\bibitem{IltenSuess}
N.~Ilten, H.~S\"uss, \emph{K-stability for Fano manifolds with torus action of complexity $1$}, Duke Math. J. \textbf{166} (2017), 177--204.

\bibitem{Ko97}
J.~Koll\'ar, \emph{Singularities of pairs},  Proceedings of Symposia in Pure Mathematics \textbf{62} (1997), 221--287.

\bibitem{LXZ}
Y.~Liu, C.~Xu, Z.~Zhuang, \emph{Finite generation for valuations computing stability thresholds and applications to K-stability}, Ann. of Math. \textbf{196} (2022), 507--566.

\bibitem{Liu-Zhu}
Y.~Liu, Z.~Zhu, \emph{Equivariant K-stability under finite group action}, Internat. J. Math. \textbf{33} (2022), paper No. 2250007, 21 pp.

\bibitem{Mallory}
D.~Mallory, \emph{On the K-stability of blow-ups of projective bundles}, in preparation.

\bibitem{maruyama2}
M.~Maruyama, \emph{Elementary transformations in the~theory of algebraic vector bundles},  Lecture Notes in Math., \textbf{961} (1982), 241--266.

\bibitem{mukai-book}
S.~Mukai, \emph{An introduction to invariants and moduli}, Cambridge Studies in Advanced Mathematics \textbf{81}, Cambridge University Press, Cambridge, 2003.

\bibitem{OdakaSpottiSun}
Y.~Odaka, C.~Spotti, S.~Sun, \emph{Compact moduli spaces of del Pezzo surfaces and K\"ahler--Einstein metrics},   J. Differ. Geom.  \textbf{102} (2016), 127--172.

\bibitem{olsson}
M.~Olsson, \emph{Algebraic spaces and stacks}, Amer.  Math.  Soc.  Colloq.  Publ. \textbf{62} American Mathematical Society, Providence, RI, 2016.

\bibitem{Olver}
P.~Olver, \emph{Classical invariant theory},
LMS Stud. Texts, \textbf{44} Cambridge University Press, 1999.

\bibitem{Peano}
G.~Peano, \emph{Formazioni invariantive delle corrispondenze}, Giornale Mat. Univ. Ital. \textbf{20} (1882), 79--100.

\bibitem{Procesi}
C.~Procesi, \emph{Lie groups. An approach through invariants and representations}, Universitext, Springer, New York, 2007.

\bibitem{Prokhorov}
Yu.~Prokhorov, \emph{Lectures on complements on log surfaces}, Math. Soc. of Japan Memoirs \textbf{10} (2001).

\bibitem{taro}
T.~Sano, \emph{On deformations of $\mathbb{Q}$-Fano $3$-folds}, J. Algebraic Geom. \textbf{25} (2016), 141--176.

\bibitem{sernesi}
E.~Sernesi, \emph{Deformations of algebraic schemes}, Grundlehren Math.\ Wiss., \textbf{334} Springer-Verlag, Berlin, 2006.

\bibitem{SwansonHuneke}
I.~Swanson, C.~Huneke, \emph{Integral closure of ideals, rings, and modules}, LMS Lecture Note Series \textbf{336}, Cambridge University Press, 2006.

\bibitem{tsukioka}
T.~Tsukioka, \emph{Classification of Fano manifolds containing a negative divisor isomorphic to projective space}, Geom. Dedicata \textbf{123} (2006), 179--186.

\bibitem{Turnbull}
H. Turnbull, \emph{Double binary forms III}, Proc. Roy. Soc. Edinburgh \textbf{43} (1922/23), 43--50.

\bibitem{XZ}
C.~Xu, Z.~Zhuang, \emph{On positivity of the CM line bundle on K-moduli spaces}, Ann. of Math. \textbf{192} (2020), 1005--1068.

\bibitem{ZhangZhou}
K.~Zhang, C.~Zhou, \emph{Delta invariants of projective bundles and projective cones of Fano type}, to appear in Math Z.

\bibitem{Zhuang}
Z.~Zhuang, \emph{Optimal destabilizing centers and equivariant K-stability}, Invent. Math. \textbf{226} (2021), 195--223.


\end{thebibliography}
\end{document}